\numberwithin{equation}{section}
\newtheorem{theorem}{Theorem}[section]
\newtheorem{lemma}[theorem]{Lemma}
\newtheorem{proposition}[theorem]{Proposition}
\theoremstyle{definition}
\newtheorem{remark}[theorem]{Remark}
\DeclareRobustCommand{\rchi}{{\mathpalette\irchi\relax}}
\renewcommand{\Bbb}{\mathbb}
\newcommand{\irchi}[2]{\raisebox{\depth}{$#1\chi$}} % inner command, used by \rchi
\newcommand{\mc}{\mathcal}
\newcommand{\mb}{\mathbb}
\newcommand{\la}{\lambda}
\newcommand{\norm}[1]{\left\lVert#1\right\rVert}
\newcommand{\pd}[2]{\frac{\partial#1}{\partial#2}}
\newcommand{\R}{\mb{R}}
\newcommand{\C}{\mb{C}}
\newcommand{\N}{\mb{N}}
\newcommand{\Z}{\mb{Z}}
\newcommand{\e}{\varepsilon}
\newcommand{\ps}[3]{\left( #2, #3 \right)_{#1}}
\newcommand{\colg}[1]{{\color{teal} #1}}
\pgfplotsset{compat=1.18}
\begin{document}

\title[On Aharonov-Bohm operators with many coalescing poles]{Quantitative spectral stability for Aharonov-Bohm operators with many coalescing poles}

\author[V. Felli, B. Noris, R. Ognibene, and G. Siclari]{Veronica Felli, Benedetta Noris, Roberto Ognibene, and Giovanni Siclari}

\address{Veronica Felli and Giovanni Siclari 
  \newline \indent Dipartimento di Matematica e Applicazioni
  \newline \indent
Universit\`a degli Studi di Milano–Bicocca
\newline\indent Via Cozzi 55, 20125 Milano, Italy}
\email{veronica.felli@unimib.it,  g.siclari2@campus.unimib.it}

\address{Benedetta Noris
  \newline \indent Dipartimento di Matematica
  \newline \indent Politecnico di Milano
  \newline\indent Piazza Leonardo da Vinci 32, 20133 Milano, Italy}
\email{benedetta.noris@polimi.it}

\address{Roberto Ognibene
	\newline \indent Dipartimento di Matematica
	\newline \indent Universit\`a di Pisa
	\newline\indent  Largo Bruno Pontecorvo, 5, 56127 Pisa, Italy}
\email{roberto.ognibene@dm.unipi.it}

\date{Revised version, December 7, 2023.}

\begin{abstract}
  The behavior of simple eigenvalues of Aharonov-Bohm operators with
  many coalescing poles is discussed.  In the case of half-integer
  circulation, a gauge transformation makes the problem equivalent to
  an eigenvalue problem for the Laplacian in a domain with straight
  cracks, laying along the moving directions of poles. For this
  problem, we obtain an asymptotic expansion for eigenvalues, in which
  the dominant term is related to the minimum of an energy functional
  associated with the configuration of poles and defined on a space of
  functions suitably jumping through the cracks.
    
  Concerning configurations with an odd number of poles, an accurate
  blow-up analysis identifies the exact asymptotic behaviour of
  eigenvalues and the sign of the variation in some cases. An
  application to the special case of two poles is also discussed.
\end{abstract}

\maketitle

{\bf Keywords.} Magnetic Schr\"odinger operators, Aharonov–Bohm
potentials, asymptotics of eigenvalues, blow-up analysis.

\medskip 

{\bf MSC classification.}
35J10, %Schrödinger operator, Schrödinger equation
35P20, %Asymptotic distributions of eigenvalues in context of PDEs
35J75, %Singular elliptic equations

\section{Introduction}\label{sec_introduction}
This paper deals with asymptotic expansions of the eigenvalue
variation for Aharonov-Bohm operators with many coalescing poles with
half-integer circulation, under homogeneous Dirichlet boundary conditions on
a simply connected open bounded domain $\Omega\subset \R^2$.  More
precisely, we consider the case of any number $k$ of poles moving
along straight lines towards a collision point $P\in\Omega$, with
distances from $P$ vanishing with the same order. Without loss of
generality, we assume that $P=0\in \Omega$, so that the moving poles
can be written as multiples of $k$ fixed points
$\{a^j\}_{j=1,\dots,k}$ with the same multiplicative infinitesimal
parameter $\e>0$.

Since we are interested in the asymptotic behaviour of  eigenvalues
as $\e\to0^+$, it is not restrictive to assume that there exists $R<1$ such
that
\begin{equation*}
  \{a^j\}_{j=1,\dots,k} \subset D_R(0)\subset \Omega,
\end{equation*}
where, for every $r>0$ and $x \in \R^2$, we denote $D_r(x):=\{y \in
\R^2: |x-y|<r\}$. Henceforth, we denote $D_r(0)$ simply by
$D_r$.

We assume that, among the $k$ poles, there are $k_1$ poles that stand
alone on their own straight line through the origin, while the
remaining ones form $k_2$ pairs of poles staying on the same straight
line but on different sides with respect to the origin. Hence 
\begin{equation*}
k=k_1 +2k_2 \quad \text{with  } k_1,k_2 \in \mathbb{N},  \ (k_1,k_2) \neq (0,0),
\end{equation*}
and, for every $j=1,\dots, k$, there exist $r_j>0$ and $\alpha^j\in
(-\pi,\pi]$ such that $\alpha^{j}\neq\alpha^\ell$ if $j\neq\ell$ and
\begin{equation}\label{def_aj}
a^j= r_j(\cos(\alpha^j),\sin(\alpha^j)),
\end{equation}
where $\alpha^{j_1} \neq \alpha^{j_2}\pm \pi$ if $j_1 \neq j_2$ and
$j_1,j_2\in \{1,\dots, k_1\}$, while $\alpha^j\in (-\pi,0]$ and
$\alpha^{j+k_2}=\alpha^j+\pi$ for every
$j \in \{k_1+1, \dots, k_1+k_2\}$.  For the sake of simplicity, we
treat in detail configurations of the type described above, see
\Cref{fig:f1}; in \Cref{sec:general} we explain how our methods and
results can be extended to more general configurations of poles.
\begin{figure}[ht]
	\centering
	\begin{tikzpicture}[scale=0.5,line cap=round,line join=round,>=triangle 45,x=1.0cm,y=1.0cm]
  \clip(-6,-5.5) rectangle (6,4);
  \filldraw[fill=gray, opacity=0.2]  plot [smooth cycle] coordinates {(5.25,1.5) (5.25,3)
  (3,3.6) (0,3) 
  (-4.2,3.9) (-5.25,3)  (-4.5,1.5) (-3,0)
  (-1.5,-5.1) (0,-4) (1.5,-3.5)};
  \draw [line width=0.5pt,dashed] (-1.,1.)-- (2.45,-2.4);
\draw [line width=0.5pt, dashed] (-1.,-0.6)-- (1.42,0.82);
\draw [line width=0.5pt, dashed] (0.48,-2.)-- (-0.65,3.1);
\draw [line width=0.5pt, dashed] (1.,2.)-- (-1.94,-4.05);
\draw [fill=black] (0.03,-0.01) circle (2pt);
\begin{scriptsize}
\draw [fill=gray] (-1.,1.) circle (2.5pt);
\draw[color=black] (-1,1.45) node {$a^1$};
\draw [fill=gray] (-1.,-0.6) circle (2.5pt);
\draw[color=black] (-1.4,-1) node {$a^{j}$};
\draw [fill=gray] (1.42,0.82) circle (2.5pt);
\draw[color=black] (1.54,1.25) node {$a^{j+k_2}$};
\draw [fill=gray] (0.48,-2.) circle (2.5pt);
\draw[color=black] (0.60,-2.4) node {$a^{k_1}$};
\draw [fill=gray] (1.,2.) circle (2.5pt);
\draw[color=black] (1.12,2.45) node {$a^2$};
\end{scriptsize}
\end{tikzpicture}
\caption{Configuration of poles $(k_1+1\leq j \leq k_1+k_2)$.}
\label{fig:f1}
\end{figure}
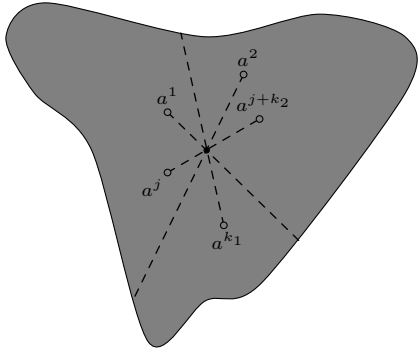

For every  $j=1,\dots, k$ and $\e \in (0,1]$, we define 
\begin{equation*}
a_\e^j:=\e a^j.
\end{equation*}
For every $b=(b_1,b_2) \in \R^2$, the Aharonov-Bohm vector potential
with pole $b$ and circulation $\rho\in\R$ is defined as 
\begin{equation*}
  A_b^\rho(x_1,x_2):=\rho\left(\frac{-(x_2-b_2)}{(x_1-b_1)^2+(x_2-b_2)^2},
    \frac{x_1-b_1}{(x_1-b_1)^2+(x_2-b_2)^2}\right), \quad (x_1,x_2)\in
  \R^2\setminus	\{b\}.
\end{equation*}
In this paper, we consider the case of half-integer circulations
$\rho\in \frac12+\Z$, which
is of particular interest from the mathematical point of view due to
applications to the problem of 
spectral minimal partitions, see \cite{BNHHO, NT}. For $\rho=\frac12$ we denote
\begin{equation}\label{def_A_a^j}
  A_b:=A_b^{1/2}.
\end{equation}
We are interested in the multi-singular vector potential 
\begin{equation*}
  \quad \mathcal
  A_\e^{(n_1,n_2,\dots,n_k)}:=\sum_{j=1}^{k}A_{a^j_\e}^{n_j+\frac12}=
  \sum_{j=1}^{k}(2n_j+1)A_{a^j_\e},
\end{equation*}
having at each pole $a^j_\e$ half-integer circulation $n_j+\frac12$
with $n_j\in\Z$, 
and in the corresponding eigenvalue problem 
\begin{equation}\label{prob_Aharonov-Bohm_multipole-nj}
\begin{cases}
\left(i\nabla +\mathcal A_\e^{(n_1,n_2,\dots,n_k)}\right)^2 u= \la \,u, &\text{ in } \Omega,\\
u=0, &\text{ on } \partial \Omega,
\end{cases}
\end{equation} 
where the magnetic Schr\"odinger operator
$\big(i\nabla + \mathcal A_\e^{(n_1,n_2,\dots,n_k)}\big)^2$ acts as
\begin{equation*}
  \left(i\nabla +\mathcal A_\e^{(n_1,n_2,\dots,n_k)}\right)^2u:=-\Delta u +2 i \,\mathcal
  A_\e^{(n_1,n_2,\dots,n_k)}\cdot \nabla u
  +\big|\mathcal A_\e^{(n_1,n_2,\dots,n_k)}\big|^2 u.
\end{equation*}
Since $n_j\in\Z$, 
$\mathcal A_\e^{(n_1,n_2,\dots,n_k)}$ is gauge equivalent to the vector potential
\begin{equation*}
\quad \mathcal A_\e:=\sum_{j=1}^{k}(-1)^{j+1}A_{a^j_\e}.
\end{equation*}
Therefore the operators $\big(i\nabla + \mathcal
A_\e^{(n_1,n_2,\dots,n_k)}\big)^2$ and $(i\nabla +\mathcal A_\e)^2$
are unitarily equivalent (see \cite[Theorem 1.2]{leinfelder} and
\cite[Proposition 2.2]{L2015}), and consequently the 
spectrum of \eqref{prob_Aharonov-Bohm_multipole-nj} coincides with that of
\begin{equation}\label{prob_Aharonov-Bohm_multipole}
\begin{cases}
(i\nabla +\mathcal A_\e)^2 u= \la \,u, &\text{ in } \Omega,\\
u=0, &\text{ on } \partial \Omega.
\end{cases}
\end{equation}
Hence, to study the behaviour as $\e\to0^+$ of the spectrum of
\eqref{prob_Aharonov-Bohm_multipole-nj}, it is not restrictive to
consider problem \eqref{prob_Aharonov-Bohm_multipole}.  We refer to
\eqref{eq_eigenfunctions_Aharonov_Bohm_multipole} for the variational
formulation of \eqref{prob_Aharonov-Bohm_multipole}. From classical
spectral theory, problem \eqref{prob_Aharonov-Bohm_multipole} has a
diverging sequence of real positive eigenvalues
$\{\la_{\e,n}\}_{n\in \N\setminus\{0\}}$; in the sequence
$\{\la_{\e,n}\}_{n\in \N\setminus\{0\}}$ we repeat each eigenvalue
according to its multiplicity. Moreover, the eigenspace associated to
each eigenvalue has finite dimension.

As $\e\to0^+$, the following limit eigenvalue problem
comes into play: 
\begin{equation}\label{prob_Aharonov-Bohm_0}
\begin{cases}
\left(i\nabla +\frac{1+(-1)^{k+1}}{2} A_{0}\right)^2u=\la u, &\text{ in } \Omega,\\
u=0, &\text{ on } \partial \Omega,
\end{cases}
\end{equation} 
with $A_0$ defined as in \eqref{def_A_a^j} with $b=0$. If $k$ is odd, the operator
in \eqref{prob_Aharonov-Bohm_0} is the Aharonov-Bohm operator with
one pole in $0$ and circulation $\frac12$; 
as above, the classical Spectral Theorem applies and provides a
diverging sequence of real positive eigenvalues
$\{\la_{0,n}\}_{n\in \N\setminus\{0\}}$ with finite multiplicity.
Furthermore, it is well-known that, in this case, eigenfunctions
  vanish in $0$ with order $\frac m2$, for some odd $m\in\N$, and
  have exactly $m$ nodal lines meeting at $0$ and dividing the whole
  $2\pi$-angle into $m$ equal parts; see \cite[Theorem 1.3,
Section 7]{FFT}  and \eqref{eq:asyu}--\eqref{eq:asygrad} for a 
description of the asymptotic behaviour at $0$ of eigenfunctions of \eqref{prob_Aharonov-Bohm_0}.

If $k$ is even the nature of the limit eigenvalue
  problem undergoes a significant mutation. Indeed, for $k$ even, the
  operator in \eqref{prob_Aharonov-Bohm_0} is the classical Dirichlet
  Laplacian and the eigenvalue problem \eqref{prob_Aharonov-Bohm_0} can
  be rewritten as 
\begin{equation}\label{prob_eigenvalue_keven_regular}
\begin{cases}
-\Delta u=\la u, &\text{ in } \Omega,\\
u=0, &\text{ on } \partial \Omega.
\end{cases}
\end{equation}
We conclude that, for every $k \in \mathbb{N} \setminus \{0\}$, the
spectrum of \eqref{prob_Aharonov-Bohm_0} is a diverging sequence
$\{\la_{0,n}\}_{n \in \mathbb{N}\setminus\{0\}}$ of positive real
eigenvalues.

We recall   from \cite[Theorem 1.2]{L2015} that, whatever the number $k$ of
poles is, 
\begin{equation*}
\text{the function}\quad\e \mapsto \la_{\e,n} \quad \text{is continuous on } [0,1], 
\end{equation*}
so that, in particular, 
\begin{equation}\label{limit_lae_la0}
\lim_{\e\to0^+}\la_{\e,n} = \la_{0,n}
\end{equation}
for every $n \in \mathbb{N}\setminus\{0\}$. The present paper
  aims at giving a sharp
  asymptotic expansion for the variation $\la_{\e,n} -
  \la_{0,n}$ of simple eigenvalues  with respect to the moving
  configuration of poles. 

  In the case of one moving pole, \cite{BNNNT} establishes a first
  relation between the rate of convergence \eqref{limit_lae_la0} and
  the number of the nodal lines of the corresponding
  eigenfunction. Sharper asymptotic expansions for simple eigenvalues
  are obtained in \cite{AFaharonov}, in the case of one pole moving
  along the tangent to a nodal line of the limit eigenfunction, and in
  \cite{AF-SIAM}, in the case of one pole moving along any direction.
  The case of one pole approaching the boundary is treated in
  \cite{AFNN-2017} and \cite{NNT-2015}. The methods developed in
  \cite{AFaharonov}, \cite{AFNN-2017}, and \cite{NNT-2015} are based
  on an Almgren type frequency formula, which provides local energy
  bounds for eigenfunctions. These are used to estimate the Rayleigh
  quotient, whose minimax levels characterize the eigenvalues, and to
  prove the convergence of a family of blown-up eigenfunctions to some
  non trivial limit profile. In particular, using the notation
  introduced above, in \cite{AFaharonov} it is proved that, for
  $k=k_1=1$ and
  $a^1_\e=\e a^1 =\e\, r_1(\cos(\alpha^1),\sin(\alpha^1))$ moving
  along the tangent to one of the $m$ nodal lines of the limit
  eigenfunction $u_0$, if $\lambda_{0,n}$ is a simple, then
\begin{equation}\label{eq:AF}
\lambda_{\e,n}-\lambda_{0,n}=
4 \,r_1^m(|\beta_1|^2+|\beta_2|^2)\,{\mathfrak M}\,\e^m +o(\e^m )\quad\text{as }\e\to0^+.
\end{equation}
In \eqref{eq:AF}  $(\beta_1,\beta_2)\neq(0,0)$ is such that 
\begin{equation*}
    \lim_{r\to0^+}r^{-\frac m2} u_0(r\cos t,r\sin t) =\beta_1 
e^{i\frac t2}\cos\big(\tfrac m2 t\big)+\beta_2  e^{i\frac t2}
\sin\big(\tfrac m2 t\big),
\end{equation*} see \eqref{eq:asyu},  and 
  ${\mathfrak M}<0$ is a negative constant depending only on $m$, which has the 
  following variational characterization:
  \begin{equation}\label{eq:m}
     {\mathfrak M}=\min\left\{\frac12 \int_{\R^2_+} |\nabla u(x)|^2 \,dx-\frac m2
 \int_0^1 x_1^{\frac m2 -1}u(x_1,0)\,dx_1:u\in {\mathcal D}^{1,2}_s(\R^2_+)\right\}, 
  \end{equation}
  where $s:=\{(x_1,x_2)\in\R^2: x_2=0\text{ and }x_1\geq 1\}$,
  $\R^2_+=\{(x_1,x_2)\in\R^2:x_2>0)\}$, and
  ${\mathcal D}^{1,2}_s(\R^2_+)$ is the completion of
  $C^\infty_{\rm c}(\overline{\R^2_+} \setminus s)$ with respect to
  the norm $( \int_{\R^2_+} |\nabla u|^2\,dx )^{1/2}$.  For an
  explicit formula for $\mathfrak M$ we refer to \cite[Theorem
  2.3]{AFLeigenvar}.  The quantity appearing in \eqref{eq:m} can be
  interpreted as a weighted \emph{torsional rigidity} of the segment
  along which the pole is moving. Concerning the classical notion of
  torsional rigidity of a set, the literature is vast; among many
  others, we cite the classical books \cite{PZ1951,henrot2018} for the
  basic definitions and some possible application in shape
  optimization and \cite{BBV,BM2020,brasco2022steklov} for more recent
  investigations in the field. We also point out \cite{AO-2023}, where
  a notion of \emph{thin torsional rigidity} is exploited in the study
  of spectral stability for some singularly perturbed problems.

In the case of one single pole, the study of Aharonov-Bohm eigenvalues benefits 
from the known regularity of the eigenvalue as a function of the pole 
position. Indeed, in \cite{L2015} it is proved that, in the case of one 
moving pole, eigenvalues are analytic as functions of the pole, so 
that the eigenvalue variation admits a Taylor expansion. The sharp 
asymptotics  on nodal lines \eqref{eq:AF} obtained in \cite{AFaharonov} 
is used in \cite{AF-SIAM} to compute the leading term of such Taylor 
expansion, exploiting symmetry and periodicity properties of the 
Fourier coefficients of the blow-up profile with respect to the
moving direction. In the case of many poles, the analyticity property  
is maintained as long as the poles are away from each other (see 
again \cite{L2015}), but is lost in the case of a collision; indeed 
in \cite{AFL} (and in \cite{AFHL} for symmetric domains) it is proved that,
in the case of two poles colliding 
at a point outside the nodal set of the limit eigenfunction, the 
eigenvalue variation is asymptotic to the logarithm of the distance. 

From the above discussion it therefore emerges that the case of 
multiple colliding poles presents additional significant
difficulties. So far, up to our knowledge, in the literature only the case of
two coalescing poles has been addressed  with the aim of deriving precise
asymptotic estimates in terms of the distance between the two poles. 
The paper \cite{AFHL} derives the asymptotic behaviour of eigenvalues of 
Aharonov–Bohm operators with two colliding poles moving on an axis of symmetry 
of the domain, which is assumed not to be tangent to any nodal line of the 
limit eigenfunction. 
The argument used in \cite{AFHL} is based on isospectrality  with the Dirichlet
Laplacian on the domain with a small segment removed, for which an asymptotic 
expansion of the eigenvalue variation is obtained by a capacity argument, in the
spirit of \cite{courtois}. The complementary case of two colliding poles, which move
on an axis of symmetry coinciding with a nodal line of the
limit eigenfunction, is treated in \cite{AFLeigenvar}, exploiting an 
isospectrality result and  a monotonicity formula in the spirit of 
\cite{AFaharonov}. The assumption of symmetry of the domain is removed in 
\cite{AFL}, in the case of two poles collapsing at an interior point out of nodal
lines of the limit eigenfunction; this is possible thanks to an estimate of the diameter of 
the nodal set of magnetic eigenfunctions close to the collision point. 

In the present paper we develop a new approach that provides  
asymptotic expansions of the eigenvalue variation in the most general 
case of any number of poles moving towards a collision point. We propose a 
method which combines the idea of torsional rigidity, naturally appearing in 
\cite{AFaharonov} (see also \cite[Theorem 2.2]{AFNN-2017}) to variationally 
characterize the coefficient of the leading term as in \eqref{eq:m}, 
with that of capacity, 
which \cite{courtois} and \cite{AFHL} show to be the good small  parameter
in a spectral perturbation theory in domains with small holes.
  
  Let us assume that there exists
  $n_0\in\N\setminus\{0\}$ such that
\begin{equation}\label{eq:simple}
 \lambda_{0,n_0}\text{ is a simple eigenvalue of \eqref{prob_Aharonov-Bohm_0}}.
\end{equation}
In view of \eqref{limit_lae_la0}, assumption \eqref{eq:simple} implies
that also $\lambda_{\e,n_0}$ is simple as an eigenvalue of
\eqref{prob_Aharonov-Bohm_multipole}, provided $\e$ is sufficiently
small. Simplicity of the spectrum is a \emph{generic} property for
many differential operators. We refer e.g. to \cite{Teytel}, where the
author exhibits sufficient conditions for genericity of simplicity of
the spectrum for various families of differential operators (including
Aharonov-Bohm operators with a single pole). See also
\cite{Abatangelo-multip} for a focus on the particular case of
Aharonov-Bohm operators.

The first step in our approach is to perform some gauge transformation, making  the
magnetic eigenvalue problem \eqref{prob_Aharonov-Bohm_multipole}, and its 
corresponding limit one \eqref{prob_Aharonov-Bohm_0}, equivalent to eigenvalue 
problems for the Laplacian in domains with straight cracks, laying along the moving 
directions of poles, see \eqref{prob_eigenvalue_gauged_multipole} and
\eqref{prob_eigenvlaue_guged0}. Fixing a $L^2$-normalized eigenfunction $v_0$ of 
the equivalent limit eigenvalue problem \eqref{prob_eigenvlaue_guged0} 
associated to the eigenvalue 
$\lambda_{0,n_0}$, we prove in Theorem \ref{t:main1} the following asymptotic 
expansion:
\begin{equation}\label{eq:exp-la}
  \la_{\e, n_0} -\la_{0, n_0}=2\big(\mc{E}_\e-L_\e(v_0)\big)
 +o\big(\|\nabla V_\e\|^2_{L^2(\Omega)}\big) \quad \text{as  }\e \to 0^+,
\end{equation}
where $L_\e$ is the  linear functional defined in \eqref{def_Le},  $\mathcal E_\e$ 
is the minimum of an energy functional associated with the configuration
of poles and defined on a space of functions suitably jumping through the cracks, 
see \eqref{eq:defEe}, and $V_\e$ is the potential attaining such a minimum.
We observe that $\mathcal E_\e$ is a kind of intermediate quantity
between torsional rigidity and capacity of the set obtained as the union of the 
segments connecting the poles to the origin. 
Indeed, the capacity of a set is defined by
minimizing the $L^2$-norm of the gradient among functions which are prescribed 
on the set; the torsional rigidity, instead, is constructed by minimizing an energy
functional, which contains a linear term involving an integral on the set, without
prescribing any condition. In the definition of  $\mathcal E_\e$ given in 
\eqref{eq:defEe}, we minimize an energy functional over a family of functions which 
are only partially prescribed on the cracks, in the sense that we impose a jump 
condition on the functions across the segments, obtaining a jump of the normal 
derivatives as a consequent natural condition. 
The development of such an intermediate notion provides a unified approach, which does not 
require an a priori relation between the configuration of poles and the orientation 
of the nodal set of the limit eigenfunction. 
We mention that elliptic problems in
cracked domains, with jumps of the unknown function and its normal derivative 
prescribed on the cracks, are studied in \cite{Medkova-Krutitskii}.  

For $k$ odd, a blow-up analysis allows us to identify the exact
asymptotic behaviour of the quantities appearing in the right hand
side of \eqref{eq:exp-la}. In Theorem \ref{t:exp-odd} we prove that
$\lim_{\e\to 0^+}\e^{-m}\mathcal E_\e =\mathcal E$, where $m$ is the
vanishing order of $v_0$ at $0$ and $\mathcal E$ is the minimum of the
energy functional defined in \eqref{def_J} over a space of suitably
jumping functions, see \eqref{def_E_limit}. Thus we generalize
\eqref{eq:AF} in the multipolar case, obtaining the following explicit
expansion
\begin{equation}\label{eq:exp-la2}
    \la_{\e, n_0} -\la_{0, n_0}=2\,\e^m\big(\mathcal E-L(\Psi_0)\big)+o(\e^m)
\end{equation}
as $\e \to 0^+$, where $L$ is the linear functional defined in
\eqref{def_L} and $\Psi_0$ is the $\frac m2$-homogeneous harmonic
function introduced in \eqref{def_Psi}. We note that the assumption
that $k$ is odd is crucial in the blow-up analysis, since it
guarantees the validity of the Hardy-type inequality proved in
Proposition \ref{prop_hardy}, needed to characterize the functional
space containing the limiting blow-up profile.  In the particular case
of all poles moving either along the tangents to nodal lines or along
the bisectors between nodal lines of the limit eigenfunction, we can
prove that the quantity $\mathcal E-L(\Psi_0)$, appearing as the
coefficient of the leading term of the asymptotic expansion
\eqref{eq:exp-la2}, does not vanish, see Proposition
\ref{p:two-cases}; this shows that $m$ is exactly the vanishing order
of the eigenvalue variation. On the other hand, the study of the
continuity properties of the coefficients appearing in
\eqref{eq:exp-la2}, see Theorem \ref{theorem_continuity}, allows us to
prove the existence of configurations of poles for which
$\mathcal E-L(\Psi_0)=0$ and hence $\la_{\e, n_0} -\la_{0, n_0}$ is an
infinitesimal of higher order than $m$.

If $k$ is even, a Hardy type inequality is no more available, and therefore 
the blow-up analysis meets the technical difficulty of identifying the limiting
profile in an appropriate functional space. In spite of that, in the case 
of two poles colliding in a point of the nodal set of the limit eigenfunction 
and  moving either along the tangents to its nodal lines or along its bisectors, 
in Theorems \ref{theorem_2poles_nodal} and \ref{theorem_2poles_bisector} we are
able to derive the exact asymptotic behavior of $\mc{E}_\e-L_\e(v_0)$, and
consequently   of $\la_{\e, n_0} -\la_{0, n_0}$ thanks to the use of elliptic
coordinates; in this way we generalize the results of \cite{AFHL} and 
\cite{AFLeigenvar}, which require an axial symmetry of the domain as a further 
hypothesis. 

In the next section we state the main results of the paper, after
having introduced the necessary notations.

\section{Statement of the main results}\label{sec_main_results}

To give a variational formulation of problem \eqref{prob_Aharonov-Bohm_multipole}, we
introduce the functional space
$H^{1,\e}(\Omega,\C)$, defined as the completion of
\begin{equation*}
  \{\phi \in H^1(\Omega,\C) \cap C^\infty(\Omega,\C): \phi\equiv 0
  \text{ in  a neighbourhood of } a^j_\e \text{ for all } j=1,\dots,k\}
\end{equation*}
with respect to the norm 
\begin{equation}\label{def_norm_H1a}
  \norm{w}_{H^{1,\e}(\Omega,\C)}:=\bigg(\norm{w}_{L^2(\Omega,\C)}^2+
    \norm{\nabla w}_{L^2(\Omega,\C^2)}^2+\sum_{j=1}^{k}
    \Big\|\tfrac{w}{|\cdot-a^j_\e|}\Big\|_{L^2(\Omega,\C)}^2\bigg)^{\!\!1/2}.
\end{equation}
We observe that $H^{1,\e}(\Omega,\C)=\left\{u\in
  H^1(\Omega,\C):\frac{u}{|\cdot-a^j_\e|}\in L^2(\Omega,\C)\text{ for
  all }j=1,\dots,k\right\}$.

In \cite{LW} (see also \cite{AFJT} and \cite[Lemma 3.1,
Remark 3.2]{FFT}), the following local magnetic Hardy-type inequality
\begin{equation*}
  \int_{D_r(b)}|i \nabla w + A_b^\rho w|^2 \, dx \ge
  \Big(\min_{j \in \mb{Z}}|j-\rho|\Big)^2\int_{D_r(b)}\frac{|w(x)|^2}{|x-b|^2}\, dx
\end{equation*}
is proved for every $ b \in \R^2$ and
$w \in C^\infty_{\rm c}(\overline{D_r(b)}\setminus\{b\},\C)$.  It follows that the norm
\eqref{def_norm_H1a} is equivalent to the norm
\begin{equation*}
  \left(\norm{\left(i\nabla +\mathcal
        A_\e\right)u}_{L^2(\Omega,\C^2)}^2+
    \norm{u}_{L^2(\Omega,\C)}^2\right)^{\!1/2}.
\end{equation*}
To deal with homogeneous Dirichlet boundary conditions, we also
introduce the space $H_0^{1,\e}(\Omega,\C)$ defined as the
closure of $C^\infty_{\rm c}(\Omega \setminus \{a_\e^1,\dots,a_\e^k\})$ in
$H^{1,\e}(\Omega,\C)$. The space
$H_0^{1,\e}(\Omega,\C)$ can be explicitly characterized as
\begin{equation*}
  H_0^{1,\e}(\Omega,\C)=\left\{w \in  H_0^1(\Omega,\C):
    \tfrac{w}{|\cdot-a^j_\e|}\in L^2(\Omega,\C) \text{ for all } j=1,\dots,k\right\}.
\end{equation*}
We say that $\lambda \in\R$ is an \emph{eigenvalue} of 
\eqref{prob_Aharonov-Bohm_multipole} if there exists
$u \in H_0^{1,\e}(\Omega,\C)\setminus\{0\}$ (called \emph{eigenfunction})
such that
\begin{equation}\label{eq_eigenfunctions_Aharonov_Bohm_multipole}
\int_\Omega \left(i\nabla +\mathcal A_\e\right)u\cdot \overline{\left(i\nabla
    +\mathcal A_\e\right)w} \, dx =\la \int_\Omega u \overline{w} \, dx
\quad \text{for all } w \in H_0^{1,\e}(\Omega,\C).
\end{equation}
We recall from the introduction that the eigenvalue problem 
\eqref{prob_Aharonov-Bohm_multipole} (and hence
\eqref{eq_eigenfunctions_Aharonov_Bohm_multipole}) admits a diverging sequence of real
positive eigenvalues 
\begin{equation*}
    \la_{\e,1}\leq \la_{\e,2}\leq \la_{\e,3}\leq\cdots,
\end{equation*} repeated 
in the enumeration  according to their multiplicity.

In a similar way, the variational formulation of \eqref{prob_Aharonov-Bohm_0} in the
case $k$ odd (corresponding to a problem of type \eqref{prob_Aharonov-Bohm_multipole}
with only one pole located at $0$) can be be given in the functional space
$\{w \in H_0^1(\Omega,\C): \tfrac{w}{|x|}\in L^2(\Omega,\C)\}$.  In the case $k$ even,
instead, \eqref{prob_Aharonov-Bohm_0} takes the form of the classical eigenvalue 
problem for the Dirichlet Laplacian, whose variational formulation is well known.
In both cases, \eqref{prob_Aharonov-Bohm_0} admits a diverging sequence of real
positive eigenvalues 
\begin{equation*}
    \la_{0,1}\leq \la_{0,2}\leq \la_{0,3}\leq\cdots,
\end{equation*} 
repeated according to their multiplicity.

A suitable gauge
transformation allows us to obtain equivalent formulations of
\eqref{prob_Aharonov-Bohm_multipole} and \eqref{prob_Aharonov-Bohm_0} as 
eigenvalue problems for the Laplacian in domains with straight cracks.
For every $\e \in [0,1]$ we define 
\begin{align*}
  &\Sigma^j:=\{ta^j: t \in \R\} \quad \text{for all  } j=1,\dots,k_1+k_2, \\
  &\Gamma^j_\e:=\{ta^j: t \in (-\infty,\e]\}, \quad S_\e^j:=\{ta^j: t
    \in [0,\e]\}
    \quad  \text{for all  } j=1,\dots,k_1, \\
  &S_\e^j:=\{t a^{j}+(\e-t)a^{j+k_2}: t \in [0,\e]\}
    \quad \text{for all  }
    j=k_1+1,\dots,k_1+k_2,  \\
  &\Gamma_\e:=\bigg(\bigcup_{j=1}^{k_1} \Gamma_\e^j \bigg)\cup\bigg(
    \bigcup_{j=k_1+1}^{k_1+k_2} S_\e^j\bigg),
\end{align*}
see \Cref{3figs}.
\begin{figure}%
\centering
\subfloat[\scriptsize The set $\Gamma_\e$.]{\begin{tikzpicture}[scale=0.6,line cap=round,line join=round,>=triangle 45,x=1.0cm,y=1.0cm]
  \clip(-4,-4) rectangle (3.5,3);
  \draw [line width=1pt,rotate=110,color=red] (-3,0)-- (2,0);
  \draw [line width=1pt,rotate=110,dotted,color=red] (-4,0)-- (-3,0);
\draw [fill=gray,rotate=110] (2,0) circle (2.5pt);
\draw[color=black,rotate=110] (2.5,0) node {\tiny $a^{1}_\e$};
 \draw [line width=1pt,rotate=70,color=red] (-3,0)-- (1,0);
  \draw [line width=1pt,rotate=70,dotted,color=red] (-4,0)-- (-3,0);
\draw [fill=gray,rotate=70] (1,0) circle (2.5pt);
\draw[color=black,rotate=70] (1.5,0) node {\tiny $a^{2}_\e$};
\draw [line width=1pt,rotate=10,color=red] (-3,0)-- (2.5,0);
  \draw [line width=1pt,rotate=10,dotted,color=red] (-4,0)-- (-3,0);
\draw [fill=gray,rotate=10] (2.5,0) circle (2.5pt);
\draw[color=black,rotate=10] (3,0) node {\tiny $a^{3}_\e$};
\draw [line width=1pt,rotate=-30,color=red] (0,0)-- (2,0);
  \draw [fill=gray,rotate=-30] (2,0) circle (2.5pt);
\draw[color=black,rotate=-30] (2.5,-0.2) node {\tiny$a^{k_1+1}_\e$};
\draw [line width=1pt,rotate=150,color=red] (0,0)-- (1,0);
  \draw [fill=gray,rotate=150] (1,0) circle (2.5pt);
\draw[color=black,rotate=150] (1.5,0) node {\tiny$a^{k_1+1+k_2}_\e$};
\draw [line width=1pt,rotate=45,color=red] (0,0)-- (3,0);
  \draw [fill=gray,rotate=45] (3,0) circle (2.5pt);
\draw[color=black,rotate=45] (3.5,0) node {\tiny$a^{k_1+2k_2}_\e$};
\draw [line width=1pt,rotate=225,color=red] (0,0)-- (1.5,0);
  \draw [fill=gray,rotate=225] (1.5,0) circle (2.5pt);
\draw[color=black,rotate=225] (2.2,0) node {\tiny$a^{k_1+k_2}_\e$};
\end{tikzpicture}}\quad
\subfloat[\scriptsize The set $\Gamma_0$.]{\begin{tikzpicture}[scale=0.6,line cap=round,line join=round,>=triangle 45,x=1.0cm,y=1.0cm]
  \clip(-4,-4) rectangle (3.5,3);
   \draw [line width=0.2pt,rotate=110,dashed] (0,0)-- (2,0);
  \draw [line width=1pt,rotate=110,color=red] (-3,0)-- (0,0);
  \draw [line width=1pt,rotate=110,dotted,color=red] (-4,0)-- (-3,0);
\draw [fill=gray,rotate=110] (2,0) circle (2.5pt);
\draw[color=black,rotate=110] (2.5,0) node {\tiny $a^{1}_\e$};
 \draw [line width=1pt,rotate=70,color=red] (-3,0)-- (0,0);
 \draw [line width=0.2pt,rotate=70,dashed] (0,0)-- (1,0);
  \draw [line width=1pt,rotate=70,dotted,color=red] (-4,0)-- (-3,0);
\draw [fill=gray,rotate=70] (1,0) circle (2.5pt);
\draw[color=black,rotate=70] (1.5,0) node {\tiny $a^{2}_\e$};
\draw [line width=1pt,rotate=10,color=red] (-3,0)-- (0,0);
\draw [line width=0.2pt,rotate=10,dashed] (0,0)-- (2.5,0);
  \draw [line width=1pt,rotate=10,dotted,color=red] (-4,0)-- (-3,0);
\draw [fill=gray,rotate=10] (2.5,0) circle (2.5pt);
\draw[color=black,rotate=10] (3,0) node {\tiny $a^{3}_\e$};
\draw [line width=0.2pt,rotate=-30,dashed] (0,0)-- (2,0);
  \draw [fill=gray,rotate=-30] (2,0) circle (2.5pt);
\draw[color=black,rotate=-30] (2.5,-0.2) node {\tiny$a^{k_1+1}_\e$};
\draw [line width=0.2pt,rotate=150,dashed] (0,0)-- (1,0);
  \draw [fill=gray,rotate=150] (1,0) circle (2.5pt);
\draw[color=black,rotate=150] (1.5,0) node {\tiny$a^{k_1+1+k_2}_\e$};
\draw [line width=0.2pt,rotate=45,dashed] (0,0)-- (3,0);
  \draw [fill=gray,rotate=45] (3,0) circle (2.5pt);
\draw[color=black,rotate=45] (3.5,0) node {\tiny$a^{k_1+2k_2}_\e$};
\draw [line width=0.2pt,rotate=225,dashed] (0,0)-- (1.5,0);
  \draw [fill=gray,rotate=225] (1.5,0) circle (2.5pt);
\draw[color=black,rotate=225] (2.2,0) node {\tiny$a^{k_1+k_2}_\e$};
\end{tikzpicture}}\quad
\subfloat[\scriptsize The sets $S^j_\e$.]{\begin{tikzpicture}[scale=0.6,line cap=round,line join=round,>=triangle 45,x=1.0cm,y=1.0cm]
  \clip(-4,-4) rectangle (3.5,3);
   \draw [line width=0.2pt,rotate=110,dashed] (-3,0)-- (0,0);
\draw [line width=1pt,rotate=110,color=red] (0,0)-- (2,0);
\draw [line width=0.2pt,rotate=110,dotted] (-4,0)-- (-3,0);
\draw [fill=gray,rotate=110] (2,0) circle (2.5pt);
\draw[color=black,rotate=110] (2.5,0) node {\tiny $a^{1}_\e$};
 \draw [line width=0.2pt,rotate=70,dashed] (-3,0)-- (0,0);
 \draw [line width=1pt,rotate=70,color=red] (0,0)-- (1,0);
  \draw [line width=0.2pt,rotate=70,dotted] (-4,0)-- (-3,0);
\draw [fill=gray,rotate=70] (1,0) circle (2.5pt);
\draw[color=black,rotate=70] (1.5,0) node {\tiny $a^{2}_\e$};
\draw [line width=0.2pt,rotate=10,dashed] (-3,0)-- (0,0);
\draw [line width=1pt,rotate=10,color=red] (0,0)-- (2.5,0);
  \draw [line width=0.2pt,rotate=10,dotted] (-4,0)-- (-3,0);
\draw [fill=gray,rotate=10] (2.5,0) circle (2.5pt);
\draw[color=black,rotate=10] (3,0) node {\tiny $a^{3}_\e$};
\draw [line width=1pt,rotate=-30,color=red] (0,0)-- (2,0);
  \draw [fill=gray,rotate=-30] (2,0) circle (2.5pt);
\draw[color=black,rotate=-30] (2.5,-0.2) node {\tiny$a^{k_1+1}_\e$};
\draw [line width=1pt,rotate=150,color=red] (0,0)-- (1,0);
  \draw [fill=gray,rotate=150] (1,0) circle (2.5pt);
\draw[color=black,rotate=150] (1.5,0) node {\tiny$a^{k_1+1+k_2}_\e$};
\draw [line width=1pt,rotate=45,color=red] (0,0)-- (3,0);
  \draw [fill=gray,rotate=45] (3,0) circle (2.5pt);
\draw[color=black,rotate=45] (3.5,0) node {\tiny$a^{k_1+2k_2}_\e$};
\draw [line width=1pt,rotate=225,color=red] (0,0)-- (1.5,0);
  \draw [fill=gray,rotate=225] (1.5,0) circle (2.5pt);
\draw[color=black,rotate=225] (2.2,0) node {\tiny$a^{k_1+k_2}_\e$};
\end{tikzpicture}}%
\caption{The sets $\Gamma_\e$, $\Gamma_0$, $S^j_\e$ ($1\leq j\leq k_1+k_2$).}
\label{3figs}
\end{figure}
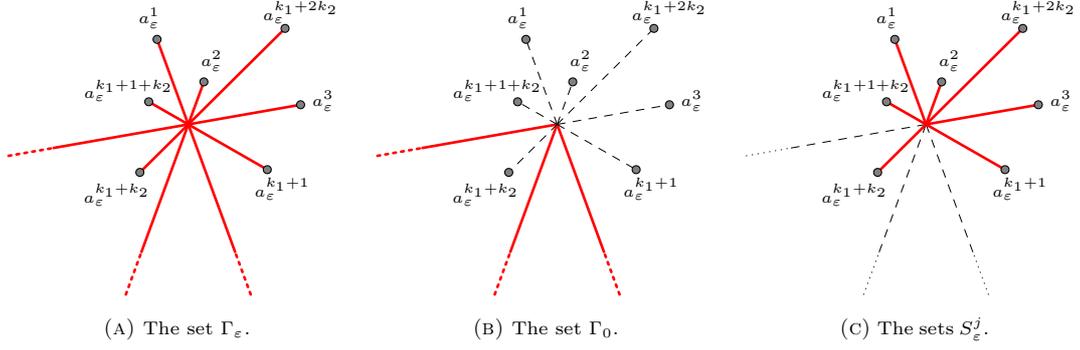
We note that, for every $j=1,\ldots,k_1$,  $\Gamma^j_0=\Gamma^j_\e\setminus S_\e^j$
is the straight half-line starting at $0$ with slope $\alpha_j+\pi$. For
every $\e \in [0,1]$, we consider the functional space
$\mathcal H_\e$ defined as the closure of
\begin{equation*}
\left\{w \in H^1(\Omega \setminus \Gamma_\e)=H^1(\Omega \setminus \Gamma_\e,\R): w=0 \text{ on a
    neighbourhood of } \partial \Omega\right\}
\end{equation*}
in $H^1(\Omega \setminus
  \Gamma_\e)$ endowed with the norm $\norm{w}_{H^1(\Omega \setminus
  \Gamma_\e)}=\|\nabla w\|_{L^2(\Omega
      \setminus \Gamma_\e)}+\|w\|_{L^2(\Omega)}$.
From the Poincar\'e type inequality stated in Proposition
\ref{prop_poincare}, it follows that 
\begin{equation*}
\norm{w}_{\mc{H}_\e}:= \left(\int_{\Omega \setminus \Gamma_\e} |\nabla
  w|^2 \, dx\right)^{\!\!1/2}
\end{equation*}
is a norm on $\mc{H}_{\e}$ equivalent to $\norm{w}_{H^1(\Omega \setminus  \Gamma_\e)}$.
The corresponding scalar product is denoted as $(\cdot,\cdot)_{\mathcal H_\e}$.

For every $j=1,\dots, k_1+k_2$, with the notation
$\nu^j:=\big(-\sin (\alpha^j), \cos (\alpha^j)\big)$ we consider the
half-planes
\begin{equation*}
  \pi_+^j:=\{x \in \R^2: x\cdot \nu^j>0\}\quad\text{and}\quad
  \pi_-^j:=\{x \in \R^2: x\cdot \nu^j<0\}.
\end{equation*}
We observe that $\nu^j$ is the unit outer normal vector to $\pi_-^j$ on $\partial \pi_-^j$.
In view of classical trace results and embedding theorems for
fractional Sobolev spaces in dimension $1$, for every $j=1,\dots,k_1+k_2$ and
$p \in [2,+\infty)$  there exist continuous
trace operators
\begin{equation}\label{def_traces}
  \gamma_+^j:H^1(\pi_+^j\setminus  \Gamma_1 ) \to L^p(\Sigma^j)  \quad
  \text{and} \quad
  \gamma_-^j:H^1(\pi_-^j\setminus  \Gamma_1 ) \to L^p(\Sigma^j).
\end{equation} 
We also define the  trace operators
\begin{equation}\label{eq:trTj}
  T^j:H^1(\R^2\setminus \Gamma_1) \to L^p(\Sigma^j), \quad
  T^j(w):=\gamma_+^j(w\vert_{\pi_+^j})
  +\gamma_-^j(w\vert_{\pi_-^j}),
\end{equation}
for every $j=1,\dots,k_1+k_2$ and $p\in[2,+\infty)$.  For every
$\e\in[0,1]$, the restrictions to $\mathcal H_\e$ of the operators
$\gamma_+^j,\gamma_-^j$ and $T^j$ are linear and continuous, since any
element of $\mc{H}_\e$ can be trivially extended by $0$ to an element
of $H^1(\R^2 \setminus \Gamma_1)$; furthermore, due to the
  boundedness of $\Omega$, such restrictions are continuous and
  compact from $\mathcal H_\e$ into $L^p(\Sigma^j\cap\Omega)$ for all
  $p\in[1,+\infty)$.

For every $\e \in (0,1]$, we define the space
\begin{equation}\label{def_tilde_H_e}
  \widetilde{\mc{H}}_\e:= \left\{
    \begin{array}{ll}
      \!\!w \in \mc{H}_\e: &T^j(w)=0 \text { on }\Gamma^j_\e \text{ for all } j=1,\dots,k_1,\\[3pt]
                           &T^j(w)=0 \text { on } S^j_\e \text{ for all } j=k_1+1,\dots,k_1+ k_2\!\!
    \end{array}
\right\},
\end{equation}
and, for $\e=0$,
\begin{equation}\label{def_tilde_H_0}
  \widetilde{\mc{H}}_0:= \big\{
    w \in \mc{H}_0: T^j(w)=0 \text { on }\Gamma^j_0 \text{ for all } j=1,\dots,k_1\big\}.
  \end{equation}
In Section \ref{sec_equivalent_Aharonov_Bohm} we construct a function 
\begin{equation}\label{eq:Theta-e-intro}
    \Theta_\e:\R^2\setminus\{a^j_\e:j=1,\dots,k\}\to\R
\end{equation}
    such that 
    \begin{equation}\label{eq:proprieta_Theta-eps}
\begin{cases}
  \Theta_\e\in C^\infty(\R^2\setminus \Gamma_\e)\\
  \text{$\nabla \Theta_\e$ can be extended to be in
    $C^\infty(\R^2\setminus\{a^j_\e:j=1,\dots,k\})$ with
    $\nabla\Theta_\e=\mathcal A_\e$},
\end{cases}        
    \end{equation}
     see \eqref{def_Thetae} for the definition of $\Theta_\e$. The phase multiplication
\begin{equation}\label{eq:ph.transf}
    u(x)\mapsto v(x):=e^{-i\Theta_\e(x)} u(x), \quad  x \in  \Omega\setminus\Gamma_\e,
\end{equation}
transforms any solution $u$  to problem \eqref{prob_Aharonov-Bohm_multipole} into 
a solution $v$  to 
\begin{equation}\label{prob_eigenvalue_gauged_multipole}
\begin{cases}
-\Delta v= \la  v,  &\text{ in } \Omega \setminus \Gamma_\e,\\
v=0, &\text{on } \partial \Omega,\\
T^j(v)=0, &\text { on }\Gamma^j_\e \text{ for all } j=1,\dots,k_1,\\
T^j(\nabla v\cdot \nu^j)=0, &\text {on }\Gamma^j_\e \text{ for all } j=1,\dots,k_1,\\
T^j(v)=0, &\text {on } S^j_\e \text{ for all } j=k_1+1,\dots,k_1+k_2,\\
T^j(\nabla v\cdot \nu^j)=0, &\text {on }S^j_\e \text{ for all } j=k_1+1,\dots,k_1+k_2,
\end{cases}
\end{equation}
In \eqref{def_Theta0} we also define a function 
\begin{equation}\label{eq:Theta-0-intro}
    \Theta_0:\R^2\setminus\{0\}\to\R
\end{equation}
    satisfying
    \begin{equation}\label{eq:proprieta_Theta-0}
\begin{cases}
  \Theta_0\in C^\infty(\R^2\setminus \Gamma_0)\\
  \text{$\nabla \Theta_0$ can be extended to be in
    $C^\infty(\R^2\setminus\{0\})$ with
    $\nabla\Theta_0=\frac{1+(-1)^{k+1}}{2}A_0$}.
\end{cases}        
    \end{equation}
The gauge transformation 
\begin{equation}\label{eq:gauge0}
    u(x)\mapsto v(x):=e^{-i\Theta_0(x)} u(x), \quad  x \in  \Omega\setminus\Gamma_0,
\end{equation}
shows that the limit eigenvalue
problem \eqref{prob_Aharonov-Bohm_0} is equivalent to  
\begin{equation}\label{prob_eigenvlaue_guged0}
\begin{cases}
-\Delta v= \la  v,  &\text{in } \Omega \setminus \Gamma_0,\\
v=0, &\text{on } \partial \Omega,\\
T^j(v)=0, &\text {on }\Gamma^j_0 \text{ for all } j=1,\dots,k_1,\\
T^j(\nabla v\cdot \nu^j)=0, &\text {on }\Gamma^j_0 \text{ for all } j=1,\dots,k_1,
\end{cases}
\end{equation} 
in the sense that the two problems have the same eigenvalues and their
eigenfunctions match each other via the phase multiplication \eqref{eq:gauge0}, see Section
\ref{sec_equivalent_Aharonov_Bohm} for details.  Therefore, under
assumption \eqref{eq:simple},  $\lambda_{0,n_0}$ is also a
simple eigenvalue of \eqref{prob_eigenvlaue_guged0}. Let
\begin{equation}\label{eq:v0}
v_0\text{ be an eigenfunction of \eqref{prob_eigenvlaue_guged0} associated to
$\lambda_{0,n_0}$ such that
$\norm{v_0}_{L^2(\Omega)}=1$};
\end{equation}
it is not restrictive to assume that
$v_0$ is real-valued, see Remark \ref{remark_va_real}. 
Once $v_0$ is fixed as above, for every
  $\e\in(0,1]$ we define
\begin{equation}\label{def_Le}
  L_\e: \mc{H}_1 \to\R,\quad L_\e(w):= 2 \sum_{j=1}^{k_1+k_2} \int_{S^j_\e}\nabla v_0 \cdot \nu^j
  \gamma_+^j (w)\, dS
\end{equation}
and 
\begin{equation}\label{def_Je}
  J_\e: \mc{H}_\e\to\R,\quad
  J_\e(w):=\frac{1}{2}\int_{\Omega \setminus \Gamma_\e} |\nabla w|^2 \, dx +L_\e(w).
\end{equation}
As proved in Proposition \ref{prop_Ve_existence}, for every
$\e\in(0,1]$ there exists a
unique $V_\e\in\mathcal H_\e$ such that 
\begin{equation}\label{eq:Vepsin}
  V_\e-v_0\in
  \widetilde{\mathcal H}_\e\quad\text{and}\quad
  J_\e(V_\e)=  \min\left\{J_\e(w): w\in \mathcal H_\e \text{ and }w -v_0  \in  \widetilde{\mc{H}}_\e \right\}.
\end{equation}
Our first main result is the following expansion of the eigenvalue
variation $\la_{\e,n_0} - \la_{0, n_0}$ in terms~of 
\begin{equation}\label{eq:defEe}
    \mathcal E_\e=J_\e(V_\e)
\end{equation} and
$L_\e(v_0)$. 
\begin{theorem}\label{t:main1}
  Under assumption \eqref{eq:simple}, let $v_0$ be as in
  \eqref{eq:v0}. Then
\begin{equation}\label{eq_asymptotic_eigenvlaues-intro}
  \la_{\e, n_0} -\la_{0, n_0}=2(\mc{E}_\e-L_\e(v_0))+o\big(\|V_\e\|^2_{\mc{H}_\e}\big) \quad \text{as  }\e \to 0^+,
\end{equation}
where $\mathcal E_\e$ and $V_\e$ are defined in \eqref{eq:defEe} and \eqref{eq:Vepsin},
respectively.
\end{theorem}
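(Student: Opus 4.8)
The plan is to reduce the eigenvalue variation to one \emph{exact} algebraic identity and then to absorb all the error into the prescribed remainders by an energy estimate that genuinely uses the simplicity of $\la_{0,n_0}$. Throughout I denote by $v_\e\in\widetilde{\mathcal H}_\e$ the $L^2$-normalized (real-valued, with $(v_\e,v_0)_{L^2(\Omega)}>0$) eigenfunction of the gauged problem \eqref{prob_eigenvalue_gauged_multipole} associated with $\la_{\e,n_0}$, so that $(\nabla v_\e,\nabla\phi)_{L^2(\Omega\setminus\Gamma_\e)}=\la_{\e,n_0}(v_\e,\phi)_{L^2(\Omega)}$ for all $\phi\in\widetilde{\mathcal H}_\e$, and similarly for $v_0$ on $\widetilde{\mathcal H}_0$. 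The preliminary step is a \emph{fundamental identity}: integrating by parts on $\Omega\setminus\Gamma_\e$, using that $v_0$ solves \eqref{prob_eigenvlaue_guged0} and is smooth across the segments $S^j_\e$ (where the limit configuration carries no crack) while every $\phi\in\widetilde{\mathcal H}_\e$ satisfies $T^j(\phi)=0$ there, the only surviving boundary contributions come from the jump of $\phi$ across the $S^j_\e$ and reproduce exactly the functional $L_\e$ of \eqref{def_Le}, with a minus sign dictated by the orientation of $\nu^j$:
\[
(\nabla v_0,\nabla\phi)_{L^2(\Omega\setminus\Gamma_\e)}=\la_{0,n_0}(v_0,\phi)_{L^2(\Omega)}-L_\e(\phi)\qquad\text{for all }\phi\in\widetilde{\mathcal H}_\e .
\]
Combining this with the Euler--Lagrange equation for $V_\e$, which by \eqref{eq:Vepsin} reads $(\nabla V_\e,\nabla\phi)_{L^2(\Omega\setminus\Gamma_\e)}=-L_\e(\phi)$ on $\widetilde{\mathcal H}_\e$, and with the very definition $\mathcal E_\e=J_\e(V_\e)=\tfrac12\|V_\e\|_{\mc H_\e}^2+L_\e(V_\e)$ in \eqref{eq:defEe}, I obtain the energetic reformulation $2(\mathcal E_\e-L_\e(v_0))=\|V_\e\|_{\mc H_\e}^2+2L_\e(V_\e)-2L_\e(v_0)$ of the target leading term.

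Next I would derive an exact formula for the gap. Testing the $\e$-equation with $v_\e$, the fundamental identity with $\phi=v_\e$, and expanding $\|\nabla(v_\e-v_0)\|^2_{L^2(\Omega\setminus\Gamma_\e)}$ while using $\|v_\e\|_{L^2}=\|v_0\|_{L^2}=1$, one arrives at
\[
\la_{\e,n_0}-\la_{0,n_0}=\|\nabla(v_\e-v_0)\|^2_{L^2(\Omega\setminus\Gamma_\e)}-\la_{0,n_0}\|v_\e-v_0\|^2_{L^2(\Omega)}-2L_\e(v_\e).
\]
Comparison with the expression for $2(\mathcal E_\e-L_\e(v_0))$ above suggests that $v_\e-v_0$ should coincide, to leading order, with $-V_\e$. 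I therefore set $e_\e:=v_\e-v_0+V_\e\in\widetilde{\mathcal H}_\e$; adding the $\e$-equation, the fundamental identity and the Euler--Lagrange equation for $V_\e$ (the two occurrences of $L_\e$ cancelling), $e_\e$ is seen to solve, for every $\phi\in\widetilde{\mathcal H}_\e$,
\[
(\nabla e_\e,\nabla\phi)_{L^2(\Omega\setminus\Gamma_\e)}-\la_{\e,n_0}(e_\e,\phi)_{L^2(\Omega)}=-\la_{\e,n_0}(V_\e,\phi)_{L^2(\Omega)}+(\la_{\e,n_0}-\la_{0,n_0})(v_0,\phi)_{L^2(\Omega)},
\]
that is, $(-\Delta-\la_{\e,n_0})e_\e=-\la_{\e,n_0}V_\e+(\la_{\e,n_0}-\la_{0,n_0})v_0$ in the cracked domain.

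The core of the proof is to show that $e_\e$ is negligible at the correct scale, namely $\|e_\e\|_{\mc H_\e}^2=o\big(\|V_\e\|_{\mc H_\e}^2\big)$. I would first record the qualitative facts $v_\e\to v_0$ and $V_\e\to0$ in $\mc H_\e$ (from \eqref{limit_lae_la0}, the simplicity \eqref{eq:simple}, and the minimality of $V_\e$), together with the almost-orthogonality $(e_\e,v_\e)_{L^2(\Omega)}=O(\|v_\e-v_0\|_{L^2}^2)+o(\|V_\e\|_{\mc H_\e})$. Decomposing $e_\e$ into its component along $v_\e$ and its component $e_\e^{\perp}\in\langle v_\e\rangle^{\perp}$, and testing the equation for $e_\e$ with $e_\e^{\perp}$, I would invoke the spectral gap around the isolated simple eigenvalue $\la_{\e,n_0}$ to make $-\Delta-\la_{\e,n_0}$ coercive on $\langle v_\e\rangle^{\perp}$ uniformly in small $\e$. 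The feedback term $(\la_{\e,n_0}-\la_{0,n_0})v_0$ is harmless: after projecting off $\langle v_\e\rangle$ one checks $(v_0,e_\e^{\perp})_{L^2}=-\|e_\e^{\perp}\|^2_{L^2}+(V_\e,e_\e^{\perp})_{L^2}$, so it contributes only quantities that are either absorbed by coercivity or of lower order. The principal contribution is thus governed by $\|V_\e\|_{L^2(\Omega)}$, and the decisive ingredient is an \emph{improved Poincaré inequality} $\|V_\e\|_{L^2(\Omega)}=o\big(\|V_\e\|_{\mc H_\e}\big)$, which reflects the concentration of $V_\e$ near the collision point as the $S^j_\e$ shrink; I would prove it by a normalized compactness argument, since any nontrivial weak limit of $V_\e/\|V_\e\|_{\mc H_\e}$ would be harmonic with vanishing trace data, hence zero.

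Finally I would substitute $v_\e-v_0=e_\e-V_\e$ into the exact gap identity and expand. The bound $\|e_\e\|_{\mc H_\e}^2=o(\|V_\e\|_{\mc H_\e}^2)$ controls the quadratic gradient term, the improved Poincaré inequality (with $\|e_\e\|_{L^2}\leq C\|e_\e\|_{\mc H_\e}$) kills $\la_{0,n_0}\|v_\e-v_0\|_{L^2}^2$, and a trace estimate giving $L_\e(e_\e)=o(L_\e(v_0))+o(\|V_\e\|_{\mc H_\e}^2)$ reduces the right-hand side to $\|V_\e\|_{\mc H_\e}^2+2L_\e(V_\e)-2L_\e(v_0)$ up to $o(L_\e(v_0))+o(\|V_\e\|_{\mc H_\e}^2)$; by the reformulation of the first paragraph this is exactly $2(\mathcal E_\e-L_\e(v_0))$ plus the admissible error, which is \eqref{eq_asymptotic_eigenvlaues-intro}. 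I expect the main obstacle to be precisely the coercivity estimate for $e_\e$: taming the feedback term $(\la_{\e,n_0}-\la_{0,n_0})v_0$ (which reintroduces the unknown gap and forces a mild bootstrap) jointly with the improved Poincaré inequality for the corrector, which is where the simplicity of the eigenvalue and the concentration of $V_\e$ are genuinely used.
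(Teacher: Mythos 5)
Your proposal is correct and follows essentially the same route as the paper's proof of Theorem \ref{prop_eigenvlaues_with_CT}: the same corrector $V_\e$ and exact identity $2(\mc E_\e-L_\e(v_0))=\la_0\int_\Omega V_\e v_0\,dx$ (obtained in your version from $J_\e(V_\e)$ and the fundamental integration-by-parts identity), the same improved Poincar\'e estimate $\|V_\e\|_{L^2(\Omega)}=o(\|V_\e\|_{\mc H_\e})$ proved by normalized compactness, and the same spectral-gap mechanism coming from simplicity, merely bookkept through $e_\e=v_\e-v_0+V_\e$ instead of through $\psi_\e-\Pi_\e\psi_\e$. One small imprecision: for $n_0>1$ the form $q_\e-\la_{\e,n_0}(\cdot,\cdot)_{L^2}$ is \emph{not} coercive on $\langle v_\e\rangle^{\perp}$ (lower eigenfunctions make it negative); what you actually need, and what the paper uses via $\mathop{\rm dist}(\mu_\e,\sigma(\widetilde{\mc F}_\e))\ge K$, is uniform invertibility of the restricted operator, which serves the same purpose in your Step on $e_\e^{\perp}$.
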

\subsection{The case \texorpdfstring{$k$}{k} odd}
For $k$ odd, the asymptotic behaviour of $\mathcal E_\e$ as $\e\to0^+$
can be quantified in terms of the vanishing order of $v_0$ at the
collision point $0$. Indeed, as detailed in Proposition
\ref{prop_v0_asympotic_odd}, if $k$ is odd, there exists
$\beta\in \R\setminus\{0\}$ such that, as $\e \to 0^+$,
  \begin{equation}\label{asy-v0}
  \e^{-\frac{m}{2}} v_0\big(\e\cos t,\e\sin t\big) \to \beta \,f(t)\, 
\sin\left(\tfrac{m}{2}(t-\alpha_0)\right) 
\end{equation}
in
$C^{1,\tau}\big([0,2\pi]\setminus\{\alpha^j+\pi\}_{j=1}^{k_1},\R\big)$
for all $\tau \in (0,1)$, where $m\in\N$ is odd and corresponds to the
number of nodal lines of $v_0$ meeting at $0$ (which equals the number
of nodal lines of eigenfunctions of \eqref{prob_Aharonov-Bohm_0}
associated to $\lambda_{0,n_0}$),
$\alpha_0\in \big[0,\frac{2\pi}m\big)$ is the minimal slope of such
nodal lines, and
\begin{equation}\label{def_f}
f:[0,2\pi]\to\{-1,1\},\quad 
f(t):= \prod_{j=1}^{k_1}(-1)^{\rchi_{[\alpha^j+\pi,2 \pi)}(t)},
\end{equation}
where 
\begin{equation}\label{eq:chi_step_func}
  \rchi_{[\alpha^j+\pi,2 \pi)}(t):=
\begin{cases}
  0, & \text {if } t \in [0,\alpha^j+\pi),\\
  1, & \text {if } t \in [\alpha^j+\pi,2 \pi).
\end{cases}
\end{equation}
From \eqref{asy-v0} we realize \colg{that} the $m$ nodal lines of $v_0$ which meet at $0$
are tangent to the $m$ straight half-lines 
\begin{equation*}
    \mathcal R_j=\Big\{\big(\cos\big(\alpha_0+j\tfrac{2\pi}{m}\big),
    \sin\big(\alpha_0+j\tfrac{2\pi}{m}\big)\big)r:r\geq0\Big\},
    \quad j=0,1,\dots,m-1,
\end{equation*}
which divide the whole $2\pi$-angle into $m$ equal sectors.
We define the functional space
\begin{equation}\label{eq:defX}
  \widetilde{\mathcal X}:=
  \left\{\hskip-3pt
\begin{array}{ll}
  w \in L^1_{\rm loc}(\R^2):\hskip-5pt &w\in
                    H^1(D_r\setminus \Gamma_1) \text{ for all } r>0,\\[3pt]
  &\nabla w \in L^2(\R^2\setminus \Gamma_1,\R^2), \
  T^j(w)=0 \text{ on } \Gamma_0^j \text{ for all } j=1,\dots,k_1
\end{array}
\hskip-3pt\right\},
\end{equation}
and consider its closed subspace
\begin{equation}\label{eq:deftildeH}
  \widetilde{\mc{H}}:=\{w \in \widetilde{\mathcal X}: T^j(w)=0
  \text{ on } S_1^j \text{ for any } j=1,\dots,k_1+ k_2\}.
\end{equation}
Letting
\begin{equation}\label{def_Psi}
  \Psi_0(x)=\Psi_0(r \cos t,r\sin t)=\beta  \, r^{\frac{m}{2}}
\,f(t)\, 
\sin\left(\tfrac{m}{2}(t-\alpha_0)\right) 
\end{equation}
with $f$, $m$, $\beta$, and $\alpha_0$ as in \eqref{asy-v0}, we observe that the
nodal set of $\Psi_0$ is given by $\bigcup_{j=0}^{m-1}\mathcal R_j$. We define 
\begin{equation}\label{def_L}
  L: \widetilde{\mathcal X}\to\R,\quad
  L(w):= 2 \sum_{j=1}^{k_1+k_2} \int_{S^j_1}\nabla \Psi_0 \cdot \nu^j
  \gamma_+^j (w)\, dS
\end{equation}
and 
\begin{equation}\label{def_J}
  J: \widetilde{\mathcal X}\to\R,\quad
  J(w):=\frac{1}{2}\int_{\R^2 \setminus \Gamma_1} |\nabla w|^2 \, dx +L(w).
\end{equation}
We observe that $L(w)$ is well-defined also for any function $
w\in H^1(D_1\setminus{\Gamma_1})$.

Let $\eta\in C^\infty_{\rm c}(\R^2)$ be a radial cut-off function 
such that
\begin{equation}\label{eq:def_eta}
        \begin{cases} 
        0\le \eta(x) \le 1 \text{ for all $x \in \R^2$},\\ 
    \eta(x)=1 \text{ if $x\in D_{1}$},\quad \eta(x)=0 \text{ if $x\in \R^2 \setminus D_{2}$},\\
    |\nabla \eta|\leq 2 \text{ in $D_{2}\setminus D_{1}$}.
\end{cases}
\end{equation}
As proved in Proposition \ref{prop_Vtilde_existence}, there exists a
unique $\widetilde V\in \widetilde{\mathcal X}$ such that
\begin{equation}\label{eq:minJ}
  \widetilde{V}-\eta \Psi_0\in \widetilde{\mathcal H}\quad\text{and}\quad
  J(\widetilde V)=  \min\left\{J(w): w\in \widetilde{\mathcal X}
    \text{ and }w
    -\eta \Psi_0\in \widetilde{\mathcal H}\right\}.
\end{equation}

\begin{theorem}\label{t:exp-odd}
     Let $k$ be odd. Under assumption \eqref{eq:simple}, let $v_0$ be as in
  \eqref{eq:v0}. Then
  \begin{enumerate}[\rm (i)]
      \item $\lim_{\e\to 0^+}\e^{-m}\mathcal E_\e =\mathcal E$,
where $m$ is the vanishing order of $v_0$ at $0$ as in \eqref{asy-v0} and 
\begin{equation}\label{def_E_limit}
     \mathcal E=J(\widetilde V)=\min_{\eta\Psi_0+\widetilde{\mathcal H}}J;
\end{equation}
    \item $\la_{\e, n_0} -\la_{0, n_0}=2\,\e^m\big(\mathcal E-L(\Psi_0)\big)+o(\e^m)$ 
    as  $\e \to 0^+$.
\end{enumerate}
\end{theorem}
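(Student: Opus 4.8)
The plan is to prove both assertions by a blow-up argument at scale $\e$, rescaling the minimizer $V_\e$ and showing that it converges to the limit minimizer $\widetilde V$ of \eqref{eq:minJ}. Concretely, I would set $\widetilde V_\e(y):=\e^{-m/2}V_\e(\e y)$ on $(\Omega/\e)\setminus\Gamma_1$ and $\widetilde v_{0,\e}(y):=\e^{-m/2}v_0(\e y)$, so that by \eqref{asy-v0} one has $\widetilde v_{0,\e}\to\Psi_0$ in $C^{1,\tau}_{\mathrm{loc}}$. A direct change of variables, exploiting the $\frac m2$-homogeneity of $\Psi_0$, yields the scaling identities
\begin{equation*}
\tfrac12\int_{\Omega\setminus\Gamma_\e}|\nabla V_\e|^2\,dx=\e^m\,\tfrac12\int_{(\Omega/\e)\setminus\Gamma_1}|\nabla\widetilde V_\e|^2\,dy,\qquad \e^{-m}L_\e(V_\e)=2\sum_{j=1}^{k_1+k_2}\int_{S^j_1}\nabla\widetilde v_{0,\e}\cdot\nu^j\,\gamma_+^j(\widetilde V_\e)\,dS,
\end{equation*}
so that $\e^{-m}\mc{E}_\e=\tfrac12\|\nabla\widetilde V_\e\|_{L^2}^2+\widehat L_\e(\widetilde V_\e)$, where $\widehat L_\e$ has density $\nabla\widetilde v_{0,\e}\to\nabla\Psi_0$ uniformly on the fixed segments $S^j_1$. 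Since rescaling maps $\Gamma^j_\e$ to $\Gamma^j_1$ and $S^j_\e$ to $S^j_1$, the constraint $V_\e-v_0\in\widetilde{\mc{H}}_\e$ becomes $T^j(\widetilde V_\e-\widetilde v_{0,\e})=0$ on $\Gamma^j_1$ (for $j\le k_1$) and on $S^j_1$ (for $j>k_1$).

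For the lower bound I would first derive the uniform estimate $\|\nabla\widetilde V_\e\|_{L^2}\le C$: minimality against the recovery sequence constructed below gives $\e^{-m}\mc{E}_\e=O(1)$, and since the linear term is controlled by the trace of $\widetilde V_\e$ on $S^j_1$ via Proposition \ref{prop_hardy}, one absorbs it and bounds the Dirichlet seminorm. Extracting a subsequence with $\widetilde V_\e\rightharpoonup W$ in $H^1_{\mathrm{loc}}(\R^2\setminus\Gamma_1)$ and $\nabla\widetilde V_\e\rightharpoonup\nabla W$ in $L^2(\R^2)$, the compactness of $\gamma_+^j$ on the bounded segments $S^j_1$ gives $\gamma_+^j(\widetilde V_\e)\to\gamma_+^j(W)$ in $L^2(S^j_1)$, whence $\widehat L_\e(\widetilde V_\e)\to L(W)$. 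Passing the trace conditions to the limit (using $\widetilde v_{0,\e}\to\Psi_0$, that $\eta\Psi_0=\Psi_0$ on $D_1\supset S^j_1$, and that $(1-\eta)\Psi_0$ preserves $T^j=0$ along $\Gamma^j_0$) shows $W-\eta\Psi_0\in\widetilde{\mc{H}}$, i.e. $W$ is admissible for \eqref{eq:minJ}. Weak lower semicontinuity of the Dirichlet energy on each ball $D_R\subset\Omega/\e$, followed by $R\to\infty$, then gives $\liminf_{\e\to0^+}\e^{-m}\mc{E}_\e\ge\tfrac12\|\nabla W\|_{L^2(\R^2)}^2+L(W)=J(W)\ge\mc{E}$.

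For the matching upper bound I would build a recovery sequence from $\widetilde V$. The candidate is $w_\e(x):=\e^{m/2}\widetilde V(x/\e)$, corrected by lower-order terms to meet exactly the Dirichlet condition on $\partial\Omega$ and the constraint $T^j(w_\e)=T^j(v_0)$ on $S^j_\e$. By homogeneity, $w_\e$ already satisfies $T^j(w_\e)=0$ on $\Gamma^j_0$ and $T^j(w_\e)=T^j(\Psi_0)$ on $S^j_\e$, differing from the required $T^j(v_0)=2v_0$ only by $2(v_0-\Psi_0)=o(\e^{m/2})$ on segments of length $\e$, and it is small near $\partial\Omega$. The decay of $\widetilde V$ at infinity furnished by Proposition \ref{prop_hardy} is what guarantees that both the cut-off near $\partial\Omega$ and the correction on $S^j_\e$ cost only $o(\e^m)$, so that $w_\e-v_0\in\widetilde{\mc{H}}_\e$ and $\e^{-m}J_\e(w_\e)\to J(\widetilde V)=\mc{E}$. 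Then $\mc{E}_\e=J_\e(V_\e)\le J_\e(w_\e)$ gives $\limsup_{\e\to0^+}\e^{-m}\mc{E}_\e\le\mc{E}$, proving (i); as a by-product $W=\widetilde V$ by the uniqueness in Proposition \ref{prop_Vtilde_existence}.

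I expect the main obstacle to be precisely the compactness and the identification of the limit profile $W$: since the rescaled domains $\Omega/\e$ invade the whole plane, one must rule out loss of energy at infinity and make sense of the trace conditions on the unbounded cracks $\Gamma^j_0$. This is exactly where the oddness of $k$ enters, through the Hardy-type inequality of Proposition \ref{prop_hardy}, which controls a weighted $L^2$-norm by the Dirichlet seminorm and makes $\widetilde{\mc{X}}$ a well-behaved space with a unique minimizer. Finally, part (ii) follows by inserting this information into Theorem \ref{t:main1}: the same change of variables gives $L_\e(v_0)=\e^m L(\Psi_0)+o(\e^m)$ and $\|V_\e\|_{\mc{H}_\e}^2=\e^m\|\nabla\widetilde V_\e\|_{L^2}^2=O(\e^m)$, so that the remainders $o(L_\e(v_0))$ and $o(\|V_\e\|_{\mc{H}_\e}^2)$ are $o(\e^m)$; substituting $\mc{E}_\e=\e^m\mc{E}+o(\e^m)$ from (i) yields $\la_{\e,n_0}-\la_{0,n_0}=2\e^m(\mc{E}-L(\Psi_0))+o(\e^m)$.
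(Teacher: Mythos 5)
Your proposal is correct and follows the same overall blow-up strategy as the paper: rescale $V_\e$ by $\e^{-m/2}$ at scale $\e$, obtain a uniform bound in $\widetilde{\mathcal X}$ through the Hardy inequality of Proposition \ref{prop_hardy} (this is indeed where the oddness of $k$ enters), and pass to the limit, with $L_\e(v_0)$ and $\|V_\e\|_{\mathcal H_\e}^2$ handled exactly as in the paper to deduce (ii) from Theorem \ref{t:main1}. The difference is in how the energy convergence in (i) is concluded. The paper (Proposition \ref{prop_blow_up}) identifies the weak limit of $\widetilde V_\e$ as the unique solution of the limit Euler--Lagrange problem \eqref{eq_tilde_V}, upgrades to \emph{strong} convergence in $\widetilde{\mathcal X}$ by testing \eqref{eq_tilde_Ve} with $\widetilde V_\e-\eta\widetilde V_{0,\e}$ and \eqref{eq_tilde_V} with $\widetilde V-\eta\Psi_0$ and matching the right-hand sides, and then reads off $\e^{-m}\mathcal E_\e\to\mathcal E$ from the scaled energy identity \eqref{eq:scalEe}; the strong convergence is reused later for Theorem \ref{t:autofunzioniAB}. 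You instead run a direct-method $\liminf$/$\limsup$ argument, which avoids the strong-convergence step but requires the extra construction of a recovery sequence; the two corrections you indicate (cut-off near $\partial\Omega$, paid for by the tail of the Hardy integral, and replacement of $\Psi_0$ by $v_0$ on $S^j_\e$, paid for by \eqref{eq:convH1}) do cost only $o(\e^m)$, so this route closes. Two small imprecisions, neither fatal: $\nabla\widetilde v_{0,\e}\cdot\nu^j$ does not converge \emph{uniformly} on $S^j_1$, since the limit density behaves like $|x|^{\frac m2-1}$ and is singular at $0$ when $m=1$; what holds and suffices is convergence in $L^p(S^j_1)$ for $p<2$ (as in \eqref{eq:conLp}) paired with compactness of the traces into $L^{p'}$. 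Likewise, absorbing the linear term in the uniform bound uses continuity of the trace operators on all of $\widetilde{\mathcal X}$ (not only on $\widetilde{\mathcal H}$) with respect to the Dirichlet seminorm, which is again a consequence of Proposition \ref{prop_hardy}.
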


The expansion proved in Theorem \ref{t:exp-odd}-(ii) identifies the sharp
asymptotic behaviour of the eigenvalue variation $\la_{\e, n_0} -\la_{0, n_0}$ if 
$\mathcal E-L(\Psi_0)\neq0$; 
if instead $\mathcal E-L(\Psi_0)=0$, Theorem \ref{t:exp-odd}-(ii) only provides the 
information that $\la_{\e, n_0} -\la_{0, n_0}$ is an infinitesimal of higher 
order than $m$. It is therefore natural to ask whether there are 
configurations of poles $\{a^j\}$ for which the quantity 
$\mathcal E-L(\Psi_0)$ does or does not vanish.
The following proposition gives an answer in this sense, also providing 
precise information on the sign of the eigenvalue variation in two remarkable 
cases: the case in which each pole moves along the tangent to a nodal line of 
the limit eigenfunction and the case in which each pole moves along the 
bisector between two nodal lines. 
\begin{proposition}\label{p:two-cases}
    Let $k=k_1\leq m$ be odd and $k_2=0$. Under assumption \eqref{eq:simple}, 
    let $v_0$ be as in \eqref{eq:v0} and $\alpha_0$ as in \eqref{asy-v0}.
    For every $j\in \{1,\dots,k_1\}$ let $\alpha^j$ be as in \eqref{def_aj}.

\smallskip
\begin{enumerate}[\rm (i)]\setlength\itemsep{0.8em}
\item If
  $\alpha^j\in \{\alpha_0+\ell\tfrac{2\pi}{m}:\ell=0,1,2,\dots,m-1\}$
  for all $j\in\{1,\dots,k_1\}$, then
\begin{equation*}
\mathcal E<0 \quad\text{and}\quad L(\Psi_0)=0;
\end{equation*}
furthermore, $\la_{\e, n_0} <\la_{0, n_0}$ provided that $\e>0$ is sufficiently small.
\item If
  $\alpha^j\in \{\alpha_0+(1+2\ell)
  \tfrac{\pi}{m}:\ell=0,1,2,\dots,m-1\}$ for all
  $j\in\{1,\dots,k_1\}$, then
\begin{equation*}
\mathcal E>0 \quad\text{and}\quad L(\Psi_0)=0;
\end{equation*}
furthermore, $\la_{\e, n_0} >\la_{0, n_0}$ provided that $\e>0$ is sufficiently small.
\item There exists a choice of $\{\alpha^j:j=1,\dots,k\}$ such that 
$\mathcal E-L(\Psi_0)=0$ and  $\la_{\e,n_0}-\la_{0,n_0}=o(\e^{m})$ as $\e\to0^+$.
\end{enumerate}
\end{proposition}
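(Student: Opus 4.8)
The plan is to exploit the explicit form of $\Psi_0$ in \eqref{def_Psi} and to observe that $L(\Psi_0)$ vanishes in both cases, but for \emph{complementary} reasons. First I would record the relevant traces along $S^j_1$. Since $\nu^j=(-\sin\alpha^j,\cos\alpha^j)$ is the angular unit vector at $t=\alpha^j$ and $f$ is continuous across the ray $\alpha^j$ (because $\alpha^j\neq\alpha^\ell+\pi$, which is guaranteed here as $m$ is odd), parametrizing $S^j_1$ by $x=r(\cos\alpha^j,\sin\alpha^j)$ with $r\in[0,r_j]$ gives
\begin{equation*}
\gamma_+^j(\Psi_0)=\beta\, f(\alpha^j)\, r^{m/2}\sin\!\big(\tfrac m2(\alpha^j-\alpha_0)\big),
\qquad
\nabla\Psi_0\cdot \nu^j=\tfrac m2\,\beta\, f(\alpha^j)\, r^{\frac m2-1}\cos\!\big(\tfrac m2(\alpha^j-\alpha_0)\big).
\end{equation*}
In case (i) one has $\alpha^j-\alpha_0\in\frac{2\pi}{m}\Z$, so $\sin(\tfrac m2(\alpha^j-\alpha_0))=0$, whence $\gamma_+^j(\Psi_0)=0$ on every $S^j_1$ and $L(\Psi_0)=0$ by \eqref{def_L}, while the weight $\nabla\Psi_0\cdot\nu^j$ is nonzero. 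In case (ii) one has $\alpha^j-\alpha_0\in\frac{\pi}{m}+\frac{2\pi}{m}\Z$, so $\cos(\tfrac m2(\alpha^j-\alpha_0))=0$ and the weight $\nabla\Psi_0\cdot\nu^j$ vanishes identically on every $S^j_1$; hence $L\equiv 0$ on $\widetilde{\mathcal X}$, and in particular $L(\Psi_0)=0$.

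For the sign of $\mathcal E$ in case (i) I would note that, since $\gamma_+^j(\Psi_0)=\gamma_-^j(\Psi_0)=0$ on each $S^j_1\subset D_1$ (where $\eta\equiv1$), the function $\eta\Psi_0$ satisfies $T^j(\eta\Psi_0)=0$ on $S^j_1$, i.e.\ $\eta\Psi_0\in\widetilde{\mc H}$. Thus the admissible class in \eqref{def_E_limit} coincides with $\widetilde{\mc H}$ itself and $\mathcal E=\min_{\widetilde{\mc H}}J\le J(0)=0$. Strict negativity then follows from a one-parameter test: since $\nabla\Psi_0\cdot\nu^j\neq0$, one constructs $w\in\widetilde{\mc H}$ supported near a single interior point of some $S^{j_0}_1$, odd across that segment (so $T^{j_0}(w)=0$ and $T^{j'}(w)=0$ trivially for $j'\neq j_0$), with $L(w)\neq0$; evaluating $J(\delta w)=\tfrac{\delta^2}{2}\|\nabla w\|_{L^2}^2+\delta L(w)$ for $\delta>0$ small and of the right sign yields $J(\delta w)<0$, so $\mathcal E<0$. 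As $L(\Psi_0)=0$, Theorem \ref{t:exp-odd}-(ii) gives $\la_{\e,n_0}-\la_{0,n_0}=2\e^m\mathcal E+o(\e^m)<0$ for $\e$ small.

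For case (ii), $L\equiv0$ reduces \eqref{def_E_limit} to the Dirichlet energy over an affine subspace of $\widetilde{\mathcal X}$,
\begin{equation*}
\mathcal E=\min\Big\{\tfrac12\!\int_{\R^2\setminus\Gamma_1}\!|\nabla w|^2\,dx : \ w-\eta\Psi_0\in\widetilde{\mc H}\Big\}\ge0 .
\end{equation*}
The only element of $\widetilde{\mathcal X}$ with vanishing gradient is $0$ (a zero-gradient function is constant on the connected set $\R^2\setminus\Gamma_1$, and $T^j(\cdot)=0$ on $\Gamma^j_0$ forces that constant to vanish), so $\|\nabla\cdot\|_{L^2}$ is a genuine norm on $\widetilde{\mathcal X}$ and the minimizer $\widetilde V$ of \eqref{eq:minJ} is unique. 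Then $\mathcal E=\tfrac12\|\nabla\widetilde V\|_{L^2}^2=0$ would force $\widetilde V=0$, hence $-\eta\Psi_0\in\widetilde{\mc H}$; but $T^j(\eta\Psi_0)=2\gamma_+^j(\Psi_0)\neq0$ on $S^j_1$ in case (ii), a contradiction. Therefore $\mathcal E>0$, and again $L(\Psi_0)=0$ gives $\la_{\e,n_0}-\la_{0,n_0}=2\e^m\mathcal E+o(\e^m)>0$ for $\e$ small. The hard part here is precisely this strict lower bound: it hinges on $\|\nabla\cdot\|_{L^2}$ being a norm (not merely a seminorm) on $\widetilde{\mathcal X}$ and on $0$ lying outside the admissible affine class, both of which rest on the functional-analytic description of $\widetilde{\mathcal X}$ and on the trace conditions defining $\widetilde{\mc H}$.

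Finally, for (iii) I would take $k=k_1=1$ (so that $k=k_1\le m$ and distinctness of the angles hold automatically) and view $\mathcal E-L(\Psi_0)=\mathcal E$ as a function of the single angle $\alpha^1$; note that $m$, $\alpha_0$ and $\la_{0,n_0}$ are intrinsic to the single-pole limit problem and do not change with $\alpha^1$. By case (i) this quantity is negative at the nodal direction $\alpha^1=\alpha_0$, and by case (ii) it is positive at the bisector direction $\alpha^1=\alpha_0+\tfrac{\pi}{m}$. Since Theorem \ref{theorem_continuity} guarantees that the coefficient $\mathcal E-L(\Psi_0)$ depends continuously on the configuration, the intermediate value theorem produces an intermediate $\alpha^1\in(\alpha_0,\alpha_0+\tfrac{\pi}{m})$ for which $\mathcal E-L(\Psi_0)=0$; for this configuration Theorem \ref{t:exp-odd}-(ii) reads $\la_{\e,n_0}-\la_{0,n_0}=o(\e^m)$, as claimed.
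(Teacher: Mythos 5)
Your overall strategy coincides with the paper's: in case (i) the vanishing of $\gamma_\pm^j(\Psi_0)$ on $S_1^j$ turns the admissible class into the linear space $\widetilde{\mathcal H}$ and a one-parameter test with $L(w)\neq0$ gives $\mathcal E<0$; in case (ii) the vanishing of $\nabla\Psi_0\cdot\nu^j$ kills $L$ and reduces $\mathcal E$ to a positive capacity-type minimum; and (iii) follows from Theorem \ref{theorem_continuity} together with the intermediate value theorem. Two steps, however, are not correct as written.

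First, in case (ii) you justify that the gradient $L^2$-norm is a norm on $\widetilde{\mathcal X}$ by saying that a zero-gradient function is constant on ``the connected set $\R^2\setminus\Gamma_1$''. For $k_1\ge 2$ this set is \emph{not} connected: the $k_1$ rays $\Gamma_0^j$ already split the plane into $k_1$ sectors, and the segments $S_1^j$ are additional slits that do not disconnect further. A zero-gradient element of $\widetilde{\mathcal X}$ is therefore constant on each sector, say equal to $c_1,\dots,c_{k_1}$, and the conditions $T^j(w)=0$ on $\Gamma_0^j$ give the cyclic system $c_j+c_{j+1}=0$; it is only because $k_1$ is odd that this forces all $c_j=0$. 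This is exactly the mechanism behind Lemma \ref{l:hardy-anelli} and Proposition \ref{prop_hardy}, after which the paper records that \eqref{def_norm_tilde_X} is a norm on $\widetilde{\mathcal X}$ --- the correct move is to cite that fact rather than connectedness. The rest of your contradiction argument ($\widetilde V=0$ would force $-\eta\Psi_0\in\widetilde{\mathcal H}$, impossible since $T^j(\eta\Psi_0)=2\gamma_+^j(\Psi_0)\neq 0$ on $S_1^j$) is fine and is equivalent to the paper's characterization \eqref{eq:232}.

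Second, claim (iii) is asserted for the $k$ fixed in the hypotheses, whereas you prove it only for $k=1$ by varying the single angle $\alpha^1$. For a general odd $k\le m$ you should instead start from a full configuration as in (i) and rotate it rigidly by $\zeta\in[0,\tfrac{\pi}{m}]$, which is precisely the setting of Theorem \ref{theorem_continuity}: $G(0)<0$ by (i), $G(\tfrac{\pi}{m})>0$ by (ii), and Bolzano's theorem produces $\zeta_0$ with $G(\zeta_0)=0$. (A minor side remark: the continuity of $f$ across the ray of slope $\alpha^j$ follows from the standing assumption $\alpha^{j_1}\neq\alpha^{j_2}\pm\pi$, not from $m$ being odd.) With these two repairs your argument matches the paper's proof.
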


The proof of claim (iii) in Proposition \ref{p:two-cases} is based on
a continuity argument. Indeed, the function $\mathcal E-L(\Psi_0)$
varies continuously under rotations of the configuration of poles, see
Theorem \ref{theorem_continuity}. Hence (i) and (ii), together with
Bolzano's Theorem, guarantee the existence of intermediate
configurations for which $\mathcal E-L(\Psi_0)$ vanishes.  The proof
of claims (i) and (ii) highlights the fact that, analogously to
$\mathcal{E}_\e$, also $\mathcal{E}$ represents an intermediate notion
between the capacity and the torsional rigidity of the set
$\cup_{j=1}^{k_1}S_1^j$. Indeed, in case (i) it occurs that
\begin{equation*} 
  \mathcal{E}=\min_{w\in \widetilde{\mathcal{H}}}
  \left\{ \frac{1}{2}\int_{\R^2\setminus\Gamma_1}|\nabla w|^2\,dx+L(w)\right\}<0,
\end{equation*}
see \eqref{eq:231}, i.e. $\mathcal{E}$ is the minimum of a functional
containing a (quadratic) energy term and a linear one, over a linear
space: this makes it somehow behaving like a torsional rigidity of the
set $\cup_{j=1}^{k_1}S_1^j$. On the other hand, in case (ii) we have
the characterization
\begin{equation*}
  \mathcal{E}=\min \left\{ \frac{1}{2}\int_{\R^2\setminus\Gamma_1}
    |\nabla w|^2\,dx\colon w-\eta \Psi_0\in\widetilde{\mathcal{H}} \right\}>0,
\end{equation*}
see \eqref{eq:232}, which yields a notion resembling that of
$\Psi_0$-capacity of the set $\cup_{j=1}^{k_1}S_1^j$.

The proof of Theorem \ref{t:exp-odd} is based on a blow-up
analysis, which also provides the following result on the behavior
of eigenfunctions, characterizing their blow-up profile and
quantifying the convergence speed of the eigenfunctions of problem
\eqref{prob_Aharonov-Bohm_multipole}  towards the corresponding
eigenfunction of the limit problem \eqref{prob_Aharonov-Bohm_0}.

\begin{theorem}\label{t:autofunzioniAB}
  Let $k$ be odd and $n_0\in \N\setminus\{0\}$ be such that
  \eqref{eq:simple} is satisfied.  Let $u_0$ be an eigenfunction of
  \eqref{prob_Aharonov-Bohm_0} associated to $\lambda_{0,n_0}$ such
  that $\int_\Omega |u_0|^2\,dx=1$. For every $\e\in(0,1]$, let
  $u_\e\in H^{1,\e}_0(\Omega,\C)$ be the eigenfunction of
  \eqref{prob_Aharonov-Bohm_multipole} associated to the eigenvalue
  $\lambda_{n_0,\e}$ such that
    \begin{equation}\label{eq:ipo-auto}
        \int_\Omega |u_\e|^2\,dx=1\quad\text{and}\quad 
        \int_\Omega e^{-i(\Theta_\e-\Theta_0)}u_\e\overline{u_0}\,dx
        \text{ is a positive real number},
    \end{equation}
    where $\Theta_\e$ and $\Theta_0$ are as in 
    \eqref{eq:Theta-e-intro}--\eqref{eq:proprieta_Theta-eps} and 
    \eqref{eq:Theta-0-intro}--\eqref{eq:proprieta_Theta-0}, respectively.
    Then
    \begin{equation}\label{eq:asy-ue1}
        \e^{-m/2}u_\e(\e \,\cdot)\to e^{i\Theta_1}(\Psi_0-\widetilde V)
        \quad\text{as $\e\to0^+$ }
    \end{equation}
    in $H^{1,1}(D_R,\C)$ for all $R>0$, where $\widetilde V$ and
    $\Psi_0$ are as in \eqref{eq:minJ} and \eqref{def_Psi},
    respectively. Moreover,
    \begin{equation}\label{eq:asy-ue2}
        \lim_{\e\to0^+}\e^{-m}\int_{\R^2\setminus\Gamma_1}\left|e^{-i(\Theta_\e-\Theta_0)}(i\nabla+\mathcal A_\e)u_\e-(i\nabla+A_0)u_0\right|^2\,dx=\|\nabla \widetilde V\|_{L^2(\R^2\setminus\Gamma_1)}^{2}.
    \end{equation}
    \end{theorem}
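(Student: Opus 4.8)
The plan is to reduce both claims to the gauged, real-valued, cracked eigenvalue problems and then run a blow-up at scale $\e$ around the collision point $0$. Setting $v_\e:=e^{-i\Theta_\e}u_\e$ and $v_0:=e^{-i\Theta_0}u_0$, transformations \eqref{eq:ph.transf} and \eqref{eq:gauge0} turn $u_\e,u_0$ into eigenfunctions of \eqref{prob_eigenvalue_gauged_multipole} and \eqref{prob_eigenvlaue_guged0}; by Remark~\ref{remark_va_real} we take $v_0$ real, and simplicity of $\la_{\e,n_0}$ together with the normalization \eqref{eq:ipo-auto} (which becomes $\|v_\e\|_{L^2(\Omega)}=1$ and $\int_\Omega v_\e v_0\,dx>0$) forces $v_\e$ real and positively aligned with $v_0$. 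Using $\nabla\Theta_\e=\mc A_\e$ and $\nabla\Theta_0=A_0$ one computes $(i\nabla+\mc A_\e)u_\e=e^{i\Theta_\e}i\nabla v_\e$ and $(i\nabla+A_0)u_0=e^{i\Theta_0}i\nabla v_0$, so the integrand of \eqref{eq:asy-ue2} equals $|\nabla(v_\e-v_0)|^2$ and the right-hand side of \eqref{eq:asy-ue2} reads $\e^{-m}\int_{\R^2\setminus\Gamma_1}|\nabla(v_\e-v_0)|^2\,dx\to\|\nabla\widetilde V\|^2_{L^2(\R^2\setminus\Gamma_1)}$. Since the construction in Section~\ref{sec_equivalent_Aharonov_Bohm} gives $\Theta_\e(\e\,\cdot)=\Theta_1$, statement \eqref{eq:asy-ue1} is equivalent, via this gauge isometry, to the strong convergence $\e^{-m/2}v_\e(\e\,\cdot)\to\Psi_0-\widetilde V$ in $H^1(D_R\setminus\Gamma_1)$ for every $R>0$, the factor $e^{i\Theta_1}$ being produced automatically.

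Next I would introduce $\tilde v_\e(y):=\e^{-m/2}v_\e(\e y)$, which solves $-\Delta\tilde v_\e=\e^2\la_{\e,n_0}\tilde v_\e$ on $(\Omega/\e)\setminus\Gamma_1$ with $T^j(\tilde v_\e)=0$ on $\Gamma_1^j$ (for $j\le k_1$) and on $S_1^j$ (all $j$), plus the natural conditions on the normal derivatives. The crucial step is a uniform bound $\|\tilde v_\e\|_{H^1(D_R\setminus\Gamma_1)}\le C(R)$, which I would obtain through an Almgren-type monotonicity formula for $v_\e$ centered at $0$, in the spirit of the analysis underlying Theorem~\ref{t:exp-odd}; here the oddness of $k$ is decisive, since it yields the Hardy-type inequality of Proposition~\ref{prop_hardy} controlling $\int|\tilde v_\e|^2/|y|^2$ and fixing the admissible growth $r^{m/2}$ of limiting profiles. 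Along a subsequence, $\tilde v_\e\rightharpoonup W$ weakly in $H^1(D_R\setminus\Gamma_1)$ for every $R$; since $\e^2\la_{\e,n_0}\to0$, the limit $W$ is harmonic on $\R^2\setminus\Gamma_1$ and inherits $T^j(W)=0$ on $\Gamma_0^j$ (for $j\le k_1$) and on $S_1^j$ (all $j$).

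To identify $W$, I would use \eqref{asy-v0}, which gives $\e^{-m/2}v_0(\e\,\cdot)\to\Psi_0$; hence $\widetilde W:=\Psi_0-W$ is the weak limit of $-\e^{-m/2}(v_\e-v_0)(\e\,\cdot)$, has $\nabla\widetilde W\in L^2(\R^2\setminus\Gamma_1)$, and satisfies $\widetilde W-\eta\Psi_0\in\widetilde{\mathcal H}$ (the matching of $T^j$ on $\Gamma_0^j$ and $S_1^j$ being transmitted in the limit, while $W$ itself grows like $\Psi_0$ at infinity). Passing to the limit in the weak formulation of the scaled equation, tested against functions in $\widetilde{\mathcal H}$, shows that $\widetilde W$ solves the Euler--Lagrange equation characterizing the minimizer of $J$ over $\eta\Psi_0+\widetilde{\mathcal H}$; by the uniqueness in Proposition~\ref{prop_Vtilde_existence} this forces $\widetilde W=\widetilde V$, i.e. $W=\Psi_0-\widetilde V$. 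Independence of the limit from the subsequence upgrades this to convergence of the whole family, while the positivity in \eqref{eq:ipo-auto} together with $\beta\ne0$ selects the correct non-trivial profile and sign. Local strong convergence in $H^1(D_R\setminus\Gamma_1)$, needed for \eqref{eq:asy-ue1}, then follows from the equation and weak convergence by a Caccioppoli/energy-comparison argument (testing with $\tilde v_\e-W$ suitably cut off near $\partial D_R$), and undoing the gauge yields \eqref{eq:asy-ue1} in $H^{1,1}(D_R,\C)$.

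Finally, for the global energy identity \eqref{eq:asy-ue2}: since $\Gamma_\e/\e=\Gamma_1$, the change of variables $x=\e y$ rewrites its left-hand side as $\int_{\R^2\setminus\Gamma_1}\big|\nabla\big(\e^{-m/2}(v_\e-v_0)(\e\,\cdot)\big)\big|^2\,dy$, whose integrand converges locally to $|\nabla\widetilde V|^2$ by the previous step. To turn this into the stated limit I must rule out loss of energy at infinity in the blow-up; for this I would invoke the sharp expansion $\e^{-m}\mc E_\e\to\mc E=J(\widetilde V)$ of Theorem~\ref{t:exp-odd}-(i), the minimality \eqref{eq:Vepsin} of $V_\e$, and the definition \eqref{def_J} of $J$, which fix the total Dirichlet energy of $v_\e-v_0$ up to $o(\e^m)$ and thereby promote the convergence to global strong convergence. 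I expect the crux to be precisely this tail control: establishing the monotonicity bound that keeps $\|\tilde v_\e\|_{H^1(D_R\setminus\Gamma_1)}$ uniformly bounded and proving that the blow-up absorbs all of the perturbation energy, so that the contribution away from scale $\e$ is genuinely $o(\e^m)$ --- the stage at which the oddness of $k$, through the Hardy inequality of Proposition~\ref{prop_hardy}, is indispensable.
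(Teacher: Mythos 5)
Your reduction to the gauged problem is correct (the identity $(i\nabla+\mathcal A_\e)u_\e=e^{i\Theta_\e}\,i\nabla v_\e$ and the scale invariance of the Dirichlet integral are exactly right), but the core of your argument rests on two steps that are not actually carried out, and one of them is precisely what the paper's method is designed to avoid. You propose to get the uniform bound $\|\e^{-m/2}v_\e(\e\cdot)\|_{H^1(D_R\setminus\Gamma_1)}\le C(R)$ from ``an Almgren-type monotonicity formula for $v_\e$ \dots in the spirit of the analysis underlying Theorem~\ref{t:exp-odd}.'' No such monotonicity formula appears anywhere in the paper, and the analysis underlying Theorem~\ref{t:exp-odd} does not use one: the blow-up there is performed on the auxiliary potential $V_\e$ of Proposition~\ref{prop_Ve_existence} (a harmonic function with prescribed jumps, whose scaled energy is controlled purely variationally via Proposition~\ref{prop_|Ee|} and the Hardy inequality), not on the eigenfunctions. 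Establishing an Almgren formula for eigenfunctions with many coalescing poles is exactly the obstruction that motivated the new approach, so deferring the uniform bound to it leaves a genuine hole.

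The second gap is the transfer from $V_\e$ to $v_\e$. Even granting local compactness, your identification of the limit and your ``tail control'' both require knowing that $v_\e-v_0$ carries, up to $o(\e^{m/2})$ in energy, exactly the profile $V_\e$; the expansion $\e^{-m}\mathcal E_\e\to\mathcal E$ only quantifies the energy of $V_\e$ itself. The missing bridge is the spectral-theoretic estimate \eqref{eq:energy-eigenfunctions} of Theorem~\ref{prop_eigenvlaues_with_CT}, namely $\|(v_0-V_\e)-\Pi_\e(v_0-V_\e)\|_{\mathcal H_\e}=o(\|V_\e\|_{\mathcal H_\e})$ together with \eqref{eq:energy-eigenfunctions2}--\eqref{eq:energy-eigenfunctions3}, which identify $v_\e$ as $\Pi_\e(v_0-V_\e)/\|\Pi_\e(v_0-V_\e)\|_{L^2(\Omega)}$ up to an error of order $o(\|V_\e\|_{\mathcal H_\e})$. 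With these in hand, the paper's Proposition~\ref{p:blow_up_eigenfunctions} obtains both \eqref{eq:asy-ue1} and \eqref{eq:asy-ue2} directly from the strong convergence $\e^{-m/2}V_\e(\e\cdot)\to\widetilde V$ in $\widetilde{\mathcal X}$ (Proposition~\ref{prop_blow_up}) and $\e^{-m/2}v_0(\e\cdot)\to\Psi_0$, with no blow-up of $v_\e$ and no monotonicity formula. Your outline never invokes these estimates, so as written neither the uniform bound nor the global energy identity is actually established.
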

    We observe that condition \eqref{eq:ipo-auto} allows us to
    identify, among all the eigenfunctions of
    \eqref{prob_Aharonov-Bohm_multipole} associated to the eigenvalue
    $\lambda_{n_0,\e}$ (that are multiples of a given one due to the
    simplicity of $\lambda_{n_0,\e}$), the one that converges to $u_0$
    as $\e\to0^+$.

\subsection{The case of two opposite poles \texorpdfstring{$(k_1=0$, $k_2=1)$}{k1k2}}
\label{sec:two-opposite}
In the case of two opposite poles $a^1_\e$, $a^2_\e=-a^1_\e$ colliding to $0$
from the two sides of the same straight line, we can 
rewrite the terms appearing in \eqref{eq_asymptotic_eigenvlaues-intro} in 
elliptic coordinates in the spirit of \cite[Subsection 2.2]{AFHL}, 
thus determining the dominant term in the asymptotic expansion. This allows 
us to  generalize \cite[Theorem 2.6, Theorem 2.8]{AFLeigenvar}, 
see also \cite[Theorem 1.16]{AFHL}, removing any symmetry assumption on the 
domain $\Omega$. Let us assume that
\begin{equation*}
  \text{the $n_0$-th eigenvalue $\lambda_{n_0}$ of the Dirichlet Laplacian in $\Omega$ is simple.}
\end{equation*}
We recall that, since $k$ is even in this case,
$\lambda_{n_0}=\lambda_{0,n_0}$ coincides with the  $n_0$-th eigenvalue of 
the limit problem \eqref{prob_Aharonov-Bohm_0}. 
Let 
\begin{equation}\label{def_u_0}
  u_0 \text{ be an eigenfunction of \eqref{prob_eigenvalue_keven_regular} associated to $\lambda_{n_0}=\lambda_{0,n_0}$ such that  }\int_{\Omega} u_0^2 \, dx =1.
\end{equation}
If $u_0(0) \neq0$, then, for any bounded simply connected domain
$\Omega$, a sharp expansion of the variation
$\la_{\e, n_0} -\la_{0, n_0}$ has already been obtained in
\cite[Theorem 1.2]{AFL}, see Remark \ref{rem:m=0}. Hence we assume
that $u_0(0)=0$.  Up to a suitable choice of the coordinate system,
according to the notation introduced in \eqref{def_aj}, it is not
restrictive to consider the case $\alpha^1=0$, $\alpha^2=\pi$, so
that, for some $r_1\in(0,R)$, the configuration of the two opposite
poles is given by
\begin{equation}\label{eq:due-poli}
	a^1_\e=r_1(\e,0) \quad \text{and} 
    \quad a^2_\e=r_1(-\e,0),
\end{equation}
and
\begin{equation}\label{def_S_e}
	S_\e:=S_\e^1=[-r_1\e,r_1\e] \times \{0\},
\end{equation}
see \Cref{fig:f2}.
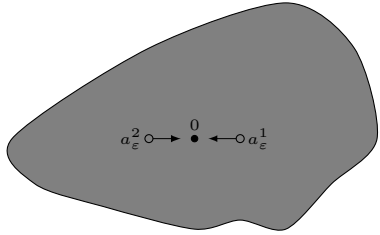
\begin{figure}[ht]
	\centering
	\begin{tikzpicture}[scale=0.6,line cap=round,line join=round,>=triangle 45,x=1.0cm,y=1.0cm]
  \clip(-4.5,-2.5) rectangle (4.5,3.5);
\filldraw[fill=gray, opacity=0.2]  plot [smooth cycle] coordinates {(-4,0) (-3.5,-1) (-2,-1.5) (0,-2) (1,-1.8) (2,-2) (3,-1) (4,0) (3.5,2) (2,3) (-1,2)};
  \draw [line width=0.3pt,color=gray] (-1,0)-- (1,0);
\draw [line width=0.3pt,-latex] (-1,0)-- (-0.3,0);
\draw [line width=0.2pt,latex-] (0.3,0)-- (1,0);
\draw [fill=black] (0,0) circle (2pt);
\draw [fill=gray] (-1,0) circle (2.5pt);
\draw[color=black] (0,0.3) node {\tiny$0$};
\draw[color=black] (1.4,0) node {\tiny$a^1_\e$};
\draw [fill=gray] (1,0) circle (2.5pt);
\draw[color=black] (-1.4,0) node {\tiny$a^{2}_\e$};
\end{tikzpicture}
\caption{Two opposite poles colliding at $0$  ($k_1=0$, $k_2=1$).}
\label{fig:f2}
\end{figure}
Furthermore, since $u_0(0) =0$, it is well known that there exists
$m \in \mathbb{N}\setminus\{0\}$, $\beta \in \R \setminus \{0\}$ and
$\alpha_0 \in [0,\frac {\pi}{m})$ such that
\begin{equation}\label{asymptotic_u0_even}
	r^{-m}u_0(r \cos t,r \sin t) \to \beta \sin(m(t-\alpha_0)) \quad \text{ in } C^{1,\tau}([0,2 \pi],\mb{C})\quad\text{as $r \to 0^+$},
\end{equation}
for any $\tau \in (0,1)$. In particular, the $2m$ half-lines 
with slopes $\alpha_0+j \frac{\pi}{m}$, $j=0,\dots, 2m-1$, are tangent to 
the nodal lines of $u_0$ meeting at $0$.

\begin{remark}\label{remark_beta}
  By standard regularity theory, $u_0$ is analytic in $\Omega$.  Let
  $T_m$ be the Taylor polynomial of $u_0$ centered at $0$ of order
  $m$, with $m \in \mb{N}\setminus\{0\}$ being as in
  \eqref{asymptotic_u0_even}.  Then, in view of
  \eqref{asymptotic_u0_even},
	\begin{equation}\label{def_T_m}
          T_m(x_1,x_2)=\sum_{j=0}^m \frac{1}{(m-j)!j!}
          \pd{^mu_0}{x^{m-j}_1\partial x_2^j}(0)\,x_1^{m-j}x_2^j.
	\end{equation}
	For every $t \in [0,2\pi]$, we have 
	\begin{align*}
		&T_m(\cos t,\sin t)= \beta \sin(m(t-\alpha_0)),\\
		&(\nabla T_m)(\cos t,\sin t)\cdot(-\sin t,\cos t)=m\beta \cos(m(t-\alpha_0)).
	\end{align*}
	Hence
 \begin{equation*}
		\frac{1}{m!}\pd{^m u_0}{x^{m}_1}(0)=-\beta
                \sin(m\alpha_0)
                \quad \text{ and } \quad \frac{1}{(m-1)!}
                \pd{^m u_0}{x^{m-1}_1\partial x^{1}_2}(0)=m\beta \cos(m\alpha_0),
	\end{equation*}
	so that, in particular, 
\begin{equation}\label{eq:beta2}
  \beta=\frac{(-1)^j}{m!}\pd{^m u_0}{x^{m-1}_1\partial x^{1}_2}(0)
  \quad
  \text{if } \alpha_0=\frac{j\pi}{m}\quad \text{for some $j\in\{0,1,\dots,2m-1\}$},
	\end{equation}	
 and
 \begin{equation}\label{eq:beta1}
   \beta =\frac{(-1)^{j+1}}{m!}\pd{^m u_0}{x^{m}_1}(0)
   \quad  \text{if } \alpha_0=\frac{\pi}{2m}+\frac{j\pi}{m}\quad
   \text{for some $j\in\{0,1,\dots,2m-1\}$}. 
  \end{equation}
\end{remark}
If the segment $S_\e$ is tangent to a nodal line of $u_0$, i.e. if
$\alpha_0=\frac{j\pi}{m}$ for some $j\in\{0,1,\dots,2m-1\}$, we have
the following result which generalizes \cite[Theorem 2.8]{AFLeigenvar}
dropping any symmetry assumption on $\Omega$.
\begin{theorem}\label{theorem_2poles_nodal}
  Let $\la_{n_0}$ be a simple eigenvalue of
  \eqref{prob_eigenvalue_keven_regular} and let $u_0$ be as in
  \eqref{def_u_0}.  Assume that $u_0(0)=0$ and let
  $m \in \mb{N} \setminus \{0\}$ and $\alpha_0$ be as in
  \eqref{asymptotic_u0_even}.  Let $k_1=0$ and $k_2=1$ with the
  configuration of poles as in assumption \eqref{eq:due-poli}.  If
  $\alpha_0=\frac{j\pi}{m}$ for some $j\in\{0,1,\dots,2m-1\}$, then
 \begin{equation*}
\la_{\e, n_0} -\la_{n_0}		
=-\frac{m\pi \beta^2
  r_1^{2m}}{4^{m-1}}\binom{m-1}{\lfloor\frac{m-1}{2}\rfloor}^{\!2}\e^{2m}
+o(\e^{2m})\quad\text{as }\e\to0^+,
	\end{equation*}
 with $\beta$ as in \eqref{eq:beta2}.
\end{theorem}
On the other hand, if $S_\e$ lays on the bisector of the angle between
the tangents to nodal lines, i.e. if
$\alpha_0=\frac{\pi}{2m}+\frac{j\pi}{m}$ for some
$j\in\{0,1,\dots,2m-1\}$, then we prove the following expansion.
\begin{theorem}\label{theorem_2poles_bisector}
Let $\la_{n_0}$ be a simple eigenvalue of 
\eqref{prob_eigenvalue_keven_regular} and let $u_0$ be as in \eqref{def_u_0}. 
Assume that $u_0(0)=0$ and let $m \in \mb{N} \setminus \{0\}$ and 
$\alpha_0$ be as in \eqref{asymptotic_u0_even}.	
Let $k_1=0$ and $k_2=1$  with the configuration of poles as in 
assumption \eqref{eq:due-poli}. If $\alpha_0=\frac{\pi}{2m}+\frac{j\pi}{m}$ 
for some $j\in\{0,1,\dots,2m-~\!\!1\}$, then
	\begin{equation*}
          \la_{\e, n_0} -\la_{n_0}=\frac{m\pi \beta^2r_1^{2m}}{4^{m-1}}
          \binom{m-1}{\lfloor\frac{m-1}{2}\rfloor}^{\!2}\e^{2m}+o(\e^{2m})\quad\text{as }\e\to0^+,
	\end{equation*}
 with $\beta$ as in \eqref{eq:beta1}.
\end{theorem}
We observe that Theorem \ref{theorem_2poles_bisector} is a
generalization of \cite[Theorem 2.6]{AFLeigenvar} and \cite[Theorem
1.16]{AFHL}.

\medskip The rest of the paper is organized as follows. In Section
\ref{sec_preliminaries} we collect some basic facts, such as the gauge
invariance property of the problem, useful features of the functional
spaces involved, and some known results that will be used in the rest
of the paper. In Section \ref{sec_capacity_torsion} we provide some
preliminary estimates on the quantities $\mc{E}_\e$ and $L_\e$ that
appear in formula \eqref{eq_asymptotic_eigenvlaues-intro}; such
estimates are used in Section \ref{sec_asymptotic_expansion}, where
the proof of Theorem \ref{t:main1} is completed.  In Section
\ref{sec_blow_up} we perform a blow-up analysis of the potential
$V_\e$ appearing in \eqref{eq:Vepsin}, in the case $k$ odd; this is
the key ingredient in the proof of Theorems \ref{t:exp-odd} and
\ref{t:autofunzioniAB}. In the same section we also complete the proof
of \Cref{p:two-cases}.  Finally, in Section \ref{sec_two_poles} we
consider the case of two poles colliding to $0$ from opposite sides of
the same straight line, thus proving Theorems
\ref{theorem_2poles_nodal} and \ref{theorem_2poles_bisector}.

\section{Preliminaries} \label{sec_preliminaries}
\subsection{Scalar potential functions for \texorpdfstring{$A_b$}{Ab} outside half-lines}\label{sec_agaugetransformation}
The construction of the gauge transformation, which makes problems
\eqref{prob_Aharonov-Bohm_multipole} and \eqref{prob_Aharonov-Bohm_0}
equivalent to eigenvalue problems for the Laplacian in domains with
straight cracks, is based on the remark that, since Aharonov-Bohm
vector fields are irrotational, they are gradients of some scalar
potential functions in simply connected domains, such as the
complement of straight half-lines starting at the pole.

For every $b=(b_1,b_2) \in \R^2$, let  $\theta_b: \R^2\setminus\{b\}
\to [0,2\pi)$ be defined as 
\begin{equation*}
\theta_b(x_1,x_2):=
\begin{cases}
  \arctan\big(\frac{x_2-b_2}{x_1-b_1}\big), &\text{if } x_1>b_1,\  x_2\ge b_2,\\
  \frac{\pi}{2}, &\text{if } x_1=b_1,\ x_2> b_2,\\
  \pi+\arctan\big(\frac{x_2-b_2}{x_1-b_1}\big),    &\text{if } x_1<b_1,\\
  \frac{3}{2}\pi, &\text{if } x_1=b_1,\ x_2< b_2,\\
  2\pi +\arctan\big(\frac{x_2-b_2}{x_1-b_1}\big), &\text{if }
  x_1>b_1,\ x_2< b_2,
\end{cases}
\end{equation*}
i.e.,
\begin{equation*}
  \theta_b\big(b+r(\cos t,\sin t)\big)=t\quad\text{for all $t \in
    [0,2\pi)$ and $r>0$}.
\end{equation*}
We observe that  $\theta_b \in
C^\infty(\R^2\setminus\{(x_1,b_2):x_1\geq b_1\})$ and  $\nabla
\theta_b$ can be extended to be in $C^\infty(\R^2\setminus\{b\})$, with  $\nabla
\big(\frac{\theta_b}2\big)= A_b$ in $\R^2\setminus\{b\}$.
For every $b\in\R^2$, $\alpha \in \R$, and
$x=(x_1,x_2)\in \R^2$, we define
\begin{equation}\label{def_R_b}
R_{b,\alpha}(x):=
\begin{bmatrix}
b_1\\
b_2\\
\end{bmatrix}
+M_\alpha
\begin{bmatrix}
x_1-b_1\\
x_2-b_2\\
\end{bmatrix},
\end{equation}
with
\begin{equation}\label{eq:def_Mzeta}
    M_\alpha:=\begin{bmatrix}
\cos\alpha&-\sin\alpha\\
\sin\alpha&\cos\alpha\\
\end{bmatrix},
\end{equation}
i.e., $R_{b,\alpha}$ is a rotation about $b$ by an angle $\alpha$.
Let 
\begin{equation}\label{def_theta_alphab}
\theta_{b,\alpha}:= \theta_b \circ  R_{b,\alpha},
\end{equation}
so that $\theta_{b,\alpha}(b+r(\cos t,\sin t))=\alpha+t$ for
  every $r>0$ and $t\in[-\alpha,-\alpha+2\pi)$.  We observe that
$\theta_{b,\alpha}$ is smooth in
  $\R^2\setminus\{b+r(\cos\alpha,-\sin\alpha):r\geq0\}$ and 
$\nabla \theta_{b,\alpha}$ can be extended to be in
$C^\infty(\R^2\setminus\{b\})$, with
$\nabla \big(\frac{\theta_{b,\alpha}}2\big)= A_b$, see \Cref{fig:theta_b_alpha}.

\begin{figure}[ht]
	\centering
	\begin{tikzpicture}[scale=0.5,line cap=round,line join=round,>=triangle 45,x=1.0cm,y=1.0cm]
  \clip(-3.5,-3.5) rectangle (3.5,3.5);
\draw [line width=0.5pt,thick] (0,0)-- (3.4641,-2);
\draw [line width=0.5pt, dashed] (0,0)-- (4,0);
\draw [line width=0.5pt, dashed] (0,0)-- (3.4641,2);
\draw [fill=gray] (0,0) circle (2.5pt);
\begin{scriptsize}
\draw (-0.3,0.3) node {$b$};
\draw[line width=0.5pt, thick] (1,0) arc (0:30:1);
\draw (1.4,0.4) node {$\alpha$};
\draw[line width=0.5pt,thick, ->] (3,0) arc (0:220:3);
\draw[line width=0.5pt,thick,] (3,0) arc (0:-30:3);
\draw (2,3) node {$\theta_{b,\alpha}$};
\end{scriptsize}
\end{tikzpicture}
\caption{$\theta_{b,\alpha}$ is smooth in
  $\R^2\setminus\{b+r(\cos\alpha,-\sin\alpha):r\geq0\}$.}
  \label{fig:theta_b_alpha}
\end{figure}
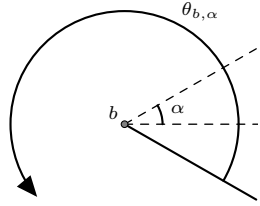

\subsection{Some remarks on functional spaces}\label{sec_notations}
In this subsection we describe some properties of the functional
  spaces $\mathcal H_\e$ introduced in Section \ref{sec_main_results}.

\begin{remark}\label{remark_embbeing_compact}
  The natural embedding $I: \mc{H}_\e \to L^2(\Omega)$ is
  compact. Indeed, we can cut $\Omega$ along the lines $\Sigma^j$ for
  $j=1 \dots ,k_1+k_2$, where $\Sigma^j$ are defined in Section
  \ref{sec_main_results}. Then we can use classical compact embedding
  results for each resulting subset, see for example \cite[Theorem
  12.30]{Leo}.
\end{remark}

Arguing as in \Cref{remark_embbeing_compact}, from the Poincaré
inequality for functions vanishing on a portion of the boundary we can
deduce the following Poincaré inequality in $\mc{H}_1$, and hence in
$\mc{H}_\e$ for any $\e \in [0,1]$.
\begin{proposition}\label{prop_poincare}
There exists  a constant $C_P>0$ such that, for every $\e\in [0,1]$
and $w \in \mc{H}_{\e}$,
\begin{equation*}
  \int_{\Omega} w^2 dx \le C_P \int_{\Omega \setminus \Gamma_\e} |\nabla w|^2 \, dx.
\end{equation*}
\end{proposition}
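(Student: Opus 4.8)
The plan is to prove the Poincar\'e inequality in $\mathcal H_1$ and then transfer it to every $\mathcal H_\e$ by a uniform domain argument. Recall that $\mathcal H_\e$ is the closure, in $H^1(\Omega\setminus\Gamma_\e)$, of functions vanishing in a neighbourhood of $\partial\Omega$. The key structural observation, already used in \Cref{remark_embbeing_compact}, is that cutting $\Omega$ along the full lines $\Sigma^j$ ($j=1,\dots,k_1+k_2$) decomposes $\Omega\setminus\Gamma_\e$ into finitely many open Lipschitz subdomains, on each of which a function of $\mathcal H_\e$ is a genuine $H^1$ function whose trace vanishes on the portion of $\partial\Omega$ adjacent to that piece. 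Since elements of $\mathcal H_\e$ vanish near $\partial\Omega$ (by definition of the dense subspace, which is preserved under closure in the relevant sense), each resulting piece carries a boundary portion of positive measure on which the trace is zero.

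First I would establish the inequality for the single space $\mathcal H_1$, where $\Gamma_1$ contains the full half-lines $\Gamma^1_j$ and the segments $S^j_1$. Here I invoke the classical Poincar\'e inequality for $H^1$ functions on a bounded Lipschitz domain that vanish on a subset of the boundary of positive surface measure: on each connected component $\omega$ of $\Omega$ cut along the $\Sigma^j$, one has $\int_\omega w^2\,dx\le C_\omega\int_\omega|\nabla w|^2\,dx$ with $C_\omega$ depending only on $\omega$, using the vanishing trace on $\partial\Omega\cap\partial\omega$. Summing over the finitely many components and setting $C_P$ to be the maximum of the $C_\omega$ yields $\int_\Omega w^2\,dx\le C_P\int_{\Omega\setminus\Gamma_1}|\nabla w|^2\,dx$ for all $w\in\mathcal H_1$.

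The remaining point is to pass from $\mathcal H_1$ to a \emph{uniform} constant valid for all $\e\in[0,1]$. The natural route is the inclusion of gradients: for $w\in\mathcal H_\e$ one has $\Omega\setminus\Gamma_1\subset\Omega\setminus\Gamma_\e$, so $\int_{\Omega\setminus\Gamma_1}|\nabla w|^2\,dx\le\int_{\Omega\setminus\Gamma_\e}|\nabla w|^2\,dx$. I would check that any $w\in\mathcal H_\e$ restricts to an admissible element of $\mathcal H_1$ (it lies in $H^1(\Omega\setminus\Gamma_1)$ since $\Gamma_\e\subset\Gamma_1$ and it still vanishes near $\partial\Omega$), apply the $\mathcal H_1$-inequality to this restriction, and combine with the gradient inequality to obtain $\int_\Omega w^2\,dx\le C_P\int_{\Omega\setminus\Gamma_\e}|\nabla w|^2\,dx$ with the \emph{same} $C_P$. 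This gives the claim for all $\e\in[0,1]$ simultaneously.

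The main obstacle is the careful bookkeeping showing that the relevant pieces obtained after cutting along $\Sigma^j$ are Lipschitz domains each meeting $\partial\Omega$ in a set of positive measure on which the trace vanishes, so that the component-wise Poincar\'e inequality genuinely applies; one must rule out degenerate components that touch $\partial\Omega$ only at isolated points. Once this is handled, the monotonicity $\Gamma_\e\subset\Gamma_1$ that makes the gradient term only larger (and the feasible class only smaller) gives uniformity in $\e$ essentially for free, which is why I expect the decomposition step, rather than the $\e$-uniformity, to be the delicate part.
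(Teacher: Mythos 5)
Your proposal is correct and follows essentially the same route as the paper: cut $\Omega$ along the lines $\Sigma^j$, apply the classical Poincaré inequality for functions vanishing on a positive-measure portion of the boundary on each resulting piece to get the inequality in $\mathcal H_1$, and then transfer it to every $\mathcal H_\e$ via the inclusion $\mathcal H_\e\subseteq\mathcal H_1$ and the monotonicity $\int_{\Omega\setminus\Gamma_1}|\nabla w|^2\,dx\le\int_{\Omega\setminus\Gamma_\e}|\nabla w|^2\,dx$. The paper states this only as a one-sentence remark referring to Remark \ref{remark_embbeing_compact}, so your write-up simply supplies the same argument in more detail.
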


Since
$\Omega\setminus\Gamma_1\subseteq\Omega\setminus\Gamma_{\e_1}
\subseteq\Omega\setminus\Gamma_{\e_2}$, we have
$\mathcal H_{\e_2}\subseteq \mathcal H_{\e_1}\subseteq\mathcal H_1$
for all $0\leq \e_2\leq\e_1\leq1$. Proposition \ref{p:mosco} below
establishes a Mosco-type convergence result for the spaces
$\mathcal H_\e$ as $\e\to0^+$.

  \begin{proposition}\label{p:mosco}
    Let $\{\e_n\}\subset(0,1)$ be such that
    $\lim_{n\to\infty}\e_n=0$. If $\{v_n\}_n\subset\mathcal H_1$ and
    $v\in \mathcal H_1$ are such that $v_n\in \mathcal H_{\e_n}$ for
    all $n\in \N$ and $v_n\rightharpoonup v$ in $\mathcal H_1$ as
    $n\to\infty$, then $v\in \mathcal H_0$.
  \end{proposition}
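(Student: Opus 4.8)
The plan is to identify $\mc{H}_0$ with the subspace of $\mc{H}_1$ consisting of those functions that do not jump across the segments $S^j_1$, and then to show that the weak limit $v$ inherits this no-jump property by passing to the limit in the traces of the $v_n$. Since $\Gamma_1\setminus\Gamma_0=\bigcup_{j=1}^{k_1+k_2}S^j_1$ up to the negligible set $\{0\}$, a function $w\in\mc{H}_1$ belongs to $H^1(\Omega\setminus\Gamma_0)$ if and only if its one-sided traces from $\pi^j_+$ and $\pi^j_-$ coincide on each $S^j_1$. Taking into account that membership in $\mc{H}_1$ already encodes the vanishing near $\partial\Omega$, a standard density argument (analogous to the characterization of $H^{1,\e}_0$ recalled in Section \ref{sec_main_results}) gives
\[
\mc{H}_0=\big\{w\in\mc{H}_1:\gamma_+^j(w)=\gamma_-^j(w)\text{ on }S^j_1\text{ for all }j=1,\dots,k_1+k_2\big\}.
\]
Hence it suffices to prove that $v$ has no jump across each $S^j_1$.

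First I would fix $j\in\{1,\dots,k_1+k_2\}$ and set $g_n:=\gamma_+^j(v_n)-\gamma_-^j(v_n)\in L^2(\Sigma^j\cap\Omega)$. Since $v_n\in\mc{H}_{\e_n}$, the function $v_n$ is genuinely of class $H^1$ across $\Sigma^j$ at every point of the uncracked portion $S^j_1\setminus S^j_{\e_n}$: such points lie at positive distance from $0$ and therefore belong to $\Omega\setminus\Gamma_{\e_n}$, so the two one-sided traces of $v_n$ agree there, i.e. $g_n=0$ a.e. on $S^j_1\setminus S^j_{\e_n}$.

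Next comes the passage to the limit. As recalled after \eqref{eq:trTj}, the trace operators $\gamma_+^j,\gamma_-^j$ are continuous and compact from $\mc{H}_1$ into $L^2(\Sigma^j\cap\Omega)$; since $v_n\rightharpoonup v$ in $\mc{H}_1$, compactness turns weak convergence into the strong convergence $g_n\to g:=\gamma_+^j(v)-\gamma_-^j(v)$ in $L^2(\Sigma^j\cap\Omega)$. Fix $\delta>0$. For every $n$ large enough that $\e_n<\delta$ we have $S^j_{\e_n}\subset S^j_\delta$, whence $g_n=0$ on the \emph{fixed} set $S^j_1\setminus S^j_\delta$; letting $n\to\infty$ in $L^2(S^j_1\setminus S^j_\delta)$ gives $g=0$ a.e. there. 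Since $\delta>0$ is arbitrary and $\{0\}$ is negligible, $g=0$ a.e. on $S^j_1$. As $j$ was arbitrary, the characterization above yields $v\in\mc{H}_0$.

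The argument is essentially soft, being a standard Mosco-type limit carried out at the level of traces; the only genuinely delicate points are the trace characterization of $\mc{H}_0$ inside $\mc{H}_1$ (which requires a density argument and some care with the crack endpoints) and the behaviour near the collision point $0$, where all the cracks meet and the uncracked sets $S^j_1\setminus S^j_{\e_n}$ fail to exhaust $S^j_1$. I expect the latter to be the main obstacle, but it is disposed of by the $\delta$-exhaustion of the last step, exploiting that $\{0\}$ has vanishing one-dimensional measure.
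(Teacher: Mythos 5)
Your argument is correct, but it follows a genuinely different route from the paper's. The paper never touches traces: it observes that for each fixed $\e$ one has $v_n\in\mathcal H_\e$ for all large $n$ (since $\e_n<\e$ eventually), so weak closedness of the closed subspace $\mathcal H_\e$ gives $v\in\bigcap_{\e>0}\mathcal H_\e$; it then identifies the distributional gradient of $v$ on $\Omega\setminus\Gamma_0$ with a single $L^2$ field by testing against $\varphi\in C^\infty_{\rm c}(\Omega\setminus\Gamma_0)$, each of which is supported in $\Omega\setminus\Gamma_\e$ for $\e$ small, concluding $v\in H^1(\Omega\setminus\Gamma_0)$ and hence $v\in\mathcal H_0$. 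You instead localize the obstruction as the jump $g_n=\gamma_+^j(v_n)-\gamma_-^j(v_n)$, use that $g_n$ vanishes on $S^j_1\setminus S^j_{\e_n}$, and convert weak convergence into strong $L^2$ convergence of the jumps via compactness of the trace operators, then exhaust $S^j_1\setminus\{0\}$ by the sets $S^j_1\setminus S^j_\delta$. Your route costs two extra ingredients: the compactness of $\gamma_\pm^j$ (which the paper has already established, so this is legitimate) and the identification of $\mathcal H_0$ with the no-jump subspace of $\mathcal H_1$, which hides a density step. That last step is essentially the same unproved assertion the paper itself makes at the end of its proof (``$v\in\mathcal H_1\cap H^1(\Omega\setminus\Gamma_0)$ implies $v\in\mathcal H_0$''), so it is not a gap relative to the paper's own standard of rigor, but you should be aware that your characterization of $\mathcal H_0$ and that final implication are the same claim in disguise. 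The paper's argument is softer and shorter; yours is more explicit about where the extra singularity of $v_n$ lives and degenerates gracefully at the collision point, which is a reasonable trade-off.
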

  \begin{proof}
    For every $\e\in(0,1]$, there exists $n(\e)\in\N$ such that
    $v_n\in \mathcal H_\e$ for all $n>n(\e)$. The weak convergence
    $v_n\rightharpoonup v$ in $\mathcal H_1$ then implies that
    $v\in \mathcal H_\e$ for all $\e\in(0,1]$. It follows that there
    exists $\mathbf{f}\in L^2(\Omega,\R^N)$ such that
    $\nabla v=\mathbf{f}$ in $\mathcal D'(\Omega\setminus\Gamma_\e)$
    for all $\e\in(0,1]$.  Actually, $\nabla v=\mathbf{f}$ in
    $\mathcal D'(\Omega\setminus\Gamma_0)$, since, for every
    $\varphi\in C^\infty_{\rm c}(\Omega\setminus\Gamma_0)$,
    $\mathop{\rm supp}\varphi\subset\Omega\setminus\Gamma_\e$ for $\e$
    sufficiently small. Therefore,
    $v\in H^1(\Omega\setminus\Gamma_0)$. From the fact that
    $v\in \mathcal H_1\cap H^1(\Omega\setminus\Gamma_0)$ it follows
    that $v\in \mathcal H_0$.
  \end{proof}
Since the singleton $\{0\}$ has null capacity in $\Omega$,
  functions in  $\mathcal H_0$, respectively in $\widetilde{\mathcal
    H}_0$, can be approximated by functions vanishing  in a
  neighbourhood of $0$, as stated in Lemma \ref{l:approx-00}.
  \begin{lemma}\label{l:approx-00}\quad
    
    \begin{enumerate}[\rm (i)]
      \item The set $\mathcal H_{0,0}:=\{v\in \mathcal H_0:v\equiv0\text{ in a neighbourhood of
        }0\}$ is dense in $\mathcal H_0$. 
        \item The set $\widetilde{\mathcal H}_{0,0}:=\{v\in \widetilde{\mathcal H}_0:v\equiv0\text{ in a neighbourhood of
    }0\}$ is dense in $\widetilde{\mathcal H}_0$. 
  \end{enumerate}
\end{lemma}
\begin{proof}
  To prove (i) we first notice that, if $v\in \mathcal H_0$, then, defining $v_n$ as
  \begin{equation*}
    v_n(x)=
    \begin{cases}
      v(x),&\text{if }|v(x)|<n,\\
      -n ,&\text{if }v(x)<-n,\\
      n ,&\text{if }v(x)>n,
    \end{cases}
  \end{equation*}
  $v_n\in \mathcal H_0 \cap L^\infty(\Omega)$ for all $n\in\N$ and
  $v_n\to v$ in $\mathcal H_0$. Therefore it is enough to prove that
  $\mathcal H_{0,0}\cap L^\infty(\Omega)$ is dense in
  $\mathcal H_{0}\cap L^\infty(\Omega)$. To this aim, let us fix some
  $v\in\mathcal H_{0}\cap L^\infty(\Omega)$. For every $\e\in(0,1)$
 we consider the cut-off function $\omega_{\e}\in W^{1,\infty}(\R^2)$
  defined as  
  \begin{equation}\label{def_eta_even}
\omega_{\e}(x):=
\begin{cases}
1, &\text{if } x \in  D_{\e},\\
\frac{2\log|x|-\log\e}{\log\e}, &\text{if } x \in  D_{\sqrt{\e}} \setminus  D_{\e}, \\
0, &\text{if } x \in  \Omega \setminus D_{\sqrt{\e}}.
\end{cases}
\end{equation}
One may directly verify that
$(1-\omega_\e)v\in \mathcal H_{0,0}\cap L^\infty(\Omega)$ for all
$\e\in(0,1)$ and $(1-\omega_\e)v\to v$ in $\mathcal H_0$ as $\e\to
0$. The proof of (i) is thereby complete. We can proceed in a similar
way to obtain (ii).  
\end{proof}

  \subsection{An equivalent eigenvalue problem by gauge transformation }\label{sec_equivalent_Aharonov_Bohm}
For every $\e \in (0,1]$, using the notation introduced in
\eqref{def_theta_alphab}, we define
\begin{equation*}
  \theta_\e^j:=\begin{cases}
    \theta_{a^j_\e,\pi-\alpha^j}, &\text {if } j=1,\dots, k_1+k_2, \\[3pt]
  \theta_{a^j_\e, -\alpha^{j}},  &\text {if }
  j=k_1+k_2+1,\dots,k_1+2k_2,
\end{cases}
\end{equation*}
with  $\alpha^j$ as in \eqref{def_aj}, see \Cref{fig:theta_j_eps}, and 
\begin{equation}\label{def_Thetae}
   \Theta_\e:\R^2\setminus\{a^j_\e:j=1,\dots,k\}\to\R,\quad 
   \Theta_\e:=\frac12\sum_{j=1}^{k}(-1)^{j+1}\theta_\e^j.
\end{equation}
We observe that $\Theta_\e$ verifies \eqref{eq:proprieta_Theta-eps}.

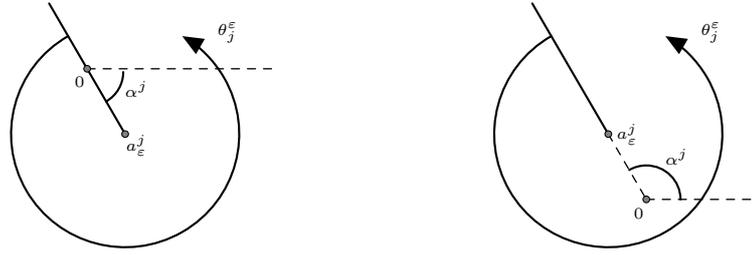
\begin{figure}
\centering
\subfloat[\scriptsize $\theta_j^\varepsilon$ for $j\leq k_1+k_2$.]
{\begin{tikzpicture}[scale=0.5,line cap=round,line join=round,>=triangle 45,x=1.0cm,y=1.0cm]
 \clip(-6,-5) rectangle (6,5);
\draw [line width=0.5pt,thick] (0,0)-- (-2,3.4641);
\draw [line width=0.5pt, dashed] (-1,1.732)-- (4,1.732);
\draw [fill=gray] (0,0) circle (2.5pt);
\draw (0.3,-0.3) node {\tiny $a^j_\varepsilon$};
\draw [fill=gray] (-1,1.732) circle (2.5pt);
\draw (-1.2,1.4) node {\tiny $0$};
\draw[line width=0.5pt,thick, ->] (-1.5,2.598) arc (120:420:3);
\draw (2.7,2.7) node {\tiny $\theta_j^\varepsilon$};
\draw[line width=0.5pt, thick] (-0.5,0.866) arc (300:360:0.9);
\draw (0.3,1.2) node {\tiny $\alpha^j$};
\end{tikzpicture}}
\quad
\subfloat[\scriptsize $\theta_j^\varepsilon$ for $j\geq k_1+k_2+1$.]
{\begin{tikzpicture}[scale=0.5,line cap=round,line join=round,>=triangle 45,x=1.0cm,y=1.0cm]
 \clip(-6,-5) rectangle (6,5);
\draw [line width=0.5pt,thick] (0,0)-- (-2,3.4641);
\draw [line width=0.5pt, dashed] (1,-1.732)-- (4,-1.732);
\draw [line width=0.5pt, dashed] (0,0)-- (1,-1.732);
\draw [fill=gray] (0,0) circle (2.5pt);
\draw (0.5,0) node {\tiny $a^j_\varepsilon$};
\draw [fill=gray] (1,-1.732) circle (2.5pt);
\draw (0.8,-2.1) node {\tiny $0$};
\draw[line width=0.5pt,thick, ->] (-1.5,2.598) arc (120:420:3);
\draw (2.7,2.7) node {\tiny $\theta_j^\varepsilon$};
\draw[line width=0.5pt, thick] (1.9,-1.732) arc (0:120:0.9);
\draw (1.8,-0.6) node {\tiny $\alpha^j$};
\end{tikzpicture}}
\caption{The angles $\theta_j^\varepsilon$ for $1\leq j\leq k_1+2k_2$. 
The half-lines represent the singular set of the function $\theta_j^\varepsilon$.
}
\label{fig:theta_j_eps}
\end{figure}

For any $\e\in(0,1]$, let $\lambda\in\R$ be an eigenvalue of problem
\eqref{prob_Aharonov-Bohm_multipole} associated to the eigenfunction
$u \in H_0^{1,\e}(\Omega,\C)\setminus\{0\}$. Then the function 
\begin{equation*}
v(x):=e^{-i\Theta_\e(x)} u(x), \quad  x \in  \Omega\setminus\Gamma_\e,
\end{equation*}
belongs to $\mathcal H_\e$ and weakly solves 
\eqref{prob_eigenvalue_gauged_multipole}, 
in the sense that $v \in \widetilde{\mc{H}}_\e$ and 
\begin{equation}\label{eq_eigenvalue_gauged_multipole}
\int_{\Omega \setminus \Gamma_\e} \nabla v \cdot \nabla w \, dx = \lambda\int_{\Omega} v w \, dx 
\quad  \text{for all } w \in \widetilde{\mc{H}}_\e,
\end{equation}
where $\widetilde{\mc{H}}_\e$ is defined in
\eqref{def_tilde_H_e}. On the other hand, if $v \in
  \widetilde{\mc{H}}_\e$ solves \eqref{eq_eigenvalue_gauged_multipole},
  then $u=e^{i\Theta_\e} v$
  solves \eqref{prob_Aharonov-Bohm_multipole}.
Therefore the eigenvalue problems \eqref{prob_Aharonov-Bohm_multipole} and 
\eqref{prob_eigenvalue_gauged_multipole}
have the same eigenvalues and their eigenfunctions match each other
via the phase $e^{-i\Theta_\e}$.

A similar gauge transformation can be made for solutions to \eqref{prob_Aharonov-Bohm_0}.
 For every $j=1,\dots, k_1$, let $\alpha^j$ be as in
\eqref{def_aj} and
\begin{equation*}
  \theta_0^j:=\theta_{0} \circ R_{0, \pi-\alpha^j}.
\end{equation*}
We define 
\begin{equation}\label{def_Theta0}
    \Theta_0:\R^2\setminus\{0\}\to\R,\quad \Theta_0(x)=\frac12
    \sum_{j=1}^{k_1}(-1)^{j+1}\theta^j_0(x).
\end{equation}
If $k_1=0$ we just take $\Theta_0\equiv0$.
We observe that $\Theta_0$ satisfies \eqref{eq:proprieta_Theta-0}. Furthermore, 
if $t\in[0,2\pi)$,
  \begin{align}
    \label{eq:theta0j}\theta_0^j(\cos t,\sin t)& =
    \begin{cases}
      t-\alpha^j+\pi,&\text{if }t\in[0,\alpha^j+\pi),\\
     t-\alpha^j-\pi,&\text{if }t\in[\alpha^j+\pi,2\pi),
   \end{cases}\\
    \notag&=-\alpha^j+t+\pi(1-2\rchi_{[\alpha^j+\pi,2\pi)}),
  \end{align}
where   $\rchi$ is defined in \eqref{eq:chi_step_func}.
We have that $u$ is an eigenfunction of problem \eqref{prob_Aharonov-Bohm_0},
associated to the eigenvalue $\la$, if and only if the function
\begin{equation}\label{def_vo_gauged}
v(x):=e^{-i\Theta_0(x)}u(x),
\quad  x \in \Omega\setminus\Gamma_0,
\end{equation}
is a non-zero weak solution of \eqref{prob_eigenvlaue_guged0}
in the sense that $v \in \widetilde{\mc{H}}_{0}$ and 
\begin{equation}\label{eq_eigenvalue_gauged0}
\int_{\Omega \setminus \Gamma_0} \nabla v \cdot \nabla w \, dx = \lambda\int_{\Omega} v w \, dx 
\quad  \text{for all } w \in \widetilde{\mc{H}}_{0},
\end{equation}
where $\widetilde{\mc{H}}_{0}$ is defined in \eqref{def_tilde_H_0}.  
We recall that, if $k_1$ is even, then, letting $v$ as in
  \eqref{def_vo_gauged}, the function $v
  e^{i\Theta_0}=u$ is an eigenfunction of the Dirichlet
  Laplacian in $\Omega$, hence it is smooth in $\Omega$.

\begin{remark}\label{remark_va_real}
  We may treat eigenfunctions of problems
  \eqref{prob_eigenvalue_gauged_multipole} and
  \eqref{prob_eigenvlaue_guged0} as real-valued functions (thus
  justifying the choice to consider $\mathcal H_\e$ as a space of real
  functions). Indeed, since all the coefficients in
  \eqref{prob_eigenvalue_gauged_multipole} and
  \eqref{prob_eigenvlaue_guged0} are real, both the real and the
  imaginary part of any eigenfunction are eigenfunctions, if not
  trivial. Hence, any eigenspace of
  \eqref{prob_eigenvalue_gauged_multipole} and
  \eqref{prob_eigenvlaue_guged0} admits a basis made of reals
  eigenfunctions. See also \cite[Subsection 2.3]{AFaharonov}.
\end{remark}

\subsection{Asymptotics of solutions to the limit eigenvalue problem}

Let $\{\alpha^j\}_{j=1}^{k_1}$ and $\rchi_{[\alpha^j+\pi,2 \pi)}$ be
as in \eqref{def_aj} and \eqref{eq:chi_step_func}, respectively.  Let
$f$ be the function defined in \eqref{def_f}.

\begin{proposition}\label{prop_v0_asympotic_odd}
  Let $k_1$ be odd. If $v$ is a non-trivial solution to
  \eqref{prob_eigenvlaue_guged0}, in the sense that
  $v\in\widetilde{\mathcal H}_0$ satisfies
  \eqref{eq_eigenvalue_gauged0}, then there exist an odd number
  $m \in \mathbb{N}$, $\beta\in \R\setminus\{0\}$, and
  $\alpha_0\in \big[0,\frac{2\pi}m\big)$ such that
  \begin{equation}\label{limit_v_0_odd}
  \e^{-\frac{m}{2}} v\big(\e\cos t,\e\sin t\big) \to \beta \,f(t)\, 
\sin\left(\tfrac{m}{2}(t-\alpha_0)\right) 
\end{equation}
in
$C^{1,\tau}\big([0,2\pi]\setminus\{\alpha^j+\pi\}_{j=1}^{k_1},\R\big)$
as $\e \to 0^+$, for all $\tau \in (0,1)$.  Moreover, there
exists a constant $C>0$ such that
\begin{equation}\label{ineq_v_0_pointiweise_bound_odd}
  |v(x)| \le C|x|^{\frac{m}{2}} 
  \quad \text{and} \quad |\nabla v(x)| \le C|x|^{\frac{m}{2}-1}
  \quad\text{for all } x \in \Omega\setminus\Gamma_0.
\end{equation}
Furthermore, letting
\begin{equation*}
  \Psi(x)=\Psi(r \cos t,r\sin t)=\beta  \, r^{\frac{m}{2}}
\,f(t)\, 
\sin\left(\tfrac{m}{2}(t-\alpha_0)\right), 
\end{equation*}
with $m$, $\beta$, and $\alpha_0$ as in \eqref{limit_v_0_odd} and $f$ as in 
\eqref{def_f}, we have that, as $\e\to0^+$,
\begin{equation}\label{eq:convH1}
  \e^{-\frac{m}{2}}v(\e \cdot)
\to \Psi \quad\text{in
  }H^1(D_\rho\setminus\Gamma_0)\text{ for all }\rho>0.
\end{equation}
\end{proposition}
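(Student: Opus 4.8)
The plan is to transport the problem to the Aharonov--Bohm setting, where the asymptotics at the pole are classical, and then to push the information back through the gauge phase. Since $k_1$ is odd, $k$ is odd, and \eqref{prob_Aharonov-Bohm_0} is the Aharonov--Bohm operator with a single pole at $0$ and circulation $\tfrac12$. By the gauge equivalence of Section~\ref{sec_equivalent_Aharonov_Bohm}, a function $v\in\widetilde{\mc H}_0$ solves \eqref{eq_eigenvalue_gauged0} if and only if $u:=e^{i\Theta_0}v$ is an eigenfunction of \eqref{prob_Aharonov-Bohm_0} for the same eigenvalue; recall also that $\mc H_0$ consists of \emph{real} functions, so that $v$ is real-valued.

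To the Aharonov--Bohm eigenfunction $u$ I would apply the blow-up analysis based on the Almgren-type monotonicity formula (see \cite[Theorem~1.3, Section~7]{FFT}): there exist an odd integer $m\in\N$, equal to the number of nodal lines of $u$ meeting at $0$, and $(\beta_1,\beta_2)\in\R^2\setminus\{(0,0)\}$ such that
\begin{equation*}
  r^{-\frac m2}u(r\cos t,r\sin t)\to e^{i\frac t2}\big(\beta_1\cos(\tfrac m2 t)+\beta_2\sin(\tfrac m2 t)\big)\quad\text{in }C^{1,\tau}([0,2\pi])
\end{equation*}
as $r\to0^+$, together with the scale-invariant pointwise bounds $|u(x)|\le C|x|^{m/2}$ and $|(i\nabla+A_0)u(x)|\le C|x|^{m/2-1}$, and the convergence of the magnetic blow-ups $\e^{-m/2}u(\e\,\cdot)$ in the magnetic $H^1$ norm on every punctured disk.

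It remains to translate these facts to $v=e^{-i\Theta_0}u$, the crux being the explicit form of the phase on the unit circle. Using \eqref{def_Theta0}, \eqref{eq:theta0j} and $\sum_{j=1}^{k_1}(-1)^{j+1}=1$, one computes
\begin{equation*}
  \Theta_0(\cos t,\sin t)=\tfrac t2+c-\pi\sum_{j=1}^{k_1}(-1)^{j+1}\rchi_{[\alpha^j+\pi,2\pi)}(t)
\end{equation*}
for some constant $c\in\R$, and since $(-1)^{j+1}\equiv1\pmod2$ the step-function factor reduces to $f$ of \eqref{def_f}, giving $e^{-i\Theta_0(\cos t,\sin t)}=e^{-ic}e^{-it/2}f(t)$. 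As $\Theta_0$ is $0$-homogeneous, multiplying the blow-up limit of $u$ by this phase cancels the $e^{it/2}$ factor, so that $\e^{-m/2}v(\e\cos t,\e\sin t)$ tends to $e^{-ic}f(t)\big(\beta_1\cos(\tfrac m2 t)+\beta_2\sin(\tfrac m2 t)\big)$; since $v$ is real this limit is real, whence $e^{-ic}\beta_1,e^{-ic}\beta_2\in\R$, and rewriting the trigonometric combination as a single phase-shifted sine yields unique $\beta\in\R\setminus\{0\}$ and $\alpha_0\in[0,\tfrac{2\pi}m)$ realizing \eqref{limit_v_0_odd}. Since $|e^{-i\Theta_0}|=1$ and $\nabla\Theta_0=A_0$, one has $|v|=|u|$ and $\nabla v=-i\,e^{-i\Theta_0}(i\nabla+A_0)u$, hence $|\nabla v|=|(i\nabla+A_0)u|$; the bounds \eqref{ineq_v_0_pointiweise_bound_odd} and, after rescaling, the convergence \eqref{eq:convH1} in $H^1(D_\rho\setminus\Gamma_0)$ then follow at once from their magnetic counterparts, while the $C^{1,\tau}$ convergence survives on each arc avoiding the jump set $\{\alpha^j+\pi\}_{j=1}^{k_1}$, where $e^{-i\Theta_0}$ is smooth.

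The main obstacle is precisely this bookkeeping of the gauge phase on the circle: one must verify that the jumps of $e^{-i\Theta_0}$ across the branch cuts reproduce exactly the sign pattern $f$, that the $e^{\pm it/2}$ factors cancel so that no spurious discontinuity appears at $t=0$, and that the reality of $v$ forces the limit into the single-sine normal form with $\beta\neq0$ and $\alpha_0\in[0,\tfrac{2\pi}m)$.
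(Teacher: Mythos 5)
Your proposal follows essentially the same route as the paper's proof: gauge back to the single-pole Aharonov--Bohm eigenfunction $u=e^{i\Theta_0}v$, invoke the blow-up asymptotics of \cite[Theorem~1.3, Section~7]{FFT}, compute the phase $\Theta_0$ on the unit circle via \eqref{eq:theta0j} so that the branch-cut jumps produce exactly the sign factor $f$ and the $e^{\pm it/2}$ factors cancel, and then use the reality of $v$ to reduce to the single-sine normal form, with the pointwise bounds and the $H^1$ convergence \eqref{eq:convH1} transferred through the unimodular phase (the paper makes the last step explicit via dominated convergence applied to the scaled functions). The argument is correct and matches the paper's in all essential respects.
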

\begin{proof}
  As observed above, the function
  $u:=e^{i\Theta_0}v$
  is an eigenfunction of \eqref{prob_Aharonov-Bohm_0} with $k$
  odd, i.e.
\begin{equation*}
\begin{cases}
\left(i\nabla + A_{0}\right)^2u=\la u, &\text{ in } \Omega,\\
u=0, &\text{ on } \partial \Omega,
\end{cases}
\end{equation*} 
with $A_0$ defined in \eqref{def_A_a^j}.  From \cite[Theorem 1.3,
Section 7]{FFT} it follows that there exist an odd $m \in \mathbb{N}$
and $\beta_1,\beta_2 \in \C $ such that $(\beta_1,\beta_2)\neq (0,0)$
and, as $\e \to 0^+$,
\begin{equation}\label{eq:asyu}
  \e^{-\frac{m}{2}} u(\e\cos t ,\e\sin t ) \to  e^{\frac{i}{2}t}
  \left(\beta_1 \cos\left(\tfrac{m}{2}t\right)+\beta_2
    \sin\left(\tfrac{m}{2}t\right)\right)
  \quad\text{in $C^{1,\tau}([0,2\pi],\C)$}
\end{equation}
and
\begin{multline}\label{eq:asygrad}
  \e^{1-\frac{m}{2}} \nabla u(\e\cos t ,\e\sin t ) \to  \tfrac m2 e^{\frac{i}{2}t}
  \left(\beta_1 \cos\left(\tfrac{m}{2}t\right)+\beta_2
    \sin\left(\tfrac{m}{2}t\right)\right)\boldsymbol{\theta}(t)\\
  +\tfrac{d}{dt}\left(e^{\frac{i}{2}t}
  \left(\beta_1 \cos\left(\tfrac{m}{2}t\right)+\beta_2
    \sin\left(\tfrac{m}{2}t\right)\right)\right) \boldsymbol{\tau}(t)
  \quad\text{in $C^{0,\tau}([0,2\pi],\C)$}
\end{multline}
for all $\tau \in (0,1)$, where $\boldsymbol{\theta}(t)=(\cos t, \sin
t)$ and $\boldsymbol{\tau}(t)=(-\sin t,\cos t)$. Furthermore, by
  \eqref{eq:theta0j}, for all $t\in[0,2\pi]$ we have
  \begin{equation}\label{eq:exp}
\sum_{j=1}^{k_1}(-1)^{j+1}\theta^j_0(\e\cos t ,\e\sin t )=t+\sum_{j=1}^{k_1}(-1)^j\alpha^j+\pi
      -2\pi\sum_{j=1}^{k_1}(-1)^{j+1}\rchi_{[\alpha^j+\pi,2 \pi)}(t).
  \end{equation}
From 
\eqref{eq:asyu}, the definition of $u$, and \eqref{eq:exp} it follows that
\begin{equation*}
  \e^{-\frac{m}{2}} v\big(\e\cos t ,\e\sin t \big) \to f(t)
e^{-\frac i2\left(\pi+\sum_{j=1}^{k_1}(-1)^j\alpha^j\right)}
\Big(\beta_1 \cos\left(\tfrac{m}{2}t\right)+\beta_2 \sin\left(\tfrac{m}{2}t\right)\Big) 
\end{equation*}
in
$C^{1,\tau}\big([0,2\pi]\setminus\{\alpha^j+\pi\}_{j=1}^{k_1},\C\big)$
as $\e \to 0^+$, for all $\tau \in (0,1)$. Then, 
since $v$ is real-valued (see Remark
\ref{remark_va_real}),
we have proved that there exist $c_1,c_2 \in \R $ such that
  $(c_1,c_2)\neq (0,0)$ and
\begin{equation}\label{limit_v_0_odd-old}
  \e^{-\frac{m}{2}} v\big(\e\cos t ,\e\sin t \big) \to f(t)
\Big(c_1 \cos\left(\tfrac{m}{2}t\right)+c_2 \sin\left(\tfrac{m}{2}t\right)\Big). 
\end{equation}
Letting
\begin{equation*}
  \alpha_0=
  \begin{cases}
    \frac2m\mathop{\rm arccot}\big(-\frac{c_2}{c_1}\big),&\text{if }c_1\neq0,\\
    0,&\text{if }c_1=0,
  \end{cases}
\end{equation*}
we can rewrite \eqref{limit_v_0_odd-old} as \eqref{limit_v_0_odd}. Estimate 
\eqref{ineq_v_0_pointiweise_bound_odd} is a consequence of
\eqref{eq:asyu} and \eqref{eq:asygrad}.

Finally, to prove
  \eqref{eq:convH1}, we define
     \begin{equation*}
\tilde{u}_{\e}(x):=\e^{-\frac{m}{2}}u(\e x), \quad 
\Phi(x)=\Phi(r \cos t ,r\sin t )=r^{\frac m2}
 e^{\frac{i}{2}t}
  \left(\beta_1 \cos\left(\tfrac{m}{2}t\right)+\beta_2
    \sin\left(\tfrac{m}{2}t\right)\right). 
  \end{equation*}
We observe that
  \eqref{eq:asyu}, \eqref{eq:asygrad}, and the Dominated Convergence
  Theorem imply that
  \begin{equation*}
    \nabla \tilde{u}_\e\to \nabla \Phi \quad\text{and}\quad
    \frac{\tilde{u}_\e}{|x|}\to \frac{\Phi}{|x|}\quad\text{in
    }L^2(D_\rho)\quad \text{for all }\rho>0,
  \end{equation*}
which easily provides \eqref{eq:convH1}.
\end{proof}

In the case $k$ even, solutions to \eqref{prob_eigenvlaue_guged0} are more regular.
\begin{proposition} \label{prop_v0_asympotic_even} Let $k_1$ be
  even. If $v$ is a non-trivial solution to
  \eqref{prob_eigenvlaue_guged0}, then there exist 
  $m \in \mathbb{N}$,  $\beta\in \R\setminus\{0\}$, and $\alpha_0\in
  \big[0,\frac{\pi}m\big)$ such that
\begin{equation}\label{limit_v_0_even}
  \e^{-m} v(\e\cos t ,\e \sin t ) \to \beta f(t)
 \sin(m(t-\alpha_0))
\end{equation}
in $C^{1,\tau}([0,2\pi]\setminus\{\pi+\alpha^j\}_{j=1}^{k_1},\R)$ as $\e
\to 0^+$,
 for all $\tau \in (0,1)$. Moreover, there exists a constant $C>0$ such that 
\begin{equation}\label{ineq_v_0_pointiweise_bound_even}
|v(x)| \le C|x|^m \quad\text{and}\quad |\nabla v(x)| \le
\begin{cases}
  C|x|^{m-1},&\text{if }m\geq1,\\
  C ,&\text{if }m=0,
\end{cases}
\end{equation}
for all $x \in \Omega\setminus\Gamma_0$.
\end{proposition}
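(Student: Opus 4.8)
The plan is to mirror the structure of the proof of Proposition \ref{prop_v0_asympotic_odd}, exploiting the decisive simplification that occurs when $k_1$ is even: the limit problem is gauge-equivalent to a \emph{smooth} eigenvalue problem, so the singular behaviour analysed via \cite{FFT} in the odd case is replaced by a plain Taylor expansion. First I would set $u:=e^{i\Theta_0}v$ and recall from Section \ref{sec_equivalent_Aharonov_Bohm} that, since $k_1$ is even, $u$ is an eigenfunction of the Dirichlet Laplacian in $\Omega$ and hence real-analytic there. The computation behind this is that
\[
\nabla\Theta_0=\sum_{j=1}^{k_1}(-1)^{j+1}\nabla\big(\tfrac{\theta_0^j}{2}\big)=\Big(\sum_{j=1}^{k_1}(-1)^{j+1}\Big)A_0=0 \quad\text{on }\R^2\setminus\Gamma_0,
\]
because the alternating sum of an even number of terms vanishes; thus $\Theta_0$ is locally constant off $\Gamma_0$, the jump of $e^{i\Theta_0}$ across each $\Gamma_0^j$ cancels that of $v$, and $u$ extends smoothly through the cracks.

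Next I would expand $u$ at the origin. Being analytic and nontrivial, $u$ vanishes at $0$ to some finite order $m\in\N$, and its leading homogeneous Taylor polynomial $P_m$ is harmonic: comparing homogeneity degrees in $-\Delta u=\la u$ forces the lowest-order term to satisfy $\Delta P_m=0$. Writing $P_m(r\cos t,r\sin t)=r^m(\gamma_1\cos(mt)+\gamma_2\sin(mt))$ with $(\gamma_1,\gamma_2)\neq(0,0)$, Taylor's theorem (with all derivatives available by analyticity) yields $\e^{-m}u(\e\cos t,\e\sin t)\to \gamma_1\cos(mt)+\gamma_2\sin(mt)$ in $C^{1,\tau}([0,2\pi],\C)$, together with the bounds $|u(x)|\le C|x|^m$ and $|\nabla u(x)|\le C|x|^{m-1}$ for $m\ge1$ (respectively $|\nabla u(x)|\le C$ for $m=0$).

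Then I would pass back to $v=e^{-i\Theta_0}u$. Evaluating \eqref{eq:theta0j} exactly as in the odd case, the $t$-linear and the constant $\pi$-contributions drop out because $\sum_{j=1}^{k_1}(-1)^{j+1}=0$, and the remaining contributions of the step functions reassemble, through the identity $e^{i\pi(-1)^{j+1}\rchi_{[\alpha^j+\pi,2\pi)}}=(-1)^{\rchi_{[\alpha^j+\pi,2\pi)}}$, into the sign function $f$ of \eqref{def_f}; this leaves $e^{-i\Theta_0(\cos t,\sin t)}=e^{-iC}f(t)$ with $C=\frac12\sum_{j=1}^{k_1}(-1)^{j}\alpha^j$ constant. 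Multiplying the previous limit by this phase gives $\e^{-m}v(\e\cos t,\e\sin t)\to e^{-iC}f(t)(\gamma_1\cos(mt)+\gamma_2\sin(mt))$; since the left-hand side is real (Remark \ref{remark_va_real}) and $f(t)\in\{-1,1\}$, the constants $c_i:=e^{-iC}\gamma_i$ are real, and for $m\ge1$ I would recast $c_1\cos(mt)+c_2\sin(mt)=\beta\sin(m(t-\alpha_0))$ with $\beta\in\R\setminus\{0\}$ and $\alpha_0\in[0,\pi/m)$, obtaining \eqref{limit_v_0_even}. As $f$ is locally constant off $\{\alpha^j+\pi\}_{j=1}^{k_1}$, the $C^{1,\tau}$ convergence persists on the complement of these points. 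Finally, since $\Theta_0$ is locally constant, $|v|=|u|$ and $|\nabla v|=|\nabla u|$ pointwise on $\Omega\setminus\Gamma_0$, so \eqref{ineq_v_0_pointiweise_bound_even} is inherited from the bounds on $u$.

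The conceptually important step is the first one, namely recognizing that $\nabla\Theta_0=0$ and hence that the whole problem collapses to a smooth Laplacian eigenfunction; after that the argument is essentially routine. The only points requiring care are the bookkeeping in the phase computation (to recover precisely the sign function $f$), the fact that multiplying by the discontinuous $f$ is compatible with $C^{1,\tau}$ convergence only away from its jump set, and the degenerate case $m=0$ (which can occur only when $k_1=0$, so that $f\equiv1$ and $v=u$), for which \eqref{limit_v_0_even} degenerates and only the pointwise bounds in \eqref{ineq_v_0_pointiweise_bound_even} are asserted.
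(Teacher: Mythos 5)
Your proposal is correct and follows essentially the same route as the paper's proof: gauge back to $u=e^{i\Theta_0}v$, which is an analytic eigenfunction of the Dirichlet Laplacian since $\nabla\Theta_0=0$ for $k_1$ even, Taylor-expand $u$ at the origin with harmonic leading part, and recover the sign function $f$ from the phase computation \eqref{eq:exp-even}, with $|\nabla v|=|\nabla u|$ giving the pointwise bounds. The only inaccuracy is the parenthetical claim that $m=0$ forces $k_1=0$: an eigenfunction of the Dirichlet Laplacian can be nonzero at the origin also when $k_1\ge 2$ is even, since the jump conditions satisfied by $v$ across the $\Gamma_0^j$ are then carried entirely by the factor $f$; this does not affect the validity of the argument.
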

\begin{proof}
  The function $u:=e^{i\Theta_0}v$ is an eigenfunction of \eqref{prob_Aharonov-Bohm_0} 
  with $k$ even, i.e. $u$ is an eigenfunction of the Dirichlet
  Laplacian. 
  From  \eqref{eq:theta0j} we deduce the analogue of \eqref{eq:exp} in the even case: 
  \begin{equation}\label{eq:exp-even}
\sum_{j=1}^{k_1}(-1)^{j+1}\theta^j_0(\e\cos t ,\e\sin t )=\sum_{j=1}^{k_1}(-1)^j\alpha^j
      -2\pi\sum_{j=1}^{k_1}(-1)^{j+1}\rchi_{[\alpha^j+\pi,2 \pi)}(t)
  \end{equation}
   for all $t\in[0,2\pi]$.
  Claims \eqref{limit_v_0_even} and \eqref{ineq_v_0_pointiweise_bound_even}
  follow from the fact that $u$ is analytic, the definition of $u$
  and \eqref{eq:exp-even}, observing that, since $k_1$ is even,
  $|\nabla u|=|\nabla v|$.
\end{proof}

\begin{remark}
  For the sake of simplicity, for any $w \in\mc{H}_0$ we simply
  write $w$ instead of $\gamma_+^j(w)$ on $S_\e^j$, since
  $\gamma_+^j(w)= \gamma_-^j(w)$ on $S_\e^j$ for any
  $j=1, \dots, k_1+k_2$.  We also simply write $v_0$,
  $\nabla v_0$ and $\nabla v_0 \cdot \nu^j$ when considering
    their traces  on $S_\e^j$.
\end{remark}

\section{Definition and properties of 
\texorpdfstring{$\mathcal E_\e$}{Ee}}\label{sec_capacity_torsion}

For some $n_0\in\N\setminus\{0\}$, let $u_0$ be an eigenfunction of
\eqref{prob_Aharonov-Bohm_0} associated to the eigenvalue
$\lambda_0=\lambda_{0,n_0}$ and $v_0$ be as in \eqref{def_vo_gauged},
so that $v_0$ is a non-zero weak solution of
\eqref{prob_eigenvlaue_guged0} with $\lambda=\lambda_0$. By Remark \ref{remark_va_real} it is
not restrictive to assume that $v_0$ is real-valued and
$\|u_0\|_{L^2(\Omega,\C)}=\norm{v_0}_{L^2(\Omega)}=1$.

Let $L_\e$ be the functional introduced in \eqref{def_Le}. We observe
that $L_\e$ is well-defined; indeed, for every $j=1,\dots,k_1+k_2$, we
have  $\nabla v_0\in L^p(S^j_\e)$ for all $p\in[1,2)$ in view of
\eqref{ineq_v_0_pointiweise_bound_odd} and
\eqref{ineq_v_0_pointiweise_bound_even}, whereas
$\gamma^j_+(w)\in L^q(S^j_\e)$ for all $w\in\mathcal H_1$ and
$q\in[2,+\infty)$ by \eqref{def_traces}.  We provide below an estimate
of the norm of $L_\e$ in $\mc{H}_1^*$, where $\mc{H}_1^*$ is the dual
space of $\mc{H}_1$.

\begin{proposition}\label{prop_Le}
  Let $m \in \mathbb{N}$ be as in Proposition
  \ref{prop_v0_asympotic_odd} for $v=v_0$, if $k$ is odd, or as in
  Proposition \ref{prop_v0_asympotic_even}, if $k$ is even.  For every
  $\e\in (0,1]$, the map $L_\e$ defined in \eqref{def_Le} belongs to
  $\mc{H}_1^*$ and, as $\e\to0^+$,
\begin{equation}\label{eq_norm_Le}
\norm{L_\e}_{\mc{H}_1^*}= 
\begin{cases}
O\big(\e^{\frac{m}{2}-1+\frac{1}{p}}\big), &\text{if $k$ is odd},\\
O\big(\e^{\frac{1}{p}}\big), &\text{if $k$ is even and } m=0,\\
O\big(\e^{m-1+\frac{1}{p}}\big), &\text{if $k$ is even and } m>0,
\end{cases}
\end{equation}
for every $p \in (1,2)$.  In particular, $L_\e\to0$ in $\mathcal
  H_1^*$ as $\e\to0^+$.
\end{proposition}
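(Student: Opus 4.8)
The plan is to estimate the dual norm directly from its definition, $\norm{L_\e}_{\mc{H}_1^*}=\sup\{|L_\e(w)|:w\in\mc{H}_1,\ \norm{w}_{\mc{H}_1}\le1\}$, by bounding the integral over each segment $S^j_\e$ in \eqref{def_Le} through H\"older's inequality. Fixing $p\in(1,2)$ and letting $p'=\frac{p}{p-1}\in(2,+\infty)$ be its conjugate exponent, for every $w\in\mc{H}_1$ I would write
\begin{equation*}
  |L_\e(w)|\le 2\sum_{j=1}^{k_1+k_2}\int_{S^j_\e}|\nabla v_0\cdot\nu^j|\,|\gamma_+^j(w)|\,dS
  \le 2\sum_{j=1}^{k_1+k_2}\norm{\nabla v_0\cdot\nu^j}_{L^p(S^j_\e)}\norm{\gamma_+^j(w)}_{L^{p'}(S^j_\e)}.
\end{equation*}
The two factors are then treated separately: the trace factor is controlled uniformly in $\e$, while the gradient factor carries the whole $\e$-dependence.

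For the trace factor, since $S^j_\e\subset\Sigma^j\cap\Omega$ and $p'<+\infty$, the continuity of the trace operator $\gamma_+^j$ from $\mc{H}_1$ into $L^{p'}(\Sigma^j\cap\Omega)$ recalled in \eqref{def_traces} (and the subsequent discussion) provides a constant $C>0$, independent of $\e$, such that
\begin{equation*}
  \norm{\gamma_+^j(w)}_{L^{p'}(S^j_\e)}\le\norm{\gamma_+^j(w)}_{L^{p'}(\Sigma^j\cap\Omega)}\le C\norm{w}_{\mc{H}_1}.
\end{equation*}
Here it is essential that $p>1$, so that $p'$ is finite and the trace embedding is available.

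For the gradient factor I would parametrize each segment and integrate the pointwise decay bounds. For a single segment ($j\le k_1$) the points have the form $x=ta^j$ with $t\in[0,\e]$, so that $|x|=r_jt$ and $dS=r_j\,dt$; for a paired segment ($k_1<j\le k_1+k_2$), using $\alpha^{j+k_2}=\alpha^j+\pi$, the point is $x=\big(t(r_j+r_{j+k_2})-\e r_{j+k_2}\big)(\cos\alpha^j,\sin\alpha^j)$, whence $|x|$ sweeps the interval $[0,\e\max\{r_j,r_{j+k_2}\}]$ and $dS=(r_j+r_{j+k_2})\,dt$. Inserting the bound $|\nabla v_0(x)|\le C|x|^{\frac m2-1}$ from \eqref{ineq_v_0_pointiweise_bound_odd} (if $k$ is odd), respectively $|\nabla v_0(x)|\le C|x|^{m-1}$ for $m>0$ or $|\nabla v_0(x)|\le C$ for $m=0$ from \eqref{ineq_v_0_pointiweise_bound_even} (if $k$ is even), reduces each contribution to an elementary one-dimensional integral of the type $\int_0^{\e r}s^{\gamma}\,ds=\frac{(\e r)^{\gamma+1}}{\gamma+1}$, with $\gamma=p(\frac m2-1)$ or $\gamma=p(m-1)$. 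This scales like $\e^{\gamma+1}$ whenever $\gamma+1>0$, so taking the $p$-th root returns the factor $\e^{\frac m2-1+\frac1p}$ in the odd case and $\e^{m-1+\frac1p}$ in the even case with $m>0$; for $m=0$ one simply bounds the integrand by a constant and picks up $|S^j_\e|^{1/p}=O(\e^{1/p})$.

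The decisive point, and the only place where the restriction $p<2$ enters, is the convergence of this integral at the origin: in the odd case with the smallest admissible value $m=1$ one has $\gamma=-\frac p2$, so $\gamma+1>0$ is equivalent to $p<2$; for $m\ge3$ odd and for all $m\ge1$ in the even case the exponent is automatically admissible, while $m=0$ involves no singularity. Combining the two factors, summing over $j$, and taking the supremum over $\norm{w}_{\mc{H}_1}\le1$ yields \eqref{eq_norm_Le}. Finally, since $\frac1p>\frac12$ for $p\in(1,2)$, all three exponents $\frac m2-1+\frac1p$ (with $m\ge1$ odd), $\frac1p$, and $m-1+\frac1p$ (with $m\ge1$) are strictly positive, so $\norm{L_\e}_{\mc{H}_1^*}\to0$ as $\e\to0^+$, which is the last assertion. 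I expect the main obstacle to be precisely verifying the integrability of $|x|^{p(\frac m2-1)}$ near $0$, which forces $p<2$ and pins down the stated range of exponents.
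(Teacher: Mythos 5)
Your proof is correct and follows essentially the same route as the paper's: H\"older's inequality on each $S^j_\e$ with conjugate exponents $p\in(1,2)$ and $p'$, the uniform trace bound from \eqref{def_traces} for the factor $\|\gamma_+^j(w)\|_{L^{p'}(S^j_\e)}$, and the pointwise decay estimates \eqref{ineq_v_0_pointiweise_bound_odd}--\eqref{ineq_v_0_pointiweise_bound_even} to compute $\|\nabla v_0\|_{L^p(S^j_\e)}=O(\e^{\frac m2-1+\frac1p})$ (respectively the even-case exponents). Your explicit parametrization of the segments and the discussion of where $p<2$ is actually needed merely spell out details the paper leaves implicit.
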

\begin{proof}
  If $k$ is odd, for every $p \in (1,2)$ and
  $w \in \mathcal H_1$, from the H\"older inequality,
  \eqref{def_traces} and \eqref{ineq_v_0_pointiweise_bound_odd} it
  follows that, letting $p'=\frac{p}{p-1}$,
\begin{equation*}
|L_\e(w)|\le 2\sum_{j=1}^{k_1+k_2} \norm{\nabla v_0}_{L^p(S^j_\e)}\|\gamma_+^j (w)\|_{L^{p'}(S^j_\e)}
\le C \e^{\frac{m}{2}-1+\frac{1}{p}}
\norm{w}_{\mathcal H_1},
\end{equation*}
for some constant $C>0$ independent of $\e$.
If $k$ is even, the proof is similar due to  \eqref{ineq_v_0_pointiweise_bound_even}.
\end{proof}
For every $\e\in (0,1]$, we now consider the functional $J_\e$ defined
in \eqref{def_Je} and,
recalling the definition of $\widetilde{\mc{H}}_\e$ in \eqref{def_tilde_H_e},
the minimization problem 
\begin{equation}\label{minimization_capacity_torsion}
  \inf\left\{J_\e(w): w\in \mathcal H_\e \text{ and }w -v_0  \in  \widetilde{\mc{H}}_\e \right\}.
\end{equation}
Note that, since $v_0 \in \widetilde{\mc{H}}_{0}$, the condition
$w -v_0 \in \widetilde{\mc{H}}_\e$ is equivalent to
\begin{equation}\label{eq:jump}
T^j(w)=
\begin{cases}
0, &\text{on } \Gamma^j_0\text{ for all } j=1,\dots,k_1,\\
2v_0, &\text{on } S^j_\e\text{ for all } j=1,\dots,k_1+k_2.
\end{cases}
\end{equation}

\begin{proposition}\label{prop_Ve_existence}
  The infimum in \eqref{minimization_capacity_torsion} is achieved by
  a unique $V_\e\in\mathcal H_\e$. Furthermore, $V_\e$ weakly solves
  the problem
\begin{equation}\label{prob_Ve}
\begin{cases}
  -\Delta V_\e=0,  &\text{in } \Omega \setminus \Gamma_\e,\\
  V_\e=0, &\text{on } \partial \Omega,\\
  T^j(V_\e-v_0)=0, &\text {on }\Gamma^j_\e \quad\text{for all } j=1,\dots,k_1,\\
  T^j(\nabla V_\e\cdot \nu^j-\nabla v_0 \cdot \nu^j)=0, &\text {on
  }\Gamma^j_\e
  \quad\text{for all } j=1,\dots,k_1,\\
  T^j(V_\e-v_0)=0, &\text {on }S^j_\e \quad\text{for all } j=k_1+1,\dots,k_1+k_2,\\
  T^j(\nabla V_\e\cdot \nu^j-\nabla v_0 \cdot \nu^j)=0, &\text {on
  }S^j_\e \quad \text{for all } j=k_1+1,\dots,k_1+k_2,
\end{cases}
\end{equation}
in the sense that $V_\e \in  \mc{H}_\e$, $V_\e -v_0 \in  \widetilde{\mc{H}}_\e$,   and 
\begin{equation}\label{eq_Ve}
\int_{\Omega \setminus \Gamma_\e} \nabla V_\e \cdot \nabla w\, dx = - L_\e(w)
\quad\text{for all }w \in \mc{\widetilde{H}}_\e.
\end{equation}
\end{proposition}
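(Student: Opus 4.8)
The plan is to recognize \eqref{minimization_capacity_torsion} as the minimization of a coercive, strictly convex quadratic functional over a closed affine subspace of the Hilbert space $\mathcal H_\e$, and to read off \eqref{eq_Ve} as the associated Euler--Lagrange equation. First I would record that $J_\e$ is well-defined and continuous on $\mathcal H_\e$: by Proposition~\ref{prop_Le} the functional $L_\e$ belongs to $\mathcal H_1^*$, hence to $\mathcal H_\e^*$ since $\mathcal H_\e\subseteq\mathcal H_1$. Because $v_0\in\widetilde{\mathcal H}_0\subseteq\mathcal H_0\subseteq\mathcal H_\e$, the admissible set $v_0+\widetilde{\mathcal H}_\e$ is a nonempty closed affine subspace, and writing every competitor as $w=v_0+\phi$ with $\phi\in\widetilde{\mathcal H}_\e$ turns \eqref{minimization_capacity_torsion} into the minimization over $\phi\in\widetilde{\mathcal H}_\e$ of
\[
  J_\e(v_0+\phi)=\tfrac12\|\phi\|_{\mathcal H_\e}^2+(v_0,\phi)_{\mathcal H_\e}+L_\e(\phi)+\Big(\tfrac12\|v_0\|_{\mathcal H_\e}^2+L_\e(v_0)\Big),
\]
where the last bracket is an additive constant.

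Next I would settle existence and uniqueness. Since $\widetilde{\mathcal H}_\e$ is a closed subspace of $\mathcal H_\e$, it is itself a Hilbert space for the scalar product $(\cdot,\cdot)_{\mathcal H_\e}$, and the map $\psi\mapsto-(v_0,\psi)_{\mathcal H_\e}-L_\e(\psi)$ is a bounded linear functional on it (boundedness of the first term follows from the Cauchy--Schwarz inequality and of the second from Proposition~\ref{prop_Le}). By the Riesz representation theorem there is a unique $\phi_0\in\widetilde{\mathcal H}_\e$ with $(\phi_0,\psi)_{\mathcal H_\e}=-(v_0,\psi)_{\mathcal H_\e}-L_\e(\psi)$ for all $\psi\in\widetilde{\mathcal H}_\e$; completing the square shows that this $\phi_0$ is exactly the unique minimizer of the displayed functional. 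Setting $V_\e:=v_0+\phi_0$ gives the unique minimizer in \eqref{minimization_capacity_torsion}, with $V_\e-v_0=\phi_0\in\widetilde{\mathcal H}_\e$ by construction. (Equivalently, coercivity via the Poincaré inequality of Proposition~\ref{prop_poincare}, together with weak lower semicontinuity and strict convexity, yields the same conclusion through the direct method.) Rewriting the Riesz identity as $(V_\e,\psi)_{\mathcal H_\e}=-L_\e(\psi)$, that is $\int_{\Omega\setminus\Gamma_\e}\nabla V_\e\cdot\nabla\psi\,dx=-L_\e(\psi)$ for every $\psi\in\widetilde{\mathcal H}_\e$, is precisely \eqref{eq_Ve}; it is also the first-order condition obtained by differentiating $t\mapsto J_\e(V_\e+t\psi)$ at $t=0$.

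Finally I would justify that \eqref{eq_Ve} is indeed the weak form of the boundary value problem \eqref{prob_Ve}. Testing with $w\in C^\infty_{\rm c}(\Omega\setminus\Gamma_\e)\subseteq\widetilde{\mathcal H}_\e$, for which $L_\e(w)=0$ because $w$ vanishes near each $S^j_\e$, gives $-\Delta V_\e=0$ in $\Omega\setminus\Gamma_\e$; elliptic regularity then makes $V_\e$ harmonic on each side of the cracks, so the one-sided normal traces entering $T^j(\nabla V_\e\cdot\nu^j)$ are well-defined. For general $w\in\widetilde{\mathcal H}_\e$ I would integrate by parts on the two sides of each crack; using $T^j(w)=0$ (so $\gamma_-^j(w)=-\gamma_+^j(w)$) together with $-\Delta V_\e=0$, all crack contributions collapse into $-\sum_j\int T^j(\nabla V_\e\cdot\nu^j)\,\gamma_+^j(w)\,dS$. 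Matching this with $-L_\e(w)$ and varying $\gamma_+^j(w)$ yields the natural conditions: on each $S^j_\e$ one finds $T^j(\nabla V_\e\cdot\nu^j)=2\,\nabla v_0\cdot\nu^j=T^j(\nabla v_0\cdot\nu^j)$, because $v_0$ is smooth across $S^j_\e$ (this segment meets the limiting crack set $\Gamma_0$ only at the origin), while on the half-lines $\Gamma_0^j$ with $j\le k_1$ the right-hand side is absent, so $T^j(\nabla V_\e\cdot\nu^j)=0=T^j(\nabla v_0\cdot\nu^j)$, the last equality being exactly the natural condition satisfied by $v_0$ in \eqref{prob_eigenvlaue_guged0}.

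I expect the literal claim \eqref{eq_Ve} to be routine once coercivity and the boundedness of $L_\e$ are in hand, so the genuinely technical point is the last step: one must ensure the one-sided gradient traces are regular enough for the integration by parts to be legitimate, and keep careful track of the orientation of $\nu^j$ and of the factor $2$, thereby reconciling the ``half'' jump prescribed on the cracks through $L_\e$ with the full jump of the normal derivative appearing in \eqref{prob_Ve}.
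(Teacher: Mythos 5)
Your proof is correct and follows essentially the same variational route as the paper: the paper obtains existence from continuity, coercivity (via Proposition \ref{prop_poincare}) and convexity of $J_\e$ on the closed convex set $v_0+\widetilde{\mc{H}}_\e$ and proves uniqueness separately by testing the difference of two solutions with itself, whereas you package existence and uniqueness into a single application of the Riesz representation theorem after completing the square --- an equivalent argument for a quadratic functional on a Hilbert space. The final integration-by-parts derivation of the transmission conditions in \eqref{prob_Ve} is sound but not required, since the paper defines ``weakly solves \eqref{prob_Ve}'' to mean precisely identity \eqref{eq_Ve}, which you already obtained as the Euler--Lagrange (Riesz) identity.
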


\begin{proof}
  In view of \eqref{def_Je}, the continuity of the linear operator
  $L_\e$, and Proposition \ref{prop_poincare}, we can easily verify
  that $J_\e$ is continuous and coercive on the set
  $v_0 + \widetilde{\mc{H}}_\e= \{w \in \mc{H}_\e: w -v_0 \in \widetilde{\mc{H}}_\e \}$, which is
  closed and convex.
  Moreover, $J_\e$ is convex. Therefore,
  the infimum in 
  \eqref{minimization_capacity_torsion} is achieved by some $V_\e$, which
  weakly solves \eqref{prob_Ve} in the sense of \eqref{eq_Ve}.
  If $V_{\e,1},V_{\e,2}\in v_0 + \widetilde{\mc{H}}_\e$ are weak solutions of \eqref{prob_Ve},
  then $V_{\e,1}-V_{\e,2}\in \widetilde{\mc{H}}_\e$ and 
\begin{equation}\label{prop_Ve_existence:2}
\int_{\Omega \setminus \Gamma_\e} (\nabla V_{\e,1}-\nabla V_{\e,2})
\cdot \nabla w\, dx =0\quad\text{for all }w \in \mc{\tilde{H}}_\e.
\end{equation}
Testing \eqref{prop_Ve_existence:2} with $w=V_{\e,1}-V_{\e,2}$ we
obtain  $\nabla (V_{\e,1}-V_{\e,2})=0$ and hence, by
Proposition~\ref{prop_poincare}, we conclude that $V_{\e,1}=V_{\e,2}$.
\end{proof}

For every $\e\in(0,1]$, let $J_\e$ and $V_\e$ be as \eqref{def_Je} and
Proposition \ref{prop_Ve_existence}, respectively. We consider the
quantity $\mc{E}_\e:=J_\e(V_\e)$ as in \eqref{eq:defEe}.  $\mc{E}_\e$
plays a significant role in the asymptotic expansion of the eigenvalue
variation $\la_{\e,n_0} -\la_{0,n_0}$, as the poles $a^j_\e$ move
towards the collision at~$0$.

To derive a first   upper and lower bound for $\mc{E}_\e$, we
consider, for every $r>0$, the radial cut-off function 
$\eta_r \in C^\infty_{\rm c}(\R^2)$ defined as 
\begin{equation}\label{eq:cut-off}
  \eta_r(x):=\eta\left(\frac xr\right)
\end{equation}
with $\eta$ as in \eqref{eq:def_eta}.

\begin{proposition}\label{prop_Ee_limit}
Let $m \in \mathbb{N}$ be as in Proposition
  \ref{prop_v0_asympotic_odd} for $v=v_0$, if $k$ is odd, or as in
  Proposition \ref{prop_v0_asympotic_even}, if $k$ is even.
Then there exists a constant $C_1>0$ such that, for all $\e\in(0,1]$, 
\begin{equation}\label{ineq_Ee}
\mc{E}_\e\le 
\begin{cases}
C_1\,\e^m, &\text{if $k$ is odd},\\
C_1\,\frac{1}{|\log\e|}, &\text{if $k$ is even and } m=0,\\
C_1\,\e^{2m}, &\text{if $k$ is even and } m>0.
\end{cases}
\end{equation}
Moreover, for every $p\in(1,2)$ there exists $C_2=C_2(p)>0$ such that 
  \begin{equation}\label{ineq_Ee-sotto}
\mc{E}_\e\geq 
\begin{cases}
-C_2\, \e^{m-2+\frac{2}{p}}, &\text{if $k$ is odd},\\
-C_2\, \e^{\frac{2}{p}}, &\text{if $k$ is even and } m=0,\\
-C_2\, \e^{2m-2+\frac{2}{p}}, &\text{if $k$ is even and } m>0.
\end{cases}
\end{equation}
In particular, $\mathcal E_\e\to0$ as $\e\to0^+$.
\end{proposition}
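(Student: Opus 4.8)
The plan is to estimate $\mathcal E_\e=J_\e(V_\e)$ (see \eqref{eq:defEe} and \eqref{def_Je}) from above by evaluating $J_\e$ at a single cheap admissible competitor, and from below by the same coercivity argument that underlies Proposition~\ref{prop_Ve_existence}, combined with the decay rates for $\|L_\e\|_{\mathcal H_1^*}$ already recorded in Proposition~\ref{prop_Le}. Throughout I exploit that $v_0\in\widetilde{\mathcal H}_0\subseteq\mathcal H_0\subseteq\mathcal H_\e$ together with the pointwise decay \eqref{ineq_v_0_pointiweise_bound_odd}, \eqref{ineq_v_0_pointiweise_bound_even}.

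For the upper bound it suffices to exhibit one $w$ with $w-v_0\in\widetilde{\mathcal H}_\e$ and $J_\e(w)$ of the claimed order, since then $\mathcal E_\e\le J_\e(w)$. When $m\ge1$ I would take $w_\e:=\eta_{c\e}v_0$, with $\eta_{c\e}$ as in \eqref{eq:cut-off} and $c>0$ a fixed constant large enough that $\eta_{c\e}\equiv1$ on $\bigcup_j S^j_\e$ (possible because $\bigcup_j S^j_\e\subset D_{c\e}$); in the remaining regime $k$ even and $m=0$ I would instead use the logarithmic cut-off, $w_\e:=\omega_\e v_0$ with $\omega_\e$ as in \eqref{def_eta_even}, noting $\bigcup_j S^j_\e\subset D_{\e}$ since $R<1$, so $\omega_\e\equiv1$ there. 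Admissibility is immediate: both cut-offs are radial, hence continuous across the cracks, so $T^j(w_\e)=\eta_{c\e}\,T^j(v_0)$ (resp.\ $\omega_\e\,T^j(v_0)$), which equals $T^j(v_0)$ on $\bigcup_j S^j_\e$ (where the cut-off is $1$) and vanishes on $\bigcup_{j\le k_1}\Gamma^j_0$; this is exactly \eqref{eq:jump}, hence $w_\e-v_0\in\widetilde{\mathcal H}_\e$. One then splits $\nabla w_\e=\eta_{c\e}\nabla v_0+v_0\nabla\eta_{c\e}$ and inserts the pointwise bounds: in polar coordinates $\int_{D_{2c\e}}|\nabla v_0|^2\lesssim\int_0^{2c\e}r^{m-1}\,dr\sim\e^m$ for $k$ odd (and $\lesssim\int_0^{2c\e}r^{2m-1}\,dr\sim\e^{2m}$ for $k$ even, $m>0$), the term $v_0\nabla\eta_{c\e}$ contributes the same order, and $L_\e(w_\e)=2\sum_j\int_{S^j_\e}(\nabla v_0\cdot\nu^j)v_0\,dS$ is of the same order by the same estimates. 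In the case $m=0$ the logarithmic cut-off gives the decisive contribution $\int|\nabla\omega_\e|^2\sim1/|\log\e|$, while the other terms are $o(1/|\log\e|)$. This yields \eqref{ineq_Ee}.

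For the lower bound I argue directly on the minimizer. Writing $a:=\|V_\e\|_{\mathcal H_\e}=\|\nabla V_\e\|_{L^2(\Omega\setminus\Gamma_\e)}$ and $b:=\|L_\e\|_{\mathcal H_1^*}$, and using $\|V_\e\|_{\mathcal H_1}\le\|V_\e\|_{\mathcal H_\e}$ (because $\Gamma_\e\subseteq\Gamma_1$, so $\Omega\setminus\Gamma_1\subseteq\Omega\setminus\Gamma_\e$), one has $|L_\e(V_\e)|\le b\,a$, whence
\[
\mathcal E_\e=\tfrac12 a^2+L_\e(V_\e)\ge\tfrac12 a^2-b\,a\ge-\tfrac12 b^2=-\tfrac12\|L_\e\|_{\mathcal H_1^*}^2 .
\]
Inserting the three rates of Proposition~\ref{prop_Le} gives precisely \eqref{ineq_Ee-sotto} for every $p\in(1,2)$. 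Finally, all exponents appearing in \eqref{ineq_Ee}--\eqref{ineq_Ee-sotto} are strictly positive (for the lower bound, $m-2+\tfrac2p>0$ already at $m=1$ since $p<2$) and $1/|\log\e|\to0$, so $\mathcal E_\e\to0$.

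The routine part is the polar-coordinate bookkeeping for the energy of $w_\e$. The only genuinely delicate point is the upper bound: one must pick the correct cut-off in each regime (linear at scale $\e$ when $m\ge1$, logarithmic when $m=0$) and verify that multiplication by a radial cut-off preserves the prescribed jump \eqref{eq:jump} on the segments $S^j_\e$, i.e.\ that $w_\e$ remains in the affine class $v_0+\widetilde{\mathcal H}_\e$; the lower bound, by contrast, is a one-line consequence of coercivity and Proposition~\ref{prop_Le}.
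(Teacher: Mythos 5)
Your proposal is correct and follows essentially the same route as the paper: the upper bound via the competitor $\eta_\e v_0$ (logarithmic cut-off $\omega_\e$ when $k$ is even and $m=0$) together with the pointwise bounds \eqref{ineq_v_0_pointiweise_bound_odd}--\eqref{ineq_v_0_pointiweise_bound_even}, and the lower bound via Young's inequality applied to $\mathcal E_\e=\tfrac12\|V_\e\|_{\mathcal H_\e}^2+L_\e(V_\e)$ combined with the decay of $\|L_\e\|_{\mathcal H_1^*}$ from Proposition~\ref{prop_Le}. The only cosmetic difference is that you complete the square to get $\mathcal E_\e\ge-\tfrac12\|L_\e\|_{\mathcal H_1^*}^2$ directly, whereas the paper records the slightly stronger intermediate estimate $\mathcal E_\e+\|L_\e\|_{\mathcal H_1^*}^2\ge\tfrac14\|V_\e\|_{\mathcal H_\e}^2$, which it reuses later.
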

\begin{proof}
  If $k$ is odd, let
  $\eta_{\e} \in C^\infty_{\rm c}(\R^2)$ be a cut-off function as in
  \eqref{eq:cut-off} with $r=\e$.  From \eqref{def_Le}, \eqref{def_Je},
  \eqref{minimization_capacity_torsion}, \eqref{eq:defEe}, and
  \eqref{ineq_v_0_pointiweise_bound_odd} it follows that
\begin{multline*}
  J_\e(V_\e) \le J_\e(\eta_{\e} v_0)\le\frac{1}{2}\int_{\Omega
    \setminus \Gamma_\e} |\nabla (\eta_{\e} v_0) |^2 \, dx +
   2\sum_{j=1}^{k_1+k_2} \int_{S^j_\e} |\nabla v_0| \left|v_0\right|\, dS\\
  \le \int_{(\Omega\cap D_{2\e}(0)) \setminus \Gamma_\e} |\nabla
  v_0 |^2 \, dx +\int_{\Omega\cap D_{2\e}(0)} |\nabla \eta_\e
  |^2 v_0^2\, dx +2\sum_{j=1}^{k_1+k_2} \int_{S^j_\e}  |\nabla
  v_0|\left|v_0\right|\, dS\le C_1\e^m
\end{multline*}
for some constant $C_1>0$ independent of $\e$.  If $k$ is even and
$m \in \N\setminus\{0\}$, \eqref{ineq_Ee} can be proved arguing in a
similar way and using \eqref{ineq_v_0_pointiweise_bound_even} instead
of \eqref{ineq_v_0_pointiweise_bound_odd}.

If $k$ is even and $m=0$, for
every $\e\in(0,1]$ we consider the cut-off function
$\omega_{\e} \in W^{1,\infty}(\R^2)$ defined in
\eqref{def_eta_even}.  We have  $0\le\omega_{\e}\le 1$ and, thanks
to \eqref{def_Le}, \eqref{def_Je},
\eqref{minimization_capacity_torsion}, \eqref{eq:defEe}, and
\eqref{ineq_v_0_pointiweise_bound_even} with $m=0$,
\begin{multline*}
  J_\e(V_\e) \le J_\e(\omega_{\e} v_0)\le\frac{1}{2}\int_{\Omega
    \setminus \Gamma_\e} |\nabla (\omega_{\e} v_0) |^2 \, dx +2
  \sum_{j=1}^{k_1+k_2} \int_{S^j_\e} \left|\nabla v_0\right|\left|v_0\right|\, dS\\
  \le \int_{(\Omega\cap D_{\sqrt{\e}}(0)) \setminus \Gamma_\e}
  |\nabla v_0 |^2 \, dx +\int_{\Omega\cap D_{\sqrt{\e}}(0)}
  |\nabla \omega_{\e} |^2 v_0^2\, dx +2\sum_{j=1}^{k_1+k_2}
  \int_{S^j_\e} |\nabla v_0|\left|v_0\right|\, dS \le C_1
  \frac{1}{|\log \e|}
\end{multline*}
for some constant $C_1>0$ independent of $\e$. Estimate
\eqref{ineq_Ee} is thereby proved.

To prove \eqref{ineq_Ee-sotto}, we observe that
\begin{align*}
  \norm{V_\e}_{\mc{H}_1}^2&=\norm{V_\e}_{\mc{H}_\e}^2
=2\mathcal E_\e-2 L_\e(V_\e)
  \le2 \mc{E}_\e +2|L_\e(V_\e)| \\
  &\le 2 \mc{E}_\e +2\norm{L_\e}_{\mathcal
                            H_1^*}\norm{V_\e}_{\mc{H}_1}\le 2 \mc{E}_\e +2\norm{L_\e}_{\mathcal
                            H_1^*}^2+\frac12\norm{V_\e}_{\mc{H}_1}^2,
\end{align*}
and hence
\begin{equation}\label{eq:estoso}
  \mc{E}_\e +\norm{L_\e}_{\mathcal H_1^*}^2\geq \frac14 \norm{V_\e}_{\mc{H}_1}^2\geq0,
\end{equation}
which, together with \eqref{eq_norm_Le}, implies \eqref{ineq_Ee-sotto}.
\end{proof}

\begin{proposition} \label{prop_Ve_limit_0}
We have  $V_\e \to 0$ as $\e \to 0^+$ strongly  in  $\mc{H}_1$.
\end{proposition}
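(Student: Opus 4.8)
The plan is to deduce the strong convergence directly from the quantitative coercivity bound \eqref{eq:estoso}, which was already established while proving Proposition \ref{prop_Ee_limit}. Indeed, since $V_\e\in\mathcal{H}_\e$ and the segments of $\Gamma_1\setminus\Gamma_\e$ lie in the interior of $\Omega\setminus\Gamma_\e$ (where $V_\e$ carries no jump) and have zero Lebesgue measure, one has the identity $\norm{V_\e}_{\mathcal{H}_1}=\norm{V_\e}_{\mathcal{H}_\e}$. Expanding the definition $\mathcal{E}_\e=J_\e(V_\e)=\frac12\norm{V_\e}_{\mathcal{H}_\e}^2+L_\e(V_\e)$, estimating $|L_\e(V_\e)|\le\norm{L_\e}_{\mathcal{H}_1^*}\norm{V_\e}_{\mathcal{H}_1}$ and absorbing the resulting term via Young's inequality gives exactly
\[
\tfrac14\norm{V_\e}_{\mathcal{H}_1}^2\le\mathcal{E}_\e+\norm{L_\e}_{\mathcal{H}_1^*}^2.
\]

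First I would invoke the two limiting statements proved just above. By Proposition \ref{prop_Ee_limit} we have $\mathcal{E}_\e\to0$ as $\e\to0^+$, since the upper and lower bounds \eqref{ineq_Ee}--\eqref{ineq_Ee-sotto} both vanish in the limit (all the relevant exponents are strictly positive for $p\in(1,2)$ and $m\ge1$, while $1/|\log\e|\to0$ in the borderline even case). By Proposition \ref{prop_Le} we have $\norm{L_\e}_{\mathcal{H}_1^*}\to0$. Substituting these into the displayed inequality forces $\norm{V_\e}_{\mathcal{H}_1}^2\to0$, which is precisely the desired strong convergence $V_\e\to0$ in $\mathcal{H}_1$.

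Because the estimate \eqref{eq:estoso} is already in hand, there is no genuine obstacle here: the whole point is that the lower bound on $\mathcal{E}_\e$ was obtained through coercivity, so it simultaneously controls $\norm{V_\e}_{\mathcal{H}_1}$. As an alternative, more qualitative route, one could argue by weak compactness: the bound above yields a bounded family in $\mathcal{H}_1$, any weak limit $V$ belongs to $\mathcal{H}_0$ by the Mosco-type Proposition \ref{p:mosco}, the shrinking trace constraints $T^j(V_\e)=2v_0$ on $S^j_\e$ disappear in the limit (the point $0$ having null capacity, cf.\ Lemma \ref{l:approx-00}) so that $V\in\widetilde{\mathcal{H}}_0$, and passing to the limit in \eqref{eq_Ve}, whose right-hand side $-L_\e(w)$ tends to $0$, forces $V$ to solve the homogeneous problem and hence $V=0$; strong convergence would then follow by combining weak convergence with convergence of the norms. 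However, this detour still relies on the same quantitative estimate, so I would present the direct argument as the proof.
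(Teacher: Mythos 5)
Your proof is correct and follows exactly the paper's own argument: the paper likewise combines the coercivity estimate \eqref{eq:estoso} with $\mathcal E_\e\to 0$ (Proposition \ref{prop_Ee_limit}) and $\|L_\e\|_{\mathcal H_1^*}\to 0$ (Proposition \ref{prop_Le}) to conclude $\|V_\e\|_{\mathcal H_1}\to 0$. The additional weak-compactness detour you sketch is unnecessary, as you yourself note.
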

\begin{proof}
    From Proposition \ref{prop_Ee_limit} we have 
    $\lim_{\e\to0^+}\mathcal E_\e=0$, whereas Proposition
    \ref{prop_Le} implies that
    $\lim_{\e\to0^+}\norm{L_\e}_{\mc{H}_1^*}=0$. The conclusion then
    follows from \eqref{eq:estoso}.
\end{proof}

\begin{proposition}\label{prop_Ee_onormVe}
We have $\mc{E}_\e=o\left(\norm{V_\e}_{\mc{H}_\e}\right)$ as  $\e \to 0^+$.
\end{proposition}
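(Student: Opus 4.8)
The plan is to exploit the decomposition of $\mc E_\e$ already isolated in the proof of Proposition \ref{prop_Ee_limit}. By the definition \eqref{def_Je} of $J_\e$ together with $\mc E_\e=J_\e(V_\e)$ in \eqref{eq:defEe}, we have
\begin{equation*}
  \mc E_\e=\tfrac12\norm{V_\e}_{\mc H_\e}^2+L_\e(V_\e).
\end{equation*}
I would then show separately that each of the two summands on the right-hand side is $o\big(\norm{V_\e}_{\mc H_\e}\big)$ as $\e\to0^+$, which immediately yields the claim.

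For the quadratic term, the key input is Proposition \ref{prop_Ve_limit_0}, which gives $V_\e\to0$ strongly in $\mc H_1$. Since every $w\in\mc H_\e$ satisfies $\norm{w}_{\mc H_\e}=\norm{w}_{\mc H_1}$ (the set $\Gamma_1\setminus\Gamma_\e$ being Lebesgue-null, so that the gradient integrals over $\Omega\setminus\Gamma_\e$ and $\Omega\setminus\Gamma_1$ coincide, as already used in Proposition \ref{prop_Ee_limit}), it follows that $\norm{V_\e}_{\mc H_\e}\to0$. Writing $\tfrac12\norm{V_\e}_{\mc H_\e}^2=\big(\tfrac12\norm{V_\e}_{\mc H_\e}\big)\norm{V_\e}_{\mc H_\e}$ and observing that the bracketed factor is infinitesimal, the quadratic term is $o\big(\norm{V_\e}_{\mc H_\e}\big)$.

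For the linear term, I would estimate $|L_\e(V_\e)|\le\norm{L_\e}_{\mc H_1^*}\norm{V_\e}_{\mc H_1}=\norm{L_\e}_{\mc H_1^*}\norm{V_\e}_{\mc H_\e}$, again using the norm identity. Proposition \ref{prop_Le} guarantees $\norm{L_\e}_{\mc H_1^*}\to0$, so the factor $\norm{L_\e}_{\mc H_1^*}$ serves as the vanishing coefficient multiplying $\norm{V_\e}_{\mc H_\e}$, giving $L_\e(V_\e)=o\big(\norm{V_\e}_{\mc H_\e}\big)$ as well. Adding the two estimates concludes the proof.

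I do not expect a serious obstacle: the statement is essentially a bookkeeping consequence of the two already-established facts that $\norm{L_\e}_{\mc H_1^*}\to0$ and $V_\e\to0$ in $\mc H_\e$. The only mild care needed is in the degenerate instances where $V_\e\equiv0$ (there $\mc E_\e$ and $\norm{V_\e}_{\mc H_\e}$ both vanish and the relation holds trivially), and in reading the $o(\cdot)$ notation so that the infinitesimal factors $\tfrac12\norm{V_\e}_{\mc H_\e}$ and $\norm{L_\e}_{\mc H_1^*}$ are correctly interpreted as the coefficients tending to zero.
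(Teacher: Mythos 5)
Your argument is correct and coincides with the paper's own proof: both start from $\mc E_\e=\tfrac12\norm{V_\e}_{\mc H_\e}^2+L_\e(V_\e)$, bound the linear term by $\norm{L_\e}_{\mc H_1^*}\norm{V_\e}_{\mc H_1}$, and conclude from Proposition \ref{prop_Le} and Proposition \ref{prop_Ve_limit_0}. The remark that $\norm{V_\e}_{\mc H_\e}=\norm{V_\e}_{\mc H_1}$ is exactly the identification the paper uses as well, so nothing further is needed.
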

\begin{proof}
Proceeding similarly to the previous proof, we have
\[
  \left|\mc{E}_\e \right| \leq \frac{\norm{V_\e}_{\mc{H}_1}^2}{2} +
  \norm{L_\e}_{\mathcal H_1^*}\norm{V_\e}_{\mc{H}_1}
\]
and we can conclude thanks to \eqref{eq_norm_Le} and Proposition \ref{prop_Ve_limit_0}.
\end{proof}

\begin{proposition}\label{prop_Ve_L2norm_oL2norm_nabla}
We have 
\begin{equation*}
\int_{\Omega}V_\e^2 \, dx=o\big(\norm{V_\e}_{\mc{H}_\e}^2\big) \quad \text{as } \e \to 0^+.
\end{equation*}
\end{proposition}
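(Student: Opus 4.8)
The plan is to argue by contradiction, combining a compactness argument with the Mosco-type convergence of Proposition~\ref{p:mosco} and a passage to the limit in the Euler--Lagrange equation \eqref{eq_Ve}. Suppose the claim fails: then there exist $c>0$ and a sequence $\e_n\to0^+$ with $V_{\e_n}\neq0$ and $\int_\Omega V_{\e_n}^2\,dx\ge c\,\|V_{\e_n}\|_{\mc H_{\e_n}}^2$. I set $W_n:=V_{\e_n}/\|V_{\e_n}\|_{\mc H_{\e_n}}$ and recall that $\|V_\e\|_{\mc H_\e}=\|V_\e\|_{\mc H_1}$, since $\Omega\setminus\Gamma_\e$ and $\Omega\setminus\Gamma_1$ differ by a null set; hence $\|W_n\|_{\mc H_1}=1$ and $\int_\Omega W_n^2\,dx\ge c$. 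Up to a subsequence, $W_n\rightharpoonup W$ in $\mc H_1$, and by the compact embedding $\mc H_1\hookrightarrow L^2(\Omega)$ (Remark~\ref{remark_embbeing_compact}) we have $W_n\to W$ in $L^2(\Omega)$, so that $\int_\Omega W^2\,dx\ge c>0$ and in particular $W\neq0$.

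Next I would pin down the functional space containing $W$. By Proposition~\ref{p:mosco} the weak limit satisfies $W\in\mc H_0$. Moreover, since $V_{\e_n}-v_0\in\widetilde{\mc H}_{\e_n}$ and $T^j(v_0)=0$ on $\Gamma_0^j\subset\Gamma_{\e_n}^j$, we get $T^j(W_n)=0$ on $\Gamma_0^j\cap\Omega$ for every $j=1,\dots,k_1$. The compactness of the trace operators $T^j:\mc H_1\to L^p(\Sigma^j\cap\Omega)$ turns the weak convergence $W_n\rightharpoonup W$ into strong convergence $T^j(W_n)\to T^j(W)$, whence $T^j(W)=0$ on $\Gamma_0^j\cap\Omega$ and therefore $W\in\widetilde{\mc H}_0$.

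Finally I would pass to the limit in \eqref{eq_Ve} to show that $W$ weakly solves the homogeneous limit problem. The delicate point is that both the cut domain $\Omega\setminus\Gamma_{\e_n}$ and the constraint space $\widetilde{\mc H}_{\e_n}$ depend on $n$; I circumvent this by testing only with functions $w\in\widetilde{\mc H}_{0,0}$, i.e.\ $w\in\widetilde{\mc H}_0$ vanishing in a neighbourhood $D_\delta$ of $0$, which are dense in $\widetilde{\mc H}_0$ by Lemma~\ref{l:approx-00}(ii). For such a $w$ and $\e_n$ small, one has $w\in\mc H_0\subset\mc H_{\e_n}$ and $w\equiv0$ on each $S_{\e_n}^j\subset\overline{D_{R\e_n}}\subset D_\delta$, so $w\in\widetilde{\mc H}_{\e_n}$ and $L_{\e_n}(w)=0$; dividing \eqref{eq_Ve} by $\|V_{\e_n}\|_{\mc H_{\e_n}}$ then gives $\int_{\Omega\setminus\Gamma_{\e_n}}\nabla W_n\cdot\nabla w\,dx=0$. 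Since $\nabla w=0$ on $D_\delta$ and $\Gamma_{\e_n}$ coincides with $\Gamma_0$ on $\{|x|>\delta\}$ for $\e_n<\delta/R$, this integral equals $\int_{(\Omega\cap\{|x|>\delta\})\setminus\Gamma_1}\nabla W_n\cdot\nabla w\,dx$, to which I apply the weak convergence $\nabla W_n\rightharpoonup\nabla W$ in $L^2(\Omega\setminus\Gamma_1)$ (testing against $\mathbf 1_{\{|x|>\delta\}}\nabla w$) to obtain $\int_{\Omega\setminus\Gamma_0}\nabla W\cdot\nabla w\,dx=0$. By density this holds for all $w\in\widetilde{\mc H}_0$, and choosing $w=W$ together with the Poincaré inequality of Proposition~\ref{prop_poincare} forces $\nabla W=0$ and hence $W=0$, contradicting $W\neq0$. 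This contradiction proves the claim. The main obstacle is precisely this last limit passage, where the $n$-dependence of both the domain and the admissible test space must be absorbed by localising away from the collision point $0$.
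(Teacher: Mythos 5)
Your proposal is correct and follows essentially the same route as the paper's proof: contradiction, normalization of $V_{\e_n}$, weak compactness in $\mc H_1$ combined with the compact embedding into $L^2(\Omega)$, Proposition~\ref{p:mosco} and the trace operators to place the limit in $\widetilde{\mc H}_0$, and passage to the limit in \eqref{eq_Ve} against test functions in $\widetilde{\mc H}_{0,0}$ followed by the density of Lemma~\ref{l:approx-00}. The only (immaterial) difference is that you normalize by $\|V_{\e_n}\|_{\mc H_{\e_n}}$ and conclude $\|W\|_{L^2(\Omega)}^2\ge c$, whereas the paper normalizes by $\|V_{\e_n}\|_{L^2(\Omega)}$ and concludes $\|W\|_{L^2(\Omega)}=1$.
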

\begin{proof}
  Let us assume by contradiction that there exist a positive constant $C>0$ and
  a sequence $\{\e_n\}_{n \in \N}\subset(0,1)$ such that
  $\lim_{n\to\infty}\e_n=0$ and
\begin{equation}\label{proof_Ve_L2norm_oL2norm_nabla:1}
  \int_{\Omega}V_{\e_n}^2\, dx \ge C\int_{\Omega \setminus
    \Gamma_{\e_n}}|\nabla V_{\e_n}|^2\, dx  \quad\text{for all }n \in \N.
\end{equation}
For every $n \in \N$, we define
$W_n:=\frac{V_{\e_n}}{\norm{V_{\e_n}}_{L^2(\Omega)}}$. Then
$\norm{W_n}_{L^2(\Omega)}=1$ for every $n \in \N$ and
$\{W_n\}_{n \in \N}$ is bounded in $\mc{H}_1$ thanks to
\eqref{proof_Ve_L2norm_oL2norm_nabla:1}.  It follows that there exists
$W \in \mc{H}_1$ such that $W_n \rightharpoonup W$ weakly in
$\mc{H}_1$ as $n \to \infty$, up to a subsequence.  
Since
  $W_n\in \mathcal H_{\e_n}$ for every $n$, from Proposition
  \ref{p:mosco} we deduce that $W\in \mathcal H_0$, while Remark
  \ref{remark_embbeing_compact} ensures that
  \begin{equation}\label{eq:normW1}
\|W\|_{L^2(\Omega)}=1.
\end{equation}
Since
$W_n-\|V_{\e_n}\|_{L^2(\Omega)}^{-1}v_0\in \widetilde{\mathcal
  H}_{\e_n}$, we have $T^j(W_n)=0$ on $\Gamma^j_0$ for all
$j=1,\dots,k_1$, see \eqref{eq:jump}.
By continuity of the trace operator \eqref{eq:trTj},
we deduce that $T^j(W)=0$ on $\Gamma^j_0$ for all $j=1,\dots,k_1$,
hence $W\in \widetilde{\mathcal H}_0$.

Let $w\in \widetilde{\mathcal H}_{0,0}$, where
$\widetilde{\mathcal H}_{0,0}$ is defined in Lemma
\ref{l:approx-00}. For $n$
sufficiently large, $w\in \widetilde{\mathcal H}_{\e_n}$ and  $L_{\e_n}(w)=0$, hence we
can test \eqref{eq_Ve} with $w$, thus obtaining
\begin{equation*}
  \int_{\Omega \setminus \Gamma_1} \nabla W_n \cdot \nabla w\, dx=
  \int_{\Omega \setminus \Gamma_{\e_n}} \nabla W_n \cdot \nabla w\, dx = - L_{\e_n}(w)=0.
\end{equation*}
Letting $n\to\infty$ in the above identity, we obtain 
$\int_{\Omega \setminus \Gamma_0} \nabla W \cdot \nabla w\, dx=0$ for
all $w\in \widetilde{\mathcal H}_{0,0}$ and hence, by the density of
$\widetilde{\mathcal H}_{0,0}$ in $\widetilde{\mathcal H}_0$
established in Lemma \ref{l:approx-00},
\begin{equation}\label{eq:eqW}
  \int_{\Omega \setminus \Gamma_0} \nabla W \cdot \nabla w\,
  dx=0\quad\text{for all }w\in \widetilde{\mathcal H}_0.
\end{equation}
Choosing $w=W$ in \eqref{eq:eqW}, we conclude that $W=0$, thus
contradicting \eqref{eq:normW1}.
\end{proof}

\section{Asymptotic expansion of the eigenvalue variation}\label{sec_asymptotic_expansion}
For every $\e\in[0,1]$, we consider the bilinear form
$q_\e:\mc{\widetilde{H}}_\e\times\mc{\widetilde{H}}_\e\to \R$  defined as 
\begin{equation}\label{def_qe}
q_\e(w_1,w_2):=\int_{\Omega \setminus\Gamma_\e} \nabla w_1 \cdot \nabla w_2 \, dx,
\end{equation}
where $\mc{\widetilde{H}}_\e$ is as in \eqref{def_tilde_H_e}.  To
simplify notation,
we denote by $q_\e$ both the bilinear form defined above and the
associated quadratic form
\begin{equation*}
  q_\e(w)=\int_{\Omega \setminus\Gamma_\e} |\nabla w|^2 \,
  dx=\|w\|_{\mathcal H_\e}^2, \quad w\in\mc{\widetilde{H}}_\e.
\end{equation*}
The following preliminary result can be obtained in a standard way
from the compactness properties pointed out in Remark
\ref{remark_embbeing_compact} and abstract spectral theory, see for
example \cite[Theorems 6.16 and 6.21, Proposition 8.20]{Hspectral}.
\begin{proposition}\label{prop_spectral}
Let $\e\in[0,1]$ and 
$\mc{F}_\e:\mc{\widetilde{H}}_\e\to \mc{\widetilde{H}}_\e$ be the
linear operator defined as
\begin{equation}\label{def_Fe}
q_\e(\mc{F}_\e( w_1),w_2)=\ps{L^2(\Omega)}{w_1}{w_2}.
\end{equation}
Then 
\begin{itemize}
\item[(i)] $\mc{F}_\e$ is symmetric, non-negative and compact; in
  particular $0$ belongs to its spectrum $\sigma(\mc{F}_\e)$.
\item[(ii)]
  $\sigma(\mc{F}_\e)\setminus\{0\}=\{\mu_{n,\e}\}_{n \in \N \setminus
    \{0\}}$, where $\mu_{n,\e}:=1/ \la_{\e,n}$ for every
  $n \in \mathbb{N} \setminus \{0\}$.
\item[(iii)] For every $\mu \in \R$ and $w \in\mc{\widetilde{H}}_\e$,
\begin{equation*}
  \big(\mathop{\rm{dist}}(\mu,\sigma(\mc{F}_\e))\big)^2
  \le \frac{{q_\e}(\mc{F}_\e(w)-\mu w)}{q_\e(w)}.
\end{equation*}
\end{itemize}
\end{proposition}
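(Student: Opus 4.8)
The plan is to realize $\mc{F}_\e$ as the composition of the compact embedding into $L^2(\Omega)$ with its Hilbert adjoint, and then to read off all three claims from the spectral theorem for compact self-adjoint operators. First I would check that $\mc{F}_\e$ is well defined: by Proposition \ref{prop_poincare} the form $q_\e$ is an inner product on $\mc{\widetilde{H}}_\e$ whose induced norm is equivalent to the $\mc{H}_\e$-norm, so $(\mc{\widetilde{H}}_\e,q_\e)$ is a Hilbert space. For fixed $w_1$, the functional $w_2\mapsto \ps{L^2(\Omega)}{w_1}{w_2}$ is bounded on $\mc{\widetilde{H}}_\e$ because the embedding $I\colon \mc{\widetilde{H}}_\e\to L^2(\Omega)$ is continuous; the Riesz representation theorem then produces a unique $\mc{F}_\e(w_1)$ satisfying \eqref{def_Fe}, and linearity is immediate.

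For item (i), symmetry and non-negativity follow at once from the definition, since $q_\e(\mc{F}_\e(w_1),w_2)=\ps{L^2(\Omega)}{w_1}{w_2}=\ps{L^2(\Omega)}{w_2}{w_1}=q_\e(w_1,\mc{F}_\e(w_2))$ and $q_\e(\mc{F}_\e(w),w)=\norm{w}_{L^2(\Omega)}^2\ge 0$. For compactness I would observe that \eqref{def_Fe} says exactly $\mc{F}_\e=I^*I$, where $I^*$ is the Hilbert adjoint of $I$; since $I$ is compact by Remark \ref{remark_embbeing_compact}, so is $\mc{F}_\e$. Alternatively, testing \eqref{def_Fe} with $w_2=\mc{F}_\e(w_1)$ gives $\norm{\mc{F}_\e(w_1)}_{\mc{H}_\e}\le C\norm{w_1}_{L^2(\Omega)}$, so $\mc{F}_\e$ sends $\mc{H}_\e$-bounded sequences, which are $L^2$-precompact, to $\mc{H}_\e$-convergent ones. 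Because $\mc{\widetilde{H}}_\e$ is infinite-dimensional, a compact operator cannot be invertible, whence $0\in\sigma(\mc{F}_\e)$.

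For item (ii) I would invoke the spectral theorem for compact self-adjoint operators, which presents $\sigma(\mc{F}_\e)\setminus\{0\}$ as an at most countable set of finite-multiplicity eigenvalues accumulating only at $0$. The crucial point is the correspondence with the original spectrum: for $\mu\neq 0$, the identity $\mc{F}_\e(w)=\mu w$ means $\mu\,q_\e(w,w_2)=\ps{L^2(\Omega)}{w}{w_2}$ for all $w_2\in\mc{\widetilde{H}}_\e$, which is precisely the weak eigenvalue equation \eqref{eq_eigenvalue_gauged_multipole} with $\la=1/\mu$; since that problem is equivalent to \eqref{prob_Aharonov-Bohm_multipole}, this matches the nonzero eigenvalues of $\mc{F}_\e$ bijectively, and with multiplicities, to the reciprocals of the $\la_{\e,n}$, giving $\mu_{n,\e}=1/\la_{\e,n}$. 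Finally, item (iii) is the standard resolvent estimate for a self-adjoint operator: the spectral theorem yields $\norm{(\mc{F}_\e-\mu)w}_{\mc{H}_\e}\ge \mathop{\rm{dist}}(\mu,\sigma(\mc{F}_\e))\,\norm{w}_{\mc{H}_\e}$, and squaring, then rewriting both sides as the quadratic forms $q_\e(\mc{F}_\e(w)-\mu w)$ and $q_\e(w)$, gives the inequality. None of the steps presents a genuine obstacle; the only point requiring care is the eigenvalue bookkeeping in (ii), namely verifying that the correspondence $\mu\leftrightarrow 1/\mu$ preserves multiplicities and produces no spurious eigenvalues, which follows from the equivalence of the two weak formulations already established.
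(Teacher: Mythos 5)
Your argument is correct and is precisely the standard route the paper itself gestures at: the paper gives no detailed proof, stating only that the result follows from the compactness in Remark \ref{remark_embbeing_compact} and abstract spectral theory (citing \cite[Theorems 6.16 and 6.21, Proposition 8.20]{Hspectral}), and your write-up (Riesz representation on $(\widetilde{\mc{H}}_\e,q_\e)$, the factorization $\mc{F}_\e=I^*I$, the spectral theorem for compact self-adjoint operators, and the resolvent-type bound $\|(\mc{F}_\e-\mu)w\|\ge \mathop{\rm{dist}}(\mu,\sigma(\mc{F}_\e))\|w\|$) supplies exactly those details. No gaps.
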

Letting $n_0\in\N\setminus\{0\}$, $v_0$ and 
$\lambda_0=\lambda_{0,n_0}$ be as in Section
\ref{sec_capacity_torsion}, to prove an asymptotic expansion of
the eigenvalue variation we further assume that 
\begin{equation}\label{eq:simplicity}
  \lambda_0\text{ is simple as an eigenvalue of \eqref{prob_Aharonov-Bohm_0}},
\end{equation}
and, consequently, as an eigenvalue of
\eqref{prob_eigenvlaue_guged0}. Therefore, the continuity result of \cite[Theorem
1.2]{L2015}, see \eqref{limit_lae_la0}, implies that also $\lambda_{\e,n_0}$ is simple for $\e$
sufficiently small. From now on, we denote
\begin{equation*}
  \lambda_\e=\lambda_{\e,n_0}.
\end{equation*}
For $\e$ small, let $v_\e\in \mc{\widetilde{H}}_\e$ be the
  unique eigenfunction of \eqref{prob_eigenvalue_gauged_multipole} associated to the eigenvalue
  $\lambda_\e=~\!\!\lambda_{\e,n_0}$ satisfying
  \begin{equation}\label{eq:choice-of-v-eps}
    \int_\Omega v_\e^2\,dx=1\quad\text{and}\quad \int_\Omega v_\e v_0\,dx>0.
  \end{equation}
We denote as $\Pi_\e$ the projection onto  the one-dimensional space
spanned by $v_{\e}$, i.e.
\begin{align}\label{eq:projection}
\Pi_\e: \ & L^2(\Omega)\to \mc{\widetilde{H}}_\e,\\
\notag &w \mapsto \ps{L^2(\Omega)}{w}{v_{\e}} v_{\e}.
\end{align}
Theorem \ref{t:main1} is contained in the following result, the proof
of which is inspired by \cite[Appendix~A]{AFHL}.

\begin{theorem}\label{prop_eigenvlaues_with_CT}
Under assumption \eqref{eq:simplicity}, the following asymptotic expansion holds:
\begin{equation}\label{eq_asymptotic_eigenvlaues}
  \la_\e -\la_0=2\mc{E}_\e-2L_\e(v_0)
 +o\big(\|V_\e\|^2_{\mc{H}_\e}\big) \quad \text{as  }\e \to 0^+,
\end{equation}
where $V_\e$ is as Proposition \ref{prop_Ve_existence}.
  Furthermore,
  \begin{align}\label{eq:energy-eigenfunctions}
    &    \|v_0-V_\e-\Pi_\e(v_0-V_\e)\|_{\mathcal
      H_\e}=o\left(\|V_\e\|_{\mathcal H_\e}\right)
      \quad \text{as  }\e \to 0^+,\\
    \label{eq:energy-eigenfunctions2} &
                                        \|v_0-\Pi_\e(v_0-V_\e)\|_{L^2(\Omega)}
                                        =o\left(\|V_\e\|_{\mc{H}_\e}\right)
                                        \quad \text{as  }\e \to 0^+,\\
    &\label{eq:energy-eigenfunctions3}\norm{\Pi_\e(v_0-V_\e)}^2_{L^2(\Omega)}
      =1+o\left(\|V_\e\|_{\mc{H}_\e}\right)  \quad \text{as  }\e \to 0^+.
  \end{align}
\end{theorem}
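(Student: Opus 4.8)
The plan is to apply the abstract spectral estimate of Proposition~\ref{prop_spectral}(iii) to $\mc{F}_\e$ with the trial function $w:=v_0-V_\e$ and the target value $\mu:=1/\la_0$. Note that $w\in\widetilde{\mc{H}}_\e$ since $V_\e-v_0\in\widetilde{\mc{H}}_\e$ by \eqref{eq:Vepsin}, so $w$ is admissible. The cornerstone is an integration-by-parts identity: for every $\phi\in\widetilde{\mc{H}}_\e$,
\[
  \int_{\Omega\setminus\Gamma_\e}\nabla v_0\cdot\nabla\phi\,dx=\la_0\int_\Omega v_0\,\phi\,dx-L_\e(\phi).
\]
This is obtained by integrating $-\Delta v_0=\la_0 v_0$ (valid on $\Omega\setminus\Gamma_0\supset\Omega\setminus\Gamma_\e$) against $\phi$: on the genuine cracks $\Gamma_0^j$ the boundary terms cancel, because $v_0$ satisfies the homogeneous conditions of \eqref{eq_eigenvalue_gauged0} and $\phi$ is antisymmetric there, whereas on the segments $S_\e^j$ (where $v_0$ is regular but $\phi$ jumps with $T^j(\phi)=0$) the two sides combine into $-2\,\nabla v_0\cdot\nu^j\,\gamma_+^j(\phi)$, producing exactly $-L_\e(\phi)$ as in \eqref{def_Le}.

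Combining this with the Euler--Lagrange equation \eqref{eq_Ve}, namely $\int_{\Omega\setminus\Gamma_\e}\nabla V_\e\cdot\nabla\phi=-L_\e(\phi)$, the $L_\e$-terms cancel and I get $q_\e(v_0-V_\e,\phi)=\la_0\ps{L^2(\Omega)}{v_0}{\phi}$ for all $\phi\in\widetilde{\mc{H}}_\e$. Using the definition \eqref{def_Fe} of $\mc{F}_\e$ and $\mu\la_0=1$, this gives the defect identity
\[
  q_\e\big(\mc{F}_\e(v_0-V_\e)-\mu(v_0-V_\e),\phi\big)=-\ps{L^2(\Omega)}{V_\e}{\phi}\qquad\text{for all }\phi\in\widetilde{\mc{H}}_\e.
\]
Testing with $\phi=g:=\mc{F}_\e(v_0-V_\e)-\mu(v_0-V_\e)$ and using Cauchy--Schwarz and Proposition~\ref{prop_poincare} yields $\|g\|_{\mc{H}_\e}\le\sqrt{C_P}\,\|V_\e\|_{L^2(\Omega)}$. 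Since $\|v_0-V_\e\|_{\mc{H}_\e}^2=q_\e(v_0-V_\e)\to\la_0$ (as $V_\e\to0$ in $\mc{H}_1$ by Proposition~\ref{prop_Ve_limit_0}), Proposition~\ref{prop_spectral}(iii) gives $\mathrm{dist}(\mu,\sigma(\mc{F}_\e))=O(\|V_\e\|_{L^2(\Omega)})\to0$. By simplicity of $\la_0$ and \eqref{limit_lae_la0}, for small $\e$ the only element of $\sigma(\mc{F}_\e)$ near $\mu$ is $1/\la_\e$, all others staying at distance $\ge d>0$; hence $|1/\la_\e-1/\la_0|=O(\|V_\e\|_{L^2(\Omega)})$. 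Expanding $v_0-V_\e$ in a $q_\e$-orthonormal eigenbasis of $\mc{F}_\e$ and noting that $\Pi_\e$ coincides with the $q_\e$-projection onto $\mathrm{span}(v_\e)$ (since $q_\e(v_\e,\cdot)=\la_\e\ps{L^2(\Omega)}{v_\e}{\cdot}$), the gap $d$ forces $\|v_0-V_\e-\Pi_\e(v_0-V_\e)\|_{\mc{H}_\e}\le d^{-1}\|g\|_{\mc{H}_\e}=o(\|V_\e\|_{\mc{H}_\e})$ by Proposition~\ref{prop_Ve_L2norm_oL2norm_nabla}; this is \eqref{eq:energy-eigenfunctions}.

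For the expansion I use two algebraic identities. Testing \eqref{eq_Ve} with $V_\e-v_0$ gives $\int_{\Omega\setminus\Gamma_\e}\nabla v_0\cdot\nabla V_\e=\|V_\e\|_{\mc{H}_\e}^2+L_\e(V_\e)-L_\e(v_0)$, whence $q_\e(v_0-V_\e)=\la_0-2\mc{E}_\e+2L_\e(v_0)=:\la_0-X$; evaluating the identity $q_\e(v_0-V_\e,\phi)=\la_0\ps{L^2(\Omega)}{v_0}{\phi}$ at $\phi=V_\e-v_0$ then yields $\la_0\ps{L^2(\Omega)}{v_0}{V_\e}=X$. Since $|X|\le\la_0\|V_\e\|_{L^2(\Omega)}=o(\|V_\e\|_{\mc{H}_\e})$, one finds $\|v_0-V_\e\|_{L^2(\Omega)}^2=1-2X/\la_0+\|V_\e\|_{L^2(\Omega)}^2=1+o(\|V_\e\|_{\mc{H}_\e})$. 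With $r_\e:=(v_0-V_\e)-\Pi_\e(v_0-V_\e)$, \eqref{eq:energy-eigenfunctions} and Proposition~\ref{prop_poincare} give $\|r_\e\|_{L^2(\Omega)}=o(\|V_\e\|_{\mc{H}_\e})$, so \eqref{eq:energy-eigenfunctions3} follows from $\|\Pi_\e(v_0-V_\e)\|_{L^2}^2=\|v_0-V_\e\|_{L^2}^2-\|r_\e\|_{L^2}^2$, and \eqref{eq:energy-eigenfunctions2} from $v_0-\Pi_\e(v_0-V_\e)=V_\e+r_\e$ together with Proposition~\ref{prop_Ve_L2norm_oL2norm_nabla}. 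Finally, writing $v_0-V_\e=c_\e v_\e+r_\e$ with $c_\e^2=1+o(\|V_\e\|_{\mc{H}_\e})$ and $q_\e(v_\e,r_\e)=\la_\e\ps{L^2(\Omega)}{v_\e}{r_\e}=0$, the Rayleigh quotient $R_\e:=q_\e(v_0-V_\e)/\|v_0-V_\e\|_{L^2}^2$ satisfies $R_\e=\la_\e+o(\|V_\e\|_{\mc{H}_\e}^2)$; substituting $q_\e(v_0-V_\e)=\la_0-X$ and the above expansion of the denominator—where the sign flip produced by the normalization turns $\la_0-X$ into $\la_0+X$—gives $\la_\e-\la_0=X+o(\|V_\e\|_{\mc{H}_\e}^2)=2\mc{E}_\e-2L_\e(v_0)+o(\|V_\e\|_{\mc{H}_\e}^2)$, which is stronger than \eqref{eq_asymptotic_eigenvlaues}.

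The main obstacle is the integration-by-parts identity: by Propositions~\ref{prop_v0_asympotic_odd}--\ref{prop_v0_asympotic_even} the function $v_0$ is only $H^1$ near the collision point $0$, with $|\nabla v_0|\le C|x|^{m/2-1}$, and the segments $S_\e^j$ emanate precisely from that point. Justifying the boundary terms requires excising a small disc $D_\delta$, integrating by parts on $(\Omega\setminus\Gamma_\e)\setminus D_\delta$, and verifying that the contribution on $\partial D_\delta$ vanishes as $\delta\to0^+$ thanks to the pointwise bounds, while the traces of $\nabla v_0$ on $S_\e^j$ stay integrable ($\nabla v_0\in L^p(S_\e^j)$ for $p<2$). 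Keeping careful track of the antisymmetry of $\phi$ across $\Gamma_0^j$ versus its jump across $S_\e^j$ is exactly what isolates $L_\e$, so this step is both the most delicate one and the one that explains the appearance of $\mc{E}_\e-L_\e(v_0)$ in the expansion.
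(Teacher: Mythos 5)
Your proposal is correct and follows essentially the same route as the paper's proof: the same auxiliary function $\psi_\e=v_0-V_\e$ satisfying $q_\e(\psi_\e,\phi)=\la_0\ps{L^2(\Omega)}{v_0}{\phi}$, the same key identity $\la_0\int_\Omega V_\e v_0\,dx=2\mc{E}_\e-2L_\e(v_0)$ obtained by integration by parts on the cracks, and the same use of Proposition \ref{prop_spectral} combined with a uniform spectral-gap argument to get \eqref{eq:energy-eigenfunctions}--\eqref{eq:energy-eigenfunctions3}. The only differences are cosmetic: you assemble the final expansion through the Rayleigh quotient of $\psi_\e$ rather than by testing against the normalized projection $\hat v_\e$, and you bound $2\mc{E}_\e-2L_\e(v_0)$ as a single quantity $\la_0\ps{L^2(\Omega)}{v_0}{V_\e}=O(\|V_\e\|_{L^2(\Omega)})$, which lets you absorb all errors into $o(\|V_\e\|_{\mc{H}_\e}^2)$ --- a formally slightly cleaner, and certainly sufficient, form of \eqref{eq_asymptotic_eigenvlaues}.
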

\begin{proof}
  Let $\psi_\e:=v_0-V_\e$.  We recall that we are assuming that $v_0$
  is real-valued and $\|v_0\|_{L^2(\Omega)}=1$.  From \eqref{prob_Ve}
  and \eqref{prob_eigenvlaue_guged0} it follows that
  $\psi_\e \in\mc{\widetilde{H}}_\e$ is a weak solution of
  the problem
\begin{equation*}
\begin{cases}
  -\Delta \psi_\e=\la_{0} v_0,  &\text{in } \Omega \setminus \Gamma_\e,\\
  \psi_\e=0, &\text{on } \partial \Omega,\\
  T^j(\psi_\e)=0, &\text {on }\Gamma^j_\e \text{ for all } j=1,\dots,k_1,\\
  T^j(\nabla \psi_\e\cdot \nu^j)=0, &\text {on }\Gamma^j_\e \text{ for all } j=1,\dots,k_1,\\
  T^j(\psi_\e)=0, &\text {on }S^j_\e \text{ for all } j=k_1+1,\dots, k_1+k_2,\\
  T^j(\nabla\psi_\e\cdot \nu^j)=0, &\text {on }S^j_\e \text{ for all
  } j=k_1+1,\dots,k_1 + k_2,
\end{cases}
\end{equation*}
in the sense that, letting $q_{\e}$ be as in \eqref{def_qe},
\begin{equation}\label{eq_psia}
  q_{\e}(\psi_\e, w)=\la_0\ps{L^2(\Omega)}{v_0}{w}  \quad\text{for all } w
  \in \mc{\widetilde{H}}_\e.
\end{equation}
Let $v_{\e}$ be an eigenfunction of
\eqref{prob_eigenvalue_gauged_multipole} associated to $\la_\e$
chosen as in \eqref{eq:choice-of-v-eps}. Let $\Pi_\e$ be the
projection operator onto the one-dimensional space spanned by $v_{\e}$
defined in \eqref{eq:projection}.  Moreover, we define
\begin{equation}\label{def_hat_va}
\hat v_{\e}:=\frac{\Pi_\e(\psi_{\e})}{\norm{\Pi_\e(\psi_{\e})}_{L^2(\Omega)}}.
\end{equation}
From \eqref{eq_psia} we deduce that 
\begin{equation}\label{proof_eigenvlaues_with_CT:1}
  q_{\e}(\psi_\e, w)-\la_0\ps{L^2(\Omega)}{\psi_\e}{w}=
  \la_0\ps{L^2(\Omega)}{V_\e}{w} \quad\text{for all } w
  \in \mc{\widetilde{H}}_\e.
\end{equation}
Choosing $w=\hat v_{\e}$ in \eqref{proof_eigenvlaues_with_CT:1}, by
\eqref{eq_eigenvalue_gauged_multipole} and \eqref{def_hat_va} we
obtain 
\begin{equation}\label{proof_eigenvlaues_with_CT:2}
  (\la_\e-\la_0)\ps{L^2(\Omega)}{\psi_\e}{\hat v_{\e}}=
  \la_0\ps{L^2(\Omega)}{V_\e}{v_0}+\la_0 \ps{L^2(\Omega)}{V_\e}{\hat v_{\e}-v_0}. 
\end{equation}
We claim that 
\begin{equation}\label{eq_Ve_v0}
\la_0\int_{\Omega}V_\e v_0 \, dx = 2\mc{E}_\e -2L_\e(v_0).
\end{equation}
Indeed, an integration by parts
yields 
\begin{align}\label{proof_eigenvlaues_with_CT:3}
  \int_{\Omega \setminus \Gamma_\e} &\nabla v_0 \cdot \nabla V_{\e}\, dx
                                      -\la_0 \int_{\Omega} v_0 V_\e \, dx\\
  \notag&=\sum_{j=1}^{k_1}\int_{\Gamma^j_0}\Big(-\gamma_+^j(V_\e)\gamma_+^j(\nabla v_0
          \cdot \nu^j)+\gamma_-^j(V_\e)\gamma_-^j(\nabla v_0 \cdot \nu^j)\Big) \,dS \\
                                    &\qquad\notag+\sum_{j=1}^{k_1+k_2}
                                      \int_{S_\e^j}\Big(-\gamma_+^j(V_\e)
                                      \nabla v_0 \cdot \nu^j+\gamma_-^j(V_\e)\nabla v_0 \cdot \nu^j\Big) \,dS\\
                                    &\notag=-2\sum_{j=1}^{k_1+k_2}
                                      \int_{S_\e^j}\gamma_+^j(V_\e)\nabla
                                      v_0
                                      \cdot \nu^j\,dS
                                      +2\sum_{j=1}^{k_1+k_2} \int_{S_\e^j}v_0\nabla v_0 \cdot \nu^j\,dS,
\end{align}
thanks to \eqref{prob_Ve}. Testing \eqref{eq_psia} with $V_{\e}-v_0$
we obtain
\begin{equation*}
  \int_{\Omega \setminus \Gamma_\e}|\nabla(V_{\e}-v_0)|^2=-\la_{0} \int_{\Omega} v_0 (V_{\e}-v_0)\, dx,  
\end{equation*}
and hence, in view of \eqref{eq_eigenvalue_gauged0},
\begin{equation}\label{proof_eigenvlaues_with_CT:5}
  \int_{\Omega \setminus \Gamma_\e}\nabla V_{\e} \cdot \nabla v_0\, dx 
  =\frac{1}{2}\int_{\Omega \setminus \Gamma_\e}|\nabla V_{\e}|^2\, dx
  +\frac{\la_0}{2} \int_{\Omega} v_0 V_{\e}\, dx.  
\end{equation}
Combining \eqref{def_Le}, \eqref{def_Je}, \eqref{eq:defEe},
\eqref{proof_eigenvlaues_with_CT:3} and
\eqref{proof_eigenvlaues_with_CT:5}, we derive \eqref{eq_Ve_v0}.

From \eqref{proof_eigenvlaues_with_CT:2} and \eqref{eq_Ve_v0} we
deduce that, for all $\e\in(0,1)$, 
\begin{equation}\label{proof_eigenvlaues_with_CT:6}
(\la_\e-\la_0)\ps{L^2(\Omega)}{\psi_\e}{\hat v_{\e}}=2\mc{E}_\e -2L_\e(v_0)
+\la_0 \ps{L^2(\Omega)}{V_\e}{\hat v_{\e}-v_0}. 
\end{equation}
Now we study the asymptotics, as $\e \to 0^+$, of each term in
\eqref{proof_eigenvlaues_with_CT:6}. For the sake of clarity, we divide
the rest of the proof into several steps.

\smallskip\noindent\textbf{Step 1.} We claim that  
\begin{equation}\label{eq_step1}
|\la_\e-\la_0|=o\left(\norm{V_\e}_{\mc{H}_\e} \right) \quad \text{as  }\e \to 0^+.
\end{equation}
Letting $\mu_0:=\la_0^{-1}$ and $\mu_\e:=\lambda_\e^{-1}$, since $\la_0$ is simple
and $\la_\e \to \la_0$ by \eqref{limit_lae_la0}, we have 
\begin{equation}\label{proof_eigenvlaues_with_CT:7}
|\la_\e-\la_0| =\lambda_\e \lambda_0 |\mu_\e-\mu_0| \le 2
\lambda_0^2
\mathop{\rm{dist}}(\mu_0,\sigma(\mc{F}_\e))
\le 2 \lambda_0^2 \, \left(\frac{q_\e(\mc{F}_\e(\psi_\e)-\mu_0
    \psi_\e)}{q_\e(\psi_\e)}\right)^{\!1/2},
\end{equation}
where the last inequality is justified by Proposition \ref{prop_spectral}.
Since  $\norm{v_0}_{L^2(\Omega)}=1$, Proposition \ref{prop_Ve_limit_0}
and the Cauchy-Schwarz inequality imply that 
\begin{equation}\label{proof_eigenvlaues_with_CT:8}
q_\e(\psi_\e)=\la_0 +\int_{\Omega \setminus \Gamma_\e} |\nabla V_\e|^2 \, dx
-2\int_{\Omega \setminus \Gamma_\e}\nabla V_\e\cdot \nabla v_0 \, dx= \la_0 +o(1).
\end{equation}
Furthermore,  in view of \eqref{def_Fe} and \eqref{eq_psia}
tested with $\mc{F}_\e(\psi_\e)-\mu_0 \psi_\e$, 
\begin{align*}
  q_\e(\mc{F}_\e(\psi_\e)-\mu_0
  \psi_\e)&=-\ps{L^2(\Omega)}{V_\e}{\mc{F}_\e(\psi_\e)-\mu_0 \psi_\e}
  +\ps{L^2(\Omega)}{v_0}{\mc{F}_\e(\psi_\e)-\mu_0 \psi_\e}\\
  \notag&\quad-q_\e(\mu_0
  \psi_\e,\mc{F}_\e(\psi_\e)-\mu_0 \psi_\e)
  =-\ps{L^2(\Omega)}{V_\e}{\mc{F}_\e(\psi_\e)-\mu_0 \psi_\e}.
\end{align*}
Hence, by Proposition \ref{prop_poincare},
Proposition \ref{prop_Ve_L2norm_oL2norm_nabla} and the Cauchy-Schwarz
inequality we conclude that 
\begin{equation}\label{proof_eigenvlaues_with_CT:10}
  \big(q_\e(\mc{F}_\e(\psi_\e)-\mu_0
  \psi_\e)\big)^{1/2}=o\left(\norm{V_\e}_{\mc{H}_\e} \right)
  \quad\text{as }\e\to0^+.
\end{equation}
Claim \eqref{eq_step1} is proved by combining
\eqref{proof_eigenvlaues_with_CT:7},
\eqref{proof_eigenvlaues_with_CT:8}, and
\eqref{proof_eigenvlaues_with_CT:10}.

\smallskip\noindent\textbf{Step 2.} We claim that  
\begin{equation}\label{eq_step2}
  q_\e(\psi_\e-\Pi_\e\psi_\e)=o\big(\norm{V_\e}_{\mc{H}_\e}^2\big)
  \quad \text{as  }\e \to 0^+.
\end{equation}
Let 
\begin{equation}\label{proof_eigenvlaues_with_CT:11}
\chi_\e:=\psi_\e-\Pi_\e\psi_\e \quad \text{and}  \quad \xi_\e:=\mc{F}_\e(\chi_\e)-\mu_\e\chi_\e.
\end{equation}
By definition we have 
\begin{equation*}
\chi_\e \in N_\e:=\{w \in\mc{\widetilde{H}}_\e:\ps{L^2(\Omega)}{w}{v_{\e}}=0\}
\end{equation*}
and, since $v_{\e}$ is an eigenfunction of
\eqref{prob_eigenvalue_gauged_multipole}, from \eqref{def_Fe} it follows that
$\mc{F}_\e(w) \in N_\e$ for all $w \in  N_\e$. 
Hence the operator 
\begin{equation*}
\widetilde{\mc{F}}_\e:={\mc{F}_\e}\Big|_{N_\e}:N_\e\to N_\e
\end{equation*} 
is well-defined. Furthermore, it is easy to verify that
$\widetilde{\mc{F}}_\e$ satisfies properties (i)-(iii) of
Proposition \ref{prop_spectral} and
$\sigma(\widetilde{\mathcal
  F}_\e)=\sigma(\mc{F}_\e)\setminus\{\mu_\e\}$. In particular, there
exists a constant $K>0$, which does not depends on $\e$, such that
$\big(\mathop{\rm{dist}}(\mu_\e,\sigma(\widetilde{F}_\e))\big)^2 \ge K$. Then, by
\eqref{proof_eigenvlaues_with_CT:11},
\begin{equation}\label{eq:estpsi-pi}
  q_\e(\psi_\e-\Pi_\e\psi_\e)=q(\chi_\e)\le
  \frac{1}{K}\big(\mathop{\rm{dist}}
  (\mu_\e,\sigma(\widetilde{\mathcal F}_\e))\big)^2 q_\e(\chi_\e)
  \le \frac{1}{K}\,q_\e(\widetilde{\mathcal
    F}_\e(\chi_\e)-\mu_\e\chi_\e)
  =\frac{1}{K}\,q_\e(\xi_\e).
\end{equation}
To estimate $q_\e(\xi_\e)$ we use 
\eqref{proof_eigenvlaues_with_CT:1} and
\eqref{eq_eigenvalue_gauged_multipole} tested with $\xi_\e$, thus obtaining
\begin{equation}\label{proof_eigenvlaues_with_CT:16}
q_\e(\chi_\e,\xi_\e)-\la_\e\ps{L^2(\Omega)}{\chi_\e}{\xi_\e}=\la_{0}\ps{L^2(\Omega)}{V_\e}{\xi_\e}
+(\la_0-\la_\e)\ps{L^2(\Omega)}{\psi_\e}{\xi_\e}.
\end{equation}
From \eqref{def_Fe} and \eqref{proof_eigenvlaues_with_CT:16} we
deduce that
\begin{align*}
  q_\e(\xi_\e)&=q_\e(\mc{F}_\e(\chi_\e),\xi_\e)-\mu_\e q_\e(\chi_\e,\xi_\e)
                =-\mu_\e[ q_\e(\chi_\e,\xi_\e)-\la_\e q_\e(\mc{F}_\e(\chi_\e),\xi_\e)]\\
  &=-\frac{\la_{0}}{\la_\e}\ps{L^2(\Omega)}{V_\e}{\xi_\e}-\frac{(\la_0-\la_\e)}{\la_\e}
          \ps{L^2(\Omega)}{\psi_\e}{\xi_\e}.
\end{align*}
From the Cauchy-Schwarz inequality, Proposition \ref{prop_poincare}, and
\eqref{limit_lae_la0} it follows that
\begin{equation}\label{proof_eigenvlaues_with_CT:18}
(q_\e(\xi_\e))^{1/2} 
\le C\left(\norm{V_\e}_{L^2(\Omega)}+|\la_\e-\la_0|\norm{\psi_\e}_{L^2(\Omega)}\right)
\end{equation}
for some constant $C>0$ which does not depend on $\e$. Furthermore,
\eqref{proof_eigenvlaues_with_CT:1} tested with $\psi_\e$,
\eqref{proof_eigenvlaues_with_CT:8}, Proposition \ref{prop_poincare},
and Proposition \ref{prop_Ve_limit_0} yield
\begin{equation*}
  \norm{\psi_\e}_{L^2(\Omega)}^2-1=-\ps{L^2(\Omega)}{V_\e}{\psi_\e}+o(1)=o(1)
  \quad  \text{as } \e \to 0^+.
\end{equation*}
Then \eqref{eq_step2} follows from Proposition
\ref{prop_Ve_L2norm_oL2norm_nabla}, \eqref{eq_step1},
\eqref{eq:estpsi-pi}, and \eqref{proof_eigenvlaues_with_CT:18}.
Estimate \eqref{eq:energy-eigenfunctions} is thereby proved.

\smallskip\noindent\textbf{Step 3.} We claim that 
\begin{equation}\label{eq_step3}
  \norm{v_0-\hat{v}_\e}_{L^2(\Omega)}=o\left(\norm{V_\e}_{\mc{H}_\e}\right)\quad
  \text{ as  }\e \to 0^+.
\end{equation}
By \eqref{def_hat_va}
\begin{equation}\label{proof_eigenvlaues_with_CT:20}
  v_0-\hat{v}_\e=v_0 -\frac{\Pi_\e\psi_\e}{\norm{\Pi_\e\psi_\e}_{L^2(\Omega)}}
  =\frac{1}{\norm{\Pi_\e\psi_\e}_{L^2(\Omega)}}\left(\big(\norm{\Pi_\e\psi_\e}_{L^2(\Omega)}-1
    \big)v_0+v_0-\Pi_\e\psi_\e\right).
\end{equation}
Furthermore, from the definition of $\psi_\e$, Proposition
\ref{prop_Ve_L2norm_oL2norm_nabla}, Proposition \ref{prop_poincare}
and \eqref{eq_step2} it follows that
\begin{equation}\label{proof_eigenvlaues_with_CT:21}
  \norm{v_0-\Pi_\e\psi_\e}_{L^2(\Omega)}\le
  \norm{v_0-\psi_\e}_{L^2(\Omega)}+
  \norm{\psi_\e-\Pi_\e\psi_\e}_{L^2(\Omega)}=o\left(\norm{V_\e}_{\mc{H}_\e}\right)
  \quad \text{as  }\e \to 0^+,
\end{equation}
thus proving \eqref{eq:energy-eigenfunctions2}.
Since $\norm{v_0}_{L^2(\Omega)}=1$,
\eqref{proof_eigenvlaues_with_CT:21} and the Cauchy-Schwarz inequality imply that
\begin{equation}\label{proof_eigenvlaues_with_CT:22}
  \norm{\Pi_\e\psi_\e}^2_{L^2(\Omega)}=
  \norm{v_0-\Pi_\e\psi_\e}^2_{L^2(\Omega)}+
  \norm{v_0}_{L^2(\Omega)}^2-2 \ps{L^2(\Omega)}{v_0-\Pi_\e\psi_\e}{v_0}
  =1+o\left(\norm{V_\e}_{\mc{H}_\e}\right)
\end{equation}
as $\e \to 0^+$, thus proving estimate
  \eqref{eq:energy-eigenfunctions3}.
Combining \eqref{proof_eigenvlaues_with_CT:20},
\eqref{proof_eigenvlaues_with_CT:21} and
\eqref{proof_eigenvlaues_with_CT:22} we obtain \eqref{eq_step3}.

\smallskip\noindent\textbf{Step 4.} We claim that 
\begin{equation}\label{eq_step4}
  \ps{L^2(\Omega)}{\psi_\e}{\hat{v}_\e}=1+o\left(\norm{V_\e}_{\mc{H}_\e}\right)\quad
  \text{ as  }\e \to 0^+.
\end{equation}
Indeed, by \eqref{def_hat_va} we have 
\begin{equation*}
  \ps{L^2(\Omega)}{\psi_\e}{\hat{v}_\e}=\frac{\ps{L^2(\Omega)}{\psi_\e-\Pi_\e\psi_\e}{\Pi_\e\psi_\e}
    +\norm{\Pi_\e\psi_\e}^2_{L^2(\Omega)}}{\norm{\Pi_\e\psi_\e}_{L^2(\Omega)}}.
\end{equation*}
Hence claim \eqref{eq_step4} follows from \eqref{eq_step2} and
\eqref{proof_eigenvlaues_with_CT:22}.

\medskip Putting together Proposition \ref{prop_Ve_L2norm_oL2norm_nabla},
\eqref{proof_eigenvlaues_with_CT:6}, \eqref{eq_step3}, and
\eqref{eq_step4}, we finally obtain
\begin{align*}
  \la_\e-\la_0 
&=\left(1+o\left(\|V_\e\|_{\mc{H}_\e}\right)\right)
    \left(2\mc{E}_\e -2L_\e(v_0)+
    o\big(\|V_\e\|^2_{\mc{H}_\e}\big)\right)\\
  &=
    2\mc{E}_\e -2L_\e(v_0)+ o\big(\|V_\e\|^2_{\mc{H}_\e}\big)+ \big(\mc{E}_\e -L_\e(v_0)\big)
o\big(\|V_\e\|_{\mc{H}_\e}\big)\\
  &=2\mc{E}_\e -2L_\e(v_0)
    +o\big(\|V_\e\|^2_{\mc{H}_\e}\big)
    \quad \text{as  }\e \to 0^+,
\end{align*}
  having used in the last estimate the fact that
  \begin{equation*}
    \mc{E}_\e -L_\e(v_0)=o\big(\|V_\e\|_{\mc{H}_\e}\big)  \quad \text{as  }\e \to 0^+,
  \end{equation*}
due to \eqref{eq_Ve_v0} and  Proposition \ref{prop_Ve_L2norm_oL2norm_nabla}.
Expansion \eqref{eq_asymptotic_eigenvlaues} is thereby proved.
\end{proof}

\section{Blow-up Analysis for \texorpdfstring{$k$}{k} odd}\label{sec_blow_up}
In this section we assume that $k$, and consequently
  $k_1$, are odd and we perform a blow-up analysis for the solution
$V_\e$ of problem \eqref{prob_Ve}.  In order to characterize the
  functional space containing the limit profile, we first need a
Hardy-type inequality,  for the validity of which the assumption
  that $k$ is odd is crucial.

\subsection{A Hardy type inequality for functions jumping on an
    odd number of lines.}
  Let $\widetilde{\mathcal X}$ and $\widetilde{\mc{H}}$ be the
  functional spaces defined in \eqref{eq:defX} and
  \eqref{eq:deftildeH}, respectively.  To prove a Hardy-type
  inequality in $\R^2\setminus D_1$ for functions in
  $\widetilde{\mathcal X}$, we first need the following Hardy
  inequality on annuli for functions jumping on an odd number of
  lines.  For every $r>0$, we define
\begin{equation*}
  \widetilde{\mathcal X}_r:=\{w \in H^1((D_{2r}\setminus D_r)\setminus
  \Gamma_0):
  T^j(w)=0 \text{ on } \Gamma_0^j \text{ for all } j=1,\dots,k_1\}.
\end{equation*}
\begin{lemma}\label{l:hardy-anelli}
Let $k$ and $k_1$ be odd.
There exists a constant $C_H>0$ such that, for every $r>0$ and $w\in
  \widetilde{\mathcal X}_r$,
\begin{equation}\label{eq:hard-annuli-1}
  r^{-2}\int_{D_{2r}\setminus D_r} w^2 \, dx \le C_H
  \int_{(D_{2r}\setminus D_r)\setminus\Gamma_0}|\nabla w|^2 \, dx.
  \end{equation}
  and
  \begin{equation}\label{eq:hard-annuli-2}
    \int_{D_{2r}\setminus D_r} \frac{w^2}{|x|^2} \, dx \le C_H
    \int_{(D_{2r}\setminus D_r)\setminus\Gamma_0}|\nabla w|^2 \, dx.
  \end{equation}
\end{lemma}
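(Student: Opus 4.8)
The plan is to first reduce both inequalities to the single scale $r=1$ by a dilation, and then to prove the resulting Poincar\'e-type inequality on the fixed annulus $D_2\setminus D_1$ by a compactness-and-contradiction argument, whose crucial point is a parity obstruction forcing the only admissible locally constant function to vanish. At the outset I would observe that the two claimed inequalities are equivalent on the annulus: since $r\le|x|\le 2r$ on $D_{2r}\setminus D_r$, we have $\frac{1}{4r^2}\le\frac{1}{|x|^2}\le\frac{1}{r^2}$, so \eqref{eq:hard-annuli-1} and \eqref{eq:hard-annuli-2} differ only by a factor of $4$ and it suffices to establish, say, \eqref{eq:hard-annuli-1}. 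Setting $w_r(y):=w(ry)$ for $y\in D_2\setminus D_1$, a change of variables gives $\int_{D_{2r}\setminus D_r}w^2\,dx=r^2\int_{D_2\setminus D_1}w_r^2\,dy$ and $\int_{(D_{2r}\setminus D_r)\setminus\Gamma_0}|\nabla w|^2\,dx=\int_{(D_2\setminus D_1)\setminus\Gamma_0}|\nabla w_r|^2\,dy$, using that $\Gamma_0$ is invariant under dilations; moreover $w\in\widetilde{\mathcal X}_r$ if and only if $w_r\in\widetilde{\mathcal X}_1$, since the trace constraints are homogeneous. Hence it is enough to prove $\int_{D_2\setminus D_1}w^2\,dx\le C_H\int_{(D_2\setminus D_1)\setminus\Gamma_0}|\nabla w|^2\,dx$ for all $w\in\widetilde{\mathcal X}_1$.

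I would then argue by contradiction, assuming there is a sequence $\{w_n\}\subset\widetilde{\mathcal X}_1$ with $\int_{D_2\setminus D_1}w_n^2\,dx=1$ and $\int_{(D_2\setminus D_1)\setminus\Gamma_0}|\nabla w_n|^2\,dx\to 0$, so that $\{w_n\}$ is bounded in $H^1((D_2\setminus D_1)\setminus\Gamma_0)$. The set $(D_2\setminus D_1)\setminus\Gamma_0$ is the disjoint union of $k_1$ Lipschitz sectors, since the $k_1$ distinct rays $\Gamma_0^j$ cut the annulus into $k_1$ connected components; therefore the embedding into $L^2$ is compact and the trace operators $\gamma_+^j$ and $\gamma_-^j$ are compact into $L^2$ of the corresponding radial segment. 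Up to a subsequence, $w_n\rightharpoonup w$ weakly in $H^1$ and strongly in $L^2$, whence $\int_{D_2\setminus D_1}w^2\,dx=1$ and $\int_{(D_2\setminus D_1)\setminus\Gamma_0}|\nabla w|^2\,dx\le\liminf_n\int|\nabla w_n|^2\,dx=0$, so $\nabla w=0$ and $w$ is constant on each of the $k_1$ sectors. Compactness of the traces lets me pass the constraints $T^j(w_n)=0$ to the limit, so that $w\in\widetilde{\mathcal X}_1$.

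The heart of the matter, and the step where the oddness of $k_1$ is indispensable, is to conclude that such a locally constant $w$ must vanish. Labelling the constant values $c_1,\dots,c_{k_1}$ of $w$ on the sectors taken in cyclic order, and noting that each ray $\Gamma_0^j$ separates two consecutive sectors, the condition $T^j(w)=\gamma_+^j(w)+\gamma_-^j(w)=0$ reads $c_i+c_{i+1}=0$ for consecutive indices. Traversing the full cycle of $k_1$ sectors yields $c_1=(-1)^{k_1}c_1$; since $k_1$ is odd this forces $c_1=0$, hence $c_i\equiv 0$ and $w\equiv 0$, contradicting $\int_{D_2\setminus D_1}w^2\,dx=1$ and proving the inequality.

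The main obstacle is precisely this closing-up-the-cycle argument: if $k_1$ were even, the alternating locally constant function taking values $+c,-c,+c,\dots$ on successive sectors would be a nonzero admissible competitor with vanishing gradient, and the Poincar\'e inequality would genuinely fail. The whole statement thus hinges on odd $k_1$ breaking this sign consistency, which is exactly the structural reason the assumption appears in the hypotheses. The remaining technical care goes into verifying the compact embedding and the compactness of the trace maps on the cut annulus, both of which follow from the Lipschitz regularity of each sector together with standard trace and Rellich theorems.
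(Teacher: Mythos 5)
Your proposal is correct and follows essentially the same route as the paper: reduce to $r=1$ by scaling, argue by contradiction via compactness on the cut annulus, and use the oddness of $k_1$ to rule out a nonzero locally constant limit satisfying the jump conditions $T^j(w)=0$. Your explicit cyclic computation $c_1=(-1)^{k_1}c_1$ is a welcome elaboration of the parity obstruction that the paper only states implicitly.
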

\begin{proof}
Inequality \eqref{eq:hard-annuli-2} is a direct consequence of
  \eqref{eq:hard-annuli-1}.

  Let us first prove
  \eqref{eq:hard-annuli-1} for $r=1$.
We argue by contradiction and assume that there exists a sequence
$\{w_n\}_{n \in \mb{N}} \subset \widetilde{\mathcal X}_1$ such that,
for all $n\in\N$, 
\begin{equation}\label{proof_hardy:2}
  \int_{D_{2}\setminus D_{1}} w_n^2 \, dx=1 \quad\text{and}\quad
  \int_{(D_{2}\setminus D_{1})\setminus \Gamma_0)} |\nabla w_n|^2 \, dx < \frac{1}{n}.
\end{equation}
Hence $\{w_n\}_{n \in \mb{N}}$ is bounded in
$\widetilde{\mathcal X}_1$ and, up to a subsequence,
$w_n \rightharpoonup w$ weakly in $\widetilde{\mathcal X}_1$ for some
$w \in \widetilde{\mathcal X}_1$.  From \eqref{proof_hardy:2} and
  weak lower semi-continuity of the $L^2$-norm, we have 
$\nabla w\equiv 0$ in $(D_{2}\setminus D_{1})\setminus \Gamma_0$;
furthermore, reasoning as in Remark \ref{remark_embbeing_compact}, 
the natural embedding of
$H^1((D_{2}\setminus D_{1})\setminus \Gamma_0))$ into
$L^2(D_{2}\setminus D_{1})$ is compact, hence
$\|w\|_{L^2(D_{2}\setminus D_{1})}=1$.  It follows that $w$ is
constant on each connected component of
$(D_{2}\setminus D_{1})\setminus\Gamma_0$ and $w\not\equiv0$.  Since 
$(D_{2}\setminus D_{1})\setminus\Gamma_0$ has $k_1$ connected components
and $k_1$ is odd, a contradiction arises from the
condition $T^j(w)=0$, which is satisfied  on $\Gamma_0^j$ for all $j=1,\dots,k_1$.

For every $r>0$ and $w\in
  \widetilde{\mathcal X}_r$, it is enough to write the proved
  inequality for the scaled function
  $w(rx)$ to obtain \eqref{eq:hard-annuli-1}.
\end{proof}

We draw attention to the fact that the constant $C_H$ in Lemma
  \ref{l:hardy-anelli} does not depend on $r$. Hence, summing over
  annuli that fill $\R^2\setminus D_1$, we obtain the following result.
\begin{proposition}\label{prop_hardy}
  Let $k$ and $k_1$ be odd. Let
  $C_H>0$ be as in Lemma \ref{l:hardy-anelli}. Then, for every
  $w \in \widetilde{\mc{X}}$,
\begin{equation}\label{ineq_hardy}
  \int_{\R^2\setminus D_1}\frac{w^2}{|x|^2} \, dx \le C_H
  \int_{(\R^2\setminus D_1)\setminus \Gamma_1}|\nabla w|^2 \, dx. 
\end{equation}
Furthermore, there exists a constant $C_H'>0$ such that, for all
  $w \in \widetilde{\mc{X}}$,
\begin{equation}\label{ineq_hardy-inside}
  \int_{D_1}w^2\, dx \le C_H'
  \int_{\R^2\setminus \Gamma_1}|\nabla w|^2 \, dx. 
\end{equation}
\end{proposition}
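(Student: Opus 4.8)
The plan is to prove the two inequalities separately, deriving \eqref{ineq_hardy} by a dyadic summation of the annular estimate \eqref{eq:hard-annuli-2} and then \eqref{ineq_hardy-inside} by a compactness argument that exploits \eqref{ineq_hardy} together with the geometry of $\Gamma_1$.

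For \eqref{ineq_hardy}, first I would observe that on the region $\{|x|>1\}$ the cut sets $\Gamma_0$ and $\Gamma_1$ coincide: since every $a^j\in D_R$ with $R<1$, the segments $S_1^j$ and the forward portions $\{ta^j: 0\le t\le1\}$ all lie inside $D_1$, so for $|x|>1$ both $\Gamma_0$ and $\Gamma_1$ reduce to the union of the backward half-lines $\{ta^j:t<-1/r_j\}$, $j=1,\dots,k_1$. Hence, for every dyadic annulus $A_n:=D_{2^{n+1}}\setminus D_{2^n}$ with $n\ge0$, a function $w\in\widetilde{\mc X}$ restricts to an element of the space $\widetilde{\mathcal X}_{2^n}$ of Lemma \ref{l:hardy-anelli}, and \eqref{eq:hard-annuli-2} applied with $r=2^n$ gives
\[
\int_{A_n}\frac{w^2}{|x|^2}\,dx\le C_H\int_{A_n\setminus\Gamma_1}|\nabla w|^2\,dx,
\]
with $C_H$ independent of $n$. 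Since the $A_n$ are pairwise disjoint and fill $\R^2\setminus D_1$ up to a null set, summing over $n\ge0$ yields \eqref{ineq_hardy}. The only point requiring care here is the identification $\Gamma_0=\Gamma_1$ on $\{|x|>1\}$, which guarantees that the trace constraints defining $\widetilde{\mathcal X}_r$ are met and that the two gradient integrals match.

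For \eqref{ineq_hardy-inside} the difficulty is that the Hardy weight $|x|^{-2}$ degenerates at the origin, so \eqref{ineq_hardy} gives no control on $\int_{D_1}w^2$; this is the main obstacle. I would argue by contradiction: if no constant $C_H'$ worked, there would exist $w_n\in\widetilde{\mc X}$ with $\int_{D_1}w_n^2=1$ and $\int_{\R^2\setminus\Gamma_1}|\nabla w_n|^2\to0$. By \eqref{ineq_hardy} this forces $\int_{\R^2\setminus D_1}w_n^2/|x|^2\to0$, so that on each ball $D_R$ the sequence is bounded in $H^1(D_R\setminus\Gamma_1)$. Cutting $D_R$ along the lines $\Sigma^j$ as in Remark \ref{remark_embbeing_compact} and using the resulting compact embeddings, together with a diagonal extraction, I obtain $w_n\to w$ in $L^2_{\mathrm{loc}}(\R^2)$ and $\nabla w=0$ a.e.\ on $\R^2\setminus\Gamma_1$; thus $w$ is constant on each connected component of $\R^2\setminus\Gamma_1$, it satisfies $\int_{D_1}w^2=1$, and by Fatou combined with \eqref{ineq_hardy} it vanishes a.e.\ on $\R^2\setminus D_1$.

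It remains to conclude $w\equiv0$, which produces the contradiction. The key geometric observation is that $\Gamma_1$ is a union of segments and half-lines all meeting at the origin and hence contains no cycle, so $\R^2\setminus\Gamma_1$ has no bounded connected component; every component is unbounded and therefore meets $\R^2\setminus\overline{D_1}$, where $w=0$. Being locally constant, $w$ must then vanish on every component, i.e.\ $w\equiv0$ on $\R^2$, contradicting $\int_{D_1}w^2=1$. (Alternatively one may run the same contradiction argument on the fixed ball $D_2$: there $\Gamma_1$ cuts $D_2$ into exactly $k_1$ sectors, each reaching $\partial D_2$, and the same reasoning applies.) I expect the verification that $\Gamma_1$ encloses no bounded region—equivalently that each component of $\R^2\setminus\Gamma_1$ reaches outside $D_1$—to be the only delicate point, and I note that the oddness of $k_1$, essential for Lemma \ref{l:hardy-anelli} and hence for \eqref{ineq_hardy}, is what ultimately underlies the whole estimate.
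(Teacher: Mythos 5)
Your proof of \eqref{ineq_hardy} is exactly the paper's argument: restriction of $w\in\widetilde{\mc X}$ to the dyadic annuli $D_{2^{h+1}}\setminus D_{2^h}$ gives elements of $\widetilde{\mathcal X}_{2^h}$, and summing \eqref{eq:hard-annuli-2} with the $r$-independent constant $C_H$ yields the claim; your explicit check that $\Gamma_0$ and $\Gamma_1$ coincide on $\{|x|>1\}$ (because all the $a^j$ lie in $D_R$ with $R<1$) is the point the paper uses implicitly.

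For \eqref{ineq_hardy-inside} you take a genuinely different route. The paper argues directly: integrating $\dive(w^2x)=2w\,\nabla w\cdot x+2w^2$ over the sectors of $D_1$ cut by the lines $\Sigma^j$ and applying the Divergence Theorem gives $\int_{D_1}w^2\le\int_{\partial D_1}w^2\,dS+\int_{D_1\setminus\Gamma_1}|\nabla w|^2$, after which the boundary term is absorbed via the trace inequality on $D_2\setminus D_1$ and \eqref{ineq_hardy}; this produces an explicit constant $C_H'$. Your compactness argument is also correct: a contradicting sequence normalized by $\int_{D_1}w_n^2=1$ with vanishing energy is, thanks to \eqref{ineq_hardy}, bounded in $H^1(D_R\setminus\Gamma_1)$ for every $R$, so a diagonal extraction and the sectorwise Rellich embedding of Remark \ref{remark_embbeing_compact} produce a limit $w$ that is locally constant on $\R^2\setminus\Gamma_1$, has unit $L^2(D_1)$-norm, and vanishes a.e.\ outside $D_1$ by Fatou. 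The concluding geometric step is sound: $\Gamma_1$ is star-shaped with respect to the origin (a ``spider'' of segments and half-lines through $0$), so it contains no loop and every connected component of $\R^2\setminus\Gamma_1$ is unbounded — indeed each component contains an angular sector at infinity between consecutive backward half-lines $\Gamma_0^j$ — hence meets $\R^2\setminus\overline{D_1}$ in a set of positive measure and $w\equiv0$, a contradiction. The trade-off is that your argument is non-quantitative (no explicit $C_H'$) and needs this topological observation, whereas the paper's divergence-theorem computation is shorter, elementary, and constructive; both are valid, and notably your second step does not use the jump conditions except through \eqref{ineq_hardy}, which is consistent with the fact that the oddness of $k_1$ enters only via Lemma \ref{l:hardy-anelli}.
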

\begin{proof}
If $w \in \widetilde{\mc{X}}$, then $w \in \widetilde{\mathcal X}_r$
for all $r>1$. Hence, by \eqref{eq:hard-annuli-2}, 
\begin{align*}
  \int_{\R^2\setminus D_1}\frac{w^2}{|x|^2} \, dx&=
  \sum_{h=0}^{\infty}\int_{D_{2^{h+1}}\setminus D_{2^h}} \frac{w^2 }{|x|^2}\, dx \\
  &\le C_H \sum_{h=0}^{\infty} \int_{(D_{2^{h+1}}\setminus D_{2^h})\setminus\Gamma_0}|\nabla
  w|^2 \, dx
 = C_H\int_{(\R^2\setminus D_1)\setminus \Gamma_1}|\nabla w|^2 \, dx,
\end{align*}
thus proving \eqref{ineq_hardy}.

By integrating the identity 
  $\mathop{\rm div}(u^2x)=2u\nabla u\cdot x+2u^2$ on each subset of
  $D_1$ obtained by cutting along the lines $\Sigma^j$,
  $j=1 \dots ,k_1+k_2$, and using the Divergence Theorem, we can prove
  that, for all  $w \in \widetilde{\mathcal X}$,
  \begin{equation*}
     \int_{D_1}w^2\, dx \le \int_{\partial D_1}w^2\,dS+ 
  \int_{D_1\setminus \Gamma_1}|\nabla w|^2 \, dx. 
  \end{equation*}
Then, by continuity of the trace operator from $H^1((D_{2}\setminus
D_{1})\setminus \Gamma_0)$ into $L^2(\partial D_1)$ and \eqref{ineq_hardy}, there exists a
positive constant $C>0$ such that
\begin{align*}
  \int_{D_1}w^2\, dx &\le C\left(\int_{D_2\setminus
                       D_1}w^2\,dx+\int_{(D_2\setminus D_1)\setminus \Gamma_1}|\nabla w|^2 \, dx\right)+ 
                       \int_{D_1\setminus \Gamma_1}|\nabla w|^2 \, dx\\
                     &\leq 4C \int_{D_2\setminus
                       D_1}\frac{w^2}{|x|^2}\,dx+(C+1)
                       \int_{\R^2\setminus\Gamma_1}|\nabla w|^2 \, dx
                       \leq (4CC_H+C+1)  \int_{\R^2\setminus \Gamma_1}|\nabla w|^2 \, dx,
\end{align*}
this proving \eqref{ineq_hardy-inside}.
\end{proof}

From Proposition \ref{prop_hardy} it follows that
\begin{equation}\label{def_norm_tilde_X}
\norm{w}_{\widetilde{\mathcal X}}:=\left(\int_{\R^2 \setminus \Gamma_1}|\nabla w|^2 \, dx\right)^{\!\!1/2}
\end{equation}
is a norm on $\widetilde{\mathcal X}$ and $\widetilde{\mathcal X}$ is
a Hilbert space with respect to the corresponding scalar
product.  Proposition \ref{prop_hardy} also ensures that the
  restriction operator
  \begin{equation}\label{eq:restriction}
    \widetilde{\mathcal X}\to H^1(D_\rho\setminus\Gamma_1)
  \end{equation}
  is continuous with respect to the norm defined in
  \eqref{def_norm_tilde_X} for every $\rho>0$. Hence,
for every $p\in~\!\![1,+\infty)$,  the trace operators 
\begin{equation}\label{def_traces_R2}
\gamma_+^j:\widetilde{\mathcal X} \to L^p(S^j_1)
\quad\text{and}
\quad \gamma_-^j: \widetilde{\mathcal X}\to L^p(S^j_1)
\end{equation} 
are well-defined and continuous with respect to the norm
$\|\cdot\|_{\widetilde{\mathcal X}}$. In particular, since
$\widetilde{\mc{H}}\subset \widetilde{\mathcal X}$,
\begin{equation}\label{ineq_sup}
  \sup_{w \in \widetilde{\mc{H}}\setminus\{0\}}
  \frac{\|\gamma_+^j(w)\|^2_{L^p(S^j_1)}}{\norm{w}_{\widetilde{\mc{X}}}^2}
  < +\infty \quad \text{ for every } p \in [1,+\infty)\text{ and }j=1,\dots,k_1+k_2.
\end{equation}
Using \eqref{ineq_hardy}, we prove now that functions in $\widetilde{\mathcal H}$ can be
approximated with functions with compact support. To this aim, we
define
\begin{equation*}
  \widetilde{\mc{H}}_c:=\{w \in \widetilde{\mc{H}}:
  \text{ there exists } r>0 \text{ such that } w\equiv0 \text{ on } \R^2 \setminus D_r\}.	
\end{equation*}
\begin{proposition}\label{prop_Hc_dense}
$\widetilde{\mc{H}}_c$ is dense in $\widetilde{\mc{H}}$.
\end{proposition}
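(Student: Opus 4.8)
The plan is to approximate an arbitrary $w\in\widetilde{\mc{H}}$ by the truncations $w_R:=\phi_R w$, where $\phi_R$ is a radial \emph{logarithmic} cut-off supported in $D_{R^2}$ and equal to $1$ on $D_R$. The logarithmic profile is the crucial choice in dimension two: it produces a gradient bound of the form $|\nabla\phi_R|\le \frac{1}{|x|\log R}$ on the annulus $D_{R^2}\setminus D_R$, and it is exactly this extra decay, combined with the Hardy inequality \eqref{ineq_hardy}, that makes the error term vanish. Concretely, I would set
\begin{equation*}
\phi_R(x):=
\begin{cases}
1, & x\in D_R,\\
\frac{2\log R-\log|x|}{\log R}, & x\in D_{R^2}\setminus D_R,\\
0, & x\in\R^2\setminus D_{R^2},
\end{cases}
\end{equation*}
so that $\phi_R\in W^{1,\infty}(\R^2)$, $0\le\phi_R\le1$, and $|\nabla\phi_R(x)|=\frac{1}{|x|\log R}$ for $R<|x|<R^2$.

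First I would verify that $w_R\in\widetilde{\mc{H}}_c$. Since $\phi_R$ is Lipschitz with compact support, $w_R\in H^1(D_r\setminus\Gamma_1)$ for every $r>0$ and $w_R\equiv0$ outside $D_{R^2}$; moreover $\nabla w_R=\phi_R\nabla w+w\nabla\phi_R\in L^2(\R^2\setminus\Gamma_1,\R^2)$, the first summand because $\nabla w\in L^2$ and the second by the Hardy estimate used below. Because $\phi_R$ is continuous across the cracks and radial, one has $T^j(w_R)=\phi_R\vert_{\Sigma^j}\,T^j(w)$, so the conditions $T^j(w)=0$ on $\Gamma_0^j$ and on $S_1^j$ are inherited by $w_R$; hence $w_R\in\widetilde{\mc{H}}_c$.

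It then remains to estimate the error. Using $\nabla(w-w_R)=(1-\phi_R)\nabla w-w\nabla\phi_R$, I would bound
\begin{equation*}
\|w-w_R\|_{\widetilde{\mc{X}}}^2\le 2\int_{\R^2\setminus\Gamma_1}(1-\phi_R)^2|\nabla w|^2\,dx
+2\int_{D_{R^2}\setminus D_R} w^2|\nabla\phi_R|^2\,dx.
\end{equation*}
The first integral is supported in $\R^2\setminus D_R$ and is dominated by $|\nabla w|^2\in L^1(\R^2\setminus\Gamma_1)$, so it tends to $0$ as $R\to+\infty$ by dominated convergence. For the second, the gradient bound and Proposition \ref{prop_hardy} give
\begin{equation*}
\int_{D_{R^2}\setminus D_R} w^2|\nabla\phi_R|^2\,dx
\le\frac{1}{(\log R)^2}\int_{\R^2\setminus D_1}\frac{w^2}{|x|^2}\,dx
\le\frac{C_H}{(\log R)^2}\,\|w\|_{\widetilde{\mc{X}}}^2,
\end{equation*}
which vanishes as $R\to+\infty$. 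Combining the two estimates yields $\|w-w_R\|_{\widetilde{\mc{X}}}\to0$, proving the density.

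The only delicate point is the treatment of the gradient-of-cut-off term: a linear cut-off over an annulus whose width is comparable to its radius would produce a contribution that need not vanish, since in two dimensions one cannot control $\int w^2|\nabla\phi_R|^2$ by energy alone. The logarithmic cut-off circumvents this by replacing the unfavorable $|x|^{-2}$ weight with the extra factor $(\log R)^{-2}$, and the finiteness of $\int_{\R^2\setminus D_1} w^2/|x|^2$ guaranteed by the Hardy inequality \eqref{ineq_hardy} (which in turn relies on the oddness of $k_1$) is precisely what closes the estimate.
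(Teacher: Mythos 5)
Your proof is correct and follows essentially the same strategy as the paper: truncate $w$ by a radial cut-off and use the Hardy inequality \eqref{ineq_hardy} to show that the gradient-of-cut-off term vanishes. The only difference is the profile of the cut-off, and here your closing commentary is mistaken: the paper's own proof uses precisely the linear-scale cut-off $\eta_r$ of \eqref{eq:cut-off}, supported on $D_{2r}\setminus D_r$, that you claim "need not vanish". It does vanish, because on that annulus $|\nabla\eta_r|\le 2/r\le 4/|x|$, so
\begin{equation*}
\int_{\R^2\setminus\Gamma_1}|\nabla\eta_r|^2w^2\,dx\le 16\int_{D_{2r}\setminus D_r}\frac{w^2}{|x|^2}\,dx\longrightarrow 0\quad\text{as }r\to\infty,
\end{equation*}
the right-hand side being the tail of the integral $\int_{\R^2\setminus D_1}w^2/|x|^2\,dx$, which is finite by \eqref{ineq_hardy}. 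What would genuinely fail is a cut-off argument in the absence of a Hardy inequality (e.g.\ in $\mathcal D^{1,2}(\R^2)$, where constants are not excluded); once $w/|x|\in L^2(\R^2\setminus D_1)$ is available, the linear cut-off already suffices, and the logarithmic profile, while perfectly valid and yielding the quantitative rate $(\log R)^{-1}$, is not needed here. So the key ingredient in both arguments is the oddness of $k_1$ through Proposition \ref{prop_hardy}, not the choice of cut-off profile.
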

\begin{proof}
  For every $r>1$, let $\eta_r$ be a cut-off function as in
    \eqref{eq:cut-off}. If $w \in \widetilde{\mc{H}}$, it is clear
  that $\{\eta_r w\}_{r>1}\subset \widetilde{\mc{H}}_c$; moreover, by
  \eqref{ineq_hardy} we have 
  $\frac{w}{|x|}\in L^2(\R^2\setminus D_1)$ and hence
\begin{equation*}
  \int_{\R^2 \setminus \Gamma_1} |\nabla \eta_r|^2 w^2 \, dx \le
  16\int_{D_{2r}\setminus D_r}\frac{w^2}{|x|^2} \, dx  \to 0^+\quad \text{as }r \to \infty.
\end{equation*}
This implies that $\nabla(\eta_r w)\to\nabla w$ in
  $L^2(\R^2\setminus\Gamma_1)$ and hence $\eta_r w\to  w$ in
  $\widetilde{\mathcal H}$.
\end{proof}

\subsection{Limit profile for blown-up potentials}

In this subsection, we introduce and characterize the function $\widetilde V$ 
appearing as limit profile in a blow-up analysis for the potentials $V_\e$.

\begin{proposition}\label{prop_Vtilde_existence}
  There exists a unique solution
  $\widetilde V\in \widetilde{\mathcal X}$ to the minimization problem
  \eqref{eq:minJ}. Furthermore, $\widetilde V$ satisfies
\begin{equation}\label{eq_tilde_V}
  \begin{cases}
\widetilde{V}-\eta \Psi_0\in \widetilde{\mathcal H}\\
\displaystyle{\int_{\R^2\setminus \Gamma_1} \nabla \widetilde{V} \cdot \nabla w \, dx
  =
 -2 \sum_{j=1}^{k_1+k_2} \int_{S^j_1}\nabla\Psi_0 \cdot \nu^j \gamma_+^j
 (w)\, dS}
\quad \text{for all  } w \in \widetilde{\mc{H}}.
\end{cases}
\end{equation}
\end{proposition}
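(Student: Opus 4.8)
The plan is to recognize \eqref{eq:minJ} as the minimization of a strictly convex, coercive quadratic functional over a closed affine subspace of the Hilbert space $\widetilde{\mathcal X}$, and then to derive \eqref{eq_tilde_V} as the associated Euler--Lagrange equation, mirroring the bounded-domain argument of Proposition \ref{prop_Ve_existence}.

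First I would check that the admissible set $\eta\Psi_0+\widetilde{\mathcal H}$ is a nonempty closed affine subset of $\widetilde{\mathcal X}$. Nonemptiness amounts to verifying $\eta\Psi_0\in\widetilde{\mathcal X}$: since $m\geq1$, the gradient $\nabla\Psi_0$ behaves like $r^{\frac m2-1}$ near $0$, so $\nabla(\eta\Psi_0)$ is square integrable and compactly supported in $D_2$, while the sign jump of $f$ across each $\Gamma_0^j$ guarantees $T^j(\eta\Psi_0)=0$ on $\Gamma_0^j$ exactly as for $v_0$; hence $\eta\Psi_0\in\widetilde{\mathcal X}$. Closedness is immediate because $\widetilde{\mathcal H}$ is a closed subspace of $\widetilde{\mathcal X}$, the latter being a Hilbert space for the norm \eqref{def_norm_tilde_X} thanks to Proposition \ref{prop_hardy}. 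Next I would show that $L$ defined in \eqref{def_L} belongs to the dual of $\widetilde{\mathcal X}$: by the H\"older inequality and the trace continuity \eqref{def_traces_R2}, for $p\in(1,2)$ and $p'=\frac{p}{p-1}$ one has $|L(w)|\leq 2\sum_{j=1}^{k_1+k_2}\|\nabla\Psi_0\|_{L^p(S_1^j)}\,\|\gamma_+^j(w)\|_{L^{p'}(S_1^j)}\leq C\|w\|_{\widetilde{\mathcal X}}$, the quantity $\|\nabla\Psi_0\|_{L^p(S_1^j)}$ being finite since $r^{\frac m2-1}\in L^p$ near $0$ for $p<2$.

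With $L$ bounded, $J(w)=\tfrac12\|w\|_{\widetilde{\mathcal X}}^2+L(w)$ is continuous, strictly convex and coercive on $\widetilde{\mathcal X}$, the quadratic term dominating the linear one as $\|w\|_{\widetilde{\mathcal X}}\to\infty$. The direct method of the calculus of variations---or equivalently Lax--Milgram applied on $\widetilde{\mathcal H}$ after the substitution $w=\eta\Psi_0+\varphi$, using that $a(\varphi,\psi)=\int_{\R^2\setminus\Gamma_1}\nabla\varphi\cdot\nabla\psi\,dx$ is coercive by the very definition of $\|\cdot\|_{\widetilde{\mathcal X}}$---then yields a minimizer $\widetilde V$, and strict convexity forces its uniqueness.

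Finally, to obtain \eqref{eq_tilde_V}, for arbitrary $w\in\widetilde{\mathcal H}$ and $t\in\R$ I would note that $\widetilde V+tw$ is admissible and compute $\frac{d}{dt}J(\widetilde V+tw)\big|_{t=0}=\int_{\R^2\setminus\Gamma_1}\nabla\widetilde V\cdot\nabla w\,dx+L(w)=0$, which upon unwrapping the definition \eqref{def_L} of $L$ gives precisely the weak formulation in \eqref{eq_tilde_V}; uniqueness can alternatively be read off here, as any two solutions differ by an element of $\widetilde{\mathcal H}$ that annihilates itself in $a(\cdot,\cdot)$. I expect the only genuinely delicate point to be the coercivity and the very meaningfulness of $\|\cdot\|_{\widetilde{\mathcal X}}$ as a norm on the unbounded domain, both of which rest entirely on the Hardy-type inequality of Proposition \ref{prop_hardy}; the remaining steps are routine convex analysis.
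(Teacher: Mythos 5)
Your proposal is correct and follows essentially the same route as the paper: boundedness of $L$ via $\nabla\Psi_0\in L^p(S^j_1)$ for $p<2$ and the trace continuity \eqref{def_traces_R2}, continuity/convexity/coercivity of $J$ on the closed convex set $\eta\Psi_0+\widetilde{\mathcal H}$ (all resting on the Hardy inequality of Proposition \ref{prop_hardy}), existence of the minimizer by the direct method, the Euler--Lagrange identity \eqref{eq_tilde_V}, and uniqueness by testing the difference of two solutions with itself. The only difference is that you spell out the verification that $\eta\Psi_0\in\widetilde{\mathcal X}$, which the paper leaves implicit.
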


\begin{proof}
Since $\nabla \Psi_0\in L^p(S^j_1)$ for all $p\in[1,2)$, by continuity of the trace 
operators in \eqref{def_traces_R2} we have that the  linear functional $L$ defined in
\eqref{def_L} is well-defined and continuous. Then the convex functional $J$ defined in 
\eqref{def_J}  is continuous and coercive on the closed and convex set
  $\eta\Psi_0+ \widetilde{\mc{H}}= \{w \in \widetilde{\mathcal X}: w -\eta\Psi_0 \in 
  \widetilde{\mc{H}}\}$.
  Therefore
  \eqref{eq:minJ} admits a solution $\widetilde{V}$, which
  satisfies \eqref{eq_tilde_V}.
  
  If $\widetilde{V}_1$ and $\widetilde{V}_2$ are solutions of
\eqref{eq_tilde_V}, then we may
take the difference between 
\eqref{eq_tilde_V} for $\widetilde{V}_1$ and \eqref{eq_tilde_V} for
$\widetilde{V}_2$,  both tested with
$\widetilde{V}_1-\widetilde{V}_2\in\widetilde{\mathcal H}$, and conclude that
$\widetilde{V}_1=\widetilde{V}_2$ thanks to \eqref{ineq_hardy}. Hence
$\widetilde{V}$ is the unique solution to \eqref{eq_tilde_V}.
\end{proof}

\subsection{An equivalent characterization of 
\texorpdfstring{$\mathcal E_\e$}{Ee}}\label{sec_characterization_capacity_torsion}
In this subsection, we obtain an equivalent characterization of the
energy $\mc{E}_\e$ introduced in \eqref{eq:defEe}, which will be used
to improve \eqref{ineq_Ee-sotto} and obtain an optimal estimate for
$|\mc{E}_\e|$ in the case $k$ odd.

\begin{proposition}\label{prop_characterization_Ee}
Let $ \eta_\e\in C^\infty_{\rm c}(\R^2)$ be  a cut-off function as in
  \eqref{eq:cut-off} with $r=\e$. Then, for every $\e \in (0,1]$,
\begin{equation}\label{eq_Ee_sup}
\mc{E}_\e=-\frac{1}{2} \sup_{w\in \widetilde{\mc{H}}_\e\setminus\{0\}} 
\frac{\left(\int_{\Omega \setminus \Gamma_\e}\nabla w \cdot \nabla(\eta_\e v_0)+L_\e(w)\right)^2}{\int_{\Omega \setminus \Gamma_\e}|\nabla w|^2 \, dx}
+\frac{1}{2}\int_{\Omega \setminus \Gamma_0}|\nabla (\eta_\e v_0)|^2 \, dx +L_\e(v_0).
\end{equation} 
\end{proposition}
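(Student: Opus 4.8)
The plan is to reduce the constrained minimization defining $\mc{E}_\e=J_\e(V_\e)$ to an \emph{unconstrained} quadratic minimization over the Hilbert space $\widetilde{\mc{H}}_\e$, whose optimal value is a squared dual norm, and then to recognize that dual norm as the supremum in \eqref{eq_Ee_sup}. The pivotal geometric observation is that, since $r_j<R<1$ for every $j$, each segment $S^j_\e$ is contained in the disk $D_\e$ where $\eta_\e\equiv1$.

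First I would exploit this to replace $v_0$ by $\eta_\e v_0$ in the affine constraint. I claim $(1-\eta_\e)v_0\in\widetilde{\mc{H}}_\e$: it lies in $\mc{H}_\e$; its trace $T^j$ vanishes on $\Gamma^j_0$ because $T^j(v_0)=0$ there and $\eta_\e$ is continuous across the line; and it vanishes on each $S^j_\e$ because $1-\eta_\e\equiv0$ there. Hence the affine sets coincide, $v_0+\widetilde{\mc{H}}_\e=\eta_\e v_0+\widetilde{\mc{H}}_\e$, so that $\mc{E}_\e=\min_{\phi\in\widetilde{\mc{H}}_\e}J_\e(\eta_\e v_0+\phi)$. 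The same observation gives two further simplifications: since $\gamma^j_+(\eta_\e v_0)=v_0$ on $S^j_\e$, one has $L_\e(\eta_\e v_0)=L_\e(v_0)$; and since $\eta_\e v_0$ has no jump across the segments $S^j_\e$ (which are not part of $\Gamma_0$), its gradient is insensitive to the cut, so $\int_{\Omega\setminus\Gamma_\e}|\nabla(\eta_\e v_0)|^2\,dx=\int_{\Omega\setminus\Gamma_0}|\nabla(\eta_\e v_0)|^2\,dx$.

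Next I would expand $J_\e(\eta_\e v_0+\phi)$ by bilinearity of the Dirichlet form and linearity of $L_\e$, obtaining
\[
  J_\e(\eta_\e v_0+\phi)=\tfrac12\!\int_{\Omega\setminus\Gamma_0}\!|\nabla(\eta_\e v_0)|^2\,dx+L_\e(v_0)+\ell_\e(\phi)+\tfrac12\|\phi\|_{\mc{H}_\e}^2,
\]
where $\ell_\e(\phi):=\int_{\Omega\setminus\Gamma_\e}\nabla(\eta_\e v_0)\cdot\nabla\phi\,dx+L_\e(\phi)$ is a bounded linear functional on $\widetilde{\mc{H}}_\e$ (bounded since $\eta_\e v_0\in\mc{H}_\e$ and $L_\e\in\mc{H}_1^*$ by Proposition \ref{prop_Le}). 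The minimization of $\phi\mapsto\tfrac12\|\phi\|_{\mc{H}_\e}^2+\ell_\e(\phi)$ over $\widetilde{\mc{H}}_\e$ is then standard: by the Riesz representation theorem there is a unique $z_\e\in\widetilde{\mc{H}}_\e$ with $\ell_\e(\phi)=(\phi,z_\e)_{\mc{H}_\e}$ and $\|z_\e\|_{\mc{H}_\e}=\|\ell_\e\|_{(\widetilde{\mc{H}}_\e)^*}$; completing the square yields $\tfrac12\|\phi\|^2+\ell_\e(\phi)=\tfrac12\|\phi+z_\e\|^2-\tfrac12\|z_\e\|^2$, so the minimum equals $-\tfrac12\|z_\e\|_{\mc{H}_\e}^2=-\tfrac12\sup_{w\neq0}\ell_\e(w)^2/\|w\|_{\mc{H}_\e}^2$. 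Substituting this back produces exactly \eqref{eq_Ee_sup}. (The minimizer $\phi=-z_\e$ recovers $V_\e=\eta_\e v_0-z_\e$, consistent with Proposition \ref{prop_Ve_existence}, and well-posedness is ensured by the Poincaré inequality of Proposition \ref{prop_poincare}.)

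The only genuinely delicate point — and hence the main obstacle — is the trace bookkeeping of the first paragraph: making rigorous that $\eta_\e\equiv1$ on every $S^j_\e$ simultaneously forces $(1-\eta_\e)v_0\in\widetilde{\mc{H}}_\e$, leaves $L_\e$ unchanged, and leaves the Dirichlet energy of $\eta_\e v_0$ untouched across the cracks. Once these three facts are in place, the remainder is the routine completing-the-square computation for a quadratic functional on a Hilbert space.
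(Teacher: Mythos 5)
Your proof is correct and follows essentially the same route as the paper: both hinge on the observation that the affine constraint set can be rewritten as $\eta_\e v_0+\widetilde{\mc{H}}_\e$, after which one minimizes the resulting quadratic-plus-linear functional over $\widetilde{\mc{H}}_\e$ and identifies the minimum as minus one half the squared dual norm of the linear part. The only cosmetic difference is that you evaluate that minimum via Riesz representation and completing the square, whereas the paper splits the infimum into directions $w$ and scalars $t\ge 0$ and minimizes the one-dimensional quadratic — your variant is, if anything, slightly cleaner since it avoids the sign bookkeeping in the restriction to $t\ge 0$.
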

\begin{proof}
Since $\mathcal E_\e$ is the infimum  in
\eqref{minimization_capacity_torsion} and $\varphi-v_0\in
\widetilde{\mathcal H}_\e$ if and only if $\varphi-\eta_\e v_0\in
\widetilde{\mathcal H}_\e$, we have 
\begin{equation}\label{proof_characterization_Ee:1}
  \mc{E}_\e=\inf_{w \in \widetilde{\mc{H}}_\e}J_\e(w+\eta_\e v_0)
  =\inf_{w \in \widetilde{\mc{H}}_\e\setminus\{0\}}
  \bigg(\inf_{t \in [0,+\infty)}J_\e(tw+\eta_\e v_0)\bigg).
\end{equation}
Moreover, by \eqref{def_Je}
\begin{align*}
  J_\e(tw+\eta_\e v_0)&=\frac{t^2}{2}\int_{\Omega \setminus
    \Gamma_\e}|\nabla w|^2 \, dx
  +t\left(\int_{\Omega \setminus \Gamma_\e}\nabla w \cdot
                        \nabla(\eta_\e v_0)\, dx
                        +L_\e(w)\right)\\
&\quad  +\frac{1}{2}\int_{\Omega \setminus \Gamma_0}|\nabla (\eta_\e v_0)|^2
  \, dx +L_\e( v_0).
\end{align*}
Hence, for every $w \in \widetilde{\mc{H}}_\e\setminus\{0\}$, 
\begin{equation*}
\inf_{t \in [0,+\infty)}J_\e(tw+\eta_\e
  v_0)=-\frac12\frac{\big(\int_{\Omega \setminus \Gamma_\e}
    \nabla w \cdot \nabla(\eta_\e v_0) \, dx+L_\e(w)\big)^2}{\int_{\Omega
      \setminus \Gamma_\e}
    |\nabla w|^2 \, dx}+\frac{1}{2}\int_{\Omega \setminus
      \Gamma_0}|\nabla (\eta_\e v_0)|^2 \, dx
    +L_\e(v_0),
\end{equation*}
which implies \eqref{eq_Ee_sup} in view of
\eqref{proof_characterization_Ee:1}.
\end{proof}

\begin{proposition}\label{prop_|Ee|}

Let $k$ and $k_1$ be odd and $m \in \mathbb{N}$ be as in Proposition
  \ref{prop_v0_asympotic_odd} for $v=v_0$. Then 
\begin{equation*}
\mc{E}_\e=O\left(\e^m \right) \quad \text{as } \e \to 0^+. 
\end{equation*}
\end{proposition}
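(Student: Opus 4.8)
The plan is to start from the variational characterization of $\mc{E}_\e$ provided by Proposition \ref{prop_characterization_Ee}. In formula \eqref{eq_Ee_sup} the two explicit terms are immediately $O(\e^m)$: since $\eta_\e v_0$ is supported in $D_{2\e}$, the pointwise bounds \eqref{ineq_v_0_pointiweise_bound_odd}, namely $|v_0(x)|\le C|x|^{m/2}$ and $|\nabla v_0(x)|\le C|x|^{m/2-1}$, give $\int_{D_{2\e}}|\nabla v_0|^2\,dx=O(\e^m)$, $\int_{D_{2\e}\setminus D_\e}v_0^2|\nabla\eta_\e|^2\,dx=O(\e^m)$ (using $|\nabla\eta_\e|\le 2/\e$), and $|L_\e(v_0)|\le C\sum_j\int_{S_\e^j}|x|^{m-1}\,dS=O(\e^m)$. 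Everything therefore reduces to showing that the subtracted supremum in \eqref{eq_Ee_sup} is also $O(\e^m)$; equivalently, setting $\Phi_\e(w):=\int_{\Omega\setminus\Gamma_\e}\nabla w\cdot\nabla(\eta_\e v_0)\,dx+L_\e(w)$, that
\[
|\Phi_\e(w)|\le C\,\e^{m/2}\,\|w\|_{\mc H_\e}\qquad\text{for all }w\in\widetilde{\mc H}_\e,
\]
with $C$ independent of $\e$, since then $\mc E_\e=-\tfrac12 S_\e+O(\e^m)$ with $0\le S_\e\le C\e^m$.

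The crucial step is a blow-up (scaling) argument. Given $w\in\widetilde{\mc H}_\e$, extended by zero outside $\Omega$, I set $\tilde w(y):=w(\e y)$ and $\tilde v_0(y):=v_0(\e y)$. Since $\Gamma_\e=\e\Gamma_1$ and $\eta_\e(\e\,\cdot)=\eta$, the change of variables $x=\e y$ gives $\|w\|_{\mc H_\e}=\|\nabla\tilde w\|_{L^2(\R^2\setminus\Gamma_1)}=\|\tilde w\|_{\widetilde{\mc X}}$ and, with no residual power of $\e$ in front (the factor $\e^{-1}$ from $\nabla v_0$ being compensated by the Jacobian on the segments),
\[
\Phi_\e(w)=\int_{\R^2\setminus\Gamma_1}\nabla\tilde w\cdot\nabla(\eta\tilde v_0)\,dy
+2\sum_{j=1}^{k_1+k_2}\int_{S_1^j}\nabla\tilde v_0\cdot\nu^j\,\gamma_+^j(\tilde w)\,dS.
\]
The jump conditions $T^j(w)=0$ on $\Gamma_\e^j$ and $S_\e^j$ rescale to $T^j(\tilde w)=0$ on $\Gamma_1^j\supset\Gamma_0^j$ and on $S_1^j$, so that $\tilde w\in\widetilde{\mc X}$ (in fact $\tilde w\in\widetilde{\mc H}$).

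It remains to bound both pieces by $C\e^{m/2}\|\tilde w\|_{\widetilde{\mc X}}$. For the volume term I use Cauchy--Schwarz together with $\|\nabla(\eta\tilde v_0)\|_{L^2}\le C\e^{m/2}$, which follows from $|\tilde v_0(y)|\le C\e^{m/2}|y|^{m/2}$, $|\nabla\tilde v_0(y)|\le C\e^{m/2}|y|^{m/2-1}$ and integrability of $|y|^{m-2}$ near $0$ (valid since $m\ge1$). For the boundary term I fix $p\in(1,2)$ and apply Hölder: $\|\nabla\tilde v_0\|_{L^p(S_1^j)}\le C\e^{m/2}$ (the exponent $p(\tfrac m2-1)>-1$ is integrable precisely because $p<2$ when $m=1$), whereas $\|\gamma_+^j(\tilde w)\|_{L^{p'}(S_1^j)}\le C\|\tilde w\|_{\widetilde{\mc X}}$ by continuity of the trace operators \eqref{def_traces_R2}. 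Combining yields $|\Phi_\e(w)|\le C\e^{m/2}\|w\|_{\mc H_\e}$, hence $S_\e\le C\e^m$ and $\mc E_\e=O(\e^m)$.

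The heart of the matter, and the only place where oddness of $k$ enters, is the trace bound $\|\gamma_+^j(\tilde w)\|_{L^{p'}(S_1^j)}\le C\|\tilde w\|_{\widetilde{\mc X}}$ expressed purely in terms of the Dirichlet norm $\|\nabla\tilde w\|_{L^2(\R^2\setminus\Gamma_1)}$: this continuity holds only because $\|\cdot\|_{\widetilde{\mc X}}$ is a genuine norm controlling $\tilde w$, which is exactly the content of the Hardy-type inequality of Proposition \ref{prop_hardy}, valid when $k_1$ is odd. Without it one recovers merely the weaker estimate of Proposition \ref{prop_Le}, which loses a factor $\e^{-1/2}$ and gives only $\e^{m-1}$ in \eqref{ineq_Ee-sotto}; the scaling-invariant Hardy inequality is precisely what upgrades this to the sharp order $\e^m$. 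I expect the delicate points to be the verification that $\tilde w\in\widetilde{\mc X}$ with an $\e$-independent trace constant and the careful scaling bookkeeping, the remaining estimates being routine consequences of \eqref{ineq_v_0_pointiweise_bound_odd}.
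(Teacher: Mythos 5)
Your proposal is correct and follows essentially the same route as the paper's proof: both start from the characterization of $\mc{E}_\e$ in Proposition \ref{prop_characterization_Ee}, estimate the two explicit terms directly via \eqref{ineq_v_0_pointiweise_bound_odd}, and control the supremum term by combining H\"older's inequality with a rescaling to the unit scale and the trace bound \eqref{ineq_sup}, whose validity rests on the Hardy inequality of Proposition \ref{prop_hardy} for $k_1$ odd. The only (cosmetic) difference is that you estimate the full linear functional $\Phi_\e$ at once after scaling, whereas the paper first absorbs the volume term $\int\nabla w\cdot\nabla(\eta_\e v_0)$ by Cauchy--Schwarz and then rescales only the $L_\e$ part; your scaling bookkeeping and the identification of where oddness of $k$ enters are both accurate.
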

\begin{proof}
  From Proposition \ref{prop_characterization_Ee} and the
  Cauchy-Schwarz inequality it follows that
\begin{align}\label{proof_|Ee|:2}
  |\mc{E}_\e| &\le \frac{1}{2} \sup_{w\in\widetilde{\mc{H}}_\e\setminus\{0\}} 
                \frac{\left(\int_{\Omega \setminus \Gamma_\e}\nabla w \cdot \nabla(\eta_\e v_0)+L_\e(w)\right)^2}{\int_{\Omega \setminus \Gamma_\e}|\nabla w|^2 \, dx}
  +\frac{1}{2}\int_{\Omega \setminus \Gamma_0}|\nabla (\eta_\e v_0)|^2 \, dx +|L_\e(v_0)|\\
 \notag &\le \sup_{w\in\widetilde{\mc{H}}_\e\setminus\{0\}} \frac{|L_\e(w)|^2}{\int_{\Omega \setminus \Gamma_\e}|\nabla w|^2 \, dx} 
  +\frac32\int_{\Omega \setminus \Gamma_0}|\nabla (\eta_\e
  v_0)|^2 \, dx +|L_\e(v_0)|.
\end{align}
From \eqref{ineq_v_0_pointiweise_bound_odd} and \eqref{def_Le} it follows that
\begin{equation}\label{proof_|Ee|:3}
  \int_{\Omega \setminus \Gamma_0}|\nabla (\eta_\e v_0)|^2 \, dx\leq
  2\int_{D_{2\e}}|\nabla \eta_\e|^2 v_0^2 \, dx+2 \int_{D_{2\e}
    \setminus \Gamma_0}  |\nabla v_0|^2 \, dx =O(\e^m)\quad\text{as
  }\e\to0^+
\end{equation}
and
\begin{equation}\label{proof_|Ee|:4}
|L_\e(v_0)| =O(\e^m)\quad\text{as
  }\e\to0^+.
\end{equation}
By \eqref{def_Le}, the H\"older inequality, and
\eqref{ineq_v_0_pointiweise_bound_odd}, for every
$p\in(1,2)$ and $p'=\frac p{p-1}$ we have 
\begin{align}\label{eq:stiLe}
  \sup_{w\in\widetilde{\mc{H}}_\e\setminus\{0\}}
  &\frac{|L_\e(w)|^2}{\int_{\Omega \setminus \Gamma_\e}|\nabla w|^2 \,
    dx} \leq 4(k_1+k_2)
    \sup_{w\in\widetilde{\mc{H}}_\e\setminus\{0\}}\frac{\sum_{j=1}^{k_1+k_2}
    \left(\int_{S^j_\e}|\nabla v_0|  |\gamma_+^j (w)|\,
    dS\right)^2}{\int_{\Omega \setminus \Gamma_\e}|\nabla w|^2 \,
    dx}\\
  \notag&\leq 4(k_1+k_2)\sum_{j=1}^{k_1+k_2}
    \left(\int_{S^j_\e}|\nabla v_0|^{p}\, dS\right)^{\!\!2/p}
    \sup_{w\in \widetilde{\mc{H}}_\e\setminus\{0\}}
    \frac{\|\gamma_+^j (w)\|^2_{L^{p'}(S_\e^j)}}{\int_{\Omega
    \setminus \Gamma_\e}|\nabla w|^2\, dx} \\
  \notag&=O\big(\e^{m-2+\frac2p}\big)\sum_{j=1}^{k_1+k_2}
    \sup_{w\in \widetilde{\mc{H}}_\e\setminus\{0\}}
    \frac{\|\gamma_+^j (w)\|^2_{L^{p'}(S_\e^j)}}{\int_{\Omega
    \setminus \Gamma_\e}|\nabla w|^2\, dx}.
\end{align}
A change of variables and \eqref{ineq_sup} yield 
\begin{equation*}
  \sup_{w\in \widetilde{\mc{H}}_\e\setminus\{0\}}
    \frac{\|\gamma_+^j (w)\|^2_{L^{p'}(S_\e^j)}}{\int_{\Omega
        \setminus \Gamma_\e}|\nabla w|^2\, dx}\leq
      \e^{2/p'}\sup_{v\in \widetilde{\mc{H}}\setminus\{0\}}
    \frac{\|\gamma_+^j
      (v)\|^2_{L^{p'}(S_1^j)}}{\norm{v}_{\widetilde{\mc{X}}}^2}
    =O(\e^{2/p'})\quad\text{as
    }\e\to0^+,
\end{equation*}
hence from  \eqref{eq:stiLe} we deduce that
\begin{equation}\label{proof_|Ee|:5}
  \sup_{w\in\widetilde{\mc{H}}_\e\setminus\{0\}}
  \frac{|L_\e(w)|^2}{\int_{\Omega \setminus \Gamma_\e}|\nabla w|^2 \,
    dx} =O(\e^m)\quad\text{as
    }\e\to0^+.
\end{equation}
The conclusion follows by combining estimates \eqref{proof_|Ee|:2},
\eqref{proof_|Ee|:3}, \eqref{proof_|Ee|:4}, and \eqref{proof_|Ee|:5}.
\end{proof}

\subsection{Blow-up analysis}\label{sec_blow_up_subsection}
Let $k$ and $k_1$ be odd and $m \in \mathbb{N}$ be as in Proposition
\ref{prop_v0_asympotic_odd} for $v=v_0$.
For every  $\e\in (0,1]$, letting $V_\e$ be as Proposition
\ref{prop_Ve_existence}, we define 
\begin{equation}\label{def_tilde_Ve_tilde_v_0e}
  \widetilde{V}_\e(x):=\e^{-\frac{m}{2}}V_\e(\e x) \quad \text{ and }
  \quad \widetilde{V}_{0,\e}(x):=\e^{-\frac{m}{2}}v_0(\e x).
\end{equation}
Extending trivially $\widetilde{V}_\e$ and $\widetilde{V}_{0,\e}$ in
  $\R^2\setminus\Omega$, we have 
$\widetilde{V}_\e, \widetilde{V}_{0,\e} \in \widetilde{\mathcal
  X}$.  Moreover
  \begin{equation}\label{eq:3}
  \widetilde{V}_\e-\widetilde{V}_{0,\e} \in \widetilde{\mathcal
    H}
\end{equation}
and,
by \eqref{eq_Ve} and Proposition \ref{prop_Hc_dense},
\begin{equation}\label{eq_tilde_Ve}
  \int_{\R^2\setminus \Gamma_1} \nabla \widetilde{V}_\e \cdot \nabla w
  \,dx
  = -2 \sum_{j=1}^{k_1+k_2} \int_{S^j_1}\nabla \widetilde{V}_{0,\e} \cdot
  \nu^j \gamma_+^j (w)\, dS
  \quad\text{for all } w \in \widetilde{\mc{H}}.
\end{equation}
Let $\Psi_0$ be as in \eqref{def_Psi}.
From \eqref{limit_v_0_odd} it follows that, for every $j=1,\dots,k_1+k_2$,
  \begin{equation}\label{eq:cd1}
    \nabla \widetilde{V}_{0,\e}(x) \cdot  \nu^j 
    \to  \nabla \Psi_0 (x)\cdot  \nu^j
  \end{equation}
  as $\e\to0^+$ for every $x\in S^j_1$, with
  \begin{align}\label{eq:nablaPsi0}
    &\nabla \Psi_0 (x) \cdot  \nu^j \\
    &=
    \begin{cases}
      \beta\frac m2|x|^{\frac m2-1} f(\alpha^j)
      \cos\left(\tfrac{m}{2}(\alpha^j-\alpha_0)\right),&\text{if
      }j=1,\dots,k_1,\\
      \beta\frac m2|x|^{\frac m2-1} f(\alpha^j)
      \cos\left(\tfrac{m}{2}(\alpha^j-\alpha_0)\right),&\text{if
      }x\in (S^j_1)', \ j=k_1+1,\dots,k_1+k_2,\\
      \notag-\beta\frac m2|x|^{\frac m2-1} f(\alpha^j+\pi)
      \cos\left(\tfrac{m}{2}(\alpha^j+\pi-\alpha_0)\right),&\text{if
      }x\in (S^j_1)'', \ j=k_1+1,\dots,k_1+k_2,
    \end{cases}
  \end{align}
  where, for every $j\in k_1,\dots,k_1+k_2$, 
  \begin{equation*}
    (S^j_1)':=\{t a^j:t\in [0,1]\},\quad (S^j_1)'':=\{t a^{j+k_2}:t\in [0,1]\}.
  \end{equation*}
  On the other hand, \eqref{ineq_v_0_pointiweise_bound_odd} implies
  that
  \begin{equation}\label{eq:cd2}
    |\nabla \widetilde{V}_{0,\e} (x)|\leq C|x|^{\frac m2-1}\quad\text{in }\R^2\setminus\Gamma_0.
  \end{equation}
  From \eqref{eq:cd1} and \eqref{eq:cd2} we deduce that, for every
  $j=1,\dots,k_1+k_2$ and $p\in[1,2)$,
  \begin{equation}\label{eq:conLp}
    \nabla \Psi_0 \cdot  \nu^j\in  L^p(S^j_1)\quad\text{and}\quad
    \nabla \widetilde{V}_{0,\e} \cdot  \nu^j \to  \nabla \Psi_0 \cdot  \nu^j
    \quad\text{in $L^p(S^j_1)$ as $\e\to0^+$}.
  \end{equation}
  Furthermore, by \eqref{eq:convH1} we know that
  \begin{equation}\label{eq:conH1-0}
    \widetilde{V}_{0,\e}\to \Psi_0 \quad\text{in }H^1(D_\rho\setminus\Gamma_0)\text{ for all }\rho>0.
  \end{equation}

\begin{proposition}\label{prop_blow_up}
Let $k$ and $k_1$ be odd and $m \in \mathbb{N}$ be as in Proposition
\ref{prop_v0_asympotic_odd} for $v=v_0$. 
For every  $\e\in (0,1]$, let $V_\e$ be as Proposition
\ref{prop_Ve_existence} and $\widetilde{V}_\e$ as in \eqref{def_tilde_Ve_tilde_v_0e}.
Then     
\begin{equation}\label{limit_blow_up}
\widetilde{V}_\e \to\widetilde{V} \quad \text{strongly in }
\widetilde{\mathcal X} \text{ as } \e \to 0^+,
\end{equation}
where $\widetilde{V} \in \widetilde{\mathcal X}$ is the unique solution to 
 the minimization problem \eqref{eq:minJ} (and then to 
\eqref{eq_tilde_V}, see Proposition \ref{prop_Vtilde_existence}). 
\end{proposition}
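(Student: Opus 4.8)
The plan is to realize $\widetilde V_\e$ as the minimizer of a rescaled functional and then run a direct-method argument: uniform boundedness, weak convergence to a solution of \eqref{eq_tilde_V}, and finally strong convergence through convergence of energies. First I would record the scaling identities. Writing $\widetilde L_\e(w):=2\sum_{j=1}^{k_1+k_2}\int_{S^j_1}\nabla\widetilde V_{0,\e}\cdot\nu^j\,\gamma_+^j(w)\,dS$ and $\widetilde J_\e(w):=\tfrac12\|w\|_{\widetilde{\mathcal X}}^2+\widetilde L_\e(w)$, the change of variables $x=\e y$ in \eqref{eq:defEe} gives $\mathcal E_\e=\e^m\widetilde J_\e(\widetilde V_\e)$, and Proposition \ref{prop_Ve_existence} rescales to the statement that $\widetilde V_\e$ minimizes $\widetilde J_\e$ over $\widetilde V_{0,\e}+\widetilde{\mathcal H}$, with Euler--Lagrange equation \eqref{eq_tilde_Ve}. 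By Proposition \ref{prop_|Ee|} we have $\widetilde J_\e(\widetilde V_\e)=\e^{-m}\mathcal E_\e=O(1)$; on the other hand, fixing $p\in(1,2)$ with conjugate exponent $p'$, \eqref{eq:conLp} bounds $\|\nabla\widetilde V_{0,\e}\cdot\nu^j\|_{L^p(S^j_1)}$ uniformly, so by the trace continuity \eqref{def_traces_R2} on $\widetilde{\mathcal X}$ we obtain $|\widetilde L_\e(\widetilde V_\e)|\le C\|\widetilde V_\e\|_{\widetilde{\mathcal X}}$. Inserting this into $\tfrac12\|\widetilde V_\e\|_{\widetilde{\mathcal X}}^2=\widetilde J_\e(\widetilde V_\e)-\widetilde L_\e(\widetilde V_\e)$ yields a uniform bound on $\|\widetilde V_\e\|_{\widetilde{\mathcal X}}$.

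Next I would extract a weakly convergent subsequence $\widetilde V_\e\rightharpoonup\widetilde V^{\ast}$ in $\widetilde{\mathcal X}$ and identify the limit. Passing to the limit in \eqref{eq_tilde_Ve} is immediate: the left-hand side converges by weak convergence, while on the right-hand side $\nabla\widetilde V_{0,\e}\cdot\nu^j\to\nabla\Psi_0\cdot\nu^j$ in $L^p(S^j_1)$ by \eqref{eq:conLp} against the fixed trace $\gamma_+^j(w)$, so $\widetilde V^{\ast}$ satisfies the variational equation in \eqref{eq_tilde_V}. To check the affine constraint $\widetilde V^{\ast}-\eta\Psi_0\in\widetilde{\mathcal H}$, I use that by \eqref{eq:restriction} the restrictions converge weakly in $H^1(D_2\setminus\Gamma_1)$ and that the trace operator into $L^2(S^j_1)$ is compact, whence $T^j(\widetilde V_\e)\to T^j(\widetilde V^{\ast})$ in $L^2(S^j_1)$. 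Since $\widetilde V_\e-\widetilde V_{0,\e}\in\widetilde{\mathcal H}$ by \eqref{eq:3}, we have $T^j(\widetilde V_\e)=T^j(\widetilde V_{0,\e})$ on $S^j_1$, and \eqref{eq:conH1-0} gives $T^j(\widetilde V_{0,\e})\to 2\gamma_+^j(\Psi_0)=T^j(\eta\Psi_0)$; therefore $T^j(\widetilde V^{\ast}-\eta\Psi_0)=0$ on $S^j_1$ for every $j$, while the conditions on $\Gamma_0^j$ hold automatically because $\widetilde V^{\ast},\eta\Psi_0\in\widetilde{\mathcal X}$. Uniqueness in Proposition \ref{prop_Vtilde_existence} forces $\widetilde V^{\ast}=\widetilde V$, and since the limit is independent of the subsequence the whole family satisfies $\widetilde V_\e\rightharpoonup\widetilde V$.

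To upgrade to strong convergence I would prove $\widetilde J_\e(\widetilde V_\e)\to J(\widetilde V)$ and deduce norm convergence. For the lower bound, weak lower semicontinuity of $\|\cdot\|_{\widetilde{\mathcal X}}$ together with $\widetilde L_\e(\widetilde V_\e)\to L(\widetilde V)$ --- the latter from the $L^p$-convergence in \eqref{eq:conLp} and the compactness of the traces $\gamma_+^j(\widetilde V_\e)\to\gamma_+^j(\widetilde V)$ in $L^{p'}(S^j_1)$ --- gives $\liminf_\e\widetilde J_\e(\widetilde V_\e)\ge J(\widetilde V)$. For the upper bound I would use the competitor $w_\e:=\widetilde V+\eta\,(\widetilde V_{0,\e}-\Psi_0)$, with $\eta$ as in \eqref{eq:def_eta}. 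Since $\eta\equiv1$ on $D_1\supset S^j_1$ and both $\widetilde V_{0,\e},\Psi_0$ are continuous across $S^j_1$, one checks $T^j(w_\e)=T^j(\widetilde V_{0,\e})$ on $S^j_1$, so $w_\e-\widetilde V_{0,\e}=(\widetilde V-\eta\Psi_0)-(1-\eta)\widetilde V_{0,\e}\in\widetilde{\mathcal H}$ and $w_\e$ is admissible; moreover $\eta(\widetilde V_{0,\e}-\Psi_0)$ is supported in $D_2$ and tends to $0$ in $\widetilde{\mathcal X}$ by \eqref{eq:conH1-0}, whence $w_\e\to\widetilde V$ in $\widetilde{\mathcal X}$ and $\widetilde J_\e(w_\e)\to J(\widetilde V)$. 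By minimality $\widetilde J_\e(\widetilde V_\e)\le\widetilde J_\e(w_\e)$, so $\limsup_\e\widetilde J_\e(\widetilde V_\e)\le J(\widetilde V)$. Combining, $\widetilde J_\e(\widetilde V_\e)\to J(\widetilde V)$, and subtracting the convergent linear part $\widetilde L_\e(\widetilde V_\e)\to L(\widetilde V)$ gives $\|\widetilde V_\e\|_{\widetilde{\mathcal X}}\to\|\widetilde V\|_{\widetilde{\mathcal X}}$; together with weak convergence in the Hilbert space $\widetilde{\mathcal X}$ this yields \eqref{limit_blow_up}.

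The main obstacle is that the admissible class for $\widetilde V_\e$ is the $\e$-dependent affine space $\widetilde V_{0,\e}+\widetilde{\mathcal H}$, whose boundary datum $\widetilde V_{0,\e}$ has $\widetilde{\mathcal X}$-norm blowing up like $\e^{-m/2}$ at infinity, so one cannot simply compare $\widetilde V_\e$ with the limiting datum $\eta\Psi_0$. The device that resolves this is that the functional $\widetilde J_\e$ depends only on the gradient of its argument, so localizing the correction with the cut-off $\eta$ produces a recovery sequence of bounded energy; the compactness of the traces on $\widetilde{\mathcal X}$ --- a consequence of the Hardy inequality of Proposition \ref{prop_hardy}, which makes $\|\cdot\|_{\widetilde{\mathcal X}}$ a norm and the restriction \eqref{eq:restriction} continuous --- is what lets both the constraint and the linear term pass to the limit. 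The only routine care needed is the uniform-in-$\e$ handling of the weights $\nabla\widetilde V_{0,\e}\cdot\nu^j$ on the segments $S^j_1$, already furnished by \eqref{eq:cd1}--\eqref{eq:conLp}.
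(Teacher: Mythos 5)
Your argument is correct and follows the paper's skeleton for the first two steps (the uniform bound via $\e^{-m}\mathcal E_\e=O(1)$ and the trace estimate is essentially identity \eqref{eq:bound-tilde-Ve}, and the identification of the weak limit via \eqref{eq_tilde_Ve}, \eqref{eq:conLp}, \eqref{eq:3}, \eqref{eq:conH1-0} and uniqueness is the same), but it diverges in the upgrade to strong convergence. The paper tests \eqref{eq_tilde_V} with $\widetilde V-\eta\Psi_0$ and \eqref{eq_tilde_Ve} with $\widetilde V_{\e_n}-\eta\widetilde V_{0,\e_n}$ and matches the two right-hand sides in the limit, which needs only weak convergence of the traces paired against the strongly $L^p$-convergent weights $\nabla\widetilde V_{0,\e}\cdot\nu^j$; you instead run a $\Gamma$-convergence-style argument (lower semicontinuity for the $\liminf$, the recovery sequence $w_\e=\widetilde V+\eta(\widetilde V_{0,\e}-\Psi_0)$ for the $\limsup$) and deduce norm convergence from convergence of energies. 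Both routes work; yours is perhaps more robust (it would survive replacing the quadratic energy by a general convex one), at the price of one extra observation that you state too casually: Proposition \ref{prop_Ve_existence} only makes $\widetilde V_\e$ the minimizer over the rescaled class of functions supported in $\e^{-1}\Omega$, not over all of $\widetilde V_{0,\e}+\widetilde{\mathcal H}$, so the comparison $\widetilde J_\e(\widetilde V_\e)\le\widetilde J_\e(w_\e)$ is not a direct rescaling of \eqref{minimization_capacity_torsion} (your competitor $w_\e$ contains $\widetilde V$, which is not compactly supported). It is nevertheless true, because \eqref{eq_tilde_Ve} holds for \emph{every} $w\in\widetilde{\mc H}$ (by the density result of Proposition \ref{prop_Hc_dense}) and $\widetilde J_\e$ is convex, so that $\widetilde J_\e(\widetilde V_\e+w)=\widetilde J_\e(\widetilde V_\e)+\tfrac12\|w\|^2_{\widetilde{\mathcal X}}$ for all $w\in\widetilde{\mc H}$; you should make this step explicit. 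Similarly, where you invoke compactness of the trace operators $\gamma_+^j:\widetilde{\mathcal X}\to L^{p'}(S_1^j)$, note that the paper only records their continuity in \eqref{def_traces_R2}; compactness is true (and standard), but weak convergence of the traces already suffices since they are paired with weights converging strongly in $L^p(S_1^j)$.
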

\begin{proof}
Taking into account \eqref{ineq_v_0_pointiweise_bound_odd},
\eqref{eq:defEe}, and 
  \eqref{def_tilde_Ve_tilde_v_0e},  a change of variables,
  \eqref{ineq_sup}, the H\"older inequality,
  and Proposition \ref{prop_|Ee|} imply that 
\begin{align}\label{eq:bound-tilde-Ve}
  \|\widetilde{V}_\e&\|_{\widetilde{\mathcal X}}^2=\int_{\R^2\setminus
    \Gamma_1} |\nabla \widetilde{V}_\e|^2 \,
  dx=\e^{-m}\norm{V_\e}_{\mc{H}_\e}^2
  =\e^{-m}(2\mc{E}_\e -2 L_\e(V_\e))\\
  \notag&\le O(1)+4 \e^{-m}\sum_{j=1}^{k_1+k_2} \int_{S^j_\e}|\nabla v_0|
  |\gamma_+^j (V_\e)|\, dS
  = O(1)+O(1) \sum_{j=1}^{k_1+k_2} \int_{S^j_1} |x|^{\frac{m}2-1}|\gamma_+^j (\widetilde{V}_\e)|\, dS\\
  \notag&= O(1) +O(1)\|\widetilde{V}_\e\|_{\widetilde{\mathcal X}}, \text{ as } \e \to 0^+.
\end{align}
Hence $\{\widetilde{V}_\e\}_{\e \in (0,1]}$ is bounded in
$\widetilde{\mathcal X}$. It follows that, for any sequence
$\{\e_n\}_n$ such that $\e_n\to 0$ as $n\to\infty$, there exist a
subsequence, still denoted by $\{\e_n\}_n$, and
$V \in \widetilde{\mathcal X}$ such that
$\widetilde{V}_{\e_n} \rightharpoonup V$ weakly in
$\widetilde{\mathcal X}$ as $n\to\infty$. Therefore, from
\eqref{eq_tilde_Ve}, \eqref{ineq_sup}, and \eqref{eq:conLp} we deduce
that $V$ solves the variational equation in \eqref{eq_tilde_V}.
Furthermore, by \eqref{eq:3} we have
$\widetilde{V}_\e-\eta\widetilde{V}_{0,\e} \in \widetilde{\mathcal
  H}$, hence \eqref{eq:conH1-0} ensures that $V$ satisfies the
condition $V-\eta \Psi_0\in \widetilde{\mathcal H}$. By the uniqueness
part of Proposition \ref{prop_Vtilde_existence} we conclude that
$V=\widetilde V$.

Since $\widetilde{V} -\eta \Psi_0 \in \widetilde{\mc{H}}$, we may test
\eqref{eq_tilde_V} with $\widetilde{V} -\eta \Psi_0$, thus obtaining
\begin{equation}\label{proof_blow_up:2}
\int_{\R^2\setminus \Gamma_1} |\nabla \widetilde{V}|^2 \,dx=
\int_{\R^2\setminus \Gamma_1} \nabla \widetilde{V}\cdot \nabla(\eta
\Psi_0) \, dx
-2 \sum_{j=1}^{k_1+k_2} \int_{S^j_1}\nabla\Psi_0 \cdot \nu^j \gamma_+^j (\widetilde{V}-\eta \Psi_0)\, dS.
\end{equation}
On the other hand, testing \eqref{eq_tilde_Ve} with
$\widetilde{V}_{\e_n}- \eta \widetilde{V}_{0,\e_n} \in \widetilde{\mc{H}}$
we obtain 
\begin{equation}\label{proof_blow_up:3}
  \int_{\R^2\setminus \Gamma_1} |\nabla \widetilde{V}_{\e_n}|^2\, dx
  =\int_{\R^2\setminus \Gamma_1}
  \nabla \widetilde{V}_{\e_n} \cdot \nabla(\eta \widetilde{V}_{0,\e_n}) \,dx 
  -2 \sum_{j=1}^{k_1+k_2} \int_{S^j_1}\nabla \widetilde{V}_{0,\e_n} \cdot
  \nu^j
  \gamma_+^j (\widetilde{V}_{\e_n}-\eta \widetilde{V}_{0,\e_n})\, dS.
\end{equation}
In view of the weak convergence
  $\widetilde{V}_{\e_n} \rightharpoonup \widetilde{V}$ in
  $\widetilde{\mathcal X}$, \eqref{eq:conH1-0}, \eqref{eq:conLp}, and
  the continuity of the trace operators \eqref{def_traces_R2}, the
  limit of the right hand side of \eqref{proof_blow_up:3} as
  $n \to \infty$ is equal to the right hand side of
  \eqref{proof_blow_up:2}, thus proving that
$\widetilde{V}_{\e_n} \to \widetilde{V}$ strongly in
$\widetilde{\mathcal X}$ as $n \to \infty$ by. Since $\widetilde{V}$
is the unique solution of
\eqref{eq_tilde_V}, 
\eqref{limit_blow_up} follows from the Urysohn Subsequence Principle.
\end{proof}

In view of the blow-up analysis performed above, we are in position
to prove Theorem \ref{t:exp-odd}.
\begin{proof}[Proof of Theorem \ref{t:exp-odd}]
    From \eqref{def_Je}, \eqref{eq:defEe}, \eqref{def_tilde_Ve_tilde_v_0e}, and
    a change of variables it follows that 
    \begin{equation}\label{eq:scalEe}
        \e^{-m}\mathcal E_\e= 
\frac{1}{2}\int_{\R^2\setminus{\Gamma_1}} |\nabla \widetilde{V_\e}|^2 \, dx +
2 \sum_{j=1}^{k_1+k_2} \int_{S^j_1}\nabla \widetilde V_{0,\e} \cdot \nu^j
  \gamma_+^j (\widetilde{V_\e})\, dS.        
    \end{equation}
The convergences \eqref{limit_blow_up} and \eqref{eq:conLp}, together with the continuity
of the trace operators in \eqref{def_traces_R2}, allow us to pass to the limit in the
right hand side of \eqref{eq:scalEe}, thus yielding
\begin{equation}\label{eq:asyEe}
    \lim_{\e\to 0^+}\e^{-m}\mathcal E_\e= 
\frac{1}{2}\int_{\R^2\setminus{\Gamma_1}} |\nabla \widetilde{V}|^2 \, dx +
2 \sum_{j=1}^{k_1+k_2} \int_{S^j_1}\nabla \Psi_0 \cdot \nu^j
  \gamma_+^j (\widetilde{V})\, dS=J(\widetilde{V})=\mathcal E    
\end{equation}
and proving claim (i). Furthermore, by \eqref{def_tilde_Ve_tilde_v_0e}, a change of 
variable, \eqref{eq:conLp}, and \eqref{eq:conH1-0}, we have  
\begin{align}\label{eq:asyL0e}
\e^{-m}L_\e(v_0)&=  2 \sum_{j=1}^{k_1+k_2} \int_{S^j_1}\nabla \widetilde V_{0,\e} \cdot \nu^j
  \gamma_+^j (\widetilde V_{0,\e})\, dS  \\
  \notag&=2\sum_{j=1}^{k_1+k_2} \int_{S^j_1}\nabla \Psi_0
  \cdot \nu^j\gamma_+^j (\Psi_0)\, dS+o(1)=L(\Psi_0)+o(1)\quad\text{as }\e\to0^+.
\end{align}
Claim (ii) follows from \eqref{eq_asymptotic_eigenvlaues-intro}, \eqref{eq:asyEe},
\eqref{eq:asyL0e}, and estimate \eqref{eq:bound-tilde-Ve}, which in particular ensures
that $\norm{V_\e}_{\mc{H}_\e}^2=O(\e^{m})$ as $\e\to0^+$.
\end{proof}

\subsection{Continuity of \texorpdfstring{$\mathcal E-L(\Psi_0)$}{E-L}  with respect to rotations of poles}

In this subsection we prove the continuity of $\mathcal E-L(\Psi_0)$
with respect to rotations of the configuration of poles. We fix a
configuration of poles $\{a^j\}$ as in \eqref{def_aj}. Then, for every
$\zeta \in [-\pi,\pi)$, we define $\Psi_0^{(\zeta)}$,
\colg{$L^{(\zeta)}(\Psi_0^{(\zeta)})$ and $\mc{E}^{(\zeta)}$} as in
\eqref{def_Psi}, \eqref{def_L}, and \eqref{def_E_limit}, respectively,
for a rotated configuration of poles $\{a^j_{\zeta}\}$, where
$a^j_{\zeta}$ are defined as in \eqref{def_aj} with angles
$\alpha^j+\zeta$ instead of $\alpha^j$, i.e.
\begin{equation*}
   a^j_{\zeta}=\mathcal R_\zeta (a^j), 
\end{equation*}
being $\mathcal R_\zeta:=R_{0,\zeta}$ with $R_{0,\zeta}$ as in \eqref{def_R_b}, see \Cref{fig:f3}.
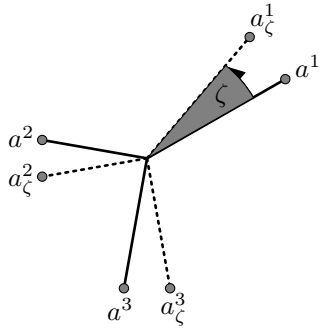
\begin{figure}[ht]
	\centering
	\begin{tikzpicture}[scale=0.7,line cap=round,line join=round,>=triangle 45,x=1.0cm,y=1.0cm]
  \clip(-3.7,-3.7) rectangle (3.5,3);
  \draw [line width=1pt,rotate=30] (0,0)-- (3,0);
\draw [fill=gray,rotate=30] (3,0) circle (2.5pt);
\draw[color=black,rotate=30] (3.5,0) node {$a^{1}$};
\draw [line width=1pt,rotate=170] (0,0)-- (2,0);
\draw [fill=gray,rotate=170] (2,0) circle (2.5pt);
\draw[color=black,rotate=170] (2.4,0) node {$a^{2}$};
\draw [line width=1pt,rotate=260] (0,0)-- (2.5,0);
\draw [fill=gray,rotate=260] (2.5,0) circle (2.5pt);
\draw[color=black,rotate=260] (2.9,0) node {$a^{3}$};
\draw [line width=1pt,dotted,rotate=50] (0,0)-- (3,0);
\draw [fill=gray,rotate=50] (3,0) circle (2.5pt);
\draw[color=black,rotate=50] (3.4,0) node {$a^{1}_\zeta$};
\draw [line width=1pt,dotted,rotate=190] (0,0)-- (2,0);
\draw [fill=gray,rotate=190] (2,0) circle (2.5pt);
\draw[color=black,rotate=190] (2.4,0) node {$a^{2}_\zeta$};
\draw [line width=1pt,dotted,rotate=280] (0,0)-- (2.5,0);
\draw [fill=gray,rotate=280] (2.5,0) circle (2.5pt);
\draw[color=black,rotate=280] (2.9,0) node {$a^{3}_\zeta$};
\draw[line width=0.5pt,->] (2,1.15) arc (30:50:2.2);
\filldraw[line width=0pt,fill opacity=0.2,fill=gray] (0,0) -- (2,1.15) arc (30:51:2.2cm) -- cycle;
\draw[color=black] (1.4,1.2) node {$\zeta$};
\end{tikzpicture}
\caption{The rotated configuration $\{a^j_{\zeta}\}$.}
\label{fig:f3}
\end{figure}

In the next theorem we prove that the function
$\zeta\mapsto \mc{E}^{(\zeta)}-L^{(\zeta)}(\Psi_0^{(\zeta)})$ is
continuous.
\begin{theorem}\label{theorem_continuity}
The function   
$G:[-\pi,\pi)\to \R$,  $G(\zeta):= \mc{E}^{(\zeta)}-L^{(\zeta)}(\Psi_0^{(\zeta)})$ 
is continuous.
\end{theorem}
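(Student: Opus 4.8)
The plan is to remove the $\zeta$-dependence of the underlying functional spaces by an orthogonal change of variables, and then to reduce the statement to the continuous dependence of the data on $\zeta$. Since $\mathcal R_\zeta=R_{0,\zeta}$ is an isometry of $\R^2$ with unit Jacobian, it maps the cracks of the original configuration onto those of the rotated one, $\mathcal R_\zeta(\Gamma_1)=\Gamma_1^{(\zeta)}$ and $\mathcal R_\zeta(S_1^j)=S_1^{j,(\zeta)}$. Hence the pullback $T_\zeta w:=w\circ\mathcal R_\zeta$ is a linear isometry of $\widetilde{\mathcal X}^{(\zeta)}$ onto $\widetilde{\mathcal X}$ (for the norm \eqref{def_norm_tilde_X}) carrying $\widetilde{\mathcal H}^{(\zeta)}$ onto $\widetilde{\mathcal H}$. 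Writing $\widehat\Psi^{(\zeta)}:=\Psi_0^{(\zeta)}\circ\mathcal R_\zeta$ and using that $\eta$ is radial, one checks that the functional \eqref{def_J} for the rotated configuration satisfies $J^{(\zeta)}(w)=\widehat J^{(\zeta)}(T_\zeta w)$, where $\widehat J^{(\zeta)}$ is built on the \emph{fixed} set $\R^2\setminus\Gamma_1$ with $\Psi_0$ replaced by $\widehat\Psi^{(\zeta)}$. Consequently $\mathcal E^{(\zeta)}=\widehat J^{(\zeta)}(\widehat V^{(\zeta)})=\frac12\|\widehat V^{(\zeta)}\|_{\widetilde{\mathcal X}}^2+\widehat L^{(\zeta)}(\widehat V^{(\zeta)})$ and $L^{(\zeta)}(\Psi_0^{(\zeta)})=\widehat L^{(\zeta)}(\widehat\Psi^{(\zeta)})$, where $\widehat V^{(\zeta)}=T_\zeta\widetilde V^{(\zeta)}$ is the unique minimizer and $\widehat L^{(\zeta)}$ is the functional \eqref{def_L} associated with $\widehat\Psi^{(\zeta)}$. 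After this reduction the whole problem lives in the single space $\widetilde{\mathcal X}$, and the only dependence on $\zeta$ enters through $\widehat\Psi^{(\zeta)}$.

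The heart of the matter is then to prove that $\zeta\mapsto\widehat\Psi^{(\zeta)}$ is continuous, say in $H^1(D_\rho\setminus\Gamma_0)$ for every $\rho>0$, and, crucially, that the normal traces $\nabla\widehat\Psi^{(\zeta)}\cdot\nu^j$ converge in $L^p(S_1^j)$ for some $p\in(1,2)$. The key observation is that the gauge phase is rotation invariant: a direct computation from \eqref{def_Theta0} and \eqref{def_R_b} gives $\theta_0^{j,(\zeta)}\circ\mathcal R_\zeta=\theta_0^{j}$, hence $\Theta_0^{(\zeta)}\circ\mathcal R_\zeta=\Theta_0$. Since the limit Aharonov--Bohm operator in \eqref{prob_Aharonov-Bohm_0} does not depend on the directions of the poles, its eigenfunction $u_0$ is \emph{fixed}, and the gauged profile satisfies $v_0^{(\zeta)}\circ\mathcal R_\zeta=e^{-i\Theta_0}(u_0\circ\mathcal R_\zeta)$. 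As $u_0$ is analytic, the family $u_0\circ\mathcal R_\zeta$ depends continuously on $\zeta$ in $C^{1,\tau}(\overline{D_\rho})$, so by Proposition \ref{prop_v0_asympotic_odd} its $\frac{m}{2}$-homogeneous blow-up is $\Phi\circ\mathcal R_\zeta$, with $\Phi$ the fixed profile in \eqref{eq:asyu}; this is manifestly smooth in $\zeta$. Since $e^{-i\Theta_0}$ is a fixed degree-$0$ homogeneous factor (with jumps frozen on $\Gamma_0$), one concludes $\widehat\Psi^{(\zeta)}=e^{-i\Theta_0}(\Phi\circ\mathcal R_\zeta)$, whence the desired continuity in the norms above.

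With the continuity of the data in hand, the conclusion follows by standard elliptic stability. First, Hölder's inequality together with the continuity of the trace operators \eqref{def_traces_R2}--\eqref{ineq_sup} yields $\|\widehat L^{(\zeta)}-\widehat L^{(\zeta_0)}\|_{\widetilde{\mathcal X}^*}\to0$ as $\zeta\to\zeta_0$. Next, setting $\widehat U^{(\zeta)}:=\widehat V^{(\zeta)}-\eta\widehat\Psi^{(\zeta)}\in\widetilde{\mathcal H}$, the variational equation \eqref{eq_tilde_V} shows that $\widehat U^{(\zeta)}$ is the Riesz representative in the Hilbert space $(\widetilde{\mathcal H},(\cdot,\cdot)_{\widetilde{\mathcal X}})$ — recall that $\|\cdot\|_{\widetilde{\mathcal X}}$ is a norm by the Hardy inequality of Proposition \ref{prop_hardy} — of a bounded functional depending continuously on $\zeta$ through $\widehat L^{(\zeta)}$ and $\nabla(\eta\widehat\Psi^{(\zeta)})$; hence $\widehat U^{(\zeta)}\to\widehat U^{(\zeta_0)}$ strongly in $\widetilde{\mathcal H}$ and so $\widehat V^{(\zeta)}\to\widehat V^{(\zeta_0)}$ in $\widetilde{\mathcal X}$. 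Passing to the limit in $\mathcal E^{(\zeta)}=\frac12\|\widehat V^{(\zeta)}\|_{\widetilde{\mathcal X}}^2+\widehat L^{(\zeta)}(\widehat V^{(\zeta)})$ and in $L^{(\zeta)}(\Psi_0^{(\zeta)})=\widehat L^{(\zeta)}(\widehat\Psi^{(\zeta)})$ (the latter involving only the convergent traces on the compact segments $S_1^j$) gives the continuity of $G$.

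The main obstacle, requiring the most care, is precisely the continuity of $\widehat\Psi^{(\zeta)}$: read off the representation \eqref{def_Psi}, the profile seems discontinuous in $\zeta$, because the sign function $f$ and the leading angle $\alpha_0$ jump whenever a pole direction crosses a branch of the gauge. The identity $\Theta_0^{(\zeta)}\circ\mathcal R_\zeta=\Theta_0$, combined with the $\zeta$-independence of $u_0$, is what reconciles these jumps into a genuinely continuous (indeed real-analytic) dependence of the \emph{transported} profile, and is the crux of the argument. One should also record that $\mathcal E^{(\zeta)}$ and $L^{(\zeta)}(\Psi_0^{(\zeta)})$ are invariant under the sign ambiguity $v_0\mapsto-v_0$, so that $G$ is well defined; fixing the profile $\Phi$ once removes this ambiguity along the whole family.
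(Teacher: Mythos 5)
Your proposal is correct and follows the same architecture as the paper's proof: pull the rotated problem back by $\mathcal R_\zeta$ so that everything lives in the fixed spaces $\widetilde{\mathcal X}$ and $\widetilde{\mathcal H}$, observe that the only residual $\zeta$-dependence enters through the transported profile, and conclude by stability of the minimization problem. The paper implements the first step by writing $\Psi_0^{(\zeta)}(r\cos t,r\sin t)=f(t-\zeta)\phi_0(r\cos t,r\sin t)$ and composing with $\mathcal R_\zeta$, which turns the moving jump factor $f(\cdot-\zeta)$ back into the fixed $f$ and leaves only the smooth factor $\phi_0\circ\mathcal R_\zeta$; your gauge identity $\Theta_0^{(\zeta)}\circ\mathcal R_\zeta=\Theta_0$ (correct, since rotations about the origin in \eqref{def_R_b} compose additively) is the conceptual form of the same cancellation. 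Where you genuinely diverge is the final step: the paper derives a uniform bound on $\|\widetilde V_\zeta\|_{\widetilde{\mathcal X}}$, extracts weak limits, identifies them via uniqueness and the Urysohn principle, and then passes to the limit in the tested equation to recover convergence of the Dirichlet energy; you instead read $\widehat U^{(\zeta)}=\widehat V^{(\zeta)}-\eta\widehat\Psi^{(\zeta)}$ off as the Riesz representative of a functional depending continuously on $\zeta$ in the dual of $\widetilde{\mathcal H}$, which yields strong convergence in one stroke. Both are valid; yours is shorter, while the paper's only requires the $L^p(S_1^j)$ convergence of the normal traces, obtained by dominated convergence from the uniform bound \eqref{eq:stiphi0}, without explicitly estimating dual norms.

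Two side remarks in your write-up are inaccurate, though neither is fatal. First, for $k$ odd the eigenfunction $u_0$ of \eqref{prob_Aharonov-Bohm_0} is \emph{not} analytic (it vanishes like $|x|^{m/2}$ with $m$ odd and is not $C^1$ at the origin), so $u_0\circ\mathcal R_\zeta$ is not continuous in $\zeta$ in $C^{1,\tau}(\overline{D_\rho})$; what you actually need, namely that the blow-up of $u_0\circ\mathcal R_\zeta$ is $\Phi\circ\mathcal R_\zeta$, holds simply because the blow-up commutes with rotations, and $\Phi\circ\mathcal R_\zeta$ is smooth in $\zeta$. Second, the identity $v_0^{(\zeta)}\circ\mathcal R_\zeta=e^{-i\Theta_0}(u_0\circ\mathcal R_\zeta)$ needs a $\zeta$-dependent unimodular constant: by \eqref{eq:theta0j} one has $\Theta_0^{(\zeta)}-\Theta_0=-\tfrac{\zeta}{2}+\pi\cdot(\text{integer-valued})$ when $k_1$ is odd, so $e^{-i\Theta_0^{(\zeta)}}u_0$ is not real and the normalization \eqref{eq:v0} forces $v_0^{(\zeta)}=\pm e^{-i\zeta/2}e^{-i\Theta_0^{(\zeta)}}u_0$. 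The extra factor $e^{-i\zeta/2}$ is continuous in $\zeta$ and the residual sign is exactly the ambiguity you note $G$ to be invariant under, so the argument survives, but the phase correction should be made explicit.
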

\begin{proof}
  Through a rotation, the problem of continuity at any
  $\zeta\in[-\pi,\pi)$ can be reduced to the problem of continuity at
  $\zeta=0$. Hence, it is enough to prove that
  $\lim_{\zeta\to0}G(\zeta)=\mathcal E-L(\Psi_0)$.

We have 
    \begin{equation*}
        \Psi_0^{(\zeta)}(r\cos t,r\sin t)=f(t-\zeta)\phi_0(r\cos t,r\sin t),
    \end{equation*}
where $f$ is defined in \eqref{def_f} and 
\begin{equation*}
    \phi_0(r\cos t,r\sin t):=\beta  \, r^{\frac{m}{2}}\, 
\sin\left(\tfrac{m}{2}(t-\alpha_0)\right) .
\end{equation*}
With a slight abuse of notation, henceforth we denote  by $f$ also
the function $(r\cos t,r\sin t)\mapsto f(t)$ defined on 
$\R^2\setminus\{0\}$. 

A change of variables yields 
\begin{equation*}
    \mc{E}^{(\zeta)}=\min\Big\{I_\zeta(w): w\in \widetilde{\mathcal X}\text{ and }w-\eta f (\phi_0\circ\mathcal R_\zeta) \in \widetilde{\mathcal H}\Big\},
\end{equation*}
where $\eta\in C^\infty_{\rm c}(\R^2)$ is a radial cut-off function as
in \eqref{eq:def_eta} and
\begin{equation*}
    I_\zeta(w)=\frac12 \int_{\R^2\setminus\Gamma_1}|\nabla w|^2\,dx+
2 \sum_{j=1}^{k_1+k_2} \int_{S^j_1}f(\nabla \phi_0\circ \mathcal R_\zeta)M_\zeta \cdot \nu^j
  \gamma_+^j (w)\, dS,
    \end{equation*}
    being $M_\zeta$ the matrix defined in \eqref{eq:def_Mzeta}.
Moreover 
\begin{equation*}
    L^{(\zeta)}(\Psi_0^{(\zeta)})=
    2 \sum_{j=1}^{k_1+k_2} \int_{S^j_1}(\nabla \phi_0\circ \mathcal R_\zeta)M_\zeta \cdot \nu^j
  (\phi_0\circ \mathcal R_\zeta)\, dS.
\end{equation*}
Since, in a neighbourhood of $0$, 
\begin{equation}\label{eq:stiphi0}
    |\nabla \phi_0( \mathcal R_\zeta (x))|\leq C|x|^{\frac m2-1}\quad\text{and}\quad 
    |\phi_0( \mathcal R_\zeta (x))|\leq C |x|^{\frac m2}
\end{equation}
for some $C>0$ independent of $\zeta$, from the Dominated Convergence
Theorem we deduce that
\begin{equation}\label{eq:dim-cont1}
    \lim_{\zeta\to0} L^{(\zeta)}(\Psi_0^{(\zeta)})= L(\Psi_0).
\end{equation}
By Proposition \ref{prop_Vtilde_existence}, for every $\zeta$ there
exists a unique $\widetilde V_\zeta\in \widetilde{\mathcal X}$ such
that
$\widetilde{V}_\zeta-\eta f(\phi_0\circ \mathcal R_\zeta)\in
\widetilde{\mathcal H}$ and
$\mathcal E^{(\zeta)}=I_\zeta(\widetilde V_\zeta)$; furthermore,
$\widetilde{V}_\zeta$ satisfies
\begin{equation}\label{eq_tilde_V-zeta}
  \int_{\R^2\setminus \Gamma_1} \nabla \widetilde{V}_\zeta \cdot \nabla w \, dx
  =
  -2 \sum_{j=1}^{k_1+k_2} \int_{S^j_1}f(\nabla\phi_0\circ \mathcal
  R_\zeta)
  M_\zeta \cdot \nu^j \gamma_+^j
  (w)\, dS
  \quad \text{for all  } w \in \widetilde{\mc{H}}.
\end{equation}
Choosing $w=\widetilde{V}_\zeta-\eta f(\phi_0\circ \mathcal R_\zeta)$
in \eqref{eq_tilde_V-zeta} we obtain
\begin{align}\label{eq:vtizetatested}
    \int_{\R^2\setminus \Gamma_1} |\nabla \widetilde{V}_\zeta|^2\,dx
   &=\int_{\R^2\setminus \Gamma_1} \nabla \widetilde{V}_\zeta \cdot
     \nabla
     (\eta f(\phi_0\circ \mathcal R_\zeta))\, dx
   \\
\notag   &\quad-2 \sum_{j=1}^{k_1+k_2} \int_{S^j_1}f(\nabla\phi_0\circ
           \mathcal R_\zeta)M_\zeta
           \cdot \nu^j \gamma_+^j
 (\widetilde{V}_\zeta)\, dS\\
 \notag&\quad
+ 2 \sum_{j=1}^{k_1+k_2} \int_{S^j_1}f(\nabla\phi_0\circ \mathcal
         R_\zeta)M_\zeta \cdot
         \nu^j \gamma_+^j
 (f(\phi_0\circ \mathcal R_\zeta))\, dS.
\end{align}
Using Young's inequality, estimate \eqref{eq:stiphi0}, and the
continuity of the trace operators \eqref{def_traces_R2}, from the
above identity we deduce that
\begin{equation*}
    \|\widetilde{V}_\zeta\|_{\widetilde{\mathcal X}}\leq C
\end{equation*}
for some $C>0$ independent of $\zeta$.  It follows that every sequence
$\zeta_n\to0$ admits a subsequence $\{\zeta_{n_\ell}\}_\ell$ such that
$\widetilde{V}_{\zeta_{n_\ell}}\rightharpoonup W$ weakly in
$\widetilde{\mathcal X}$ as $\ell\to\infty$, for some
$W\in \widetilde{\mathcal X}$.  On account of \eqref{eq:stiphi0} and
\eqref{def_traces_R2}, the Dominated Convergence Theorem yields
\begin{equation*}
    \int_{S^j_1}f(\nabla\phi_0\circ \mathcal
    R_{\zeta_{n_\ell}})M_{\zeta_{n_\ell}}
    \cdot \nu^j \gamma_+^j
 (w)\, dS\to
\int_{S^j_1}f \nabla\phi_0 \cdot \nu^j \gamma_+^j
 (w)\, dS=\int_{S^j_1} \nabla\Psi_0 \cdot \nu^j \gamma_+^j
 (w)\, dS
\end{equation*}
as $\ell\to\infty$, for every $j=1,\dots k_1+k_2$ and
$w\in \widetilde{\mc{H}}$. By choosing $\zeta=\zeta_{n_\ell}$ in
\eqref{eq_tilde_V-zeta} and letting $\ell\to \infty$ we obtain that
\begin{equation}\label{eq_tilde_V-zeta-lim}
\int_{\R^2\setminus \Gamma_1} \nabla W \cdot \nabla w \, dx
  =
 -2 \sum_{j=1}^{k_1+k_2} \int_{S^j_1}\nabla\Psi_0 \cdot \nu^j \gamma_+^j  (w)\, dS
\quad \text{for all  } w \in \widetilde{\mc{H}}.
\end{equation}
Furthermore, since
$\widetilde{V}_\zeta-\eta f(\phi_0\circ \mathcal R_\zeta)\in
\widetilde{\mathcal H}$, $\widetilde{\mathcal H}$ is a closed subspace
of $\widetilde{\mathcal X}$, and
$\eta f(\phi_0\circ \mathcal R_\zeta)\to \eta\Psi_0$ as $\zeta\to0$ in
$\widetilde{\mathcal X}$ by the Dominated Convergence Theorem, we have
\begin{equation}\label{eq:W-etaPhi}
    W-\eta\Psi_0\in \widetilde{\mathcal H}.
\end{equation} 
From \eqref{eq_tilde_V-zeta-lim}-\eqref{eq:W-etaPhi} and the
uniqueness part of Proposition \ref{prop_Vtilde_existence} we deduce
that $W=\widetilde{V}$. Having uniquely identified the weak limit
independently of the subsequence, by the Urysohn subsequence principle
we conclude that
\begin{equation}\label{eq:weakzeta}
    \widetilde{V}_\zeta\rightharpoonup \widetilde{V}\quad\text{weakly
      in }
    \widetilde{\mathcal X} \quad\text{as }\zeta\to0.
\end{equation}
The weak convergence \eqref{eq:weakzeta} allows us to pass to the
limit as $\zeta\to0$ on the right hand side of
\eqref{eq:vtizetatested}, thus proving that
\begin{align}\label{eq:dim-cont2}
    \lim_{\zeta\to0} \int_{\R^2\setminus \Gamma_1} |\nabla \widetilde{V}_\zeta|^2\,dx&=
\int_{\R^2\setminus \Gamma_1} \nabla \widetilde{V} \cdot \nabla (\eta\Psi_0) \, dx
   -2 \sum_{j=1}^{k_1+k_2} \int_{S^j_1}\nabla\Psi_0 \cdot \nu^j \gamma_+^j (\widetilde{V}-\Psi_0)\, dS\\
\notag&=\int_{\R^2\setminus \Gamma_1} |\nabla \widetilde{V}|^2\,dx,
\end{align}
the last equality being a consequence of \eqref{eq_tilde_V} tested
with $w=\widetilde{V}-\eta\Psi_0$. From \eqref{eq:dim-cont2} it
follows that
$\lim_{\zeta \to0}\mathcal{E}^{(\zeta)}= \lim_{\zeta
  \to0}I_\zeta(\widetilde{V}_\zeta)=J(\widetilde{V})=\mathcal E$,
which, together with \eqref{eq:dim-cont1}, yields the conclusion.
\end{proof}

When $k_2=0$ and the poles $\{a^j\}_{j=1,\dots,k_1}$ are on the
tangents to nodal lines of $v_0$ (i.e. on the nodal set of $\Psi_0$),
we have $\Psi_0=0$ on $S^j_1$ for all $j=1,\dots,k_1$; on the other
hand, if the poles are on the bisectors between nodal lines, then
$\nabla\Psi_0\cdot \nu^j=0$ on $S^j_1$ for all $j=1,\dots,k_1$. This
leads to Proposition \ref{p:two-cases}, which determines, in these
particular cases, the sign of the dominant term in the asymptotic
expansion obtained in Theorem \ref{t:exp-odd}, and, consequently,
exploits the continuity result established in Theorem
\ref{theorem_continuity} to find configurations of poles for which the
eigenvalue variation is an infinitesimal of higher order.

\begin{proof}[Proof of Proposition \ref{p:two-cases}]
\begin{enumerate}[\rm (i)]\setlength\itemsep{0.8em}
        \item If  $\alpha^j\in \{\alpha_0+\ell\tfrac{2\pi}{m}:\ell=0,1,2,\dots,m-1\}$
        for all $j\in\{1,\dots,k_1\}$, then $\Psi_0=0$  on $S^j_1$ for all 
        $j\in\{1,\dots,k_1\}$, so that $L(\Psi_0)=0$ and $\eta\Psi_0+\widetilde
        {\mathcal H}=\widetilde {\mathcal H}$. It follows that 
        \begin{equation}\label{eq:231}
            \mathcal E-L(\Psi_0)=\mathcal E=\min_{\widetilde{\mathcal H}}J.
        \end{equation}
        Furthermore, $\nabla \Psi_0 \cdot \nu^j\not\equiv0$ on $S^j_1$
        for all $j\in\{1,\dots,k_1\}$, see \eqref{eq:nablaPsi0}, hence
        $L\not\equiv 0$ in $\widetilde{\mathcal H}$.  Fixing some
        $w\in \widetilde{\mathcal H}$ such that $L(w)\neq0$, we have
        then
        $J(tw)=\frac{t^2}2\int_{\R^2 \setminus \Gamma_1} |\nabla w|^2
        \, dx + tL(w)<0$ for some small $t$, thus implying that
        $\mathcal E=\min_{\widetilde{\mathcal H}}J<0$. Once we have
        established that $\mathcal E-L(\Psi_0)=\mathcal E<0$, from the
        asymptotic expansion of Theorem \ref{t:exp-odd}-(ii) we deduce
        that $\la_{\e, n_0} <\la_{0, n_0}$ for sufficiently small
        $\e>0$.
        \item If  $\alpha^j\in \{\alpha_0+(1+2\ell)
        \tfrac{\pi}{m}:\ell=0,1,2,\dots,m-1\}$ for all $j\in\{1,\dots,k_1\}$, then 
        $\nabla \Psi_0 \cdot \nu^j\equiv0$ on $S^j_1$ for all 
        $j\in\{1,\dots,k_1\}$, see \eqref{eq:nablaPsi0}. It follows that $L\equiv0$,
        and hence $J(w)=\frac12\|w\|_{\widetilde{\mathcal X}}^2$.
        Since, in this case, $\Psi_0\not\equiv0$  on $S^j_1$ for all 
        $j\in\{1,\dots,k_1\}$, we have  $w\not\equiv 0$ for every $w\in 
        \eta\Psi_0+\widetilde {\mathcal H}$. Therefore 
        \begin{equation}\label{eq:232}
       \mathcal E-L(\Psi_0)=\mathcal E=\min_{\eta\Psi_0+
        \widetilde{\mathcal H}}J=\frac12\, \min_{w\in \eta\Psi_0+\widetilde{\mathcal H}}
        \|w\|_{\widetilde{\mathcal X}}^2>0.     
        \end{equation}
        From the asymptotic expansion of Theorem 
        \ref{t:exp-odd}-(ii) we finally deduce that $\la_{\e, n_0} >\la_{0, n_0}$ for 
        sufficiently small $\e>0$. 
      \item Let us fix a configuration $\{a^j\}_{j=1}^k$ with
        $k=k_1\leq m$ odd and
        $\alpha^j\in \{\alpha_0+\ell\tfrac{2\pi}{m}:0\leq \ell\leq
        m-1\}$ for all $j\in\{1,\dots,k_1\}$ as in (i). Then the
        rotated configuration $\{a^j_{\pi/m}\}$ is as in (ii). By
        (i)-(ii) we have $G(0)<0$ and $G(\frac{\pi}{m})>0$. Since $G$
        is continuous by Theorem \ref{theorem_continuity}, Bolzano's
        Theorem ensures the existence of some
        $\zeta_0\in (0,\frac{\pi}{m})$ such that $G(\zeta_0)=0$, so
        that the angles $\{\alpha^j+\zeta_0:j=1,\dots,k\}$ are as we
        are looking for. \qedhere
    \end{enumerate}
\end{proof}

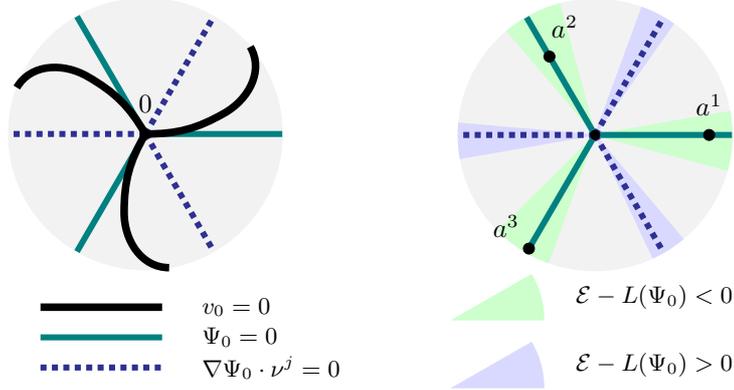
\begin{figure}[ht]

        \subfloat{
        \begin{tikzpicture}[scale=0.6]
        \filldraw [gray!10] (0,0) circle (3cm);

        \draw [line width=0.8mm,teal] (0,0) -- (3,0);
        \draw [line width=0.8mm,teal] (0,0) -- (120:3);
        \draw [line width=0.8mm,teal] (0,0) -- (240:3);
        \draw [line width=0.8mm,Blue,dotted] (0,0) -- (-3,0);
        \draw [line width=0.8mm,Blue,dotted] (0,0) -- (60:3);
        \draw [line width=0.8mm,Blue,dotted] (0,0) -- (300:3);
        \draw [line width=1mm, smooth] (0,0) to [out=120,in=330] (135:1.8) to [out=150,in=60] (160:3);
        \draw [line width=1mm, smooth] (0,0) to [out=0,in=210] (15:1.8) to [out=30,in=-60] (40:3);
        \draw [line width=1mm, smooth] (0,0) to [out=240,in=90] (255:1.8) to [out=270,in=180] (280:3);
        
        \filldraw [black] (0,0) circle (0.1cm) node [yshift=0.4cm] {$0$};

    \end{tikzpicture}}\hspace{2cm}
    \subfloat{
    \begin{tikzpicture}[scale=0.6]
        \filldraw [gray!10] (0,0) circle (3cm);
        
        \filldraw [green!20] (0,0) --  (-15:3) arc(-15:10:3) -- cycle;  
        \filldraw [green!20] (0,0) --  (105:3) arc(105:130:3) -- cycle;
        \filldraw [green!20] (0,0) --  (225:3) arc(225:250:3) -- cycle;
        \filldraw [blue!15] (0,0) --  (55:3) arc(55:70:3) -- cycle;
        \filldraw [blue!15] (0,0) --  (175:3) arc(175:190:3) -- cycle;
        \filldraw [blue!15] (0,0) --  (295:3) arc(295:310:3) -- cycle;

        \draw [line width=0.8mm,teal] (0,0) -- (3,0);
        \draw [line width=0.8mm,teal] (0,0) -- (120:3);
        \draw [line width=0.8mm,teal] (0,0) -- (240:3);
        \draw [line width=0.8mm,Blue,dotted] (0,0) -- (-3,0);
        \draw [line width=0.8mm,Blue,dotted] (0,0) -- (60:3);
        \draw [line width=0.8mm,Blue,dotted] (0,0) -- (300:3);

        \filldraw (0:2.5) circle (0.12cm) node [xshift=0cm, yshift=0.4cm] {\color{black}$a^1$};
        \filldraw (120:2) circle (0.12cm) node [xshift=0.2cm, yshift=0.4cm] {\color{black}$a^2$};
        \filldraw (240:2.9) circle (0.12cm) node [xshift=-0.3cm, yshift=0.3cm] {\color{black}$a^3$};

        \filldraw [black] (0,0) circle (0.1cm);        

    \end{tikzpicture}}

    \hspace{0.5cm}
    \subfloat{
    \begin{tikzpicture}[scale=0.8]
        \draw [line width=1mm] (0,0.5) -- (2,0.5);
        \node [right] at (2.5,0.5) {\small $v_0=0$};
        \draw [line width=0.8mm,teal] (0,0) -- (2,0);
        \node [right] at (2.5,0) {\small$\Psi_0=0$};
        \draw [line width=0.8mm,Blue,dotted] (0,-0.5) -- (2,-0.5);
        \node [right] at (2.5,-0.5) {\small$\nabla\Psi_0\cdot\nu^j=0$};
    \end{tikzpicture}
    }\hspace{1cm}
    \subfloat{
    \begin{tikzpicture}[scale=0.6]
        \filldraw [green!20] (0,0.5) -- (2,0.5) arc(0:30:2) -- cycle;
        \node [right] at (2.5,1) {\small $\mathcal{E}-L(\Psi_0)<0$};
        \filldraw [blue!15] (0,-1) -- (2,-1) arc(0:30:2) -- cycle;
        \node [right] at (2.5,-0.5) {\small  $\mathcal{E}-L(\Psi_0)>0$};
    \end{tikzpicture}
}

    \caption{Nodal set of $\Psi_0$ and sign of $\mathcal{E}-L(\Psi_0)$ ($m=k=3$, $\alpha_0=0$).}
    \label{fig:triple}

\end{figure}

\begin{figure}
    \centering
    
        \noindent\begin{minipage}{\textwidth}
        \begin{minipage}{0.15\textwidth}

        \begin{tikzpicture}[scale=0.5]
        \filldraw [gray!10] (0,0) circle (3cm);
        
        \filldraw [green!20] (0,0) --  (-15:3) arc(-15:10:3) -- cycle;  
        \filldraw [green!20] (0,0) --  (105:3) arc(105:130:3) -- cycle;
        \filldraw [green!20] (0,0) --  (225:3) arc(225:250:3) -- cycle;
        \filldraw [blue!15] (0,0) --  (55:3) arc(55:70:3) -- cycle;
        \filldraw [blue!15] (0,0) --  (175:3) arc(175:190:3) -- cycle;
        \filldraw [blue!15] (0,0) --  (295:3) arc(295:310:3) -- cycle;

        \draw [line width=0.4mm,teal] (0,0) -- (3,0);
        \draw [line width=0.4mm,teal] (0,0) -- (120:3);
        \draw [line width=0.4mm,teal] (0,0) -- (240:3);
        \draw [line width=0.4mm,Blue,dotted] (0,0) -- (-3,0);
        \draw [line width=0.4mm,Blue,dotted] (0,0) -- (60:3);
        \draw [line width=0.4mm,Blue,dotted] (0,0) -- (300:3);

        \draw[line width=0.5mm,Red,-stealth] (2.5,0) -- (1,0);
        \filldraw [Red] (0:2.5) circle (0.12cm) node [above] {\color{black}$a^1$};
        \draw[line width=0.5mm,Red,-stealth] (120:2) -- (120:0.5);
        \filldraw [Red] (120:2) circle (0.12cm) node [xshift=0.1cm,yshift=0.3cm] {\color{black}$a^2$};
        \draw[line width=0.5mm,Red,-stealth] (240:2.9) -- (240:1.4);
        \filldraw [Red] (240:2.9) circle (0.12cm) node [below] {\color{black}$a^3$};

        \filldraw [black] (0,0) circle (0.1cm);

    \end{tikzpicture}
    
    \begin{tikzpicture}[scale=0.5]
                \filldraw [gray!10] (0,0) circle (3cm);
        
        \filldraw [green!20] (0,0) --  (-15:3) arc(-15:10:3) -- cycle;  
        \filldraw [green!20] (0,0) --  (105:3) arc(105:130:3) -- cycle;
        \filldraw [green!20] (0,0) --  (225:3) arc(225:250:3) -- cycle;
        \filldraw [blue!15] (0,0) --  (55:3) arc(55:70:3) -- cycle;
        \filldraw [blue!15] (0,0) --  (175:3) arc(175:190:3) -- cycle;
        \filldraw [blue!15] (0,0) --  (295:3) arc(295:310:3) -- cycle;

        \filldraw[gray,opacity=0.25] (0,0) -- (0:2.5) arc(0:60:2.5) -- cycle;

        \draw [line width=0.4mm,teal] (0,0) -- (3,0);
        \draw [line width=0.4mm,teal] (0,0) -- (120:3);
        \draw [line width=0.4mm,teal] (0,0) -- (240:3);
        \draw [line width=0.4mm,Blue,dotted] (0,0) -- (-3,0);
        \draw [line width=0.4mm,Blue,dotted] (0,0) -- (60:3);
        \draw [line width=0.4mm,Blue,dotted] (0,0) -- (300:3);

        \draw[line width=0.5mm,blue,-stealth] (60:2.5) -- (60:1);
        \filldraw [blue] (60:2.5) circle (0.12cm) node [xshift=-0.5cm,yshift=0cm] {\color{black}$a^1_{\frac{\pi}{m}}$};
        \draw[line width=0.5mm,blue,-stealth] (180:2) -- (180:0.5);
        \filldraw [blue] (180:2) circle (0.12cm) node [above] {\color{black}$a^2_{\frac{\pi}{m}}$};
        \draw[line width=0.5mm,blue,-stealth] (300:2.9) -- (300:1.4);
        \filldraw [blue] (300:2.9) circle (0.12cm) node [below] {\color{black}$a^3_{\frac{\pi}{m}}$};

        \draw[line width=0.2mm,-stealth] (0:2.5) arc(0:60:2.5)  node [xshift=0.2cm,yshift=-0.6cm] {\footnotesize$\frac{\pi}{m}$};

        \filldraw [black] (0,0) circle (0.1cm);

    \end{tikzpicture}
    \end{minipage}\hfill
    \begin{minipage}{0.15\textwidth}
            \begin{tikzpicture}[scale=0.5]
                \filldraw [gray!10] (0,0) circle (3cm);
        
        \filldraw [green!20] (0,0) --  (-15:3) arc(-15:10:3) -- cycle;  
        \filldraw [green!20] (0,0) --  (105:3) arc(105:130:3) -- cycle;
        \filldraw [green!20] (0,0) --  (225:3) arc(225:250:3) -- cycle;
        \filldraw [blue!15] (0,0) --  (55:3) arc(55:70:3) -- cycle;
        \filldraw [blue!15] (0,0) --  (175:3) arc(175:190:3) -- cycle;
        \filldraw [blue!15] (0,0) --  (295:3) arc(295:310:3) -- cycle;

        \filldraw[gray,opacity=0.25] (0,0) -- (0:2.5) arc(0:30:2.5) -- cycle;
        
        \draw [line width=0.4mm,teal] (0,0) -- (3,0);
        \draw [line width=0.4mm,teal] (0,0) -- (120:3);
        \draw [line width=0.4mm,teal] (0,0) -- (240:3);
        \draw [line width=0.4mm,Blue,dotted] (0,0) -- (-3,0);
        \draw [line width=0.4mm,Blue,dotted] (0,0) -- (60:3);
        \draw [line width=0.4mm,Blue,dotted] (0,0) -- (300:3);

        \draw[line width=0.5mm,Yellow,-stealth] (30:2.5) -- (30:1);
        \filldraw [Yellow] (30:2.5) circle (0.12cm) node [xshift=0cm,yshift=0.4cm] {\color{black}$a^1_{\zeta_0}$};
        \draw[line width=0.5mm,Yellow,-stealth] (150:2) -- (150:0.5);
        \filldraw [Yellow] (150:2) circle (0.12cm) node [above] {\color{black}$a^2_{\zeta_0}$};
        \draw[line width=0.5mm,Yellow,-stealth] (270:2.9) -- (270:1.4);
        \filldraw [Yellow] (270:2.9) circle (0.12cm) node [below] {\color{black}$a^3_{\zeta_0}$};

        \draw[line width=0.2mm,-stealth] (0:2.5) arc(0:30:2.5) node [xshift=-0.2cm,yshift=-0.4cm] {\footnotesize$\zeta_0$};

        \filldraw [black] (0,0) circle (0.1cm);

    \end{tikzpicture}
    \end{minipage}\hfill
    \begin{minipage}{0.55\textwidth}
    \begin{tikzpicture}
        \begin{axis}[
        xmin=0,
        xmax=1.2,
        axis x line=bottom,
        axis y line=middle,
        xlabel=$\e$,
        xtick=\empty,
        xticklabel=\empty,
        ytick={2},
        yticklabels={\scriptsize$\lambda_{0,n_0}$},
        legend pos=south west,
        legend entries={\scriptsize$\mathcal{E}-L(\Psi_0)<0$,\scriptsize$\mathcal{E}-L(\Psi_0)>0$,\scriptsize$\mathcal{E}-L(\Psi_0)=0$}
        ]
            \addplot [domain=0:1, smooth,very thick, Red] { 2-2.5*x^3};
            \addplot [domain=0:1, smooth, very thick, blue] { 2+1.8*x^3};
            \addplot[domain=0:1, smooth,very thick, Yellow]{2+x^7*sin(50*deg(x)};
            \addplot [name path=A,domain=0:1, smooth,very thick, Yellow,opacity=0.2] { 2-x^7};
            \addplot [name path=B,domain=0:1, smooth,very thick, Yellow,opacity=0.2] { 2+x^7};
            \addplot[Yellow,opacity=0.2] fill between[of=A and B];

            \addplot [domain=0:1,smooth, thick, dashed] {2};  
            \node at (1.1,-0.2) {\color{Red}$\lambda_{\e,n_0}$};
            \node at (1.1,3.5) {\color{blue}$\lambda_{\e,n_0}$};
            \node at (1.1,1) {\color{Yellow}$\lambda_{\e,n_0}$};
        \end{axis}
    \end{tikzpicture}
    \end{minipage}%
\end{minipage}
    \caption{A visualization of Proposition 2.3.}
    \label{fig:3_cases}
\end{figure}
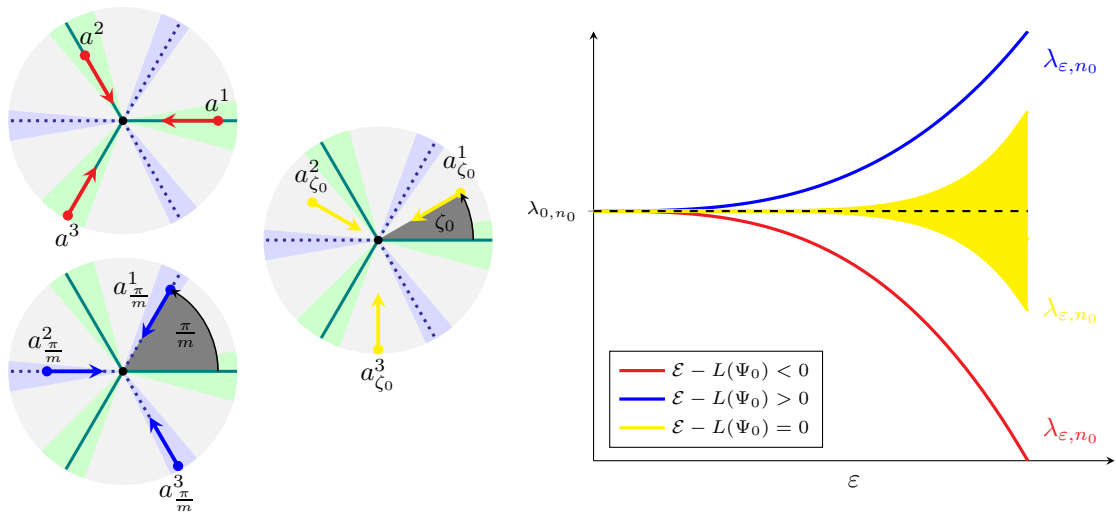

\begin{remark}\label{rmk:figures}
  \Cref{fig:triple} and \Cref{fig:3_cases} provide an example that
  helps to better visualize the result in \Cref{p:two-cases}. In
  \Cref{fig:triple} we zoom in near a point (the origin) where the
  limit eigenfunction $v_0$ vanishes of order $3/2$, namely
  \eqref{asy-v0} holds with $m=3$. We consider the case
  $\alpha_0=0$. The function $\Psi_0$ as in \eqref{def_Psi} is the
  $3/2$-homogeneous limit profile describing the local behavior of
  $v_0$. In the image on the left, the black lines are the nodal lines
  of $v_0$, which are tangent to the nodal lines of $\Psi_0$ (in
  green). The dotted lines denote the bisectors of the nodal lines of
  $\Psi_0$. In the image on the right, we fix an admissible
  configuration of poles $\{a^j\}_{j=1,2,3}$ with $k=3$ and
  $\alpha_j=2\pi(j-1)/3$ for $j=1,2,3$.  From \Cref{p:two-cases} we
  know that, if all the poles lie on the nodal set of $\Psi_0$, then
  the coefficient $\mathcal{E}-L(\Psi_0)$ of the leading term in the
  asymptotic expansion stated in \Cref{t:exp-odd} is strictly
  negative. On the other hand, if all the poles lie on the bisectors
  of the nodal lines, then the coefficient $\mathcal{E}-L(\Psi_0)$ is
  strictly positive. In \Cref{fig:3_cases} on the left, in the first
  picture (red arrows) we have our initial fixed configuration, which
  then provides a negative coefficient. In the second picture (blue
  arrows) we consider a rotation about the origin by an angle
  $\pi/m=\pi/3$: the rotated configuration ends up with all the poles
  lying on the bisectors, thus giving a positive coefficient
  $\mathcal{E}-L(\Psi_0)$. Furthermore, the continuity result in
  \Cref{theorem_continuity} ensures the existence of some
  $\zeta_0\in(0,\pi/3)$ such that, if we rotate the initial
  configuration by an angle $\zeta_0$, we find a configuration of
  poles for which $\mathcal{E}-L(\Psi_0)=0$: this is represented in
  the third picture on the left (yellow arrows). Finally, the right
  picture in \Cref{fig:3_cases} presents the behavior of the perturbed
  eigenvalue in the three cases previously described. We point out
  that, when $\mathcal{E}-L(\Psi_0)=0$ (yellow graph), it is currently
  not known what is the vanishing order of
  $\lambda_{\e,n_0}-\lambda_{0,n_0}$.
\end{remark}

\subsection{Blow-up and convergence rate for eigenfunctions}\label{sec:eigenfunctions}
From the blow-up analysis for the potential $V_\e$ performed
  in Subsection \ref{sec_blow_up_subsection} and the energy estimate given in
  \eqref{eq:energy-eigenfunctions}, we derive the following blow-up
  result for scaled eigenfunctions, together with a sharp estimate for
  their rate of convergence in the $\mathcal H_1$-norm.

\begin{proposition}\label{p:blow_up_eigenfunctions}
Under assumption \eqref{eq:simplicity}, let $k$ be odd and $v_0$ be an eigenfunction
of \eqref{prob_eigenvlaue_guged0} associated to the eigenvalue
$\lambda_0=\lambda_{0,n_0}$ with $\norm{v_0}_{L^2(\Omega)}=1$. For $\e>0$ small, let
$\la_\e=\lambda_{\e,n_0}$ and $v_{\e}$ be an eigenfunction of
\eqref{prob_eigenvalue_gauged_multipole} associated to $\la_\e$
and chosen as in \eqref{eq:choice-of-v-eps}.
Let $m \in \mathbb{N}$ be given in Proposition
\ref{prop_v0_asympotic_odd} for $v=v_0$. Then
\begin{equation}\label{eq:asy-veps}
  \e^{-\frac{m}{2}}v_\e(\e x)\to \Psi_0-\widetilde V\quad\text{as
    $\e\to0^+$ in
  }H^1(D_\rho\setminus\Gamma_1)\text{ for all }\rho>0,  
\end{equation}
where $\Psi_0$ is defined in \eqref{def_Psi} and $\widetilde V$ is the
unique solution to \eqref{eq_tilde_V}. Furthermore,
\begin{equation}\label{eq:rate-eig}
  \lim_{\e\to 0^+}\e^{-\frac{m}{2}}\|v_\e-v_0\|_{\mathcal H_1}= \|\widetilde
  V\|_{\widetilde{\mathcal X}}.
\end{equation}
\end{proposition}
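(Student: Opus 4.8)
The plan is to run the blow-up through the explicit decomposition $\psi_\e := v_0 - V_\e$ already used in the proof of Theorem \ref{prop_eigenvlaues_with_CT}, transferring the energy estimates \eqref{eq:energy-eigenfunctions}--\eqref{eq:energy-eigenfunctions3} to the scale $\e$. Recall from \eqref{eq:projection} and \eqref{def_hat_va} the projection $\Pi_\e$ onto the span of $v_\e$ and $\hat v_\e = \Pi_\e\psi_\e/\|\Pi_\e\psi_\e\|_{L^2(\Omega)}$. The first point I would settle is that $\hat v_\e = v_\e$ for $\e$ small. Since $\Pi_\e\psi_\e = \big(\int_\Omega \psi_\e v_\e\,dx\big)v_\e$ and $\|v_\e\|_{L^2(\Omega)}=1$, one has $\hat v_\e = \pm v_\e$ according to the sign of $\int_\Omega \psi_\e v_\e\,dx = \int_\Omega v_0 v_\e\,dx - \int_\Omega V_\e v_\e\,dx$; the first term is positive by \eqref{eq:choice-of-v-eps}, the second tends to $0$ by Proposition \ref{prop_Ve_limit_0}, and $\big|\int_\Omega \psi_\e v_\e\,dx\big|\to1$ by \eqref{eq:energy-eigenfunctions3}, so the sign is $+$.

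Second, I would establish the quantitative comparison $\|v_\e - \psi_\e\|_{\mc{H}_\e} = o(\e^{m/2})$. Splitting $v_\e - \psi_\e = (v_\e - \Pi_\e\psi_\e) + (\Pi_\e\psi_\e - \psi_\e)$, the second summand is $o(\|V_\e\|_{\mc{H}_\e})$ directly by \eqref{eq:energy-eigenfunctions}. The first equals $\Pi_\e\psi_\e\big(\|\Pi_\e\psi_\e\|_{L^2(\Omega)}^{-1}-1\big)$, so its $\mc{H}_\e$-norm is $\|v_\e\|_{\mc{H}_\e}\,\big|1-\|\Pi_\e\psi_\e\|_{L^2(\Omega)}\big|$; since the Rayleigh identity gives $\|v_\e\|_{\mc{H}_\e}^2 = \la_\e = O(1)$ and $\|\Pi_\e\psi_\e\|_{L^2(\Omega)} = 1 + o(\|V_\e\|_{\mc{H}_\e})$ by \eqref{eq:energy-eigenfunctions3}, this is again $o(\|V_\e\|_{\mc{H}_\e})$. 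Finally, Proposition \ref{prop_blow_up} together with the change of variables in \eqref{def_tilde_Ve_tilde_v_0e} yields $\|V_\e\|_{\mc{H}_\e} = \e^{m/2}\|\widetilde V_\e\|_{\widetilde{\mathcal X}} = O(\e^{m/2})$, whence the claim.

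With these ingredients, \eqref{eq:asy-veps} follows by adding two convergences at the blow-up scale. On one hand, $\e^{-m/2}\psi_\e(\e\,\cdot) = \widetilde V_{0,\e} - \widetilde V_\e \to \Psi_0 - \widetilde V$ in $H^1(D_\rho\setminus\Gamma_1)$, using \eqref{eq:conH1-0} (together with $\Gamma_0\subset\Gamma_1$), Proposition \ref{prop_blow_up}, and the continuity of the restriction \eqref{eq:restriction}. On the other hand, the rescaled remainder $h_\e := \e^{-m/2}(v_\e-\psi_\e)(\e\,\cdot)$ belongs to $\widetilde{\mathcal X}$, since the dilation sends $\widetilde{\mc{H}}_\e$ into $\widetilde{\mc{H}}\subset\widetilde{\mathcal X}$, and it satisfies $\int_{\R^2\setminus\Gamma_1}|\nabla h_\e|^2\,dx = \e^{-m}\|v_\e-\psi_\e\|_{\mc{H}_\e}^2 = o(1)$ by the previous step; hence \eqref{eq:restriction} forces $h_\e\to0$ in $H^1(D_\rho\setminus\Gamma_1)$. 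Summing the two convergences gives \eqref{eq:asy-veps}.

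For the rate \eqref{eq:rate-eig}, I would write $v_\e - v_0 = (v_\e - \psi_\e) - V_\e$ and observe that, for functions in $\mc{H}_\e$, the $\mc{H}_1$- and $\mc{H}_\e$-norms coincide, as $\Gamma_1\setminus\Gamma_\e$ is Lebesgue-negligible; in particular $\|v_\e-v_0\|_{\mc{H}_1} = \|v_\e-v_0\|_{\mc{H}_\e}$. The triangle inequality and the second step then give $\big|\e^{-m/2}\|v_\e-v_0\|_{\mc{H}_1} - \e^{-m/2}\|V_\e\|_{\mc{H}_\e}\big| \le \e^{-m/2}\|v_\e-\psi_\e\|_{\mc{H}_\e} = o(1)$, while $\e^{-m/2}\|V_\e\|_{\mc{H}_\e} = \|\widetilde V_\e\|_{\widetilde{\mathcal X}}\to\|\widetilde V\|_{\widetilde{\mathcal X}}$ by Proposition \ref{prop_blow_up}, proving \eqref{eq:rate-eig}. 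The main obstacles I anticipate are the careful bookkeeping of the sign and $L^2$-normalization in the first step, and the verification that the rescaled remainders genuinely lie in $\widetilde{\mathcal X}$, so that the Hardy-type restriction estimate \eqref{eq:restriction} is legitimately applicable; the rest is a matter of assembling estimates already available at the fixed and blown-up scales.
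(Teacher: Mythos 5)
Your proposal is correct and follows essentially the same route as the paper's proof: the decomposition through $\psi_\e=v_0-V_\e$ and $\Pi_\e\psi_\e$, the identification $v_\e=\Pi_\e\psi_\e/\|\Pi_\e\psi_\e\|_{L^2(\Omega)}$ via the normalization \eqref{eq:choice-of-v-eps}, the transfer of the energy estimates \eqref{eq:energy-eigenfunctions}--\eqref{eq:energy-eigenfunctions3} to the blow-up scale through Proposition \ref{prop_blow_up} and the restriction continuity \eqref{eq:restriction}. The only (immaterial) differences are that you fix the sign by examining $\int_\Omega\psi_\e v_\e\,dx$ directly rather than $\int_\Omega v_0\,\Pi_\e\psi_\e\,dx$, and you obtain the rate \eqref{eq:rate-eig} by a triangle inequality instead of expanding $\|\Pi_\e\psi_\e-v_0\|_{\mathcal H_1}^2$.
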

\begin{proof}
  Using the same notation as in the proof of Theorem
  \ref{prop_eigenvlaues_with_CT}, let $\psi_\e=v_0-V_\e$, where $V_\e$
  is defined as in Proposition \ref{prop_Ve_existence}.
  From \eqref{eq:energy-eigenfunctions} it follows that
  \begin{equation*}
    \|\Pi_\e\psi_\e-\psi_\e\|_{\mathcal
      H_\e}^2=o\left(\|V_\e\|_{\mathcal H_\e}^2\right) \quad\text{as
    }\e\to0^+.
  \end{equation*}
  Therefore, defining
  \begin{equation*}
    W_\e(x):=\e^{-\frac{m}{2}}(\Pi_\e\psi_\e-\psi_\e)(\e x), \quad
    x\in \tfrac1\e\Omega,
  \end{equation*}
  and extending trivially $W_\e$ in $\R^2\setminus\frac1\e\Omega$, we
  have  $W_\e\in \widetilde{\mathcal H}$ and, in view of
  Proposition \ref{prop_blow_up},
  \begin{equation*}
    \|W_\e\|_{\widetilde{\mathcal X}}^2=\e^{-m}\|\Pi_\e\psi_\e-\psi_\e\|_{\mathcal H_\e}^2=
    \e^{-m}o\left(\|V_\e\|_{\mathcal
    H_\e}^2\right)=\|\widetilde{V_\e}\|_{\widetilde{\mathcal X}}^2\,o(1)=o(1)
\end{equation*}
  as $\e\to0^+$.
By continuity of the restriction operator in \eqref{eq:restriction} we
deduce that
\begin{equation}\label{eq:con-W-eps}
  W_\e\to 0 \quad\text{as $\e\to0^+$ in
  $H^1(D_\rho\setminus\Gamma_1)$ for all $\rho>0$}. 
\end{equation}
Let us define
\begin{equation}\label{eq:Ueps}
    U_\e(x):=\e^{-\frac{m}{2}}(\Pi_\e\psi_\e)(\e x), \quad
    x\in \tfrac1\e\Omega,
  \end{equation}
and extend trivially $U_\e$ in $\R^2\setminus\frac1\e\Omega$. We have
\begin{equation*}
  U_\e=\widetilde{V}_{0,\e}(x)-\widetilde V_\e+W_\e,
\end{equation*}
where $\widetilde{V}_{0,\e}$ and $\widetilde V_\e$ are defined in
\eqref{def_tilde_Ve_tilde_v_0e}.
Combining \eqref{eq:conH1-0}, \eqref{limit_blow_up}, and
\eqref{eq:con-W-eps}, we conclude that
\begin{equation}\label{eq:asyUeps}
  U_\e\to \Psi_0-\widetilde V\quad\text{as
    $\e\to0^+$ in
  }H^1(D_\rho\setminus\Gamma_1)\text{ for all }\rho>0. 
\end{equation}
From \eqref{eq:energy-eigenfunctions2} it follows that
\begin{equation*}
  \int_\Omega
  v_0\Pi_\e\psi_\e\,dx=1+o\left(\|V_\e\|_{\mc{H}_\e}\right)\quad\text{as
  }\e\to0^+,
\end{equation*}
and hence, for $\e>0$ small enough,
\begin{equation*}
  \int_\Omega \frac{\Pi_\e\psi_\e}{\|\Pi_\e\psi_\e\|_{L^2(\Omega)}}\,v_0\,dx>0.
\end{equation*}
Since $v_{\e}$ is the unique eigenfunction of
\eqref{prob_eigenvalue_gauged_multipole} associated to $\la_\e$
satisfying \eqref{eq:choice-of-v-eps}, we conclude that necessarily
\begin{equation}\label{eq:chara-veps}
  v_\e=\frac{\Pi_\e\psi_\e}{\|\Pi_\e\psi_\e\|_{L^2(\Omega)}}.
\end{equation}
The convergence stated in \eqref{eq:asy-veps} follows from
\eqref{eq:chara-veps}, \eqref{eq:Ueps}, \eqref{eq:asyUeps}, and
\eqref{eq:energy-eigenfunctions3}.

Moreover, \eqref{eq:energy-eigenfunctions3} implies that
\begin{align}\label{eq:eig1}
\|v_\e-\Pi_\e\psi_\e\|_{\mathcal
  H_1}&=\frac{|1-\|\Pi_\e\psi_\e\|_{L^2(\Omega)}|}
{\|\Pi_\e\psi_\e\|_{L^2(\Omega)}}\|\Pi_\e\psi_\e\|_{\mathcal
  H_1}\\
&\notag  =\big|1-\|\Pi_\e\psi_\e\|_{L^2(\Omega)}\big| \|v_\e\|_{\mathcal
  H_\e}    
  =o(\|V_\e\|_{\mathcal H_\e})\quad\text{as }\e\to0^+,
\end{align}
whereas \eqref{eq:energy-eigenfunctions} yields that
\begin{equation}\label{eq:eig2}
  \|\Pi_\e\psi_\e-v_0\|_{\mathcal
  H_1}^2=\|V_\e\|_{\mathcal
  H_\e}^2+\|\Pi_\e\psi_\e-\psi_\e\|_{\mathcal H_\e}^2-2(V_\e,
\Pi_\e\psi_\e-\psi_\e)_{\mathcal H_\e}=\|V_\e\|_{\mathcal
  H_\e}^2+o( \|V_\e\|_{\mathcal
  H_\e}^2) 
\end{equation}
as $\e\to0^+$. Combining \eqref{eq:eig1} and \eqref{eq:eig2} we deduce
that
\begin{equation}\label{eq:eig3}
  \|v_\e-v_0\|_{\mathcal H_1}^2=\|V_\e\|_{\mathcal
  H_\e}^2+o( \|V_\e\|_{\mathcal
  H_\e}^2) \quad\text{as }\e\to0^+.
\end{equation}
Letting $\widetilde{V}_\e$ be as in \eqref{def_tilde_Ve_tilde_v_0e},
from \eqref{eq:eig3} and \eqref{limit_blow_up} we deduce that 
\begin{equation*}
  \e^{-m}\|v_\e-v_0\|_{\mathcal H_1}^2=\|\widetilde V_\e\|^2_{\widetilde{\mathcal
  X}}(1+o(1))=\|\widetilde V\|^2_{\widetilde{\mathcal
  X}}(1+o(1)) \quad\text{as }\e\to0^+,
\end{equation*}
  thus proving \eqref{eq:rate-eig}.
\end{proof}
Going back to the eigenfunctions of the original magnetic problem via 
the inverse of transformation \eqref{eq:ph.transf}, we deduce 
Theorem \ref{t:autofunzioniAB} from Proposition \ref{p:blow_up_eigenfunctions}.

\begin{proof}[Proof of Theorem \ref{t:autofunzioniAB}] 
    If $u_0$ is an eigenfunction of \eqref{prob_Aharonov-Bohm_0} associated 
    to the eigenvalue $\lambda_{0,n_0}$ such that $\int_\Omega |u_0|^2\,dx=1$, 
    and $u_\e$ is the eigenfunction of
    \eqref{prob_Aharonov-Bohm_multipole} associated to 
    $\lambda_{n_0,\e}$ satisfying \eqref{eq:ipo-auto}, then 
    $v_\e:=e^{-i\Theta_\e}u_\e$ is an eigenfunction of 
    \eqref{prob_eigenvalue_gauged_multipole} associated to $\lambda_{n_0,\e}$ and 
    $v_0:=e^{-i\Theta_0}u_0$ is an eigenfunction of 
    \eqref{prob_eigenvlaue_guged0} associated to $\lambda_{n_0,0}$ such that
    condition \eqref{eq:choice-of-v-eps} is satisfied.  From Proposition 
    \ref{p:blow_up_eigenfunctions} it follows that  $v_\e$ satisfies 
    \eqref{eq:asy-veps} and \eqref{eq:rate-eig}, in which we replace $v_\e$ with 
    $e^{-i\Theta_\e}u_\e$ and $v_0$ with $e^{-i\Theta_0}u_0$ to get exactly 
    \eqref{eq:asy-ue1} and \eqref{eq:asy-ue2}, taking into account that 
    $\Theta_\e(\e x)=\Theta_1(x)$ for all $x\in \R^2\setminus
    \{a^j:j=1,\dots,k\}$.    
\end{proof}

\section{The case of two poles}\label{sec_two_poles}
The purpose of this section is to prove Theorems
\ref{theorem_2poles_nodal} and \ref{theorem_2poles_bisector}. We
consider the case $k_1=0$ and $k_2=1$, with the configuration of poles
as in assumption \eqref{eq:due-poli}, being $r_1\in(0,R)$ and
$\e\in(0,1]$.  For the sake of simplicity, let us denote
\begin{equation*}
	T:=T^1, \quad  	\gamma_+:=\gamma_+^1,
 \quad  	\gamma_-:=\gamma_-^1,\quad \text{and }  \quad \nu:=\nu^1=(0,1),
\end{equation*} 
see \eqref{eq:trTj}.  We first consider a linear functional
$L_{\e,h,\Lambda}$ more general than the one introduced in
\eqref{def_Le}, defined for a generic domain $\Lambda$ and with the
limit eigenfunction $v_0$ replaced by a generic function $h$; the
corresponding minimal energy $\mathcal E_{\e,h,\Lambda}$ thus
generalizes the energy $\mc{E}_\e$ defined in \eqref{eq:defEe}.  For
every simply connected open bounded domain $\Lambda\subset \R^2$ such
that $D_R\subseteq\Lambda$ and every
$h \in H^1_0(\Lambda)\cap C^{\infty}(\Lambda)$, let
\begin{equation*}
	L_{\e,h,\Lambda}: \mc{H}_{1,\Lambda} \to\R,\quad
        L_{\e,h,\Lambda}(w):= 2
        \int_{S_\e}\pd{h}{x_2}	\gamma_+ (w)\, dS
\end{equation*}
and 
\begin{equation*}
	J_{\e,h,\Lambda}: \mc{H}_{\e,\Lambda}\to\R,\quad
        J_{\e,h,\Lambda}(w):=\frac{1}{2}
        \int_{\Lambda \setminus S_\e} |\nabla w|^2 \, dx +L_{\e,h,\Lambda}(w),
\end{equation*}
where, for all $\e\in(0,1]$, $S_\e$ is defined in \eqref{def_S_e} and
the functional space $\mathcal H_{\e,\Lambda}$ is the closure of
\begin{equation*}
    \left\{w \in H^1(\Lambda \setminus S_\e):w=0 \text{ on a
    neighbourhood of } \partial \Lambda\right\}
\end{equation*}
with respect to the norm $\norm{w}_{H^1(\Omega \setminus
  S_\e)}$.
Then the minimization problem 
\begin{equation}\label{eq:Vepsin_2poles}
  \inf\left\{J_{\e,h,\Lambda}(w): w\in \mathcal H_{\e,\Lambda}
    \text{ and }w- h  \in  \widetilde{\mc{H}}_{\e,\Lambda} \right\}
\end{equation} 
with
$\widetilde{\mc{H}}_{\e,\Lambda}:= \left\{w \in \mc{H}_{\e,\Lambda}:
  T(w)=0\text { on } S_\e \right\}$, is uniquely achieved, as stated
in the following proposition. We omit the proof, being similar to the
one of Proposition \ref{prop_Ve_existence}.

\begin{proposition}\label{prop_Ve_existence_2poles}
  The infimum in \eqref{eq:Vepsin_2poles} is achieved by a unique
  $V_{\e,h,\Lambda} \in \mathcal H_{\e,\Lambda}$.  Furthermore,
  $V_{\e,h,\Lambda}$ weakly solves the problem
	\begin{equation}\label{prob_Ve_2poles}
		\begin{cases}
			-\Delta V_{\e,h,\Lambda}=0,  &\text{in } \Lambda \setminus S_\e,\\
			V_{\e,h,\Lambda}=0, &\text{on } \partial \Lambda,\\
			T(V_{\e,h,\Lambda}-h)=0, &\text {on }S_\e,\\
			T\left(\pd{V_{\e,h,\Lambda}}{x_2}-\pd{h}{x_2}\right)=0, &\text {on } S_\e,
		\end{cases}
	\end{equation}
	in the sense that $V_{\e,h,\Lambda} \in  \mc{H}_{\e,\Lambda}$, $V_{\e,h,\Lambda} -h \in  \widetilde{\mc{H}}_{\e,\Lambda}$,   and 
	\begin{equation}\label{eq_Ve_2poles}
		\int_{\Lambda \setminus S_\e} \nabla V_{\e,h,\Lambda}\cdot \nabla w\, dx = -L_{\e,h,\Lambda}(w)\quad\text{for all }w \in \mc{\widetilde{H}}_{\e,\Lambda}.
	\end{equation}
\end{proposition}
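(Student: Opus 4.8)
The plan is to mimic the proof of Proposition \ref{prop_Ve_existence} line by line, since the present statement differs from it only by replacing $\Omega$ with the generic bounded domain $\Lambda$, the crack set $\Gamma_\e$ with the single segment $S_\e$, and the eigenfunction $v_0$ with the generic function $h\in H^1_0(\Lambda)\cap C^\infty(\Lambda)$. First I would secure the functional-analytic prerequisites for $\mc{H}_{\e,\Lambda}$. Arguing exactly as in Remark \ref{remark_embbeing_compact} and Proposition \ref{prop_poincare} — cutting $\Lambda$ along the line $\{x_2=0\}$ containing $S_\e$ and applying the Poincaré inequality for functions vanishing on a portion of the boundary — one obtains a constant $C_P>0$ with $\int_\Lambda w^2\,dx\le C_P\int_{\Lambda\setminus S_\e}|\nabla w|^2\,dx$ for all $w\in\mc{H}_{\e,\Lambda}$, so that $\big(\int_{\Lambda\setminus S_\e}|\nabla w|^2\,dx\big)^{1/2}$ is a norm equivalent to $\|\cdot\|_{H^1(\Lambda\setminus S_\e)}$ and the embedding $\mc{H}_{\e,\Lambda}\hookrightarrow L^2(\Lambda)$ is compact.

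Next I would verify that $L_{\e,h,\Lambda}$ is a continuous linear functional on $\mc{H}_{\e,\Lambda}$, which is actually easier here than for $L_\e$: since $D_R\subseteq\Lambda$ and $r_1<R$, the segment $S_\e$ is a compact subset of the interior region $D_R$, where $h$ is of class $C^\infty$, so $\pd{h}{x_2}$ is bounded on $S_\e$. Combining this with the continuity of the trace operator $\gamma_+\colon\mc{H}_{\e,\Lambda}\to L^2(S_\e)$ gives $|L_{\e,h,\Lambda}(w)|\le 2\|\pd{h}{x_2}\|_{L^2(S_\e)}\|\gamma_+(w)\|_{L^2(S_\e)}\le C\|w\|_{\mc{H}_{\e,\Lambda}}$, whence $L_{\e,h,\Lambda}\in(\mc{H}_{\e,\Lambda})^*$. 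Existence and uniqueness of the minimizer then follow from the direct method applied to $J_{\e,h,\Lambda}(w)=\frac12\|w\|_{\mc{H}_{\e,\Lambda}}^2+L_{\e,h,\Lambda}(w)$, which is continuous, strictly convex, and — by the continuity of $L_{\e,h,\Lambda}$ together with Young's inequality — coercive on the closed and convex affine set $h+\widetilde{\mc{H}}_{\e,\Lambda}=\{w\in\mc{H}_{\e,\Lambda}:w-h\in\widetilde{\mc{H}}_{\e,\Lambda}\}$. A minimizing sequence is bounded, hence admits a weakly convergent subsequence whose limit $V_{\e,h,\Lambda}$ lies in the affine set and attains the infimum; the first-order optimality condition along directions $w\in\widetilde{\mc{H}}_{\e,\Lambda}$ yields precisely the weak formulation \eqref{eq_Ve_2poles}.

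Finally, uniqueness is obtained as in Proposition \ref{prop_Ve_existence}: if two solutions of \eqref{eq_Ve_2poles} exist, their difference belongs to $\widetilde{\mc{H}}_{\e,\Lambda}$, and subtracting the two identities and testing with this difference gives $\nabla(V^{(1)}_{\e,h,\Lambda}-V^{(2)}_{\e,h,\Lambda})=0$, so the Poincaré inequality forces the two solutions to coincide. To read off that $V_{\e,h,\Lambda}$ weakly solves \eqref{prob_Ve_2poles}, I would integrate by parts separately on the two sides $\pi_\pm^1$ of the segment: testing with functions supported away from $S_\e$ yields harmonicity in $\Lambda\setminus S_\e$ and the homogeneous Dirichlet condition on $\partial\Lambda$, the jump condition $T(V_{\e,h,\Lambda}-h)=0$ on $S_\e$ is encoded in the constraint $V_{\e,h,\Lambda}-h\in\widetilde{\mc{H}}_{\e,\Lambda}$, and the transmission condition $T\big(\pd{V_{\e,h,\Lambda}}{x_2}-\pd{h}{x_2}\big)=0$ on $S_\e$ emerges as the natural boundary condition. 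I do not anticipate any genuine obstacle; the only points deserving minor care are confirming that the trace apparatus of Section \ref{sec_main_results} transfers verbatim to the single-segment geometry and that $S_\e$ remains inside the region where $h$ is smooth, both of which are immediate from the hypotheses.
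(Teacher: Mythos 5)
Your proposal is correct and follows exactly the route the paper intends: the paper omits this proof precisely because it is the same argument as Proposition \ref{prop_Ve_existence} (Poincar\'e inequality and compactness after cutting along the line containing $S_\e$, continuity of the linear functional via the trace operators, the direct method on the closed convex affine set $h+\widetilde{\mc{H}}_{\e,\Lambda}$, and uniqueness by testing the difference of two solutions). Your observation that continuity of $L_{\e,h,\Lambda}$ is even easier here, since $h$ is smooth on a neighbourhood of $S_\e$, is accurate and consistent with the hypotheses.
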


For every $\Lambda,h$ as above and $\e \in(0,1]$, let 
\begin{equation}\label{def_E_he}
	\mc{E}_{\e,h,\Lambda}:=J_{\e,h,\Lambda}(V_{\e,h,\Lambda}).
\end{equation}
For every $L>0$ and $\e >0$, let $E_\e(L)$ be the ellipse defined as
\begin{equation*}
	E_\e(L):=\left\{(x_1,x_2)\in \R^2:\frac{x_1^2}{L^2+r_1^2\e^2}+\frac{x^2_2}{L^2}<1\right\}.
\end{equation*}
We are going to compute $\mc{E}_{\e,P_m,E_\e(L)}$, where $P_m$ is a
homogeneous polynomial of degree $m\ge 1$.  We shall later apply such
estimate with $P_m$ being the Taylor polynomial of $u_0$ centered at
$0$ of order $m$, with $u_0$ and $m$ as in \Cref{sec:two-opposite}.

\begin{proposition}\label{prop_elliptic_coor}
  Let $m \in \mb{N}$, $m\ge 1$, and let $P_m$ be a homogeneous
  polynomial of degree $m$, i.e.
	\begin{equation}\label{def_P_m}
		P_m(x_1,x_2):=\sum_{j=0}^m \ell_jx_1^{m-j}x_2^j,
	\end{equation}
	for some $\ell_0,\ell_1,\dots,\ell_m \in \mb{R}$. Then,  for every $L>0$,  we have 
	\begin{equation}
          \label{eq_nablaV_elliptic}\int_{E_\e(L)\setminus S_\e} |\nabla V_{\e,P_m,E_\e(L)}|^2 \, dx = \pi (\e r_1)^{2m}\bigg(\ell_0^2\sum_{j=1}^m j |c_j|^2+\ell_1^2\sum_{j=1}^m\frac{|d_j|^2}{j}\bigg)+o(\e^{2m})
	\end{equation}
 as  $\e \to 0^+$, where 
	\begin{align}
		&c_j=\frac{1}{\pi}\int_0^{2\pi} (\cos\eta)^m\cos(j \eta) \, d\eta \quad \text{for every } j \in \mathbb{N},\label{def_c_j}\\
		&d_j=\frac{1}{\pi}\int_0^{2\pi} (\cos\eta)^{m-1}\sin \eta\sin(j \eta) \, d\eta \quad \text{for every } j \in \mathbb{N}\setminus\{0\}.\label{def_d_j}
	\end{align}
\end{proposition}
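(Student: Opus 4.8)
The plan is to exploit that the endpoints $(\pm r_1\e,0)$ of $S_\e$ are exactly the foci of the ellipse $\partial E_\e(L)$, and to pass to the elliptic coordinates $(\mu,\eta)$ adapted to them. Writing $c:=r_1\e$ and
\[
(x_1,x_2)=\Phi(\mu,\eta):=\big(c\cosh\mu\cos\eta,\ c\sinh\mu\sin\eta\big),\qquad \mu\ge 0,\ \eta\in[0,2\pi),
\]
the map $\Phi$ is the restriction of the conformal map $w\mapsto c\cosh w$ with $w=\mu+i\eta$. First I would check that $\Phi$ sends the rectangle $R:=(0,\mu_0)\times(0,2\pi)$ (with $\eta$ periodic) bijectively onto $E_\e(L)\setminus S_\e$, where $\mu_0$ is fixed by $\sinh\mu_0=L/(r_1\e)$, so that $\{\mu=\mu_0\}$ is $\partial E_\e(L)$ and $\{\mu=0\}$ is the doubly covered slit $S_\e$, the rays $\eta\in(0,\pi)$ and $\eta\in(\pi,2\pi)$ covering the upper and lower sides. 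Observe that $\mu_0\to+\infty$ as $\e\to0^+$, with $e^{-2\mu_0}=O(\e^2)$.

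Next I would use the conformal invariance of the Dirichlet integral in dimension two to rewrite, with $\tilde V:=V_{\e,P_m,E_\e(L)}\circ\Phi$,
\[
\int_{E_\e(L)\setminus S_\e}\big|\nabla V_{\e,P_m,E_\e(L)}\big|^2\,dx=\int_R\big[(\partial_\mu\tilde V)^2+(\partial_\eta\tilde V)^2\big]\,d\mu\,d\eta,
\]
and, by the same invariance, to record that $\tilde V$ is harmonic in $R$, is $2\pi$-periodic in $\eta$, and vanishes at $\mu=\mu_0$. I would then translate the two transmission conditions of \eqref{prob_Ve_2poles}. A short Jacobian computation shows that on the slit $\partial_{x_2}=\tfrac{1}{c\sin\eta}\,\partial_\mu$ at $\mu=0$, and the reflection $x_2\mapsto-x_2$ corresponds to $\eta\mapsto 2\pi-\eta$. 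Hence the Dirichlet jump $T(V)=2P_m$ becomes $\tilde V(0,\eta)+\tilde V(0,2\pi-\eta)=2\ell_0(c\cos\eta)^m$, while the Neumann jump $T(\partial_{x_2}V)=2\partial_{x_2}P_m$ becomes a relation for the antisymmetric combination $\partial_\mu\tilde V(0,\eta)-\partial_\mu\tilde V(0,2\pi-\eta)=2\ell_1 c^m\sin\eta(\cos\eta)^{m-1}$. Here I would first note that the minimizer, hence its gradient and energy, depends on $P_m$ only through the traces $P_m|_{S_\e}=\ell_0 x_1^m$ and $\partial_{x_2}P_m|_{S_\e}=\ell_1 x_1^{m-1}$, so replacing $P_m$ by any $H^1_0(E_\e(L))\cap C^\infty$ function coinciding with it near $S_\e$ leaves the left-hand side of \eqref{eq_nablaV_elliptic} unchanged; this reconciles the use of a polynomial with Proposition~\ref{prop_Ve_existence_2poles}.

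The decisive simplification is to split $V$ into its even and odd parts under $x_2\mapsto-x_2$, writing $\tilde V=\tilde V^{\mathrm e}+\tilde V^{\mathrm o}$ with $\tilde V^{\mathrm e}=\sum_{j\ge0}g_j(\mu)\cos(j\eta)$ and $\tilde V^{\mathrm o}=\sum_{j\ge1}h_j(\mu)\sin(j\eta)$. Then $\tilde V^{\mathrm e}$ obeys the pure Dirichlet condition $\tilde V^{\mathrm e}(0,\eta)=\ell_0(c\cos\eta)^m$ (its Neumann jump vanishing by symmetry) and $\tilde V^{\mathrm o}$ the pure Neumann condition $\partial_\mu\tilde V^{\mathrm o}(0,\eta)=\ell_1 c^m\sin\eta(\cos\eta)^{m-1}$ (its Dirichlet jump vanishing by symmetry), so the parameters $\ell_0,\ell_1$ decouple. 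Expanding $(\cos\eta)^m=\sum_j c_j\cos(j\eta)$ and $\sin\eta(\cos\eta)^{m-1}=\sum_j d_j\sin(j\eta)$ with $c_j,d_j$ as in \eqref{def_c_j}--\eqref{def_d_j}—these are trigonometric polynomials of degree $\le m$, so the sums are finite—the modes solve $f''-j^2 f=0$ on $(0,\mu_0)$ with $f(\mu_0)=0$, giving for $j\ge1$
\[
g_j(\mu)=\ell_0 c^m c_j\,\frac{\sinh\big(j(\mu_0-\mu)\big)}{\sinh(j\mu_0)},\qquad
h_j(\mu)=-\frac{\ell_1 c^m d_j}{j\cosh(j\mu_0)}\,\sinh\big(j(\mu_0-\mu)\big),
\]
together with a negligible affine mode $g_0(\mu)=\tfrac12\ell_0 c^m c_0\,\frac{\mu_0-\mu}{\mu_0}$. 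Identifying this explicit finite sum with the unique weak solution of \eqref{prob_Ve_2poles} pins down $V$.

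Finally I would compute the energy mode by mode, using the $L^2(0,2\pi)$-orthogonality of $\{\cos(j\eta),\sin(j\eta)\}$ (so that cross terms between the even and odd parts and between distinct modes drop out) and the elementary integral $\int_0^{\mu_0}\cosh\big(2j(\mu_0-\mu)\big)\,d\mu=\tfrac{1}{2j}\sinh(2j\mu_0)$. This yields, for each $j\ge1$, the contributions $\pi\,\ell_0^2 c^{2m}c_j^2\,j\coth(j\mu_0)$ from $\tilde V^{\mathrm e}$ and $\pi\,\ell_1^2 c^{2m}\tfrac{d_j^2}{j}\tanh(j\mu_0)$ from $\tilde V^{\mathrm o}$, while the $j=0$ mode contributes $O(c^{2m}/\mu_0)$. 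Since $\coth(j\mu_0)=1+O(e^{-2\mu_0})$ and $\tanh(j\mu_0)=1+O(e^{-2\mu_0})$ with $e^{-2\mu_0}=O(\e^2)$, and since $c^{2m}=(\e r_1)^{2m}$ and $\mu_0\to\infty$, every correction term and the $j=0$ mode are $o(\e^{2m})$; summing the leading contributions over the finite range $1\le j\le m$ (as $c_j=d_j=0$ for $j>m$) gives precisely \eqref{eq_nablaV_elliptic}. The main obstacle I anticipate is the careful bookkeeping in transporting the Neumann transmission condition into elliptic coordinates—the $\tfrac{1}{c\sin\eta}\partial_\mu$ factor and the even/odd behaviour of the one-sided traces across the slit—together with the clean justification that the explicit separated-variables function is indeed the weak solution of \eqref{prob_Ve_2poles} despite $P_m\notin H^1_0(E_\e(L))$.
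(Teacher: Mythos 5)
Your proposal is correct and follows essentially the same route as the paper's proof: elliptic coordinates with the poles at the foci, transport of the two jump conditions to the slit $\{\xi=0\}$, Fourier separation in $\eta$ (your even/odd split under $x_2\mapsto-x_2$ is exactly the cosine/sine decoupling the paper uses), explicit solution of the mode ODEs, and Parseval plus the $\coth(j\xi_\e),\tanh(j\xi_\e)\to1$ asymptotics; your $\sinh$-form solutions agree with the paper's exponential expressions, and your mode-by-mode energies and the $o(\e^{2m})$ error bookkeeping check out. The one point you flag — that $P_m\notin H^1_0(E_\e(L))$ while Proposition \ref{prop_Ve_existence_2poles} is stated for $h\in H^1_0(\Lambda)\cap C^\infty(\Lambda)$ — is handled correctly by your observation that only the traces of $h$ and $\partial_{x_2}h$ on $S_\e$ matter; the paper glosses over this.
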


\begin{proof}
	We consider elliptic coordinates $(\xi, \eta)$ defined as 
	\begin{equation}\label{def_elliptic_coor}
		\begin{cases}
			x_1=\e r_1\cosh(\xi) \cos (\eta), \\
			x_2=\e r_1\sinh(\xi) \sin (\eta), \\
		\end{cases}
		\quad \xi\ge 0, \, \eta \in [0,2\pi),
	\end{equation} 
see e.g. \cite[Section 2.2]{AFHL}. 
	In this coordinates $S_\e$ is described by the conditions \begin{equation*}
 \xi=0,\quad \eta\in [0,2\pi),
 \end{equation*}
 whereas $E_\e(L)$ is described by  
\begin{equation*}
 \xi\in [0,\xi_\e),\quad \eta\in [0,2\pi),
 \end{equation*}
 where $\xi_\e$ is such that $r_1 \e \sinh(\xi_\e)=L$, that is 
	\begin{equation}\label{def_xi_e}
          \xi_\e=\mathop{\rm{arcsinh}}\left(\frac{L}{r_1\e}\right)=\log\left(\frac{L}{r_1\e}+\sqrt{1+\frac{L^2}{r_1^2\e^2}}\right).
	\end{equation}
	In particular $\partial E_\e(L)$ is described by the
        conditions
	\begin{equation*}
		\xi=\xi_\e,\quad \eta \in [0,2\pi).
	\end{equation*}
	The map 
	\begin{equation*}
		F_\e:[0,\xi_\e) \times [0,2\pi ) \to E_\e(L), \quad F_\e(\xi,\eta)=(x_1,x_2),
	\end{equation*}
	defined by \eqref{def_elliptic_coor},  has a Jacobian matrix of the form 
	\begin{equation*}
		J_{F_\e}(\xi,\eta)=\e r_1 \sqrt{\cosh^2\xi-\cos^2\eta}\, \,O(\xi,\eta)
	\end{equation*}
        for some orthogonal matrix $O(\xi,\eta)$, and
        $\det J_{F_\e}(\xi,\eta)=\e^2 r_1^2 (\cosh^2\xi-\cos^2\eta)$.
        In particular $F_\e$ is a conform mapping and
        $J_{F_\e}(\xi,\eta)$ is an invertible matrix if
        $(\xi,\eta)\neq (0,0)$ and $(\xi,\eta)\neq (0,\pi)$.

        Let $\widehat{V}_{\e,P_m,L}:=V_{\e,P_m,E_\e(L)}\circ F_\e$,
        where $V_{\e,P_m,E_\e(L)}$ is the solution of
        \eqref{prob_Ve_2poles} in the case $\Lambda=E_\e(L)$ and
        $h=P_m$.  We observe that, since $F_\e(\xi,\eta)\in \R^2_+$ if
        $\eta\in(0,\pi)$ and $F_\e(\xi,\eta)\in \R^2_-$ if
        $\eta\in(\pi,2\pi)$,
\begin{equation*}
  \widehat{V}_{\e,P_m,L}(0,\eta)=
  \begin{cases}
      \gamma_+(V_{\e,P_m,E_\e(L)})(\e r_1\cos\eta,0),
      &\text{if }\eta\in(0,\pi),\\
      \gamma_-(V_{\e,P_m,E_\e(L)})(\e r_1\cos\eta,0),
      &\text{if }\eta\in(\pi,2\pi).      
  \end{cases}
\end{equation*}
Furthermore,
\begin{equation*}
  \frac{\partial \widehat{V}_{\e,P_m,L}}{\partial\xi}(0,\eta)=
  \begin{cases}
    \e r_1(\sin\eta) \, \gamma_+\left(\dfrac{\partial
        V_{\e,P_m,E_\e(L)}}{\partial x_2}\right)(\e r_1\cos\eta,0),
    &\text{if }\eta\in(0,\pi),\\[10pt]
    \e r_1(\sin\eta)\, \gamma_-\left(\dfrac{\partial
        V_{\e,P_m,E_\e(L)}}{\partial x_2}\right)(\e r_1\cos\eta,0),
    &\text{if }\eta\in(\pi,2\pi).
  \end{cases}
\end{equation*}
We also note that, for every $\eta\in[0,2\pi)$, 
\begin{equation*}
    P_m(F_\e(0,\eta))=(\e r_1)^m \ell_0 
    (\cos\eta)^m\quad\text{and}\quad
\frac{\partial P_m}{\partial x_2}(F_\e(0,\eta))=\ell_1 (\e r_1)^{m-1}(\cos\eta)^{m-1.}
\end{equation*}
Therefore, $\widehat{V}_{\e,P_m,L}$ solves the problem 
	\begin{equation}\label{proof_elliptic_coor:2.1}
		\begin{cases}
                  -\Delta \widehat{V}_{\e,P_m,L}=0,  &\text{in } (0,\xi_\e)\times (0,2\pi),\\[2pt]
                  \widehat{V}_{\e,P_m,L}(\xi_\e,\eta)=0, &\text{for all } \eta \in [0,2\pi),\\[2pt]
                  \widehat{V}_{\e,P_m,L}(\xi,0)=\widehat{V}_{\e,P_m,L}(\xi,2 \pi), &\text{for all } \xi \in (0,\xi_\e),\\[2pt]
                  \widehat{V}_{\e,P_m,L}(0,\eta)+\widehat{V}_{\e,P_m,L}(0,2 \pi -\eta)=2\ell_0(\e r_1)^m(\cos\eta)^m, &\text{for all } \eta \in (0,\pi),\\[2pt]
                  \dfrac{\partial
                    \widehat{V}_{\e,P_m,L}}{\partial\xi}(0,\eta)-
                  \dfrac{\partial
                    \widehat{V}_{\e,P_m,L}}{\partial\xi}(0,2 \pi
                  -\eta)=2\ell_1(\e r_1)^m(\cos \eta)^{m-1}\sin\eta,
                  &\text{for all } \eta \in (0,\pi).
		\end{cases}
	\end{equation}
	Let us consider the Fourier expansion of $(\e r_1)^{-m}\widehat{V}_{\e,P_m,L}$ with respect to the variable $\eta$
	\begin{equation*}
          \frac{1}{(\e r_1)^m}\widehat{V}_{\e,P_m,L}(\xi,\eta)=\frac{a_{0,\e}(\xi)}{2}+\sum_{j=1}^\infty \Big(a_{j,\e}(\xi)\cos(j\eta)+b_{j,\e}(\xi)\sin(j \eta)\Big),
	\end{equation*}
	where 
	\begin{align*}
          &a_{j,\e}(\xi):=\frac{(\e r_1)^{-m}}{\pi}\int_0^{2\pi}\widehat{V}_{\e,P_m,L}(\xi,\eta) \cos(j \eta) \, d\eta \quad \text{for all } j \in \mathbb{N}, \\
          &b_{j,\e}(\xi):=\frac{(\e r_1)^{-m}}{\pi}\int_0^{2\pi}\widehat{V}_{\e,P_m,L}(\xi,\eta) \sin(j \eta) \, d\eta \quad \text{for all } j \in \mathbb{N}\setminus\{0\}.
    \end{align*}
	Since $\cos(2 \pi -\eta)=\cos\eta$ for any $\eta \in  (0,\pi)$, from \eqref{proof_elliptic_coor:2.1} 
  it follows that 
	\begin{equation*}
          a_{0,\e}(0)+2\sum_{j=1}^\infty a_{j,\e}(0) \cos(j\eta)=2\ell_0(\cos\eta)^m \quad\text{ for all } \eta \in  (0,2\pi),
	\end{equation*}
        hence $\{a_{j,\e}(0)\}_{j\in \mb{N}}$ are the Fourier
        coefficients of $\ell_0(\cos\eta)^m$ with respect to the
        orthonormal basis
        $\big\{\frac{1}{\sqrt{2\pi}},\frac{1}{\sqrt{\pi}}\cos(j\eta),
        \frac{1}{\sqrt{\pi}}\sin(j\eta)\big\}_{j \in
          \mathbb{N}\setminus \{0\}}$ of $L^2(0,2\pi)$, i.e.
	\begin{equation*}
    a_{j,\e}(0)=\ell_0 c_j\quad \text{for all } j \in \mathbb{N},
\end{equation*}
with $c_j$ as in \eqref{def_c_j}. 
	In particular 
 \begin{equation}\label{eq:cj0}
     a_{j,\e}(0)=\ell_0 c_j=0 \quad \text{if }j>m.
 \end{equation} 
 On the other hand, the last condition in
 \eqref{proof_elliptic_coor:2.1} reads as
	\begin{equation*}
		\sum_{j=1}^\infty b'_{j,\e}(0) \sin(j\eta)=\ell_1(\cos\eta)^{m-1} \sin\eta \quad \text{for all } \eta \in  (0,2\pi).
	\end{equation*}
	It follows that $b_{j,\e}'(0)$ are independent of $\e$ and
	\begin{equation*}
		b'_{j,\e}(0)=\ell_1d_j\quad \text{for all } j \in \mathbb{N}\setminus\{0\},
	\end{equation*}
 with $d_j$ as in \eqref{def_d_j};
	 hence 
  \begin{equation}\label{eq:dj0}
  b_{j,\e}'(0)=\ell_1 d_j=0 \quad\text{if }j>m.    
  \end{equation}
  From the equation in \eqref{proof_elliptic_coor:2.1} it follows 
 that 
\begin{align*}
    0&=\frac{1}{(\e r_1)^m}\Delta\widehat{V}_{\e,P_m,L}\\
    &=\frac{a''_{0,\e}(\xi)}{2}+\sum_{j=1}^\infty \Big((a_{j,\e}''(\xi)-j^2a_{j,\e}(\xi))\cos(j\eta)+(b_{j,\e}''(\xi)-j^2b_{j,\e}(\xi))\sin(j \eta)\Big),
	\end{align*}
	hence
	\begin{align}
		&a_{0,\e}(\xi)=-\frac{a_{0,\e}(0)}{\xi_\e} \xi +a_{0,\e}(0) 
  =-\frac{\ell_0 c_0}{\xi_\e} \xi +\ell_0c_0\quad \text{for all } \xi \in (0,\xi_\e),\label{proof_elliptic_coor:10}\\
		\notag &a_{j,\e}(\xi)= \ell_0c_j \left(\frac{e^{j\xi}}{1-e^{2j\xi_\e}}+\frac{e^{-j\xi}}{1-e^{-2j\xi_\e}}\right) \quad \text{for all } \xi \in (0,\xi_\e) 
		\text{ and } j \in \mathbb{N}\setminus\{0\},\\
		\notag &b_{j,\e}(\xi)=\frac{\ell_1 d_j}{j}\left(\frac{e^{j\xi}}{1+e^{2j\xi_\e}}-\frac{e^{-j\xi}}{1+e^{-2j\xi_\e}}\right) \quad \text{for all } \xi \in (0,\xi_\e) 
		\text{ and }j \in \mathbb{N}\setminus\{0\}, 	\end{align}
	with $\xi_\e$ as in \eqref{def_xi_e}.
	Then, by \eqref{eq:cj0} and \eqref{eq:dj0}, $a_{j,\e}\equiv b_{j,\e}\equiv0$ for all $j>m$, so that   
 \begin{equation*}
   \frac{1}{(\e r_1)^m}\widehat{V}_{\e,P_m,L}(\xi,\eta)=\frac{a_{0,\e}(\xi)}{2}+\sum_{j=1}^m \Big(a_{j,\e}(\xi)\cos(j\eta)+b_{j,\e}(\xi)\sin(j \eta)\Big).
	\end{equation*}
 By a change of variables and the Parseval identity,
\begin{align}\label{eq:parse}
  \int_{E_\e(L)\setminus S_\e} &|\nabla V_{\e,P_m,E_\e(L)}|^2 \, dx= \int_{0}^{\xi_\e}\int_{0}^{2\pi}|\nabla \widehat{V}_{\e,P_m,L}|^2  \, d\eta\, d\xi\\
  \notag &  =(\e r_1)^{2m}\frac{\pi}{2} \int_{0}^{\xi_\e}|a_{0,\e}'(\xi)|^2 \, d\xi\\
  \notag &\quad+(\e r_1)^{2m}\pi\sum_{j=1}^{m}\int_{0}^{\xi_\e}\left(
           |a'_{j,\e}(\xi)|^2+j^2|b_{j,\e}(\xi)|^2 +|b'_{j,\e}(\xi)|^2+j^2|a_{j,\e}(\xi)|^2 \right)\, d\xi.
	\end{align}
	Let us compute each integral in the above formula. In view of
        \eqref{proof_elliptic_coor:10} and \eqref{def_xi_e}, it is
        clear that
	\begin{equation*}
		\int_{0}^{\xi_\e}|a_{0,\e}'(\xi)|^2 \, d\eta=\frac{ \ell_0^2c_0^2 }{\xi_\e}=\frac{ \ell_0^2c_0^2 }{|\log\e|}+O\left(\frac{1}{|\log\e|^2}\right) \quad 
  \text{as } \e \to 0^+.
	\end{equation*}
	Furthermore, for every $j \in \mathbb{N}\setminus\{0\}$,
	\begin{align*}
	&	j^2\int_{0}^{\xi_\e}|b_{j,\e}(\xi)|^2 \, d\xi= \ell_1^2 d_j^2 \int_0^{\xi_\e} \left(\frac{e^{j\xi}}{1+e^{2j\xi_\e}}-\frac{e^{-j\xi}}{1+e^{-2j\xi_\e}}\right)^2 \, d\xi
		\\
  &=\frac{ \ell_1^2 d_j^2 }{(1+e^{2j\xi_\e})^2}\int_0^{\xi_\e}e^{2j\xi}\, d \xi +\frac{ \ell_1^2 d_j^2 }{(1+e^{-2j\xi_\e})^2}\int_0^{\xi_\e}e^{-2j\xi}\, d \xi 
		-\frac{2 \ell_1^2 d_j^2 \xi_\e}{2+e^{-2j\xi_\e}+e^{2j\xi_\e}}\\
  &=\frac{ \ell_1^2 d_j^2 }{2j}\left(\frac{1}{(1+e^{2j\xi_\e})^2}(e^{2j\xi_\e}-1)+\frac{1}{(1+e^{-2j\xi_\e})^2}(1-e^{-2j\xi_\e})\right)-\frac{2 \ell_1^2 d_j^2 \xi_\e}{2+e^{-2j\xi_\e}+e^{2j\xi_\e}}\\
	&	=\frac{ \ell_1^2 d_j^2 }{2j}(1+o(1))
 \quad \text{as } \e \to 0^+
	\end{align*}
	and similarly
	\begin{multline*}
          \int_{0}^{\xi_\e}|b'_{j,\e}(\xi)|^2 \, d\xi=\ell_1^2
          d_j^2\int_0^{\xi_\e}
          \left(\frac{e^{j\xi}}{1+e^{2j\xi_\e}}+\frac{e^{-j\xi}}{1+e^{-2j\xi_\e}}\right)^2
          d\xi =\frac{ \ell_1^2 d_j^2 }{2j}(1+o(1)) \quad\text{as } \e
          \to 0^+.
	\end{multline*}
	Finally, for every $j \in \mathbb{N}\setminus\{0\}$,
	\begin{align*}
          &\int_{0}^{\xi_\e}|a'_{j,\e}(\xi)|^2 \, d\xi=j^2\ell_0^2c_j^2\int_0^{\xi_\e} \left(\frac{e^{j\xi}}{1-e^{2j\xi_\e}}-\frac{e^{-j\xi}}{1-e^{-2j\xi_\e}}\right)^2 \, d\xi
            =\ell_0^2 c_j^2\frac{j}{2}(1+o(1)), \\
          &j^2\int_{0}^{\xi_\e}|a_{j,\e}(\xi)|^2 \, d\xi=j^2\ell_0^2c_j^2\int_0^{\xi_\e} \left(\frac{e^{j\xi}}{1-e^{2j\xi_\e}}+\frac{e^{-j\xi}}{1-e^{-2j\xi_\e}}\right)^2 \, d\xi
            =\ell_0^2 c_j^2\frac{j}{2}(1+o(1)), 
	\end{align*}
        as $\e \to 0^+$, as shown in the proof of \cite[Lemma
        2.3]{AFHL}.  Replacing the above estimates in \eqref{eq:parse}
        we obtain \eqref{eq_nablaV_elliptic}.
\end{proof}

\begin{proposition}\label{prop_sum_cj_dj}
  Let $m \in \mathbb{N}\setminus\{0\}$. For every
  $j \in \mathbb{N}\setminus\{0\}$, let $c_j$ and $d_j$ be as in
  \eqref{def_c_j} and \eqref{def_d_j}, respectively.  Then
 \begin{align}
		&\sum_{j=1}^m j |c_j|^2=\frac{m}{4^{m-1}}\binom{m-1}{\lfloor\frac{m-1}{2}\rfloor}^{\! 2},\label{eq_sum_cj}\\
		&\sum_{j=1}^m \frac{1}{j} |d_j|^2=\frac{1}{m4^{m-1}}\binom{m-1}{\lfloor\frac{m-1}{2}\rfloor}^{\! 2}\label{eq_sum_dj}.
	\end{align}
\end{proposition}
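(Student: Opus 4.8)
The plan is to obtain closed forms for both $c_j$ and $d_j$, observe that they are proportional, and then reduce everything to a single combinatorial identity that telescopes.

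First I would expand $(\cos\eta)^m = 2^{-m}\sum_{k=0}^m \binom{m}{k}e^{i(m-2k)\eta}$ and read off the real Fourier coefficients, using that $\eta\mapsto(\cos\eta)^m$ is even (so its sine coefficients vanish). Matching frequencies $m-2k=\pm j$ gives
\begin{equation*}
c_j = \frac{1}{2^{m-1}}\binom{m}{\tfrac{m-j}{2}}
\quad\text{if } 1\le j\le m \text{ and } j\equiv m \ (\mathrm{mod}\ 2),
\end{equation*}
and $c_j=0$ otherwise. Next, the elementary identity $\frac{d}{d\eta}(\cos\eta)^m = -m(\cos\eta)^{m-1}\sin\eta$ lets me rewrite $d_j$ and integrate by parts; the boundary terms vanish by $2\pi$-periodicity of $\sin(j\eta)$, and differentiating $\sin(j\eta)$ produces $j\cos(j\eta)$, yielding $d_j = \tfrac{j}{m}\,c_j$. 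This is the key structural reduction: since $\frac1j|d_j|^2 = \frac{j}{m^2}|c_j|^2$, identity \eqref{eq_sum_dj} follows at once from \eqref{eq_sum_cj} via $\sum_j \frac1j|d_j|^2 = \frac{1}{m^2}\sum_j j|c_j|^2$, so only \eqref{eq_sum_cj} needs a genuine argument.

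For \eqref{eq_sum_cj}, I would substitute the closed form for $c_j$ and set $k=(m-j)/2$. As $j$ runs over the admissible values in $\{1,\dots,m\}$ of the same parity as $m$, the index $k$ runs over $0,\dots,\lfloor(m-1)/2\rfloor$ in both the even and odd cases, reducing the claim to
\begin{equation*}
\sum_{k=0}^{\lfloor(m-1)/2\rfloor}(m-2k)\binom{m}{k}^2 = m\binom{m-1}{\lfloor\frac{m-1}{2}\rfloor}^{\!2}.
\end{equation*}
The crucial step is the factorization
\begin{equation*}
(m-2k)\binom{m}{k}^2 = m\left[\binom{m-1}{k}^2-\binom{m-1}{k-1}^2\right],
\end{equation*}
which I would derive from $(m-2k)\binom{m}{k} = m\bigl[\binom{m-1}{k}-\binom{m-1}{k-1}\bigr]$ (itself coming from $k\binom{m}{k}=m\binom{m-1}{k-1}$ and $(m-k)\binom{m}{k}=m\binom{m-1}{k}$) multiplied by Pascal's rule $\binom{m}{k}=\binom{m-1}{k}+\binom{m-1}{k-1}$. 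The sum then telescopes to $m\binom{m-1}{\lfloor(m-1)/2\rfloor}^2$, using $\binom{m-1}{-1}=0$, and dividing by $4^{m-1}$ gives \eqref{eq_sum_cj}.

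The routine parts are the Fourier expansion and the integration by parts; I expect the \emph{main obstacle} to be recognizing the telescoping factorization of $(m-2k)\binom{m}{k}^2$, since the partial sum over the lower half of the symmetric antisymmetric array $\{(m-2k)\binom{m}{k}^2\}_k$ is what distinguishes this from the trivially vanishing full sum. Once that identity is in hand, both \eqref{eq_sum_cj} and \eqref{eq_sum_dj} follow immediately, and it would remain only to check the small cases (e.g. $m=1,2,3$) to confirm the parity bookkeeping and the value of the top index $\lfloor(m-1)/2\rfloor$.
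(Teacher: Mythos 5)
Your proof is correct. For \eqref{eq_sum_dj} you take essentially the same route as the paper: the key reduction is the relation $d_j=\tfrac{j}{m}c_j$, which the paper obtains by differentiating the Fourier cosine expansion of $(\cos\eta)^m$ and you obtain by integration by parts on the defining integral --- these are the same computation. The difference lies in \eqref{eq_sum_cj}: the paper does not prove it at all, but cites \cite[Proposition A.3]{AFL}, whereas you give a self-contained argument via the explicit coefficients $c_j=2^{1-m}\binom{m}{(m-j)/2}$ (for $j\equiv m \bmod 2$, zero otherwise), the substitution $k=(m-j)/2$, and the telescoping factorization
\begin{equation*}
(m-2k)\binom{m}{k}^2=m\left[\binom{m-1}{k}^2-\binom{m-1}{k-1}^2\right],
\end{equation*}
which follows from $(m-k)\binom{m}{k}=m\binom{m-1}{k}$, $k\binom{m}{k}=m\binom{m-1}{k-1}$, and Pascal's rule. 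I checked that the index range for $k$ is $0,\dots,\lfloor(m-1)/2\rfloor$ in both parities and that the telescoped value matches the claimed right-hand side (and agrees with $m=1,2,3$). What your approach buys is a fully self-contained proof of the proposition; what the paper's buys is brevity, at the cost of depending on an external reference for the harder half of the statement.
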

\begin{proof}
  For the proof of \eqref{eq_sum_cj} we refer to \cite[Proposition
  A.3]{AFL}. To prove \eqref{eq_sum_dj}, we observe that, in view of
  \eqref{def_c_j},
	\begin{equation*}
		(\cos\eta)^m=\frac{c_0}{2}+\sum_{j=1}^m c_j
                \cos(j\eta)
                \quad \text{for all } \eta \in [0,2\pi].
	\end{equation*}
	Deriving the previous identity with respect to $\eta$, we
        obtain
	\begin{equation*}
		(\cos\eta)^{m-1}\sin\eta=\frac{1}{m}\sum_{j=1}^m j c_j \sin(j\eta)=\sum_{j=1}^m d_j \sin(j\eta) \quad \text{for all } \eta \in [0,2\pi],
	\end{equation*}
	in view of \eqref{def_d_j}. It follows that $d_j=\frac{j}{m} c_j$ for 
 all $j=1,\dots,m$, hence \eqref{eq_sum_dj} follows from \eqref{eq_sum_cj}.
\end{proof}

\begin{remark}\label{remark_Pm}
  Let $m \in \mb{N}$, $m\ge 1$, and let $P_m$ be a homogeneous
  polynomial of degree $m$ as in \eqref{def_P_m}. Let
  $\Lambda\subset \R^2$ be a simply connected open bounded domain such
  that $D_R\subseteq\Lambda$.
\begin{enumerate}[\rm (i)]\setlength\itemsep{0.8em}
\item If the coefficient $\ell_0$ in \eqref{def_P_m} is zero, then
  $P_m\equiv 0$ on $S_\e$ for all $\e\in(0,1]$. Hence
  $V_{\e,P_m,\Lambda}\in \widetilde{\mathcal H}_{\e,\Lambda}$ and, in
  view of \eqref{eq_Ve_2poles},
  $\int_{\Lambda \setminus S_\e} |\nabla V_{\e,P_m,\Lambda}|^2\, dx =
  -L_{\e,P_m,\Lambda}(V_{\e,P_m,\Lambda})$, so that
\begin{equation*}
    \mc{E}_{\e,P_m,\Lambda}=-\frac12 \int_{\Lambda \setminus S_\e} |\nabla V_{\e,P_m,\Lambda}|^2\, dx. 
\end{equation*}
\item If the coefficient $\ell_1$ in \eqref{def_P_m} is zero, then
  $\frac{\partial P_m}{\partial x_2}\equiv 0$ on $S_\e$ for all
  $\e\in(0,1]$. Hence $L_{\e,P_m,\Lambda}\equiv 0$ and
\begin{equation*}
    \mc{E}_{\e,P_m,\Lambda}=\frac12 \int_{\Lambda \setminus S_\e} |\nabla V_{\e,P_m,\Lambda}|^2\, dx. 
\end{equation*}
\end{enumerate}
\end{remark}

\begin{proposition}\label{prop_asympotic_E_ePm}
  Let $\Omega \subset \R^2$ be a simply connected open bounded domain
  with $0 \in D_R\subseteq\Omega$. For every $\e\in(0,1)$, let $S_\e$
  be defined in \eqref{def_S_e}. Let $P_m$ be a homogeneous polynomial
  of degree $m$ as in \eqref{def_P_m} and $\mathcal E_{\e,P_m,\Omega}$
  be defined in \eqref{def_E_he} with $\Lambda=\Omega$ and
  $h=P_m$. Then, letting $\ell_0$ and $\ell_1$ be as in
  \eqref{def_P_m}, we have
 \begin{enumerate}[\rm (i)]\setlength\itemsep{0.8em}
     \item if $\ell_0=0$, then
    \begin{equation*}
        \mathcal E_{\e,P_m,\Omega}=-\frac{\pi}{2}r_1^{2m}\ell_1^2 \e^{2m}
        \frac{1}{m4^{m-1}}\binom{m-1}{\lfloor\frac{m-1}{2}\rfloor}^{\!2}+o(\e^{2m})
        \quad\text{as }\e\to0^+;
    \end{equation*} 
    \item if $\ell_1=0$, then
    \begin{equation*}
        \mathcal E_{\e,P_m,\Omega}=\frac{\pi}{2}r_1^{2m}\ell_0^2 \e^{2m}
        \frac{m}{4^{m-1}}\binom{m-1}{\lfloor\frac{m-1}{2}\rfloor}^{\!2}+o(\e^{2m})
        \quad\text{as }\e\to0^+.
    \end{equation*} 
 \end{enumerate}
\end{proposition}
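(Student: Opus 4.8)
The plan is to transfer the explicit computation on ellipses in Proposition \ref{prop_elliptic_coor} to a general $\Omega$ by a domain--monotonicity and squeezing argument. The crucial feature of Proposition \ref{prop_elliptic_coor} is that its leading coefficient is \emph{independent of the parameter $L$}. Combining it with Remark \ref{remark_Pm} and the closed forms of Proposition \ref{prop_sum_cj_dj}, one obtains, for every fixed $L>0$,
\begin{equation*}
\mathcal E_{\e,P_m,E_\e(L)}=K\,\e^{2m}+o(\e^{2m})\quad\text{as }\e\to0^+,
\end{equation*}
where $K=-\frac{\pi}{2}r_1^{2m}\ell_1^2\frac{1}{m4^{m-1}}\binom{m-1}{\lfloor\frac{m-1}{2}\rfloor}^{2}$ if $\ell_0=0$ (using Remark \ref{remark_Pm}(i) and \eqref{eq_sum_dj}), and $K=\frac{\pi}{2}r_1^{2m}\ell_0^2\frac{m}{4^{m-1}}\binom{m-1}{\lfloor\frac{m-1}{2}\rfloor}^{2}$ if $\ell_1=0$ (using Remark \ref{remark_Pm}(ii) and \eqref{eq_sum_cj}). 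Since the \emph{same} constant $K$ governs every ellipse, it suffices to trap $\Omega$ between two ellipses $E_\e(L_1)\subseteq\Omega\subseteq E_\e(L_2)$ and to show that $\Lambda\mapsto\mathcal E_{\e,P_m,\Lambda}$ is monotone non-increasing with respect to inclusion.

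First I would establish this monotonicity, i.e. $\mathcal E_{\e,P_m,\Lambda_2}\le\mathcal E_{\e,P_m,\Lambda_1}$ whenever $D_R\subseteq\Lambda_1\subseteq\Lambda_2$. Fix once and for all a radial cut-off $\eta\in C^\infty_{\rm c}(D_{L_1'})$, with $L_1'<R$, such that $\eta\equiv1$ on a neighbourhood of $0$; then $\eta P_m\in H^1_0(\Lambda_i)\cap C^\infty(\Lambda_i)$, one has $\eta P_m\equiv P_m$ on a neighbourhood of $S_\e$ for all small $\e$, and $\mathcal E_{\e,P_m,\Lambda}=\mathcal E_{\e,\eta P_m,\Lambda}$ by uniqueness of the solution of \eqref{prob_Ve_2poles}; this reduction is what makes Proposition \ref{prop_Ve_existence_2poles} and Remark \ref{remark_Pm} applicable to a genuine $H^1_0$ datum. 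If $\ell_0=0$ then $\eta P_m\in\widetilde{\mc{H}}_{\e,\Lambda}$, so by Remark \ref{remark_Pm}(i) the energy is the minimum of the torsion-type functional $w\mapsto\frac12\int_{\Lambda\setminus S_\e}|\nabla w|^2\,dx+L_{\e,P_m,\Lambda}(w)$ over the linear space $\widetilde{\mc{H}}_{\e,\Lambda}$; if $\ell_1=0$ then $L_{\e,P_m,\Lambda}\equiv0$ and, by Remark \ref{remark_Pm}(ii), $\mathcal E_{\e,P_m,\Lambda}$ is the minimum of $\frac12\int_{\Lambda\setminus S_\e}|\nabla w|^2\,dx$ over the affine class $\eta P_m+\widetilde{\mc{H}}_{\e,\Lambda}$. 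In both cases the minimiser for $\Lambda_1$, extended by zero, is an admissible competitor for $\Lambda_2$: it vanishes near $\partial\Lambda_1$ because $\eta P_m$ is supported in $D_{L_1'}\subseteq\Lambda_1$, and the extension preserves both the Dirichlet energy and the linear term $L_{\e,P_m,\Lambda}$, which only involves traces on $S_\e\subseteq\Lambda_1$. This yields the claimed inequality.

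Next I would verify the geometric inclusions. The inscribed and circumscribed disks of $E_\e(L)$ are $D_L$ and $D_{\sqrt{L^2+r_1^2\e^2}}$, so choosing $L_1\in(0,R)$ gives $E_\e(L_1)\subseteq D_{\sqrt{L_1^2+r_1^2\e^2}}\subseteq D_R\subseteq\Omega$ for $\e$ small, while choosing $L_2$ with $\Omega\subseteq D_{L_2}$ gives $\Omega\subseteq D_{L_2}\subseteq E_\e(L_2)$ for every $\e$. The monotonicity then sandwiches
\begin{equation*}
\mathcal E_{\e,P_m,E_\e(L_2)}\le\mathcal E_{\e,P_m,\Omega}\le\mathcal E_{\e,P_m,E_\e(L_1)},
\end{equation*}
and, dividing by $\e^{2m}$ and letting $\e\to0^+$, both outer terms converge to the same constant $K$, so the squeeze theorem yields $\e^{-2m}\mathcal E_{\e,P_m,\Omega}\to K$, which is precisely the expansion in (i) when $\ell_0=0$ and in (ii) when $\ell_1=0$.

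The main point to get right is the monotonicity step: one must check that extension by zero carries $\widetilde{\mc{H}}_{\e,\Lambda_1}$ (respectively the affine class $\eta P_m+\widetilde{\mc{H}}_{\e,\Lambda_1}$) into the corresponding class for $\Lambda_2$ while \emph{exactly} preserving the functional, which is why the common cut-off $\eta$ supported strictly inside the smaller domain is introduced and why the identification $\mathcal E_{\e,P_m,\Lambda}=\mathcal E_{\e,\eta P_m,\Lambda}$ is recorded first. The remaining ingredients—the $L$-independence of the leading coefficient in Proposition \ref{prop_elliptic_coor} and the evaluation of the sums in Proposition \ref{prop_sum_cj_dj}—are already available, so no further estimates are required.
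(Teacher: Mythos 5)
Your proposal is correct and follows essentially the same route as the paper: trapping $\Omega$ between two ellipses $E_\e(L_1)\subseteq\Omega\subseteq E_\e(L_2)$, using monotonicity of $\Lambda\mapsto\mathcal E_{\e,P_m,\Lambda}$ under inclusion via trivial extension, and exploiting the $L$-independence of the leading coefficient in Proposition \ref{prop_elliptic_coor} together with Remark \ref{remark_Pm} and Proposition \ref{prop_sum_cj_dj}. Your extra step of replacing $P_m$ by $\eta P_m$ with a cut-off supported in $D_R$ is a small added precaution (the paper applies the framework to $P_m$ directly, tacitly using that only the trace data on $S_\e$ enter the problem), but it does not change the substance of the argument.
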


\begin{proof}
  The set $\Omega$ is open and $0 \in \Omega$, hence there exist
  $L_1,L_2>0$ such that, for every $\e\in(0,1]$,
  $S_\e\subset E_\e(L_1)\subset\Omega\subset E_\e(L_2)$ (e.g. we can
  choose any $0<L_1<\sqrt{R^2-r_1^2}$ and
  $L_2=\mathop{\rm diam}\Omega$).  From \eqref{eq:Vepsin_2poles},
  \eqref{def_E_he}, \and the space inclusions
  $\mathcal H_{\e,E_\e(L_1)}\subset \mathcal H_{\e,\Omega}\subset
  \mathcal H_{\e,E_\e(L_2)}$,
  $\widetilde{\mathcal H}_{\e,E_\e(L_1)}\subset \widetilde{\mathcal
    H}_{\e,\Omega}\subset \widetilde{\mathcal H}_{\e,E_\e(L_2)}$
  obtained by trivial extension, we deduce that, for every
  $\e\in(0,1]$,
	\begin{equation}\label{proof_asympotic_E_eP_m}
		\mc{E}_{\e,P_m, E_\e(L_2)} \le \mc{E}_{\e,P_m,\Omega} \le \mc{E}_{\e,P_m,E_\e(L_1)}.
	\end{equation}
        If $\ell_0=0$, from Remark \ref{remark_Pm},
        \eqref{eq_nablaV_elliptic}, and \eqref{eq_sum_dj} it follows
        that, for $i=1,2$,
\begin{align*}
    \mathcal E_{\e,P_m,E_\e(L_i)}&=-\frac12 \int_{E_\e(L_i) \setminus
                                   S_\e}
                                   |\nabla V_{\e,P_m,E_\e(L_i)}|^2\, dx\\
    &=-\frac{\pi}{2} (\e
      r_1)^{2m}\ell_1^2\bigg(\sum_{j=1}^m\frac{|d_j|^2}{j}\bigg)
      +o(\e^{2m})=-\frac{\pi}{2}r_1^{2m}\ell_1^2 \e^{2m}
    \frac{1}{m4^{m-1}}\binom{m-1}{\lfloor\frac{m-1}{2}\rfloor}^{\!2}+o(\e^{2m})    
\end{align*}
as $\e\to0^+$,
thus proving (i) in view of \eqref{proof_asympotic_E_eP_m}.

On the other hand, if $\ell_1=0$, then Remark \ref{remark_Pm},
\eqref{eq_nablaV_elliptic}, and \eqref{eq_sum_cj} imply that, for
$i=1,2$,
\begin{align*}
    \mathcal E_{\e,P_m,E_\e(L_i)}&=\frac12 \int_{E_\e(L_i) \setminus S_\e} |\nabla V_{\e,P_m,E_\e(L_i)}|^2\, dx\\
    &=\frac{\pi}{2} (\e r_1)^{2m}\ell_0^2\bigg(\sum_{j=1}^m j|c_j|^2\bigg)+o(\e^{2m})
    =\frac{\pi}{2}r_1^{2m}\ell_0^2 \e^{2m}
\frac{m}{4^{m-1}}\binom{m-1}{\lfloor\frac{m-1}{2}\rfloor}^{\!2}+o(\e^{2m}) 
\end{align*}
as $\e\to0^+$, thus proving (ii) in view of \eqref{proof_asympotic_E_eP_m}.
\end{proof}

Let $u_0$ be as in \eqref{def_u_0} with $u_0(0)=0$ and
$m,\beta,\alpha_0$ be as in \eqref{asymptotic_u0_even}. Let $T_m$ be
the Taylor polynomial of $u_0$ centered at $0$ of order $m$ written in
\eqref{def_T_m}. In particular $T_m$ is of the form \eqref{def_P_m}
with
\begin{equation*}
    \ell_j=\frac{1}{(m-j)!j!}\pd{^mu_0}{x^{m-j}_1\partial x_2^j}(0).
\end{equation*}
If $\alpha_0=\frac{j\pi}{m}$ for some $j\in\{0,1,\dots,2m-1\}$, then,
by Remark \ref{remark_beta},
\begin{equation*}
    \ell_0=T_m(1,0)=0
\end{equation*}
and
\begin{equation*}
    \ell_1=\frac{\partial T_m}{\partial x_2}(1,0)=\nabla
    T_m(1,0)\cdot(0,1)
    =m\beta\cos(j\pi)=(-1)^jm\beta.
\end{equation*}
Hence, by Proposition \ref{prop_asympotic_E_ePm}, in this case
we have
\begin{equation}\label{eq:stf1}
        \mathcal E_{\e,T_m,\Omega}=-\frac{\pi}{2}r_1^{2m} \e^{2m}
        \frac{m\beta^2}{4^{m-1}}\binom{m-1}{\lfloor\frac{m-1}{2}\rfloor}^{\!2}+o(\e^{2m})
        \quad\text{as }\e\to0^+.
\end{equation}
On the other hand, if $\alpha_0=\frac{\pi}{2m}+\frac{j\pi}{m}$ for
some $j\in\{0,1,\dots,2m-1\}$, then, by Remark \ref{remark_beta},
\begin{equation*}
    \ell_0=T_m(1,0)=-\beta\sin\big(\tfrac{\pi}{2}+j\pi\big)=(-1)^{j+1}\beta
\end{equation*}
and
\begin{equation*}
    \ell_1=\frac{\partial T_m}{\partial x_2}(1,0)=m\beta\cos
    \big(\tfrac{\pi}{2}+j\pi\big)=0.
\end{equation*}
In this case, Proposition \ref{prop_asympotic_E_ePm} then 
provides the expansion 
\begin{equation}\label{eq:stf2}
       \mathcal E_{\e,T_m,\Omega}=\frac{\pi}{2}r_1^{2m}\e^{2m}
       \frac{m\beta^2}{4^{m-1}}\binom{m-1}{\lfloor\frac{m-1}{2}\rfloor}^{\!2}+o(\e^{2m})
       \quad\text{as }\e\to0^+.
\end{equation}
Let $g:=u_0-T_m$.  Since $u_0$ is smooth and $T_m$ is its Taylor
polynomial at $0$ of order $m$, then
\begin{equation}\label{ineq_f}
	g(x)=O(|x|^{m+1})   \quad \text{and} \quad |\nabla g(x)|
        =O(|x|^{m})
        \quad \text{as } x\to0.
\end{equation}

\begin{proposition}
  Let $m$ and $\alpha_0$ be as in \eqref{asymptotic_u0_even}.  For
  every $\e\in(0,1]$, let $V_{\e,T_m,\Omega}$ and
  $\mc{E}_{\e,T_m,\Omega}$ be as in \eqref{eq:Vepsin_2poles} and
  \eqref{def_E_he}, with $\Lambda=\Omega$ and $h=T_m$, and let
  $V_\e=V_{\e,u_0,\Omega}$ and
  $\mathcal E_\e=\mathcal E_{\e,u_0,\Omega}$ be as in
  \eqref{eq:Vepsin} and \eqref{eq:defEe}, respectively. Then
 \begin{equation}
  \label{eq_normVe_normVePm}
  \norm{V_\e-V_{\e,T_m,\Omega}}_{\mc{H_\e}}^2=O(\e^{2m+1})=o(\e^{2m})\quad \text{ as }\e \to 0^+
     \end{equation}
   and, if either $\alpha_0=\frac{j\pi}{m}$ or $\alpha_0=\frac{\pi}{2m}+\frac{j\pi}{m}$ for some $j\in\{0,1,\dots,2m-1\}$,  
     \begin{align}
\label{eq:stimaVe2p}
 &\|V_\e\|_{\mc{H_\e}}=O(\e^m) \quad \text{as }\e\to0^+,\\
&\label{eq_Ee_EePm} 
    \mc{E}_\e-\mc{E}_{\e,T_m,\Omega}=o(\e^{2m}) \quad \text{ as }\e \to 0^+. 
\end{align}
\end{proposition}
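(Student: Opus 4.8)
The plan is to exploit the linearity of the map $h\mapsto V_{\e,h,\Omega}$, which follows at once from the uniqueness in Proposition \ref{prop_Ve_existence_2poles} together with the linearity in $h$ of both the functional $L_{\e,h,\Omega}$ and the jump conditions in \eqref{prob_Ve_2poles}. Setting $g=u_0-T_m$ as in \eqref{ineq_f} and recalling that $V_\e=V_{\e,u_0,\Omega}$, $\mc{E}_\e=\mc{E}_{\e,u_0,\Omega}$, I would first record the decompositions
\begin{equation*}
V_\e=V_{\e,T_m,\Omega}+V_{\e,g,\Omega},\qquad L_{\e,u_0,\Omega}=L_{\e,T_m,\Omega}+L_{\e,g,\Omega},
\end{equation*}
so that the difference $V_\e-V_{\e,T_m,\Omega}$ appearing in \eqref{eq_normVe_normVePm} equals exactly $V_{\e,g,\Omega}$.

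The core estimate, and the main obstacle, is to show that $\|V_{\e,g,\Omega}\|_{\mc{H}_\e}^2=O(\e^{2m+1})$, i.e. to extract the extra power of $\e$ coming from the higher vanishing order of $g$. Arguing exactly as in the derivation of \eqref{eq:estoso}, one has $\tfrac14\|V_{\e,g,\Omega}\|_{\mc{H}_\e}^2\le \mc{E}_{\e,g,\Omega}+\|L_{\e,g,\Omega}\|_{\mc{H}_\e^*}^2$. For the first term I would use the competitor $\eta_\e g$ (with $\eta_\e$ the cut-off at scale $\e$ from \eqref{eq:cut-off}) in the minimization \eqref{eq:Vepsin_2poles}; the bounds $|g|\le C|x|^{m+1}$, $|\nabla g|\le C|x|^{m}$ from \eqref{ineq_f}, together with $|\nabla\eta_\e|\le C\e^{-1}$ and $|S_\e|,|D_{2\e}|=O(\e^2)$, yield $\mc{E}_{\e,g,\Omega}\le J_{\e,g,\Omega}(\eta_\e g)=O(\e^{2m+2})$. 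For the second term, the Hölder and trace estimates used in Proposition \ref{prop_Le} give, for any fixed $p\in(1,2)$, $\|L_{\e,g,\Omega}\|_{\mc{H}_\e^*}\le C\|\partial_{x_2}g\|_{L^p(S_\e)}=O(\e^{m+1/p})$, where I use that the trace operator $\gamma_+\colon \mc{H}_\e\to L^{p'}(S_\e)$ is bounded uniformly in $\e$ because $S_\e\subseteq S_1$. Since $2/p>1$, both contributions are $O(\e^{2m+1})$, which proves \eqref{eq_normVe_normVePm}; in particular $\|V_{\e,g,\Omega}\|_{\mc{H}_\e}=O(\e^{m+1/2})=o(\e^m)$.

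To obtain \eqref{eq:stimaVe2p} I would bound $\|V_{\e,T_m,\Omega}\|_{\mc{H}_\e}$ separately in the two special cases. When $\alpha_0=\tfrac{j\pi}{m}$ we have $\ell_0=0$, so Remark \ref{remark_Pm}(i) gives $\mc{E}_{\e,T_m,\Omega}=-\tfrac12\|V_{\e,T_m,\Omega}\|_{\mc{H}_\e}^2$, while when $\alpha_0=\tfrac{\pi}{2m}+\tfrac{j\pi}{m}$ we have $\ell_1=0$ and Remark \ref{remark_Pm}(ii) gives $\mc{E}_{\e,T_m,\Omega}=\tfrac12\|V_{\e,T_m,\Omega}\|_{\mc{H}_\e}^2$; in both cases Proposition \ref{prop_asympotic_E_ePm} yields $\|V_{\e,T_m,\Omega}\|_{\mc{H}_\e}^2=O(\e^{2m})$. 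Combining with $\|V_{\e,g,\Omega}\|_{\mc{H}_\e}=o(\e^m)$ and the triangle inequality gives \eqref{eq:stimaVe2p}.

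Finally, for \eqref{eq_Ee_EePm} I would write $\mc{E}_{\e,h,\Omega}=\tfrac12\|V_{\e,h,\Omega}\|_{\mc{H}_\e}^2+L_{\e,h,\Omega}(V_{\e,h,\Omega})$ and insert the two decompositions above to obtain
\begin{equation*}
\mc{E}_\e-\mc{E}_{\e,T_m,\Omega}=(V_{\e,T_m,\Omega},V_{\e,g,\Omega})_{\mc{H}_\e}+\tfrac12\|V_{\e,g,\Omega}\|_{\mc{H}_\e}^2+L_{\e,T_m,\Omega}(V_{\e,g,\Omega})+L_{\e,g,\Omega}(V_{\e,T_m,\Omega})+L_{\e,g,\Omega}(V_{\e,g,\Omega}).
\end{equation*}
Each term is then controlled by Cauchy--Schwarz and the bounds already collected: with $\|V_{\e,T_m,\Omega}\|_{\mc{H}_\e}=O(\e^m)$, $\|V_{\e,g,\Omega}\|_{\mc{H}_\e}=O(\e^{m+1/2})$, $\|L_{\e,T_m,\Omega}\|_{\mc{H}_\e^*}=O(\e^{m-1+1/p})$ and $\|L_{\e,g,\Omega}\|_{\mc{H}_\e^*}=O(\e^{m+1/p})$ for $p\in(1,2)$, the five terms are respectively $O(\e^{2m+1/2})$, $O(\e^{2m+1})$, $O(\e^{2m-1/2+1/p})$, $O(\e^{2m+1/p})$ and $O(\e^{2m+1/2+1/p})$, each of which is $o(\e^{2m})$ since $1/p>1/2$. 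This proves \eqref{eq_Ee_EePm}. I expect the only delicate point to be the bookkeeping of the $\e$-powers in the trace estimates of the second paragraph; everything else is a routine assembly of the preceding results.
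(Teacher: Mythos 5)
Your proposal is correct and follows essentially the same route as the paper: the difference $V_\e-V_{\e,T_m,\Omega}$ is identified (via linearity/uniqueness) as the potential associated with $g=u_0-T_m$, estimated through the competitor $\eta_\e g$ together with the vanishing orders \eqref{ineq_f} and the Hölder/trace bounds, and then \eqref{eq:stimaVe2p} and \eqref{eq_Ee_EePm} are deduced from Remark \ref{remark_Pm}, Proposition \ref{prop_asympotic_E_ePm}, and Cauchy--Schwarz exactly as in the paper. The only differences are organizational — you route the key bound through the coercivity inequality $\tfrac14\|V_{\e,g,\Omega}\|_{\mc{H}_\e}^2\le\mc{E}_{\e,g,\Omega}+\|L_{\e,g,\Omega}\|_{\mc{H}_\e^*}^2$ and expand $\mc{E}_\e-\mc{E}_{\e,T_m,\Omega}$ bilinearly into five terms, whereas the paper tests the equation for $W_\e$ directly with $W_\e-\eta_\e g$ and splits the difference of energies into two pieces — and the resulting estimates coincide.
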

\begin{proof}
  Let $W_\e:=V_\e-V_{\e,T_m,\Omega}$. Then $W_\e$ satisfies
  \eqref{eq_Ve_2poles} with $h:=g$. Let $\eta_\e$ be as in
  \eqref{eq:cut-off}. Testing \eqref{eq_Ve_2poles} with
  $w=W_\e-\eta_\e g$, by Young's Inequality and \eqref{def_traces} we
  obtain
	\begin{multline*}
          \norm{W_\e}_{\mc{H_\e}}^2=\int_{\Omega \setminus S_\e
          }\eta_\e\nabla W_\e \cdot\nabla g \,dx +\int_{\Omega
            \setminus S_\e }g\nabla W_\e \cdot\nabla \eta_\e \,dx
          -2 \int_{S_\e} \pd{g}{x_2} \gamma^+(W_\e)\, d S +2\int_{S_\e} \pd{g}{x_2} g\, d S \\
          \le \frac{1}{2} \norm{W_\e}_{\mc{H_\e}}^2
          +C\left(\int_{\Omega}\eta_\e^2|\nabla g|^2
            \,dx+\int_{\Omega}g^2|\nabla \eta_\e|^2\, dx+\int_{S_\e}
            \left|\pd{g}{x_2}\right|^2dS\right) +2\int_{S_\e}
          \left|\pd{g}{x_2}\right| |g|\, d S,
	\end{multline*}
	for some positive constant $C>0$. Hence
        \eqref{eq_normVe_normVePm} follows from \eqref{eq:cut-off} and
        \eqref{ineq_f}.
	
We have 
\begin{equation}\label{eq:stimaav4}
  \mc{E}_\e-\mc{E}_{\e,T_m,\Omega}={\frac12}\left(\norm{V_\e}_{\mc{H_\e}}^2-\norm{V_{\e,T_m,\Omega}}_{\mc{H_\e}}^2\right) +2 \int_{S_\e} \left(\pd{u_0}{x_2} \gamma_+(V_\e)-\pd{T_m}{x_2}\gamma_+(V_{\e,T_m,\Omega})\right)  dS.
\end{equation}
By Remark \ref{remark_Pm} and Proposition \ref{prop_asympotic_E_ePm}
we have that, if either $\alpha_0=\frac{j\pi}{m}$ or
$\alpha_0=\frac{\pi}{2m}+\frac{j\pi}{m}$ for some
$j\in\{0,1,\dots,2m-1\}$, then
$\|V_{\e,T_m,\Omega}\|_{\mc{H_\e}}=\sqrt{2|\mc{E}_{\e,T_m,\Omega}|}=O(\e^m)$
as $\e\to0^+$. Then, \eqref{eq:stimaVe2p} follows from
\eqref{eq_normVe_normVePm}.  Using again \eqref{eq_normVe_normVePm} we
conclude that
\begin{equation}\label{eq:stimaav3}
  \norm{V_\e}_{\mc{H_\e}}^2-\norm{V_{\e,T_m,\Omega}}_{\mc{H_\e}}^2
  =(V_\e-V_{\e,T_m,\Omega},V_\e+V_{\e,T_m,\Omega})_{\mc{H_\e}}
=o(\e^{2m})\quad \text{ as }\e \to 0^+.
\end{equation}
 Furthermore, fixing some $p>2$ and letting $p'=\frac{p}{p-1}$, 
H\"older's inequality, \eqref{ineq_f}, and the continuity of the 
trace operators \eqref{def_traces} imply  that
\begin{align}\label{eq:stimaav2p}
  \bigg| \int_{S_\e} \bigg(&\pd{u_0}{x_2} \gamma_+(V_\e)-\pd{T_m}{x_2}\gamma_+(V_{\e,T_m,\Omega})\bigg)  \, d S\bigg| \\
  \notag&=
          \left| \int_{S_\e} \left(\pd{g}{x_2} \gamma_+(V_\e)+\pd{T_m}{x_2}(\gamma_+(V_\e)-\gamma_+(V_{\e,T_m,\Omega}))\right) \, d S\right|\\
  \notag&\leq \int_{S_\e} \left|\pd{g}{x_2}\right| |\gamma_+(V_\e)| \, dS+\int_{S_\e}\left|\pd{T_m}{x_2}\right||\gamma_+(W_\e)| \, d S\\
  \notag&\le {\rm{const}}\left(\e^{m+\frac{1}{p'}}\left(\int_{S_\e}|\gamma_+(V_\e)|^p\, \colg{d S} \right)^{\!\!1/p}+  
          \e^{m-1+\frac{1}{p'}}\left(\int_{S_\e}|\gamma_+(W_\e)|^p \, \colg{d S} \right)^{\!\!1/p}\right) \\
  \notag&\le {\rm{const}} \left(\e^{m+\frac{1}{p'}}\norm{V_\e}_{\mc{H_\e}} +\e^{m-1+\frac{1}{p'}}\norm{W_\e}_{\mc{H_\e}} \right)\\
  \notag&=O\Big(\e^{2m+\frac{1}{p'}}\Big)+O\Big(\e^{2m-\frac12+\frac{1}{p'}}\Big)=o\big(\e^{2m}\big)\quad\text{as }\e\to0^+,
\end{align}
where we used estimates \eqref{eq:stimaVe2p} and
\eqref{eq_normVe_normVePm}. Combining \eqref{eq:stimaav4},
\eqref{eq:stimaav3}, and \eqref{eq:stimaav2p} we finally
obtain~\eqref{eq_Ee_EePm}.
\end{proof}

\begin{proof}[\textbf{Proof of Theorems \ref{theorem_2poles_nodal} and  \ref{theorem_2poles_bisector}}]
  Since we are considering only two opposite poles on the same line,
  we have $v_0=e^{-i\Theta_0}u_0=u_0$.  Let
  $m \in \mb{N} \setminus\{0\}$ and $\alpha_0 \in [0,\frac{\pi}{m})$
  be as in \eqref{asymptotic_u0_even}.
	
  If $\alpha_0=\frac{j\pi}{m}$ or
  $\alpha_0=\frac{\pi}{2m}+\frac{j\pi}{m}$ for some
  $j\in\{0,1,\dots,2m-1\}$, then, by \eqref{asymptotic_u0_even} (see
  Remark \ref{remark_beta}), $u_0(x)=T_m(x)+O(|x|^{m+1})$ and
  $\frac{\partial u_0}{\partial x_2}(x)=\frac{\partial T_m}{\partial
    x_2}(x)+O(|x|^{m})$ as $x\to0$, so that
\begin{equation}\label{eq:stf3}
  L_\e(u_0)=2  \int_{S_\e}\frac{\partial T_m}{\partial x_2}
  T_m\, dS+O(\e^{2m+1})=O(\e^{2m+1})\quad\text{as }\e\to0^+
\end{equation}
since, in this case, either $T_m\big|_{S_\e}\equiv0$ or
$\frac{\partial T_m}{\partial x_2}\big|_{S_\e}\equiv0$.

 From Theorem \ref{t:main1}, 
 \eqref{eq:stf3}, \eqref{eq:stimaVe2p}, and 
 \eqref{eq_Ee_EePm}, it follows that 
	\begin{equation}\label{eq:stf5}
		\la_{\e, n_0} -\la_{0,
                  n_0}=2\mc{E}_\e-2L_\e(u_0)
    +o\big(\|V_\e\|^2_{\mc{H}_\e}\big)=2\mc{E}_{\e,T_m,\Omega}+o(\e^{2m})
                \quad \text{as } \e \to 0^+.
	\end{equation}
 Theorem \ref{theorem_2poles_nodal} follows from \eqref{eq:stf5}
and \eqref{eq:stf1}, while Theorem \ref{theorem_2poles_bisector}
is a consequence of \eqref{eq:stf5} and \eqref{eq:stf2}.
\end{proof}

\begin{remark}\label{rem:m=0}
  The case $m=0$ has been omitted in the present section as, for
  $u_0(0)\neq0$ the sharp expansion is already contained in
  \cite{AFLeigenvar} even without symmetry assumptions on the domain;
  however, the above argument could also apply in such a case,
  providing an alternative proof of the result of \cite{AFLeigenvar}.
\end{remark}

\section{Dealing with more general configurations of poles}\label{sec:general}

In this section, we give a hint on how our approach could be extended
to treat other possible configurations of poles, which are not covered
in the present paper for the sake of simplicity of exposition. By
\Cref{t:main1}, the quantity that sharply measures the eigenvalue
variation is $\mathcal{E}_\e-L_\e(v_0)$, where $\mathcal{E}_\e$ is as
in \eqref{eq:defEe}, $L_\e$ as in \eqref{def_Le} and $v_0$ is the
limit eigenfunction after a gauge transformation, thus solving
\eqref{prob_eigenvlaue_guged0}.  As explained in the introduction,
$\mathcal{E}_\e$ is essentially an intermediate quantity between a
capacity and a torsional rigidity, \emph{measuring} the set
$\cup_{j=1}^{k_1+k_2}S_\e^j$.  For the success of our method it is
important that the limit eigenfunction $v_0$ is regular on the sets
$S_\e^j$, while the perturbed eigenfunction $v_\e$ jumps on them,
together with $\nabla v_\e\cdot\nu^j$. Our approach can be applied to
all configurations of poles for which, after a gauge transformation as
in \Cref{sec_equivalent_Aharonov_Bohm}, the origin belongs to the
half-lines on which the perturbed eigenfunction $v_\e$ jumps.

We provide below some examples. Since the gauge transformation for a
configuration of poles is the composition of the gauge transformations
of the families of poles lying on the same straight line, we now focus
on a single set of $k$ collinear poles. Hence, for sake of simplicity,
we assume
\[
    \{a^j\}_{j=1,\dots,k}\subset D_R(0)\cap\{(x_1,0)\colon x_1\in\R\}\subset\Omega.
\]
More precisely, we assume that $k=n_1+n_2$, where
$n_1,n_2\in\mathbb{N}$ denote, respectively, the number of poles which
lie on the left and on the right side with respect to the origin
(either $n_1$ or $n_2$ might be zero). Namely,
\[
    a^j=\begin{cases}
        (-\delta_j,0), &\text{for }j=1,\dots,n_1, \\
        (\delta_j,0), &\text{for }j=n_1+1,\dots,n_1+n_2,
    \end{cases}
\]
where $\delta_j>0$ are such that
\begin{align*}
    -\delta_1<-\delta_2<\cdots<-\delta_{n_1}<0<\delta_{n_1+1}<\cdots<\delta_{n_1+n_2}.
\end{align*}
For the above configuration, we consider problem
\eqref{prob_Aharonov-Bohm_multipole}. One of the following cases
occurs:
\begin{enumerate}[\rm(i)]
    \item $n_1$ and $n_2$ are both even;
    \item $n_1$ and $n_2$ are both odd;
    \item $n_1$ is odd and $n_2$ is even (or vice versa).
\end{enumerate}
The procedure developed to prove our main result \Cref{t:main1} can be
reproduced in cases (i) and (ii), as well as in case (iii) if $n_2=0$.

Let us now briefly describe, in these cases, how problem
\eqref{prob_Aharonov-Bohm_multipole} becomes after a tailored gauge
transformation. Hereafter, we denote by $\Sigma:=\R\times\{0\}$ the
$x_1$ axis, by $T\colon H^1(\R^2\setminus\Sigma)\to L^p(\Sigma)$ the
jump trace operator defined as in \eqref{eq:trTj} with $\Sigma$
instead of $\Sigma^j$, and by $\nu:=(0,1)$.

\medskip\noindent {\bf Case (i):} even number of poles evenly
distributed, i.e. $n_1=2N$ and $n_2=2M$ for some $N,M\in\mathbb{N}$
(see \Cref{subfig:case_1}). In this case, reasoning as in
\Cref{sec_equivalent_Aharonov_Bohm}, it is possible to find a gauge
transformation such that problem \eqref{prob_Aharonov-Bohm_multipole}
is equivalent to
\[
    \begin{cases}
        -\Delta v=\lambda v, &\text{in }\Omega\setminus \bigcup_{j=1}^{N+M}S_\e^j, \\
        v=0, &\text{on }\partial\Omega, \\
        T(v)=T(\nabla v\cdot\nu)=0, &\text{on }\bigcup_{j=1}^{N+M}S_\e^j,
    \end{cases}
\]
where
\[
    S_\e^j:=\begin{cases}
        [-\e\delta_{2j-1},-\e\delta_{2j}]\times\{0\}, &\text{if }j=1,\dots,N, \\
        [\e\delta_{2j-1},\e\delta_{2j}]\times\{0\}, & \text{if }j=N+1,\dots,N+M.
    \end{cases}
\]
\\
{\bf Case (ii):} even number of poles oddly distributed,
i.e. $n_1=2N+1$ and $n_2=2M+1$ for some $N,M\in\mathbb{N}$ (see
\Cref{subfig:case_2}). Once again, reasoning as in
\Cref{sec_equivalent_Aharonov_Bohm}, one can find a gauge
transformation such that problem \eqref{prob_Aharonov-Bohm_multipole}
is equivalent to
\[
    \begin{cases}
        -\Delta v=\lambda v, &\text{in }\Omega\setminus \bigcup_{j=1}^{N+M+1}S_\e^j, \\
        v=0, &\text{on }\partial\Omega, \\
        T(v)=T(\nabla v\cdot\nu)=0, &\text{on }\bigcup_{j=1}^{N+M+1}S_\e^j,
    \end{cases}
\]
where
\[
    S_\e^j:=\begin{cases}
        [-\e\delta_{2j-1},-\e\delta_{2j}]\times\{0\}, &\text{for }j=1,\dots,N, \\
        [-\e\delta_{2N+1},\e\delta_{2N+2}]\times\{0\}, &\text{for }j=N+1, \\
        [\e\delta_{2j-1},\e\delta_{2j}]\times\{0\}, &\text{for }j=N+2,\dots,N+M+1.        
    \end{cases}
  \] {\bf (iii):} odd number of poles all on the same side,
  i.e. $n_1=2N+1$ and $n_2=0$ (see \Cref{subfig:case_3}). In this
  case, problem \eqref{prob_Aharonov-Bohm_multipole} is equivalent to
\[
    \begin{cases}
      -\Delta v=\lambda v, &\text{in }\Omega\setminus
      \left[\Gamma_0\cup\left(\bigcup_{j=1}^{N+1}S_\e^j\right)\right], \\
        v=0, &\text{on }\partial\Omega, \\
        T(v)=T(\nabla v\cdot\nu)=0, &\text{on }\Gamma_0, \\
        T(v)=T(\nabla v\cdot\nu)=0, &\text{on }\bigcup_{j=1}^{N+1}S_\e^j,
    \end{cases}
\]
where
\[
    S_\e^j
    :=\begin{cases}
    [-\e\delta_{2j-1},-\e\delta_{2j}]\times\{0\}, &\text{for  }j=1,\dots,N,\\
    [-\e\delta_{2N+1},0]\times\{0\}, &\text{for  }j=N+1.
    \end{cases}
\]
To conclude, the only case left open in the present work is case (iii)
with $n_2\neq0$. This requires non-trivial technical adaptations and
will be the object of future investigation.

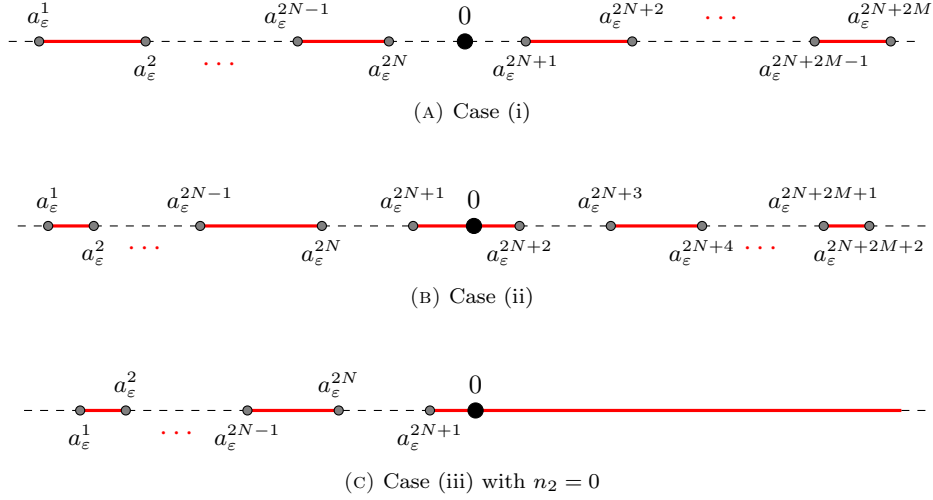
\begin{figure}[ht]
    \centering
    \subfloat[Case (i) \label{subfig:case_1}]{
    \begin{tikzpicture}[scale=2] 
        \draw [dashed] (-3,0) -- (-2.8,0);
        \draw [dashed] (-2.1,0) -- (-1.1,0);
        \draw [dashed] (-0.5,0) -- (0.4,0);
        \draw [dashed] (1.1,0) -- (2.3,0);
        \draw [dashed] (2.8,0) -- (3,0);
        
        \draw [red,very thick] (-2.8,0) -- (-2.1,0) node [yshift=-0.3cm,xshift=1cm] {\large$\cdots$};
        \draw [red,very thick] (-1.1,0) -- (-0.5,0);

        \draw [fill=gray] (-2.8,0) circle (0.03cm) node [yshift=0.35cm] {\small$a^1_\e$};
        \draw [fill=gray] (-2.1,0) circle (0.03cm) node [yshift=-0.35cm] {\small$a^2_\e$};
        \draw [fill=gray] (-1.1,0) circle (0.03cm) node [yshift=0.35cm] {\small$a^{2N-1}_\e$};
        \draw [fill=gray] (-0.5,0) circle (0.03cm) node [yshift=-0.35cm] {\small$a^{2N}_\e$};
       
        \draw [red,very thick] (2.8,0) -- (2.3,0);
        \draw [red,very thick] (1.1,0) node [yshift=0.3cm,xshift=1.2cm] {\large$\cdots$} -- (0.4,0) ;
        
        \draw [fill=gray] (2.8,0) circle (0.03cm) node [yshift=0.35cm] {\small$a^{2N+2M}_\e$};
        \draw [fill=gray] (2.3,0) circle (0.03cm) node [yshift=-0.35cm] {\small$a^{2N+2M-1}_\e$};
        \draw [fill=gray] (1.1,0) circle (0.03cm) node [yshift=0.35cm] {\small$a^{2N+2}_\e$};
        \draw [fill=gray] (0.4,0) circle (0.03cm) node [yshift=-0.35cm] {\small$a^{2N+1}_\e$};

        \filldraw [black] (0,0) circle (0.05cm) node [yshift=0.35cm] {$0$};
    \end{tikzpicture}
    }

    \vspace{0.3cm}
    
    \subfloat[Case (ii) \label{subfig:case_2}]{
        \begin{tikzpicture}[scale=2] 
        \draw [dashed] (-3,0) -- (-2.8,0);
        \draw [dashed] (-2.5,0) -- (-1.8,0);
        \draw [dashed] (-1,0) -- (-0.4,0);
        \draw [dashed] (0.3,0) -- (0.9,0);
        \draw [dashed] (1.5,0) -- (2.3,0);
        \draw [dashed] (2.6,0) -- (3,0);

        \draw [red,very thick] (-2.8,0) -- (-2.5,0) node [yshift=-0.3cm,xshift=0.7cm] {\large$\cdots$};
        \draw [red,very thick] (-1.8,0) -- (-1,0);
        \draw [red,very thick] (-0.4,0) -- (0.3,0);
        \filldraw [black] (0,0) circle (0.05cm) node [yshift=0.35cm] {$0$};

        \draw [fill=gray] (-2.8,0) circle (0.03cm) node [yshift=0.35cm] {\small$a^1_\e$};
        \draw [fill=gray] (-2.5,0) circle (0.03cm) node [yshift=-0.35cm] {\small$a^2_\e$};
        \draw [fill=gray] (-1.8,0) circle (0.03cm) node [yshift=0.35cm] {\small$a^{2N-1}_\e$};
        \draw [fill=gray] (-1,0) circle (0.03cm) node [yshift=-0.35cm] {\small$a^{2N}_\e$};
        \draw [fill=gray] (-0.4,0) circle (0.03cm) node [yshift=0.35cm] {\small$a^{2N+1}_\e$};
        \draw [fill=gray] (0.3,0) circle (0.03cm) node [yshift=-0.35cm] {\small$a^{2N+2}_\e$};
       
        \draw [red,very thick] (2.6,0) -- (2.3,0);
        \draw [red,very thick] (1.5,0) node [yshift=-0.3cm,xshift=0.8cm] {\large$\cdots$} -- (0.9,0) ;
        
        \draw [fill=gray] (2.6,0) circle (0.03cm) node [yshift=-0.35cm] {\small$a^{2N+2M+2}_\e$};
        \draw [fill=gray] (2.3,0) circle (0.03cm) node [yshift=0.35cm] {\small$a^{2N+2M+1}_\e$};
        \draw [fill=gray] (1.5,0) circle (0.03cm) node [yshift=-0.35cm] {\small$a^{2N+4}_\e$};
        \draw [fill=gray] (0.9,0) circle (0.03cm) node [yshift=0.35cm] {\small$a^{2N+3}_\e$};

    \end{tikzpicture}
    }

    \vspace{0.3cm}
    
    \subfloat[Case (iii) with $n_2=0$ \label{subfig:case_3}]{
        \begin{tikzpicture}[scale=2] 
        \draw [dashed] (-0.3,0) -- (-0.9,0);
        \draw [dashed] (-1.5,0) -- (-2.3,0);
        \draw [dashed] (-2.6,0) -- (-3,0);
        \draw [dashed] (2.8,0) -- (3,0);

        \draw [red,very thick] (-2.6,0) -- (-2.3,0) node [yshift=-0.3cm,xshift=0.7cm] {\large$\cdots$};
        \draw [red,very thick] (-1.5,0)  -- (-0.9,0) ;
        \draw [red,very thick] (-0.3,0) -- (2.8,0);

        \filldraw [black] (0,0) circle (0.05cm) node [yshift=0.35cm] {$0$};

        \draw [fill=gray] (-2.6,0) circle (0.03cm) node [yshift=-0.35cm] {\small$a^1_\e$};
        \draw [fill=gray] (-2.3,0) circle (0.03cm) node [yshift=0.35cm] {\small$a^{2}_\e$};
        \draw [fill=gray] (-1.5,0) circle (0.03cm) node [yshift=-0.35cm] {\small$a^{2N-1}_\e$};
        \draw [fill=gray] (-0.9,0) circle (0.03cm) node [yshift=0.35cm] {\small$a^{2N}_\e$};
        \draw [fill=gray] (-0.3,0) circle (0.03cm) node [yshift=-0.35cm] {\small$a^{2N+1}_\e$};

    \end{tikzpicture}
    }
 \caption{The jumping set after gauge transformation in cases (i), (ii), and (iii).}
 \label{fig:generalizzazione}
\end{figure}

\bigskip\noindent {\bf Acknowledgments.}  B. Noris has been supported
by the MUR grant Dipartimento di Eccellenza 2023-2027 and by the
INdAM-GNAMPA Project 2022 ``Studi asintotici in problemi parabolici ed
ellittici''. V. Felli, R. Ognibene, and G. Siclari are partially
supported by the INDAM-GNAMPA Project 2022 ``Questioni di esistenza e
unicit\`a per problemi non locali con potenziali di tipo
Hardy''. R. Ognibene is supported by the project ERC VAREG -
\emph{Variational approach to the regularity of the free boundaries}
(grant agreement No. 853404).

\bibliographystyle{acm}

\begin{thebibliography}{10}

\bibitem{AFaharonov}
{\sc Abatangelo, L., and Felli, V.}
\newblock Sharp asymptotic estimates for eigenvalues of {A}haronov-{B}ohm
  operators with varying poles.
\newblock {\em Calc. Var. Partial Differential Equations 54}, 4 (2015),
  3857--3903.

\bibitem{AF-SIAM}
{\sc Abatangelo, L., and Felli, V.}
\newblock On the leading term of the eigenvalue variation for {A}haronov-{B}ohm
  operators with a moving pole.
\newblock {\em SIAM J. Math. Anal. 48}, 4 (2016), 2843--2868.

\bibitem{AFHL}
{\sc Abatangelo, L., Felli, V., Hillairet, L., and L\'{e}na, C.}
\newblock Spectral stability under removal of small capacity sets and
  applications to {A}haronov-{B}ohm operators.
\newblock {\em J. Spectr. Theory 9}, 2 (2019), 379--427.

\bibitem{AFL}
{\sc Abatangelo, L., Felli, V., and L\'{e}na, C.}
\newblock On {A}haronov-{B}ohm operators with two colliding poles.
\newblock {\em Adv. Nonlinear Stud. 17}, 2 (2017), 283--296.

\bibitem{AFLeigenvar}
{\sc Abatangelo, L., Felli, V., and L\'{e}na, C.}
\newblock Eigenvalue variation under moving mixed {D}irichlet-{N}eumann
  boundary conditions and applications.
\newblock {\em ESAIM Control Optim. Calc. Var. 26\/} (2020), Paper No. 39, 47.

\bibitem{AFNN-2017}
{\sc Abatangelo, L., Felli, V., Noris, B., and Nys, M.}
\newblock Sharp boundary behavior of eigenvalues for {A}haronov-{B}ohm
  operators with varying poles.
\newblock {\em J. Funct. Anal. 273}, 7 (2017), 2428--2487.

\bibitem{Abatangelo-multip}
{\sc Abatangelo, L., and Nys, M.}
\newblock On multiple eigenvalues for {Aharonov}-{Bohm} operators in planar
  domains.
\newblock {\em Nonlinear Anal., Theory Methods Appl., Ser. A, Theory Methods
  169\/} (2018), 1--37.

\bibitem{AO-2023}
{\sc Abatangelo, L., and Ognibene, R.}
\newblock Sharp behavior of dirichlet--laplacian eigenvalues for a class of
  singularly perturbed problems.
\newblock {\em Preprint\/} (2023), {\tt arxiv.org/abs/2301.11729}.

\bibitem{AFJT}
{\sc Alziary, B., Fleckinger-Pell\'{e}, J., and Tak\'{a}\v{c}, P.}
\newblock Eigenfunctions and {H}ardy inequalities for a magnetic
  {S}chr\"{o}dinger operator in {$\Bbb R^2$}.
\newblock {\em Math. Methods Appl. Sci. 26}, 13 (2003), 1093--1136.

\bibitem{BNHHO}
{\sc Bonnaillie-No\"{e}l, V., Helffer, B., and Hoffmann-Ostenhof, T.}
\newblock Aharonov-{B}ohm {H}amiltonians, isospectrality and minimal
  partitions.
\newblock {\em J. Phys. A 42}, 18 (2009), 185203, 20.

\bibitem{BNNNT}
{\sc Bonnaillie-No\"{e}l, V., Noris, B., Nys, M., and Terracini, S.}
\newblock On the eigenvalues of {A}haronov-{B}ohm operators with varying poles.
\newblock {\em Anal. PDE 7}, 6 (2014), 1365--1395.

\bibitem{brasco2022steklov}
{\sc Brasco, L., del Mar~González, M., and Ispizua, M.}
\newblock A {S}teklov version of the torsional rigidity.
\newblock {\em Preprint\/} (2022).

\bibitem{BM2020}
{\sc Brasco, L., and Mazzoleni, D.}
\newblock On principal frequencies, volume and inradius in convex sets.
\newblock {\em NoDEA, Nonlinear Differ. Equ. Appl. 27}, 2 (2020), 26.
\newblock Id/No 12.

\bibitem{courtois}
{\sc Courtois, G.}
\newblock Spectrum of manifolds with holes.
\newblock {\em J. Funct. Anal. 134}, 1 (1995), 194--221.

\bibitem{FFT}
{\sc Felli, V., Ferrero, A., and Terracini, S.}
\newblock Asymptotic behavior of solutions to {S}chr\"{o}dinger equations near
  an isolated singularity of the electromagnetic potential.
\newblock {\em J. Eur. Math. Soc. (JEMS) 13}, 1 (2011), 119--174.

\bibitem{Hspectral}
{\sc Helffer, B.}
\newblock {\em Spectral theory and its applications}, vol.~139 of {\em
  Cambridge Studies in Advanced Mathematics}.
\newblock Cambridge University Press, Cambridge, 2013.

\bibitem{henrot2018}
{\sc Henrot, A., and Pierre, M.}
\newblock {\em Shape variation and optimization. {A} geometrical analysis},
  vol.~28 of {\em EMS Tracts Math.}
\newblock Z{\"u}rich: European Mathematical Society (EMS), 2018.

\bibitem{LW}
{\sc Laptev, A., and Weidl, T.}
\newblock Hardy inequalities for magnetic {D}irichlet forms.
\newblock In {\em Mathematical results in quantum mechanics ({P}rague, 1998)},
  vol.~108 of {\em Oper. Theory Adv. Appl.} Birkh\"{a}user, Basel, 1999,
  pp.~299--305.

\bibitem{leinfelder}
{\sc Leinfelder, H.}
\newblock Gauge invariance of {S}chr\"{o}dinger operators and related spectral
  properties.
\newblock {\em J. Operator Theory 9}, 1 (1983), 163--179.

\bibitem{L2015}
{\sc L\'{e}na, C.}
\newblock Eigenvalues variations for {A}haronov-{B}ohm operators.
\newblock {\em J. Math. Phys. 56}, 1 (2015), 011502, 18.

\bibitem{Leo}
{\sc Leoni, G.}
\newblock {\em A first course in {S}obolev spaces}, second~ed., vol.~181 of
  {\em Graduate Studies in Mathematics}.
\newblock American Mathematical Society, Providence, RI, 2017.

\bibitem{Medkova-Krutitskii}
{\sc Medkov\'{a}, D., and Krutitskii, P.}
\newblock Neumann and {R}obin problems in a cracked domain with jump conditions
  on cracks.
\newblock {\em J. Math. Anal. Appl. 301}, 1 (2005), 99--114.

\bibitem{NNT-2015}
{\sc Noris, B., Nys, M., and Terracini, S.}
\newblock On the {A}haronov-{B}ohm operators with varying poles: the boundary
  behavior of eigenvalues.
\newblock {\em Comm. Math. Phys. 339}, 3 (2015), 1101--1146.

\bibitem{NT}
{\sc Noris, B., and Terracini, S.}
\newblock Nodal sets of magnetic {S}chr\"{o}dinger operators of
  {A}haronov-{B}ohm type and energy minimizing partitions.
\newblock {\em Indiana Univ. Math. J. 59}, 4 (2010), 1361--1403.

\bibitem{PZ1951}
{\sc P{\'o}lya, G., and Szeg{\"o}, G.}
\newblock {\em Isoperimetric inequalities in mathematical physics}, vol.~27 of
  {\em Ann. Math. Stud.}
\newblock Princeton University Press, Princeton, NJ, 1951.

\bibitem{Teytel}
{\sc Teytel, M.}
\newblock How rare are multiple eigenvalues?
\newblock {\em Commun. Pure Appl. Math. 52}, 8 (1999), 0917--0934.

\bibitem{BBV}
{\sc van~den Berg, M., Buttazzo, G., and Velichkov, B.}
\newblock Optimization problems involving the first {D}irichlet eigenvalue and
  the torsional rigidity.
\newblock In {\em New trends in shape optimization}, vol.~166 of {\em Internat.
  Ser. Numer. Math.} Birkh\"{a}user/Springer, Cham, 2015, pp.~19--41.

\end{thebibliography}

\end{document}